\numberwithin{equation}{section}
\newtheorem{theorem}{Theorem}[section]
\newtheorem{corollary}[theorem]{Corollary}
\newtheorem{lemma}[theorem]{Lemma}
\newtheorem{proposition}[theorem]{Proposition}
\newtheorem{question}[theorem]{Question}
\newtheorem{problem}[theorem]{Problem}
\theoremstyle{definition}
\newtheorem{definition}[theorem]{Definition}
\newtheorem{notation}[theorem]{Notation}
\theoremstyle{remark}
\newtheorem{remark}[theorem]{Remark}
\newtheorem{example}[theorem]{Example}
\newcommand{\F}{\mathbb{F}}
\newcommand{\N}{\mathbb{N}}
\newcommand{\Z}{\mathbb{Z}}
\DeclareMathOperator{\Alt}{Alt}
\DeclareMathOperator{\Aut}{Aut}
\DeclareMathOperator{\GL}{GL}
\DeclareMathOperator{\id}{id}
\DeclareMathOperator{\Homeo}{Homeo}
\DeclareMathOperator{\LEF}{LEF}
\DeclareMathOperator{\odd}{odd}
\DeclareMathOperator{\ord}{ord}
\DeclareMathOperator{\PSL}{PSL}
\DeclareMathOperator{\SAut}{SAut}
\DeclareMathOperator{\SL}{SL}
\DeclareMathOperator{\St}{St}
\DeclareMathOperator{\supp}{supp}
\DeclareMathOperator{\Sym}{Sym}
\DeclareMathOperator{\T}{T}
\DeclareMathOperator{\E}{E}
\DeclareMathOperator{\vol}{vol}
\DeclareMathOperator{\cons}{con}
\newcommand{\abs}[1]{\vert #1 \vert}
\newcommand\Set[2]{\{\,#1\mid#2\,\}}
\newcommand{\defeq}{\mathrel{\mathop{:}}=}
\renewcommand{\epsilon}{\varepsilon}
\title[Telescopes, frames, simple groups]{From telescopes to frames and simple groups}
\author[S. Kionke]{Steffen Kionke}
\author[E. Schesler]{Eduard Schesler}
\address{FernUniversit\"at in Hagen \\ Fakult\"at f\"ur Mathematik und Informatik \\
58084 Hagen}
\email{steffen.kionke@fernuni-hagen.de}
\email{eduard.schesler@fernuni-hagen.de}
\thanks{Funded by the Deutsche Forschungsgemeinschaft (DFG, German Research Foundation) - 441848266}
\subjclass[2010]{Primary 20E18; Secondary 20E08, 20E26, 43A07}
\keywords{amenable, profinite completion, branch group, simple group}
\begin{document}
\begin{abstract}
We introduce the notion of a telescope of groups. Very roughly a telescope is a directed system of groups that contains various commuting images of some fixed group $B$. Telescopes are inspired from the theory of groups acting on rooted trees. Imitating known constructions of branch groups, we obtain a number of examples of $B$-telescopes and discuss several applications.
We give examples of  $2$-generated  infinite amenable simple groups. We show that every finitely generated residually finite (amenable) group embeds into a finitely generated (amenable) $\LEF$ simple group. We construct $2$-generated frames in products of finite simple groups and show that there are Grothendieck pairs consisting of amenable groups and groups with property $(\tau)$.
We give examples of automorphisms of finitely generated, residually finite, amenable groups that are not inner, but become inner in the profinite completion. We describe non-elementary amenable examples of finitely generated, residually finite groups all of whose finitely generated subnormal subgroups are direct factors.
\end{abstract}

\maketitle

\section{Introduction}
In this paper we develop the theory of telescopes of groups. In a nutshell, a $B$-\emph{telescope}
$\mathcal{S}$ consists of
a directed system $((\Omega_i)_{i\in \N},\iota_{i,j})$ of groups $\Omega_i$ together with homomorphisms $\phi_i\colon B \to \Omega_i$ such that the images of $\iota_{i,j}\circ\phi_i$ commute but are still large enough to normally generate each $\Omega_j$.
Each telescope gives rise to two groups: the group $G_{\mathcal{S}}$ and the head $Q_{\mathcal{S}}$.
The group $G_\mathcal{S}$ is a subgroup of $\prod_{i \in \N} \Omega_i$ and (often) $Q_{\mathcal{S}}$ is a factor of $G_\mathcal{S}$. Under suitable assumptions, it will turn out that both groups can have remarkable properties: $G_\mathcal{S}$ is a frame in $\prod_{i \in \N} \Omega_i$ and $Q_\mathcal{S}$ is simple. In addition, the construction of these groups is rather explicit and needs a small number of generators. This leads to a number of previously unknown examples in group theory, which we will present now.

\subsection*{Application 1: infinite amenable simple groups}
The notion of an amenable group\footnote{Von Neumann designated such group with the German word ``messbar'' (``measurable'' in English).
The fact that such groups are now called amenable goes back to Day~\cite{Day49}, apparently as a pun.}
was introduced by von Neumann~\cite{vNeumann29} in his study of the Banach-Tarski paradox~\cite{BanachTarski24}.
Since then, amenable groups gained a lot of interest and many interesting examples of amenable groups were found; see~\cite{Juschenko22} for a recent overview in the discrete case.
However, it took until 2013 when the first simple examples of infinite, finitely generated amenable groups were provided by Juschenko and Monod~\cite[Theorem A]{JuschenkoMonod13}.
More precisely, they showed that for every minimal homeomorphism $\alpha$ of a Cantor set $\mathfrak{C}$, the topological full group $[[\alpha]]$, i.e.\ the group of homeomorphisms of $\mathfrak{C}$ that locally act as powers of $\alpha$, is amenable.
Before that, the simplicity of the commutator subgroup of $[[\alpha]]$ was established by Matui~\cite[Theorem 4.9]{Matui06}.
Moreover, he showed that  $[[\alpha]]'$ is finitely generated if $\alpha$ is a minimal subshift~\cite[Theorem 5.4]{Matui06}.
Nevertheless there are still very few constructions of infinite, finitely generated amenable simple groups.
This is demonstrated by a question of Juschenko~\cite[Question B.5]{Juschenko22}, which asks whether $2$-generated, infinite simple amenable groups exist, see also~\cite[Problem 4.7]{Juschenko-workshop}.
The following result answers this question affirmatively.

\begin{theorem}[Theorem~\ref{thm:2-generated-amenable}]\label{thm:2-generated-amenable-intro}
There exists an infinite $2$-generated amenable residually finite group $G$, which admits an infinite simple quotient $Q$.
\end{theorem}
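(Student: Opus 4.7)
The plan is to realize the conclusion as an instance of the $B$-telescope machinery. According to the general results announced in the introduction, any telescope $\mathcal{S}$ whose base groups $\Omega_i$ are finite produces a telescope group $G_{\mathcal{S}}$ which is residually finite (being a frame in the profinite group $\prod_{i\in\N}\Omega_i$) and, under suitable assumptions, a simple head $Q_{\mathcal{S}}$. My task therefore reduces to exhibiting a specific choice of $B$ and of data $(\Omega_i,\phi_i,\iota_{i,j})$ for which one additionally obtains (a) amenability of $G_{\mathcal{S}}$, (b) infiniteness of the head, and (c) a generating set of size two.

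For the generator bound, I would take $B$ itself to be 2-generated and arrange the transition maps and embeddings $\phi_i$ in a self-similar, branch-group style, in analogy with the classical constructions of Grigorchuk or Gupta--Sidki. The commutation relations built into the telescope together with a ``shift'' between adjacent levels should allow the images of the two generators of $B$ at the bottom level to generate all of $G_{\mathcal{S}}$; this kind of compression of infinitely many maps $\phi_i$ into a small generating set is a standard feature of constructions based on rooted trees, and the paper explicitly emphasises this analogy. Infiniteness of the head would be checked in parallel, typically by a level-counting argument showing that no level stabiliser can swallow the entire group.

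The main obstacle is amenability. Because $G_{\mathcal{S}}$ is only guaranteed to embed in an infinite product of finite groups, amenability is not automatic; indeed, free groups admit such embeddings. I would follow one of the two known routes to amenable branch-like groups. The first is a Grigorchuk-style route: choose the $\Omega_i$ and $\phi_i$ so that $G_{\mathcal{S}}$ acts on a rooted tree with sufficient contraction to force subexponential word growth. The second is the Juschenko--Monod route~\cite{JuschenkoMonod13}: realize $G_{\mathcal{S}}$ as (a subgroup of) a topological full group of a minimal homeomorphism of the Cantor set, so that extensive amenability of the natural action on the orbits implies amenability of the group. Either route yields amenability of $G_{\mathcal{S}}$, and amenability then descends automatically to the simple quotient $Q_{\mathcal{S}}$.

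The hardest step, then, is to design the telescope data so that the resulting action is simultaneously rich enough for the head to be simple and tame enough (contracting or extensively amenable) for amenability to hold. Reconciling these two competing requirements is where the bulk of the technical work will lie, and it is precisely at this balance point that the abstract telescope framework of the paper is expected to prove its worth.
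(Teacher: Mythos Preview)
Your proposal is a research plan, not a proof: it never specifies the telescope $\mathcal{S}$, and each of the three required properties (2-generation, simplicity of the head, amenability) is left as a hope rather than verified. The paper's proof, by contrast, simply invokes the concrete telescope $\mathcal{A}(d,r)$ of alternating groups and points to three earlier results (Theorem~\ref{thm:frame-minimal-Alt}, Theorem~\ref{thm:simplehead-alt}, Corollary~\ref{cor:2-generated-amenable}) where the work was actually done.

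Two of your sketched arguments would not go through as stated. For 2-generation you write that taking $B$ 2-generated and using self-similarity ``should allow the images of the two generators of $B$ at the bottom level to generate all of $G_{\mathcal{S}}$''. But by definition $G_{\mathcal{S}}=\langle \Delta_1(\Omega_1),\, C^{[1]}\rangle$, so the generators of $B$ alone give only $C^{[1]}$; you also need generators of $\Omega_1$. The paper's 2-generation argument (Theorem~\ref{thm:frame-minimal-Alt}) is not a soft self-similarity trick: it passes to a deep level $n$, chooses a prime $p$ with $d^n/2 < p < d^n-(r+1)$, builds two commuting pairs (a $p$-cycle in $\Alt(d^n)$ together with a directed element of coprime order), and then uses Jordan's theorem on primitive groups containing a $p$-cycle to recover all of $\Delta_n(\Alt(d^n))$ and $C^{[n]}$ from just two elements.

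For amenability, neither of your two proposed routes is what the paper does. Subexponential growth of $G_{\mathcal{A}(d,r)}$ is listed as an open question in Section~\ref{sec:questions}, so that route is not available. And $G_{\mathcal{S}}$ is not realized as a subgroup of the topological full group of a single minimal homeomorphism in the Juschenko--Monod sense. Instead the paper applies the more general criterion of Juschenko--Nekrashevych--de la Salle (Theorem~\ref{thm:amenability-criterion}): one lets $Q_{\mathcal{S}}$ act on the Cantor set $\mathfrak{C}=\prod_n X_n$, checks that every element acts by a homeomorphism of bounded type, and then uses the normal form of Lemma~\ref{lem:normalform-short-proof} to show that each germ group $\mathcal{O}_\xi(Q_\mathcal{S})$ is a quotient of $B$, hence finite. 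You also omit any argument for simplicity of the head; in the paper this requires the weak normal form (Lemma~\ref{lem:weak-normalform}) and the consistent-point argument of Lemma~\ref{lem:consistent-alt}.
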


The groups $G$ and $Q$ in this theorem are the group $G_{\mathcal{A}(d,r)}$ and the head $Q_{\mathcal{A}(d,r)}$ of a telescope $\mathcal{A}(d,r)$ of alternating groups, which will serve as our running example of $B$-telescopes.
It might also be worth mentioning that, while $2$-generation in Theorem~\ref{thm:2-generated-amenable-intro} needs a bit of argumentation, finite generation of $G$ and $Q$ is a direct consequence of the definition of the $B$-telescope $\mathcal{A}(d,r)$.
The amenability of the groups $G_{\mathcal{A}(d,r)}$ and $Q_{\mathcal{A}(d,r)}$, as well as all other amenable groups in this article, relies on a criterion of Juschenko, Nekrashevych and de la Salle~\cite{JuschenkoNekrashevychdelaSalle16}.

\subsection*{Application 2: embeddings into simple groups}
Embedding groups into simple groups is a classical theme in group theory.
It is an easy consequence of the Baer-Schreier-Ulam theorem~\cite{Baer34} that every group $G$ embeds into a simple group $Q$.
In the case where $G$ is countable, it was shown by Gorju\v{s}kin~\cite{Gorjushkin74} that $Q$ can be chosen to be $2$-generated (see also \cite{Schupp76}).
Since then, many embedding results of specific classes of groups into simple groups with additional properties have been obtained; see e.g.~\cite{BelkZaremsky22,DarbinyanSteenbock22,Osin10} for more recent results in that spirit.
Following that theme, it is natural to ask which groups embed into finitely generated, simple amenable groups.
As far as we know, a complete answer to that question seems yet to be out of reach.
An interesting result concerning that question was obtained by Matte Bon~\cite[Theorem 1.1]{Bon17}, who proved that groups generated by bounded automata embed into finitely generated, simple amenable groups.
In particular this includes many of the classical examples of branch groups such as Grigorchuk's group and Gupta-Sidki groups.
Here we prove a significantly more general result by showing that every finitely generated, residually finite amenable group $G$ embeds in a finitely generated, simple amenable group $Q$.
Of course, unless $G$ is finite, the residual finiteness of $G$ gets lost in the transition to $Q$.
But it will be an immediate consequence of our construction that $Q$ is still locally embeddable into finite groups ($\LEF$, for short) in the sense of Vershik-Gordon~\cite{VershikGordon}.

\begin{theorem}[Theorem~\ref{thm:embeddings-into-simple-groups}]\label{thm:embedding-intro}
Let $H$ be a finitely generated, residually finite (amenable) group.
There is a finitely generated, residually finite (amenable) group $G$ and an infinite, simple, (amenable) $\LEF$ group $Q$ such that
\begin{enumerate}
\item there is an embedding $\iota \colon H \rightarrow G$,
\item there is a projection $\pi \colon G \rightarrow Q$,
\item the composition $\pi \circ \iota$ is injective.
\end{enumerate}
\end{theorem}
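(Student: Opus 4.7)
The plan is to adapt the running telescope construction $\mathcal{A}(d,r)$ to $H$, producing a $B$-telescope $\mathcal{S} = \mathcal{S}(H)$ so that $G \defeq G_{\mathcal{S}}$ and $Q \defeq Q_{\mathcal{S}}$ satisfy the conclusions. The first step is to exploit the residual finiteness of $H$: I would pick a nested chain $N_1 \supseteq N_2 \supseteq \cdots$ of finite-index normal subgroups of $H$ with $\bigcap_i N_i = \{e\}$, set $F_i \defeq H/N_i$, and let $\rho_i \colon H \to F_i$ be the quotient maps, so that the product $\rho = (\rho_i)_i \colon H \to \prod_i F_i$ is injective. A Cayley-type embedding then gives $F_i \hookrightarrow \Alt(d_i)$ for a suitable integer $d_i$, which provides the raw material to be built into the telescope.

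Next, I would construct a $B$-telescope whose level-$i$ group $\Omega_i$ is enlarged to contain the alternating group $\Alt(d_i)$ -- for example by replacing the original $\Omega_i$ from $\mathcal{A}(d,r)$ with a suitable wreath product or semidirect product involving $\Alt(d_i)$ -- and so that the maps $\phi_i \colon B \to \Omega_i$ and the connecting homomorphisms $\iota_{i,j}$ still satisfy the telescope axioms: commuting images of $B$, and normal generation of each $\Omega_j$. I would then take $G$ to be the subgroup of $\prod_i \Omega_i$ generated by the usual telescope generators together with the diagonal images of a finite generating set of $H$. By the general theory developed earlier in the paper, $G$ is finitely generated and residually finite (the latter automatic from $G \le \prod_i \Omega_i$), the head $Q \defeq Q_{\mathcal{S}}$ is an infinite simple $\LEF$ group, and there is a natural projection $\pi \colon G \to Q$. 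In the amenable case, the Juschenko-Nekrashevych-de la Salle criterion applied as in the proof of Theorem~\ref{thm:2-generated-amenable-intro} should transfer amenability from $H$ and the alternating building blocks to $G$ and $Q$.

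The embedding $\iota \colon H \to G$ is the diagonal map $h \mapsto (\rho_i(h))_i$; injectivity is immediate from $\bigcap_i N_i = \{e\}$. For injectivity of $\pi \circ \iota$, I would use the general fact from the telescope theory that $\ker(\pi)$ is concentrated on only finitely many levels of the directed system. Since for any $h \neq e$ the component $\rho_i(h) \in F_i$ is non-trivial for cofinitely many $i$, the element $\iota(h)$ has infinite support and therefore cannot lie in $\ker(\pi)$.

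The hard step is the construction in the second paragraph: the telescope axioms are rigid, and the levels must be enlarged so as to simultaneously (a) accommodate arbitrary finite groups $F_i$, (b) preserve the commuting-images and normal-generation conditions of a $B$-telescope, (c) keep $G$ finitely generated, (d) leave $Q$ infinite and simple, and, in the amenable case, (e) remain compatible with the Juschenko-Nekrashevych-de la Salle criterion. Meeting all these constraints at once -- particularly (e), which tightly restricts how much the levels can grow -- is the central technical challenge, and is essentially where the bulk of the proof will live.
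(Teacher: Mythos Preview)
Your outline has the right shape---use residual finiteness to get quotients $F_i = H/N_i$, feed them into a telescope of alternating groups, and read off the embedding into the head---but two key maneuvers from the paper are missing, and without them your construction does not close.

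First, the paper does \emph{not} embed $H$ via a plain diagonal map $h \mapsto (\rho_i(h))_i$ and then throw these elements in as extra generators. Instead it (a) first replaces $H$ by a finitely generated residually finite \emph{perfect} (amenable) group $G$ containing it, citing an earlier embedding result; and (b) builds the telescope so that $G$ sits inside the group $B$ itself, namely $B = B_0 \times G$ with $B_0 \cong \Alt(6)$. The alphabets are taken to be $X_i = G/N_i$, on which $G$ acts by translation, and $\phi_n(1,g)$ is defined so that the restriction of the telescope map $\tilde{\cdot}^{[1]}\colon B \to P_{\mathcal{H}}$ to the $G$-factor coincides with a directed tree automorphism $\tilde{g}^\alpha_{[1]}$. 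Thus the copy of $G$ is already contained in $C^{[1]} \subseteq G_{\mathcal{H}}$; no extra generators are adjoined, and all the structural results about $G_{\mathcal{S}}$ and $Q_{\mathcal{S}}$ apply directly. Your approach of enlarging $G_{\mathcal{S}}$ by adding diagonal elements produces a group that is no longer the group of a telescope, so simplicity of the head, the profinite completion computation, and the amenability argument via Proposition~\ref{prop:amenability-of-GS} no longer follow from the general theory.

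Second, the reduction to perfect $G$ is not cosmetic: the entire machinery of Section~\ref{sec:profinite-completion} (in particular Lemma~\ref{lem:G-subgroups}, which gives $G_\mathcal{S} = \widetilde{G}_\mathcal{S}$ and hence the very definition of $Q_\mathcal{S}$ as a quotient of $G_\mathcal{S}$) requires $B$ perfect. With an arbitrary finitely generated residually finite $H$ sitting inside $B$, you lose this, and with it finite generation of $G_\mathcal{S}$, the description of $\widehat{G_\mathcal{S}}$, and the simplicity criterion for the head. The paper's two-step strategy---first embed into a perfect group, then make that perfect group a direct factor of $B$---is precisely what lets the telescope axioms absorb $H$ without any ad hoc enlargement of the $\Omega_i$.
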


%
%
%
%

The group $Q$ in the theorem is the head of a telescope $\mathcal{S}$ of alternating groups. In our construction, we will embed the group $H$ into the residually finite group $G_\mathcal{S}$ such that $H$ intersects the kernel of the projection $G_{\mathcal{S}} \to Q_\mathcal{S}$ trivially.
%
As a consequence of Theorem~\ref{thm:embedding-intro}, we obtain an abundance of uncountable families of non-isomorphic, finitely generated simple amenable groups.
Indeed, this follows from the fact that a countable group contains only countably many (isomorphism classes of) finitely generated groups, while many uncountable families of finitely generated, residually finite groups are known.
The existence of uncountably many isomorphism classes of finitely generated simple amenable groups was already established by Juschenko and Monod~\cite[Corollary B]{JuschenkoMonod13} as a consequence of results of Matui~\cite{Matui06}, and Giordano, Putnam, and Skau~\cite{GiordanoPutnamSkau99}.

\subsection*{Application 3: frames}
In 1937 Bernhard Neumann~\cite{Neumann37} constructed the first known family of uncountably many isomorphism classes of finitely generated groups and thereby implicitly proved the existence of groups that are finitely generated but not finitely presented.
More precisely, Neumann introduced a family of groups that is parametrized by infinite subsets $S$ of the set $\N_{\geq 3,\odd}$ of odd natural numbers of cardinality at least $3$.
For each such subset $S$, he defined the group
\[
G_S \defeq \langle (\sigma_n),(\tau_n) \rangle \leq  \prod \limits_{n \in S} \Alt(n),
\]
where $\sigma_n = (1,2,3)$ and $\tau_n = (1,\ldots,n)$, and showed that $G_{S} \cong G_{T}$ if and only if $S = T$.
Since then, the groups $G_S$ remained a rich source for observing interesting phenomena in group theory, see e.g.~\cite{Pyber01,Pyber04,LubotzkyPyberShalev96} for classical applications and~\cite{BieriCornulierGuyotStrebel14, Bou-Rabee16} for rather recent ones.
In~\cite{LubotzkyPyberShalev96} the groups $G_S$ were shown by Lubotzky, Pyber, and Shalev to provide exotic types of subgroup growth.
To this end, they proved that the profinite completion of $G_S$ is given by $\widehat{G_S} \cong \widehat{\Z} \times \prod \limits_{n \in S} \Alt(n)$.
Using a construction of a similar spirit, they also showed that every group of the form $\widehat{\Z} \times \prod \limits_{n \in S} \PSL_n(\F_q)$ with $S \subseteq \N_{\geq 2}$ arises as the profinite completion of a finitely generated group.
Another related construction was introduced by Pyber~\cite{Pyber04} in order to realize $\widehat{\Z} \times \prod \limits_{n \geq 5} \Alt(n)^{f(n)}$ as the profinite completion of a finitely generated group, where $f$ is subject to some mild growth condition.
The factor $\widehat{\Z}$ appearing in these profinite completions can essentially be traced back to the element $(\tau_n)$ in Neumann's group $G_S$, which can be mapped onto $\Z$.
Since $\Z$ has only one subgroup for each finite index, the factor $\widehat{\Z}$ does not harm in the analysis of subgroup growth.
However, it makes these examples unsuitable for the construction of groups with exotic represention growth; compare~\cite[Proposition 2]{BassLubotzkyMagidMozes02}.
It is therefore natural to ask whether one can remove that factor.
For instance Pyber asked in~\cite{Pyber01}  whether the product $\prod \limits_{n=2}^{\infty} \Alt(2n+1)$ is the profinite completion of a finitely generated group.
According to~\cite[p.\ $2$]{KassabovNikolov06}, a first step in this direction was achieved by Kassabov, who showed that $\prod \limits_{n=5}^{\infty} \Alt(n)$ contains a finitely generated dense subgroup $G$ whose profinite completion is given by $\widehat{G} \cong \widehat{H}^6 \times \prod \limits_{n=5}^{\infty} \Alt(n)$, where $H = \SL_3(\F_p[t,t^{-1}])$.
Kassabov's construction is based on his ideas from~\cite{Kassabov07b, Kassabov07a} and was modified later by him and Nikolov~\cite{KassabovNikolov06} in a way that resolved the question of which products of finite simple groups arise as profinite completions of finitely generated groups.
They introduced the following useful notion.

\begin{definition}[see \cite{KassabovNikolov06}]\label{def:frame-intro}
Let $P = \prod \limits_{n=1}^{\infty} S_n$ be a product of finite groups.
A finitely generated subgroup $G \leq P$ is a \emph{frame for $P$} if
\begin{enumerate}
\item $\bigoplus \limits_{n=1}^{\infty} S_n$ is contained in $G$,
\item the natural map $\widehat{G} \rightarrow P$ is an isomorphism.
\end{enumerate}
\end{definition}

The main difficulty in the work of Kassabov and Nikolov was to establish the existence of some frame in some infinite product of finite alternating groups.
They proved that for every odd prime $p$ there is a $10$-generated frame for $\prod \limits_{n=3}^{\infty} \Alt\big(\frac{p^{3n}-1}{p-1}\big)$, see \cite[Thm.~3.3]{KassabovNikolov06}, which, to the best of our knowledge, was the only known direct construction of a frame so far.
Despite of the difficulty to obtain such a frame, it turns out that frames are surprisingly stable and can be merged to provide frames in other products of finite groups, see~\cite[Lemma 2.2]{KassabovNikolov06}.
Using this, Kassabov and Nikolov~\cite[Theorem 1.4]{KassabovNikolov06} deduced that
a product $P = \prod_{n \in \N} S_n$ of non-abelian finite simple groups admits a frame if
\begin{enumerate}
\item $P$ is \emph{topologically finitely generated}, i.e.\ contains a finitely generated dense subgroup, and
\item $(S_n)_n$ has \emph{essentially unbounded rank}, which means that each alternating group $\Alt(k)$ embeds into all but finitely many $S_n$.
\end{enumerate}

In general, there is no need for a topologically finitely generated profinite group to arise as the profinite completion of a residually finite, finitely generated group.
For instance, the profinite group $\prod _{p \in \mathbb{P}} \PSL_n(\F_p)$ is a counterexample; see~\cite[Section 6.2]{Segal07}.
An interesting related phenomenon is that
a finitely generated, residually finite group may need more generators than each of its finite quotients.
In fact it was shown by Noskov~\cite{Noskov83} that for a finitely generated, residually finite group $G$, the difference between the minimal cardinality of a generating set of $G$, denoted by $d(G)$, and the minimal cardinality of a generating set of a dense subgroup in $\widehat{G}$, denoted by $\delta(\widehat{G})$, can get arbitrarily large.
Finitely presented examples were given by Wise~\cite{Wise03}.
In the above result of Kassabov and Nikolov, the frame in $P$ can be chosen to be $22(\delta(P)+1)$-generated~\cite[p. 3]{KassabovNikolov06}, where the bound results from their $10$-generated frame in $\prod \limits_{n=3}^{\infty} \Alt\big(\frac{p^{3n}-1}{p-1}\big)$.
In contrast to that, this product clearly contains $2$-generated, dense subgroups.
Regarding this, Kassabov and Nikolov asked whether $\delta(P)$-generated frames exist \cite[Question~1.6]{KassabovNikolov06}.
Here we give an affirmative answer for some infinite products:

\begin{theorem}[Theorem~\ref{thm:frame-minimal-Alt} and Theorem~\ref{thm:frame-minimal-SL}]\label{thm:frames-intro}
\mbox{ }
\begin{enumerate}
\item Let $d \geq 5$. Then $\prod_{j=1}^\infty \Alt(d^j)$ admits a $2$-generated frame.
\item Let $\F_q$ be a finite field and let $d \geq 4$. Then $\prod_{j=1}^\infty \SL_{d^j}(\F_q)$ admits a $2$-generated frame.
\end{enumerate}
\end{theorem}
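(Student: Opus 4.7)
The plan is to realise each of the two products as $G_{\mathcal{S}}$ for a suitable telescope $\mathcal{S}$ with $\Omega_j=\Alt(d^j)$, respectively $\Omega_j=\SL_{d^j}(\F_q)$, and then to show that the frame properties are automatic from the telescope axioms, while the number of generators can be pushed down to two by a global recoding argument.

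First, I would construct an explicit $B$-telescope adapted to the alternating case. Take $B=\Alt(d)$, view it as acting on a $d$-ary rooted tree, and let $\Omega_j = \Alt(d^j)$ be the alternating group on the $j$-th level $X^j$. For each level $j$ we have $d^{j-1}$ commuting copies of $B$, one for each vertex at level $j-1$ acting on its children; taking these together with the commuting images coming from lower levels produces the collection $\phi_i\colon B\to \Omega_j$ required for a telescope, with the connecting maps $\iota_{i,j}$ given by the natural inflations along the tree. That these images normally generate $\Omega_j$ (indeed generate $\Omega_j$ for $d\geq 5$) is a standard fact about the alternating groups acting on rooted trees. Exactly the same scheme, replacing $\Alt(d)$ by $\SL_d(\F_q)$ and rooted tree levels by tensor factors $\F_q^{d^j}=(\F_q^d)^{\otimes j}$, handles the $\SL$ case once $d\geq 4$, where the embedding of $\SL_d(\F_q)$ into $\SL_{d^j}(\F_q)$ via block/tensor action produces the commuting images.

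Next I would read off the frame properties from the telescope formalism developed earlier in the paper. By construction $G_{\mathcal{S}}$ lies in $\prod_j \Omega_j$. The crucial point is to show the direct sum $\bigoplus_j \Omega_j$ is contained in $G_{\mathcal{S}}$: this should follow from the telescope axiom that the commuting images of $B$ normally generate each $\Omega_j$, combined with the fact that for $i<j$ the image of $\phi_i$ in $\Omega_j$ commutes with everything acting inside a single subtree at level $j-i$, so suitable commutators of elements of $G_{\mathcal{S}}$ project non-trivially to a single factor $\Omega_j$ and trivially to all other factors. A standard induction on $j$ (together with perfectness/simplicity of $\Omega_j$) then gives the whole factor, and hence the direct sum. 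Once $\bigoplus_j \Omega_j\subseteq G_{\mathcal{S}}$ is established, density of $G_{\mathcal{S}}$ in $\prod_j\Omega_j$ is automatic, and the isomorphism $\widehat{G_{\mathcal{S}}}\cong \prod_j\Omega_j$ follows once one checks that the finite quotients of $G_{\mathcal{S}}$ coming from the telescope exhaust all finite quotients; here the key is that $G_{\mathcal{S}}$ is residually finite with normal subgroups $\bigoplus_{j\geq N}\Omega_j\cap G_{\mathcal{S}}$ forming a base of open neighborhoods of the identity whose quotients are finite direct products of the $\Omega_j$.

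The main obstacle, and what I expect to take the real work, is reducing to exactly two generators. The naive telescope generators are the images of a generating set of $B$ together with an element shifting the level, which is more than two. The idea is to encode both the $B$-generators and the shift in a single pair $(a,b)\in \prod_j\Omega_j$ by choosing, on each finite level $\Omega_j$, a concrete $2$-generating pair of $\Omega_j$ (possible for $d\geq 5$ resp.\ $d\geq 4$, since these simple groups are $2$-generated) that are compatible with the telescope structure, and then verifying that the group generated by $(a,b)$ still contains the original telescope generators up to the normal closure responsible for the direct sum. The technical heart is to align these level-wise $2$-generating sets so that the resulting $a$ and $b$ project onto the telescope generators modulo a kernel that lies in $\bigoplus_j\Omega_j\subseteq G_{\mathcal{S}}$; this is where the "bit of argumentation" alluded to after Theorem~\ref{thm:2-generated-amenable-intro} enters and I would reuse the same combinatorial trick developed there for $\mathcal{A}(d,r)$.

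Finally, both statements in Theorem~\ref{thm:frames-intro} would be obtained by specialising this general construction, the alternating case to part (1) and the $\SL$-case with tensor embeddings to part (2). In each case the resulting $G_{\mathcal{S}}$ is $2$-generated, contains the direct sum, and has profinite completion equal to the full product, which is exactly the definition of a $2$-generated frame.
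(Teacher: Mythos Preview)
Your outline has the right overall architecture---build a telescope, show its group is a frame, then cut the generators to two---but several of the load-bearing steps are either wrong in detail or missing the key idea.

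First, the telescope you sketch is not the one that works. Taking $B=\Alt(d)$ with one copy at each subtree vertex does not satisfy the telescope axioms as stated: axiom \eqref{eq:commutator} requires the \emph{single} images $B_i=\phi_i(B)$ (one per level, not $d^{i-1}$ of them) to commute inside $\Omega_j$ for $i<j$, and the paper engineers this by taking $B=\Alt(rd)$ and placing $B_i$ on words starting with $x_d^{i-2}$ so that different levels have disjoint support. The flexibility axioms \ref{it:a-b}--\ref{it:ba-ba}, which you do not mention at all, are then checked by the same disjoint-support bookkeeping; these are essential, since the frame property (Theorem~\ref{thm:completion}) is proved only for \emph{flexible} telescopes with perfect $B$. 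In particular, the step ``finite quotients coming from the telescope exhaust all finite quotients'' is the hard part: the paper proves it by producing, via the flexibility element $\alpha_i$, arbitrarily many pairwise commuting conjugates of $C^{[n]}$ and then invoking the Segal-type pigeonhole (Lemma~\ref{lem:segal-trick}) to force any finite-index normal subgroup to swallow some $C^{[n]}$ and hence $K_n$. Your plan does not supply this mechanism.

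Second, your two-generation argument does not work as stated. Choosing a $2$-generating pair $(a_j,b_j)$ of each $\Omega_j$ and setting $a=(a_j)$, $b=(b_j)$ gives a dense subgroup of $\prod_j\Omega_j$, but there is no reason $\langle a,b\rangle$ contains $\bigoplus_j\Omega_j$ or has the right profinite completion---that is precisely the content of the frame question. The paper instead stays inside the already-constructed frame $G_{\mathcal{S}}$ and shows that a suitable level subgroup $K_{n-1}=\langle \Delta_n(\Omega_n),C^{[n]}\rangle$ is $2$-generated, from which $2$-generation of $G_{\mathcal{S}}=K_{n-1}\times\bigoplus_{i<n}\Omega_i$ follows since the two factors share no simple quotient. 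The actual trick is arithmetic: for large $n$ pick a prime $p$ with $d^n/2<p<d^n-(r+1)$, take two $p$-cycles $\sigma_1,\sigma_2$ generating $\Alt(d^n)$ by Jordan's theorem, pick generators $\tau_1,\tau_2$ of $B$ of order coprime to $p$, and arrange supports so that $a=\Delta_n(\sigma_1)\tilde\tau_1^{[n]}$ and $b=\Delta_n(\sigma_2)\cdot(\text{conjugate of }\tilde\tau_2^{[n]})$ have commuting factors; then taking $p$-th and $\ord(\tau_i)$-th powers separates the $\Delta_n$ and $C^{[n]}$ parts. The $\SL$ case runs the same scheme with $(2,3)$-generation of $\SL_5(\F_q)$ in place of Jordan's theorem. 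None of this is captured by ``align level-wise $2$-generating sets''.
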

The frames in this theorem are the groups $G_{\mathcal{A}(d,r)}$ and $G_{\mathcal{SL}_d(\F_q)}$ of telescopes of alternating, respectively special linear groups.
Telescopes provide frames under rather mild assumptions, as will be explained in Section~\ref{sec:profinite-completion}.
Readers familiar with the theory of branch groups will observe that the proof resembles the proof of the congruence subgroup property of some classical perfect branch groups.
By combining this proof with the above mentioned gluing part of the proof of Kassabov and Nikolov,
one obtains a rather elementary proof of their result that every topologically finitely generated product $P = \prod \limits_{n \in \N} S_n$ admits a frame if $(S_n)_n$ has essentially unbounded rank.
%

Regarding the second point of Theorem~\ref{thm:frames-intro}, we will see in Section~\ref{sec:the-alg-construction} and~\ref{sec:profinite-completion} that the underlying telescope $\mathcal{E}_d(R)$ is still reasonable when we replace $\F_q$ with an arbitrary associative unital ring $R$.
Then the profinite completion of the associated group $G_{\mathcal{E}_d(R)}$ is given by
$\widehat{G_{\mathcal{E}_d(R)}} \cong \prod_{j=1}^\infty \widehat{\E_{d^j}(R)}$, where $\E_n(R)$ denotes the subgroup of $\GL_n(R)$ generated by elementary matrices. If $\E_d(R)$ is finitely generated, then this can be used to show that the minimal number of generators needed to generated $\bigoplus \limits_{n=1}^{m} \E_{d^n}(R)$ is bounded above by some constant only depending on $d$ and $R$ and that $\E_{d^m}(R)$ does not admit quotients of a given finite cardinality when $m$ is chosen large enough.

\subsection*{Application 4: amenable-($\tau$) Grothendieck pairs}
It is a classical question in group theory of what can be said about a finitely generated, residually finite group $G$ if one knows all of its finite quotients, or equivalently, if one knows its profinite completion $\widehat{G}$ (compare \cite{DixonFormanekPolandRibes82}).
It is well-known that in general $G$ is not determined by its profinite completion.
This is not even true for virtually cyclic groups; see~\cite{Baumslag74}.
Even though there are some interesting examples of groups known to be profinitely rigid (see e.g.~\cite{BMRS20}),
this seems to be an exceptional behavior.
Nevertheless some properties -- called \emph{profinite} properties -- are detected by the profinite completion. Simple examples of profinite properties are being abelian, nilpotent or solvable.
However, many other properties are known to be not profinite; e.g. finiteness properties \cite{Lubotzky14} and higher $\ell^2$-Betti numbers \cite{KammeyerSauer}.
But how different can groups be that share the same profinite completion?
Here we investigate this question for two fairly oppositional properties in group theory: being amenable and having property $(\tau)$.
The latter property was introduced by Lubotzky and Zimmer~\cite{LubotzkyZimmer89} as a slightly weaker variant of Kazhdan's property $(\T)$.
Neither property $(\tau)$ nor amenability are profinite properties. In the case of property $(\tau)$ this was proven by Kassabov~\cite{Kassabov08},
for amenability this is a result of the authors~\cite{KionkeSchesler21}.
As a common strengthening of these results we will combine them in a \emph{Grothendieck pair}.
Recall that a pair $(G,H)$ of finitely generated, residually finite groups is called a Grothendieck pair if $G$ is a proper subgroup of $H$ and the inclusion $\iota \colon G \rightarrow H$ induces an isomorphism $\widehat{\iota} \colon \widehat{G} \rightarrow \widehat{H}$.
In the case where $G$ and $H$ are finitely presented, it was asked by Grothendieck~\cite{Grothendieck70} in 1970 whether such pairs exist.
Even in the case of finitely generated groups it took until $1986$ when the first examples of Grothendieck pairs were constructed by Platonov and Tavgen \cite{PlatonovTavgen}.
The finitely presented case was settled almost 20 years later by Bridson and Grunewald~\cite{BridsonGrunewald}.
By combining strong results of Ershov, Jaikin-Zapirain, and Kassabov~\cite{ErshovJaikinKassabov17} on groups satisfying property $(\tau)$ with the properties of our telescope groups $G_{\mathcal{S}}$, we will show the following.
\begin{theorem}[Theorem~\ref{thm:amenable-vs-tau}]\label{thm:amenable-vs-tau-intro}
There exists a Grothendieck pair $(G,H)$ consisting of an amenable group $G$ and a group $H$ with property $(\tau)$.
\end{theorem}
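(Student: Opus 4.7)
The plan is to realise both $G$ and $H$ as finitely generated dense subgroups of the same infinite product $P = \prod_j S_j$ of finite simple groups. I would take $G = G_{\mathcal{S}}$ to be a telescope group, chosen so that $G$ is finitely generated, residually finite, amenable, and a frame in $P$ in the sense of Definition~\ref{def:frame-intro}; such telescopes exist by (the proof of) Theorem~\ref{thm:frames-intro}, with amenability supplied by the Juschenko--Nekrashevych--de la Salle criterion. In particular $\widehat{G} \cong P$. Next I would invoke the main results of Ershov--Jaikin-Zapirain--Kassabov~\cite{ErshovJaikinKassabov17} to obtain a finitely generated subgroup $T \leq P$ with property $(\tau)$ whose natural map $\widehat{T} \to P$ is an isomorphism, so that $T$ is a second, $(\tau)$-frame for the same $P$. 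The telescope parameters have to be chosen so that the particular product $P$ falls into a range for which \cite{ErshovJaikinKassabov17} supplies such a $T$.

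Set $H \defeq \langle G, T\rangle \leq P$. Then $H$ is finitely generated, contains $\bigoplus_j S_j$ (because $G$ does), and is dense in $P$ (because $T$ is); the argument showing that $G$ is a frame in $P$ (the telescope analogue of the congruence subgroup property developed in Section~\ref{sec:profinite-completion}) applies to $H$ as well, yielding $\widehat{H} \cong P$, so the inclusion $G \hookrightarrow H$ induces an isomorphism on profinite completions. Property $(\tau)$ of $H$ is inherited from $T$: since $\widehat{T} = \widehat{H}$, for every finite-index normal $N \trianglelefteq H$ the natural map $T/(T\cap N) \to H/N$ is an isomorphism, and enlarging a finite generating set $S_T$ of $T$ to $S_H = S_T \sqcup S_G$ only adds a positive semidefinite operator to the Laplacian on $\ell^2(H/N)$, so the uniform spectral gap given by $(\tau)$ of $T$ persists for $H$. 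Finally $G \subsetneq H$, for otherwise the non-amenable group $T$ (an infinite, finitely generated, residually finite group with $(\tau)$) would sit inside the amenable group $G$.

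The main obstacle is the compatibility of the two constructions: one must arrange both an amenable telescope frame and an EJK $(\tau)$-frame in one and the same product $P$. This amounts to matching the profinite completions produced by telescopes of alternating or elementary matrix groups (as in Theorem~\ref{thm:frames-intro}) with the products appearing in~\cite{ErshovJaikinKassabov17}, which is where the specific choice of the telescope $\mathcal{S}$ enters. Once this compatibility is secured, the verification of the frame property for $H$ and the spectral-gap comparison needed for $(\tau)$ are routine from the material in Section~\ref{sec:profinite-completion} and the elementary operator inequality above.
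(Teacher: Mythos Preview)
Your proposal is essentially the paper's proof: take $G = G_{\mathcal{A}(d,r)}$ as an amenable frame in $P = \prod_\ell \Alt(d^\ell)$, invoke Ershov--Jaikin-Zapirain--Kassabov for a $(\tau)$-frame $T$ (the paper writes $K$) in the same $P$, set $H = \langle G, T\rangle$, and check that $H$ is a frame with $(\tau)$.

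The one point to tighten is your justification that $H$ is a frame. You write that ``the telescope analogue of the congruence subgroup property developed in Section~\ref{sec:profinite-completion} applies to $H$ as well.'' It does not: that argument (Theorem~\ref{thm:completion}) uses the specific telescope structure of $G_{\mathcal{S}}$---the pairwise commuting conjugates of $C^{[n]}$ and Lemma~\ref{lem:segal-trick}---and has no meaning for the larger group $H$, which is not a telescope group. Knowing only that $G$ is a frame and $G \leq H \leq P$ does \emph{not} force $\widehat{H}\cong P$. The paper closes this gap by citing the Kassabov--Nikolov gluing lemma (Lemma~\ref{lem:gluing}): the group generated by two frames for $P$ is again a frame. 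This needs that \emph{both} $G$ and $T$ are frames, which you have indeed arranged, so the fix is just to cite that lemma rather than Section~\ref{sec:profinite-completion}. Your Laplacian argument for $(\tau)$ of $H$ and your verification that $G \subsetneq H$ are correct; the paper phrases the former via the representation-theoretic characterisation of $(\tau)$ and leaves the latter implicit.
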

Again, the group $G$ in this theorem is the group $G_{\mathcal{A}(d,r)}$ of the telescope $\mathcal{A}(d,r)$ that we have mentioned already.

\subsection*{Organization of the article}
In Section~\ref{sec:the-alg-construction}
we give the definition of a telescope and introduce the notion of flexibility. We also present our main examples: the telescopes $\mathcal{A}(d,r)$ of alternating groups and the telescopes $\mathcal{SL}_d(F)$ of special linear groups.
Section~\ref{sec:profinite-completion} contains a number of useful results on the structure of flexible $B$-telescopes for a perfect group $B$.
In particular, we will deduce our results concerning frames in this section.
Section~\ref{sec:normal-subs} is concerned with normal and subnormal subgroups of the group $G_\mathcal{S}$ of a telescope $\mathcal{S}$.
We will see that under suitable assumptions $G_\mathcal{S}$ is a $t$-group, i.e., every subnormal subgroup is normal. As an application we will show that all finitely generated subnormal subgroups of $G_{\mathcal{A}(d,r)}$ are direct factors.
Still concerned with direct factors, Section~\ref{sec:direct-factors-prof-completions} contains an easy construction of a finitely generated, residually finite group in which direct factors cannot be recognized in profinite completions.
In the course of that we provide an amenable counterexample to a question of Goldstein and Guralnick.
In Section~\ref{sec:actions} we briefly introduce actions of telescopes, which we consider as a useful way to think about them.
However, the results in this section will only be used to give an alternative proof of the simplicity of the head $Q_{\mathcal{A}(d,r)}$.
That proof, as well as a short self-contained proof of the simplicity of $Q_{\mathcal{A}(d,r)}$, will be given in Section \ref{sec:simplicity-alt}.
In Section~\ref{sec:simplicity-psl} we will show that the head of a projective variant of the telescopes $\mathcal{SL}_d(\F_q)$ is simple as well.
The necessary background and the proof of the embedding result Theorem~\ref{thm:embedding-intro} is contained in Section~\ref{sec:embedding-groups}.
In Section~\ref{sec:grothendieck-pair} we observe that the group $G_{\mathcal{A}(d,r)}$ can be combined with known groups with property $(\tau)$ in order to obtain the Grothendieck pair in Theorem~\ref{thm:amenable-vs-tau-intro}.
Finally, in Section~\ref{sec:questions} we introduce some open problems involving $B$-telescopes.

\section{Definitions and Examples}\label{sec:the-alg-construction}
\subsection{Notation}
Let $G$ be a group. Throughout we will use the following notation. Let $g,h \in G$. We write $g^h = h^{-1}gh$ for conjugation and the commutator of $g,h$ is defined to be $[g,h] = ghg^{-1}h^{-1}$.
\subsection{Definition of $B$-telescopes}
\begin{definition}\label{def:telescope}
Let $B$ be a group.
A $B$-\emph{telescope}
$\mathcal{S}=((\Omega_i)_{i\in \N},  (\phi_i)_{i \geq 2})$ consists of
a directed system $((\Omega_i)_{i\in \N},\iota_{i,j})$ of groups $\Omega_i$ and homomorphisms $\iota_{i,j} \colon \Omega_i \rightarrow \Omega_j$ for all $i < j$
together with homomorphisms $\phi_i\colon B \to \Omega_i$ for each $i \geq 2$ whose image $B_i\subseteq \Omega_i$ satisfies
\begin{equation}\label{eq:generation}
     \Omega_i = \langle \{ hB_i h^{-1} \mid h \in \iota_{i-1,i}(\Omega_{i-1}) \} \rangle
 \end{equation}
and
\begin{equation}\label{eq:commutator}
[\iota_{i,j}(B_{i}),B_j] = 1
\end{equation}
 for all $2 \leq i < j$.
 For convenience we often write $\iota_{i,i}$ to denote the identity. 
\end{definition}
\begin{figure}[h]
\begin{tikzpicture}
\usetikzlibrary{arrows}
\usetikzlibrary{shapes}
\foreach \y in {1,2,3,4,5}
{
   \draw[fill=gray, opacity=0.2] (2*\y-2,0) ellipse (14pt+\y pt and 14pt + 7.5*\y pt);
 \node[align=center] (O\y) at (2*\y -2 ,1 + 0.2*\y) {$\Omega_\y$};
 }
 \node[align=center] (O6) at (9 ,1 + 1.1) {$\dots$};
 \foreach \y/\z in {1/2,2/3,3/4,4/5,5/6}
    \draw[->] (O\y) to node[above]{$\iota_{\y,\z}$} (O\z);
    
 \draw[fill=brown, opacity=0.2] (2,-3.5) circle (8pt);
\node[align=center] (B) at (2,-3.5) {$B$};

 \foreach \x in {2,4,6,8}
 {
 \draw[dotted, brown,thick] (\x+ 0.27,1.4 - \x * 0.35) -- (9, 1.4 - \x * 0.35);
   \draw[fill=brown, opacity=0.2] (\x,0.7) circle (8pt);
   }

 \foreach \x in {4,6,8}
   \draw[fill=brown, opacity=0.2] (\x,0) circle (8pt);

   \foreach \x in {6,8}
   \draw[fill=brown, opacity=0.2] (\x,-0.7) circle (8pt);
   
\draw[fill=brown, opacity=0.2] (8,-1.4) circle (8pt);

\foreach \x in {2,3,4,5}
{
     \node[align=center] (B\x) at (2*\x-2,2.1-0.7*\x) {$B_\x$};
     \draw[->, out=115-12*\x , in=270-16*\x] (B) to node[shift={(-0.5+0.\x,0.\x-0.2)}]{$\phi_\x$} (B\x);
}
\end{tikzpicture}
\caption{Diagram of a $B$-telescope}
\end{figure}
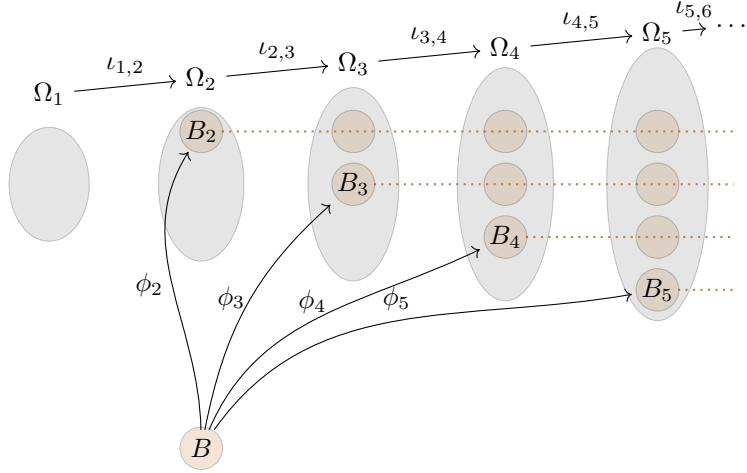
Let $\mathcal{S}=((\Omega_i)_{i\in \N},  (\phi_i)_{i \geq 2})$ be a $B$-telescope.
The groups we are interested in are subgroups of the direct product $P_{\mathcal{S}} = \prod_{i=1}^\infty \Omega_i$.
The canonical projection from $P_{\mathcal{S}}$ to $\Omega_i$ will be denoted by $\pi_i$.
For each $i \in \N$ we define 
\[
	\Delta_i \colon \Omega_i \to P_{\mathcal{S}} \quad \text{ with } \quad \omega \mapsto (\underbrace{1,\dots,1}_{i-1},\omega,\iota_{i,i+1}(\omega),\iota_{i,i+2}(\omega), \dots).
\]
We note that $\Delta_i$ is an injective homomorphism.

For an element $g \in B$ and $n \geq 1$ we define $\tilde{g}^{[n]} \in P_{\mathcal{S}}$ as the element given by $\pi_i(\tilde{g}^{[n]}) = 1$ for $i \leq n$ and by $\pi_{i}(\tilde{g}^{[n]}) = \prod \limits_{k=n+1}^{i} \iota_{k,i}(\phi_{k}(g))$ for $i > n$.
From condition \eqref{eq:commutator} we note that the terms in this product commute pairwise.
We write $\tilde{g}$ for $\tilde{g}^{[1]}$.

\begin{remark}\label{rem:tau_n-is-a-morphism}
The map $\tau_n \colon B \rightarrow P_{\mathcal{S}},\ g \mapsto \tilde{g}^{[n]}$ is a homomorphism. This directly follows from our assumption that each $\phi_{i}$ is a homomorphism and condition \eqref{eq:commutator}.\end{remark}

The image of $B$ under $\tau_n$ will be denoted by $C^{[n]}$.
The group
 \[
G_{\mathcal{S}} = \langle \Delta_1(\Omega_1), C^{[1]}\rangle \subseteq P_\mathcal{S}\]
will be called the \emph{group} of the telescope.
Here we are mostly concerned with the structure of the group $G_{\mathcal{S}}$.
However, it will be convenient to study also another group, defined from $\mathcal{S}$.
The group 
\[
\widetilde{G}_{\mathcal{S}} = \langle \{\Delta_i(\Omega_i)\mid i \geq 1\} \cup C^{[1]}\rangle \subseteq P_\mathcal{S}\]
will be called the \emph{huge group} of the telescope. We will see later that under suitable assumptions on the telescope, the group and the huge group agree.
We note that $\widetilde{G}_{\mathcal{S}}$ contains the direct sum 
$\bigoplus_{i=1}^\infty \Omega_i$ as a normal subgroup (a proof is given in Corollary~\ref{cor:first-levels} below). The quotient
\[
	Q_\mathcal{S} = \widetilde{G}_{\mathcal{S}}/\bigoplus_{i=1}^\infty \Omega_i
\]
is called the \emph{head} of the telescope.

\begin{notation}
Let $\mathcal{S} = ((\Omega_i)_{i\in \N},(\phi_i)_{i \geq 2})$ be a $B$-telescope.
For every two numbers $i,j \in \N$ with $i \leq j$ we write $B_{i,j}$, respectively $\Omega_{i,j}$, for the image of $B_i$, respectively $\Omega_i$, under $\iota_{i,j}$.
For $a \in \Omega_{i}$ and $j > i$, we define $a_{i,j} = \iota_{i,j}(a)$.
\end{notation}

We will frequently use the following simple observation, which is a direct consequence of the axioms of a $B$-telescope.

\begin{remark}\label{rem:normalform-1}
Let $\mathcal{S} = ((\Omega_i)_{i\in \N},(\phi_i)_{i \geq 2})$ be a $B$-telescope. For all $b \in B$ and $n \in \N$
\[
\tilde{b}^{[n]} = \Delta_{n+1}(\phi_{n+1}(b))\tilde{b}^{[n+1]} = \tilde{b}^{[n+1]} \Delta_{n+1}(\phi_{n+1}(b)).
\]
\end{remark}

%

\begin{definition}\label{def:flexible-telescope}
Let $\mathcal{S} = ((\Omega_i)_{i\in \N},  (\phi_i)_{i \geq 2})$ be a $B$-telescope.
We say that $\mathcal{S}$ is \emph{flexible}, if for all $i \in \N$ there is an element $\alpha_i \in \Omega_i$ such that the following hold
\begin{enumerate}[label=(F\arabic*)]
\item\label{it:a-b} $[\alpha_{i,i+1},B_{i+1}] = 1$ for all $i \geq 1$,
\item\label{it:ba-b} $[B_{k,m}^{\alpha_{i,m}},B_{\ell,m}] = 1$ for all $k,\ell \geq i+2$, and $m \geq k,\ell$,
\item\label{it:ba-ba} $[B_{k,m}^{\alpha_{i,m}}, B_{\ell,m}^{\alpha_{j,m}}] = 1$ for all $k \geq i+2$, $\ell \geq j+2$, $i>j$ and $m \geq k,\ell$,
\end{enumerate}
where $\alpha_{i,j} = \iota_{i,j}(\alpha_{i})$.
\end{definition}
\begin{remark}
If $\mathcal{S}$ is flexible, then Condition \ref{it:ba-b} also holds for $\ell = i+1$ due to \ref{it:a-b} and \eqref{eq:commutator}. Similarly, condition \ref{it:ba-ba} holds for $k\geq i+1$ and $\ell \geq j+1$ using conditions \ref{it:a-b}, \ref{it:ba-b} and \eqref{eq:commutator}.
\end{remark}
\begin{definition}
Let $\mathcal{S} = ((\Omega_i)_{i \in \N},(\phi_i)_{i \geq 2})$ be a $B$-telescope.
For each $n \in \N_0$ we consider the system $\mathcal{S}_{+n} \defeq ((\Omega_{i+n})_{i \in \N},(\phi_{i+n})_{i \geq 2})$, which will be called the \emph{$n$-fold shifted $B$-telescope of $\mathcal{S}$}.
\end{definition}
From the axioms it follows immediately that $\mathcal{S}_{+n}$ is indeed a $B$-telescope and that $\mathcal{S}_{+n}$ is flexible, if $\mathcal{S}$ is flexible. 
\begin{lemma}\label{lem:quotient-telescope}
Let $\mathcal{S} = ((\Omega_i)_{i \in \N},(\phi_i)_{i \geq 2})$ be a $B$-telescope. Suppose that  $M_i$ is a normal subgroup of $\Omega_i$ and that $\iota_{i,j}(M_i) \subseteq M_j$. Then $\mathcal{T} = ((\Omega_i/M_i)_{i \in \N},(\overline{\phi}_i)_{i \geq 2})$ is a $B$-telescope for $\overline{\phi}_n(b) = \phi_n(b)M_i$.
If $\mathcal{S}$ is flexible, then $\mathcal{T}$ is flexible.
\end{lemma}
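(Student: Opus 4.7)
The lemma is essentially a verification that the quotient construction is compatible with every piece of data in the definitions, so the plan is purely mechanical and I will organize it in three steps.

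First I would set up the quotient data. Writing $q_i\colon \Omega_i \to \Omega_i/M_i$ for the canonical projection, the hypothesis $\iota_{i,j}(M_i) \subseteq M_j$ ensures that $\iota_{i,j}$ descends to a well-defined homomorphism $\overline{\iota}_{i,j}\colon \Omega_i/M_i \to \Omega_j/M_j$ with $\overline{\iota}_{i,j} \circ q_i = q_j \circ \iota_{i,j}$. The relations $\iota_{j,k}\circ \iota_{i,j} = \iota_{i,k}$ pass to the quotients, so $((\Omega_i/M_i)_{i\in\N}, \overline{\iota}_{i,j})$ is a directed system. The composition $\overline{\phi}_i = q_i \circ \phi_i$ is a homomorphism $B \to \Omega_i/M_i$ with image $\overline{B}_i = q_i(B_i) = B_i M_i / M_i$, and by construction $\overline{\iota}_{i,j}(\overline{B}_i) = q_j(\iota_{i,j}(B_i)) = q_j(B_{i,j})$.

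Next I would check the two telescope axioms for $\mathcal{T}$. For the generation axiom \eqref{eq:generation}, apply $q_i$ to both sides of $\Omega_i = \langle \{hB_ih^{-1} \mid h \in \iota_{i-1,i}(\Omega_{i-1})\}\rangle$; since $q_i$ is surjective and sends $\{hB_ih^{-1} \mid h \in \iota_{i-1,i}(\Omega_{i-1})\}$ onto $\{\overline{h}\,\overline{B}_i\,\overline{h}^{-1}\mid \overline{h}\in \overline{\iota}_{i-1,i}(\overline{\Omega}_{i-1})\}$, the relation persists in $\Omega_i/M_i$. For the commutator axiom \eqref{eq:commutator}, $[\iota_{i,j}(B_i),B_j]=1$ in $\Omega_j$ immediately gives $[\overline{\iota}_{i,j}(\overline{B}_i),\overline{B}_j]=1$ in $\Omega_j/M_j$ after applying $q_j$.

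Finally, for flexibility I would take $\overline{\alpha}_i = q_i(\alpha_i) \in \Omega_i/M_i$, where $\alpha_i$ witnesses flexibility of $\mathcal{S}$. Since $q_j$ sends commutators to commutators and is compatible with conjugation, applying $q_j$ to each of the identities \ref{it:a-b}, \ref{it:ba-b}, \ref{it:ba-ba} yields the corresponding identities for $\overline{\alpha}_i$, $\overline{B}_{k,m}$, and so on, because $\overline{\alpha}_{i,m} = q_m(\alpha_{i,m})$ and $\overline{B}_{k,m} = q_m(B_{k,m})$.

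The only place there is anything to think about at all is making sure the generation axiom survives — one must confirm that pushing forward a generating set gives a generating set of the image, which is immediate from the surjectivity of $q_i$ — so in reality there is no substantial obstacle; the proof is a short unwinding of definitions.
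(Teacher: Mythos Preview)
Your proposal is correct and follows exactly the same approach as the paper's proof, which simply notes that the transition maps descend to the quotients by the hypothesis $\iota_{i,j}(M_i)\subseteq M_j$, that the generation axiom survives under the surjection $q_i$, and that all remaining axioms (including flexibility) are commutation relations and hence pass to quotients. Your write-up is in fact more detailed than the paper's, which dispatches the whole argument in three sentences.
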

\begin{proof}
By the assumption $\iota_{i,j}(M_i) \subseteq M_j$, the transition maps $\iota_{i,j}$ induce well-defined maps $\overline{\iota}_{i,j}\colon \Omega_i/M_i \to \Omega_j/M_j$.
Clearly, if $\Omega_i$ is generated by the $\Omega_{i-1,i}$-conjugates of $B_i = \phi_i(B)$, then the quotient $\Omega_i/M_i$ is generated by the $(\Omega_{i-1,i} / M_{i-1})$-conjugates of $\overline{\phi}_i(B)$.
The remaining axioms of $B$-telescopes and flexibility rely on commutation relations, which clearly survive in the quotients $\Omega_i/M_i$.
\end{proof}
%

\subsection{Elementary examples}
We illustrate the definition with a number of examples. We begin with almost trivial examples, which are useful to understand some of the assumptions appearing later.
\begin{example}[$\{1\}$-telescopes]
Assume that $B = \{1\}$ is trivial.
Consider a $\{1\}$-telescope $\mathcal{S} =((\Omega_i)_{i\in \N},  (\phi_i)_{i \geq 2})$.
Then condition \eqref{eq:generation} 
implies that $\Omega_k = \{1\}$ for all $k \geq 2$. In particular, a $\{1\}$-telescope is determined uniquely by the group $\Omega_1$. Moreover, the group of $\mathcal{S}$ is (isomorphic to) $\Omega_1$.
Clearly, a $\{1\}$-telescope is always flexible with $\alpha_i = 1$ for all $i$.
\end{example}
\begin{example}[$B$-telescopes of length $2$]
Let $\Omega_2$ be a group with two subgroups $\Omega_1$ and $B$ such that the $\Omega_1$-conjugates of $B$ generate $\Omega_2$. Define $\Omega_k = \{1\}$ for all $k \geq 3$. Let $\iota_{1,2}\colon \Omega_1 \to \Omega_2$ and $\phi_2\colon B \to \Omega_2$ denote the inclusion maps. If we choose $\iota_{i,j}$ and $\phi_j$ to be trivial for all $j \geq 3$, then  $\mathcal{S} =((\Omega_i)_{i\in \N},  (\phi_i)_{i \geq 2})$ is a $B$-telescope. 
It is flexible with $\alpha_i = 1$ for all $i$. 
The group $G_\mathcal{S}$ is the subgroup of $\Omega_1\times \Omega_2$ defined by 
\[
  G_\mathcal{S} = \langle \{(g,g) \mid g \in \Omega_1\} \cup \{(1,b) \mid b \in B\}\rangle \subseteq \Omega_1\times \Omega_2.
\] 
Conjugating elements of the form $(1,b)$ with elements of the form $(g,g)$ one can apply condition \eqref{eq:generation} to see that $G_{\mathcal{S}} = \Omega_1 \times \Omega_2$.
\end{example}
\begin{example}[A $B$-telescope for abelian $B$]
Let $B$ be an abelian group.
Define $\Omega_i = B$ for all $i$. We set $\iota_{i,j}$ and $\phi_j$ to be the identity maps for all $1 \leq i < j$.
Then $\mathcal{S} =((\Omega_i)_{i\in \N},  (\phi_i)_{i \geq 2})$ is a flexible $B$-telescope with $\alpha_i = 1$ for all $i$.
The group $G_\mathcal{S}$ is isomorphic to $B \times B$, since it is the subgroup of $\prod_{i=1}^\infty B$ generated by the elements $\Delta_1(\Omega_1) = \{ (b,b,b\dots) \mid b \in B\}$ and $C^{[1]} = \{(1,b,b^2,b^3,b^4\dots) \mid b \in B\}$.
\end{example}
\begin{example}[A non-flexible $B$-telescope]
Let $B$ be any group.
Define $\Omega_i = \{1\}$ for all odd $i$ and define $\Omega_i = B$ for all even $i$. The maps $\iota_{i,j}$ have to be trivial for all $i < j$. In addition, we define $\phi_j\colon B \to \Omega_j$ to be the identity maps for all even numbers $j$ and to be trivial otherwise.
Then $\mathcal{S} =((\Omega_i)_{i\in \N},  (\phi_i)_{i \geq 2})$ is a $B$-telescope.
The group $G_\mathcal{S}$ is isomorphic to $B$, since $\Delta_1(\Omega_1)$ is trivial and $C^{[1]} = \{(1,b,1,b,1,b,1,\dots) \mid b \in B\} \cong B$.
This telescope is not flexible if $B$ is non-abelian. Indeed, since $\Omega_1$ is trivial the only possible choice is $\alpha_1 = 1$. However, $B_4 = \Omega_4$ is non-abelian and we have $[B_4^{\alpha_{1,4}},B_4] \neq 1$; this contradicts \ref{it:ba-b}.
\end{example}
\subsection{A family of $\Alt(rd)$-telescopes}\label{ex:Alt}
We will now introduce the main example motivating our definition of $B$-telescopes. It will serve as a running example and will be used to in our main applications. We encourage the reader to familiarize with this example. 

\smallskip

\noindent We fix an integer $d \geq 5$ and an integer $2 \leq r \leq d-3$. Our aim is to construct a flexible $\Alt(rd)$-telescope $\mathcal{A}(d,r) = ((\Omega_{\ell})_{\ell \in \N},(\phi_{\ell})_{\ell \geq 2})$, where $\Omega_{\ell} \cong \Alt(d^\ell)$ is a finite alternating group.
To this end, we fix a $d$-element set $X \defeq \{x_1,\ldots,x_d\}$, which we will think of as an alphabet.
Let $X^{\ast}$ denote the set of all words over $X$ including the empty word $\emptyset$.
We will identify the set of words of length $\ell \in \N_0$ over $X$ with the direct product $X^{\ell}$.
Let $\Omega_{\ell} = \Alt(X^{\ell})$ denote the alternating group acting on $X^{\ell}$.
For every $k \geq \ell$ we consider the embedding $\iota_{\ell,k} \colon \Omega_{\ell} \rightarrow \Omega_{k}$ given by
\[
\iota_{\ell,k}(\sigma)(vw) = \sigma(v)w
\]
for all $v \in X^{\ell}$ and $w \in X^{k-\ell}$.
Observe that $\iota_{\ell,k}(\sigma)$ is indeed an even permutation for every $\sigma \in \Omega_{\ell}$.
Let $B \defeq \Alt(\{x_1,\ldots,x_r\} \times X)$, which we think of as the subgroup of $\Omega_2$ fixing all words $x_ix_j$ with $i > r$.
For each $\ell \geq 2$, we define a homomorphism $\phi_{\ell} \colon B \rightarrow \Omega_{\ell}$ as follows.
Given $\sigma \in B$, $v \in X^{\ell-2}$ and $x,y \in X$, let
\[
\phi_{\ell}(\sigma)(vxy) =
\begin{cases}
v\sigma(xy),& \text{if } v = x_d^{\ell-2}\\
vxy,& \text{otherwise.}
\end{cases}
\]
Following Definition~\ref{def:telescope}, we write $B_{\ell}$ for the image of $B$ under $\phi_{\ell}$.

\begin{remark}\label{rem:B_i-is-a-certain-alternating-group}
Note that $B_{\ell}$ can be described as the alternating group on the set of all words of length $\ell$ over $X$ that start with $x_d^{\ell-2} x_i$ for some $i \leq r$.
In other words we have
\begin{equation}\label{eq:B_i-is-a-certain-alternating-group}
B_{\ell} = \Alt(\{x_d^{\ell-2}\} \times \{x_1,\ldots,x_r\} \times X).
\end{equation}
\end{remark}

We will show that $\mathcal{A}(d,r)=((\Omega_{\ell})_{\ell \in \N}, (\phi_{\ell})_{\ell \geq 2})$ is a flexible $B$-telescope.
As usual, we will write $B_{\ell,k}$, respectively $\Omega_{\ell,k}$, to denote the image of $B_{\ell}$, respectively $\Omega_{\ell}$, under $\iota_{\ell,k}$. To establish the various commutator relations, we will use the fact that permutations with disjoint support commute.

\begin{definition}\label{def:support}
Given a set $X$ and a subset $S \subseteq \Sym(X)$, we define the \emph{support} of $S$ in $X$ by $\supp_X(S) = \Set{x \in X}{\sigma(x) \neq x \text{ for some } \sigma \in S}$.
\end{definition}

For the flexibility of  $\mathcal{A}(d,r)$ we need elements $\alpha_i \in \Omega_i$ as in Definition~\ref{def:flexible-telescope}.
We define for each $\ell \in \N$ the element $\alpha_{\ell} \in \Omega_{\ell}$ as the $3$-cycle
\[
\alpha_{\ell} \defeq (x_d^{\ell-1}x_{d-2},x_d^{\ell-1}x_{d-1},x_d^{\ell}) \in \Omega_{\ell}.
\]
Recall that for $k \geq \ell$, we denote the image of $\alpha_{\ell}$ under $\iota_{\ell,k}$ by $\alpha_{\ell,k}$.

\begin{lemma}\label{lem:supports}
Given $i,j,k \in \N$, the following hold.
\begin{enumerate}
\item If $j \geq i \geq 2$, then $\supp_{X^j}(B_{i,j}) = \{x_d^{i-2}\} \times \{x_1,\ldots,x_r\} \times X^{j+1-i}$.
\item If $j \geq i$, then $\supp_{X^j}(\alpha_{i,j}) = \{x_d^{i-1}\} \times \{x_{d-2},x_{d-1},x_d\} \times X^{j-i}$.
\item If $k \geq j \geq i+2$, then
\[
\supp_{X^k}(B_{j,k}^{\alpha_{i,k}})
= \{x_d^{i-1}x_{d-1}x_d^{j-2-i}\} \times \{x_1,\ldots,x_r\} \times X^{k+1-j}.
\]
\end{enumerate}
\end{lemma}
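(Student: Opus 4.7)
The plan is to reduce all three statements to two simple mechanisms: first, that the embedding $\iota_{\ell,k}\colon \Omega_\ell \to \Omega_k$ acts on the first $\ell$ letters only, so $\supp_{X^k}(\iota_{\ell,k}(\sigma)) = \supp_{X^\ell}(\sigma) \times X^{k-\ell}$; and second, that supports transform under conjugation by the rule $\supp(S^{\alpha}) = \alpha^{-1}(\supp(S))$. These two facts, combined with the explicit descriptions of $B_\ell$ and $\alpha_\ell$, will do all the work.

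For (1), I would use the description $B_\ell = \Alt(\{x_d^{\ell-2}\}\times\{x_1,\ldots,x_r\}\times X)$ given in Remark \ref{rem:B_i-is-a-certain-alternating-group}, whose support in $X^i$ is exactly $\{x_d^{i-2}\}\times\{x_1,\ldots,x_r\}\times X$. Applying the first mechanism to $\iota_{i,j}$ appends a trailing factor $X^{j-i}$, which after combining with the trailing $X$ yields $X^{j+1-i}$, proving the claim. For (2), $\alpha_i$ is a $3$-cycle on the three points $x_d^{i-1}x_{d-2}, x_d^{i-1}x_{d-1}, x_d^i$, whose union is $\{x_d^{i-1}\}\times\{x_{d-2},x_{d-1},x_d\}$; again appending $X^{j-i}$ via $\iota_{i,j}$ gives the stated support.

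Statement (3) is the only one that requires a small computation. Here I would first note that by (1) we have $\supp_{X^k}(B_{j,k}) = \{x_d^{j-2}\}\times\{x_1,\ldots,x_r\}\times X^{k+1-j}$, and since $j\geq i+2$ we may split $x_d^{j-2}=x_d^{i}\cdot x_d^{j-2-i}$. Thus every element of $\supp_{X^k}(B_{j,k})$ begins, in its first $i$ coordinates, with $x_d^i$. The conjugate $B_{j,k}^{\alpha_{i,k}}$ has support $\alpha_{i,k}^{-1}(\supp_{X^k}(B_{j,k}))$, and $\alpha_{i,k}^{-1}$ acts only on the first $i$ letters, by $\alpha_i^{-1}$. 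Since $\alpha_i$ sends $x_d^{i-1}x_{d-1}\mapsto x_d^i$, its inverse maps $x_d^i\mapsto x_d^{i-1}x_{d-1}$ and fixes every other prefix of length $i$ appearing in our support (in particular, it fixes any prefix whose $i$-th letter is not $x_d$). Applying $\alpha_i^{-1}$ to the first $i$ coordinates of each element therefore replaces $x_d^i$ by $x_d^{i-1}x_{d-1}$ and leaves the rest untouched, yielding $\{x_d^{i-1}x_{d-1}x_d^{j-2-i}\}\times\{x_1,\ldots,x_r\}\times X^{k+1-j}$, as claimed.

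The only real point requiring care is the bookkeeping in (3): one must confirm that every prefix occurring in $\supp_{X^k}(B_{j,k})$ is precisely $x_d^i$ and not some other string, so that $\alpha_i^{-1}$ acts uniformly as ``replace $x_d^i$ by $x_d^{i-1}x_{d-1}$.'' This is exactly where the hypothesis $j \geq i+2$ is used. Nothing beyond this is needed; the remaining arguments are just unwinding definitions.
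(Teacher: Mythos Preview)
Your proof is correct and takes essentially the same approach as the paper's: both derive (1) and (2) directly from the definitions of $B_\ell$ and $\alpha_\ell$ together with the fact that $\iota_{\ell,k}$ appends a trailing $X^{k-\ell}$ factor to supports, and both obtain (3) via $\supp(B_{j,k}^{\alpha_{i,k}}) = \alpha_{i,k}^{-1}(\supp(B_{j,k}))$ followed by computing $\alpha_i^{-1}(x_d^i) = x_d^{i-1}x_{d-1}$. Your write-up is simply more explicit about the mechanisms the paper leaves implicit.
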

\begin{proof}
The first two claims directly follow from the definitions of $B_{i,j}$ and $\alpha_{i,j}$.
By applying these two claims we obtain
\begin{align*}
\supp_{X^{k}}(B_{j,k}^{\alpha_{i,k}})
& = \alpha_{i,k}^{-1}(\supp_{X^{k}}(B_{j,k})) \\
& = \alpha_{i,k}^{-1}(\{x_d^{j-2}\} \times \{x_1,\ldots,x_r\} \times X^{k+1-j}) \\
& = \{x_d^{i-1}x_{d-1}x_d^{j-2-i}\} \times \{x_1,\ldots,x_r\} \times X^{k+1-j},
\end{align*}
which gives us the last claim.
\end{proof}
\begin{proposition}\label{prop:B-telescope}
$\mathcal{A}(d,r)$ defined above is a flexible $B$-telescope.
\end{proposition}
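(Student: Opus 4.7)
The plan is to verify the axioms in sequence, with Lemma~\ref{lem:supports} doing essentially all the work for the commutator conditions. First, that $\bigl((\Omega_\ell)_{\ell \in \N}, \iota_{\ell,k}\bigr)$ is a directed system and that each $\phi_\ell$ is a homomorphism into $\Alt(X^\ell)$ is immediate from the definitions: $\iota_{\ell,k}$ is manifestly an injective homomorphism satisfying $\iota_{k,k'}\circ \iota_{\ell,k} = \iota_{\ell,k'}$; and $\phi_\ell(\sigma)$ applies the even permutation $\sigma$ to the $rd$-element sub-block $\{x_d^{\ell-2}\}\times\{x_1,\dots,x_r\}\times X \subseteq X^\ell$ and fixes the complement, hence is even on $X^\ell$.

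For the normal generation condition \eqref{eq:generation}, I would observe that conjugation by $\iota_{i-1,i}(\Omega_{i-1})$ acts on supports by permuting the first $i-1$ coordinates through $\Alt(X^{i-1})$. Since $\Alt(X^{i-1})$ is transitive on $r$-element subsets (using $d^{i-1}-r\geq 2$), the $\iota_{i-1,i}(\Omega_{i-1})$-conjugates of $B_i$ include the full alternating group on $S\times X$ for every $r$-subset $S\subseteq X^{i-1}$. Now, for two $r$-subsets $S,S'\subseteq X^{i-1}$ with $|S\cap S'|=r-1$, the two alternating groups on $S\times X$ and $S'\times X$ have domains overlapping in $(r-1)d\geq 2d\geq 10$ points, so by the standard fact that two alternating groups on overlapping subsets of size $\geq 5$ with overlap of size $\geq 2$ generate the alternating group on their union, iterating over a chain of $r$-subsets connecting any two $r$-subsets produces $\Alt(X^i)=\Omega_i$. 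This is the only step requiring more than bookkeeping, and it is the main (mild) obstacle.

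Finally, all four remaining commutator conditions \eqref{eq:commutator}, \ref{it:a-b}, \ref{it:ba-b}, \ref{it:ba-ba} reduce via Lemma~\ref{lem:supports} to disjointness of supports in $X^m$, which follows from the numerical hypothesis $r\leq d-3$, ensuring $\{x_1,\dots,x_r\}\cap\{x_{d-2},x_{d-1},x_d\}=\emptyset$. In each case one isolates one coordinate position at which the two supports are forced to take values in disjoint subsets of $X$: for \eqref{eq:commutator} with $i<j$, the $(i-1)$-th position of $\supp B_j$ lies in the $x_d^{j-2}$-prefix and hence equals $x_d$, whereas in $\supp B_{i,j}$ it must lie in $\{x_1,\dots,x_r\}$; for \ref{it:a-b} the $i$-th position separates $\{x_{d-2},x_{d-1},x_d\}$ from $\{x_1,\dots,x_r\}$; for \ref{it:ba-b} the $i$-th position of $\supp B_{k,m}^{\alpha_{i,m}}$ equals $x_{d-1}$ while in $\supp B_{\ell,m}$ it equals $x_d$ (since $\ell\geq i+2$); and for \ref{it:ba-ba} the $j$-th position of $\supp B_{k,m}^{\alpha_{i,m}}$ is $x_d$ (since $j<i$, this coordinate still lies in the $x_d^{i-1}$-block) while in $\supp B_{\ell,m}^{\alpha_{j,m}}$ it equals $x_{d-1}$. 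In every case the supports are disjoint, the permutations commute, and the corresponding relation holds.
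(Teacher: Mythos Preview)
Your proof is correct. The verification of \eqref{eq:commutator} and of the flexibility axioms \ref{it:a-b}--\ref{it:ba-ba} is essentially identical to the paper's: both reduce to disjointness of supports via Lemma~\ref{lem:supports}, and your ``separating coordinate'' formulation is just a compressed version of the paper's explicit support comparisons.

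For the generation axiom \eqref{eq:generation} the two arguments diverge slightly. The paper uses only $2$-transitivity of $\Alt(X^{\ell-1})$: it sends the pair $\{x_d^{\ell-2}x_1,\,x_d^{\ell-2}x_2\}$ to each consecutive pair $\{v_k,v_{k+1}\}$ in the lexicographic enumeration of $X^{\ell-1}$, obtains $\Alt(\{v_k,v_{k+1}\}\times X)$ inside a conjugate of $B_\ell$, and then notes that every $3$-cycle $(w_j,w_{j+1},w_{j+2})$ lies in one of these blocks, invoking the classical fact that such consecutive $3$-cycles generate the full alternating group. You instead use transitivity of $\Alt(X^{\ell-1})$ on $r$-element subsets to realise $\Alt(S\times X)$ as a conjugate of $B_\ell$ for \emph{every} $r$-subset $S\subseteq X^{\ell-1}$, and then merge these along a chain of $r$-subsets differing in one element, using the standard lemma that two alternating groups on overlapping sets (each of size $\geq 3$, nonempty overlap) generate the alternating group on the union. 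Both routes are short and standard; yours is arguably cleaner in that it does not rely on an ordering of $X^\ell$, while the paper's needs only $2$-transitivity rather than transitivity on $r$-subsets.

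One small arithmetic slip: you wrote $(r-1)d\ge 2d$, but since only $r\ge 2$ is assumed this should read $(r-1)d\ge d\ge 5$. The overlap is still more than sufficient for the merging lemma, so nothing is affected.
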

\begin{proof}
Let $\ell \geq 2$.
We begin by to verifying~\eqref{eq:generation}, i.e.\ that $\Omega_{\ell} = \langle \{ \sigma B_{\ell} \sigma^{-1} \mid \sigma \in \Omega_{\ell-1,\ell} \} \rangle$.
In view of Remark~\ref{rem:B_i-is-a-certain-alternating-group} we have
\[
\sigma B_{\ell} \sigma^{-1}
= \Alt(\sigma(\{x_d^{\ell-2}\} \times \{x_1,\ldots,x_r\} \times X)).
\]
In the case where $\sigma = \iota_{\ell-1,\ell}(\sigma')$ for some $\sigma' \in \Omega_{\ell-1}$ this gives us
\begin{equation}\label{eq:conjugating-with-Omega-i-1}
\sigma B_{\ell} \sigma^{-1}
= \Alt(\sigma'(\{x_d^{\ell-2}\} \times \{x_1,\ldots,x_r\}) \times X).
\end{equation}
Let $v_1,\ldots,v_{d^{\ell-1}}$, respectively $w_1,\ldots,w_{d^{\ell}}$, be the lexicographic enumeration of $X^{\ell-1}$, respectively $X^{\ell}$.
Then for every $v_k \in X^{\ell-1}$ we have
\begin{equation}\label{eq:consecutive}
\{v_k\} \times X = \{w_{j+1},\ldots,w_{j+d}\}
\end{equation}
for some $j$.
From our assumption that $d \geq 5$ it follows that $\Omega_{\ell-1}$ acts $2$-transitively on $X^{\ell-1}$.
Thus for every $1 \leq k < d^{\ell-1}$ there is some $\sigma_k \in \Omega_{\ell-1}$ with $\sigma_k(x_d^{\ell-2}x_1) = v_k$ and $\sigma_k(x_d^{\ell-2}x_2) = v_{k+1}$.
By setting $M_k \defeq \{v_k,v_{k+1}\} \times X$, it follows from~\eqref{eq:conjugating-with-Omega-i-1} that
\begin{equation}\label{eq:consecutive-2}
\Alt(\{w_{j+1},\ldots,w_{j+2d}\}) = \Alt(M_k) \leq \sigma_k B_{\ell} \sigma_k^{-1}
\end{equation}
for an appropriate $j$.
In view of~\eqref{eq:consecutive}, each intersection $M_k \cap M_{k+1} = \{v_{k+1}\} \times X$ consists of $d$ consecutive words in $X^{\ell}$.
Together with~\eqref{eq:consecutive-2} and our assumption that $2d \geq 10$, this tells us that every $3$ consecutive words $w_j,w_{j+1},w_{j+2}$ lie in some $M_k$.
Thus every $3$-cycle of the form $(w_j,w_{j+1},w_{j+2})$ is contained in some $\Alt(M_k)$.
Together these $3$-cycles are well-known to generate $\Alt(\Omega_{\ell})$, see e.g.~\cite[Lemma 5.1]{Matui06}.

To establish condition \eqref{eq:commutator}, we fix two integers $i,j$ with $2 \leq i < j$.
Then Lemma~\ref{lem:supports} gives us
\[
\supp_{X^j}(B_{i,j}) = \{x_d^{i-2}\} \times \{x_1,\ldots,x_r\} \times X^{j+1-i}
\]
and
\[
\supp_{X^j}(B_j) = \{x_d^{j-2}\} \times \{x_1,\ldots,x_r\} \times X.
\]
Since $r \leq d-3<d$, it follows that $\supp_{X^j}(B_{i,j})$ and $\supp_{X^j}(B_j)$ and disjoint; we conclude
$[B_{i,j},B_j] = 1$ in $\Omega_j$.

Finally, we verify the flexibility conditions \ref{it:a-b}, \ref{it:ba-b} and \ref{it:ba-ba} from Definition~\ref{def:flexible-telescope}.
Suppose that $i \geq 1$.
Then another application of Lemma~\ref{lem:supports} gives us
\[
\supp_{X^{i+1}}(\alpha_{i,i+1}) = \{x_d^{i-1}\} \times \{x_{d-2},x_{d-1},x_d\} \times X
\]
and
\[
\supp_{X^{i+1}}(B_{i+1}) = \{x_d^{i-1}\} \times \{x_1,\ldots,x_r\} \times X,
\]
which are disjoint using $r \leq d-3$.
As a consequence, we see that $\alpha_{i,i+1}$ commutes with $B_{i+1}$ and we obtain \ref{it:a-b}.
In order to verify  \ref{it:ba-b}, let $\ell \geq 2$, $j \geq i+2$, and $k \geq j,\ell$.
In this case Lemma~\ref{lem:supports} gives us
\[
\supp_{X^k}(B_{j,k}^{\alpha_{i,k}})
= \{x_d^{i-1}x_{d-1}x_d^{j-2-i}\} \times \{x_1,\ldots,x_r\} \times X^{k+1-j}
\]
and
\[
\supp_{X^k}(B_{\ell,k}) = \{x_d^{\ell-2}\} \times \{x_1,\ldots,x_r\} \times X^{k+1-\ell},
\]
from which we deduce that $\supp_{X^k}(B_{j,k}^{\alpha_{i,k}})$ and $\supp_{X^k}(B_{\ell,k})$ are disjoint.
Hence $B_{j,k}^{\alpha_{i,k}}$ commutes with $B_{\ell,k}$.

To verify \ref{it:ba-ba}, let $j > i$, $k \geq i+2$, $\ell \geq j+2$, and let $m \geq k,\ell$ be given.
Lemma~\ref{lem:supports} tells us that
\[
\supp_{X^m}(B_{k,m}^{\alpha_{i,m}})
= \{x_d^{i-1}x_{d-1}x_d^{k-2-i}\} \times \{x_1,\ldots,x_r\} \times X^{m+1-k}
\]
and
\[
\supp_{X^m}(B_{\ell,m}^{\alpha_{j,m}})
= \{x_d^{j-1}x_{d-1}x_d^{\ell-2-j}\} \times \{x_1,\ldots,x_r\} \times X^{m+1-\ell}.
\]
Since $i < j$, this implies that the supports of $B_{k,m}^{\alpha_{i,m}}$ and $B_{\ell,m}^{\alpha_{j,m}}$ are disjoint in $X^m$.
Thus $B_{k,m}^{\alpha_{i,m}}$ commutes with $B_{\ell,m}^{\alpha_{j,m}}$, which completes the proof.
\end{proof}
\begin{remark}
If one considers the sets $X^\ell$ as level sets of a regular rooted tree, then 
the construction of $\mathcal{A}(d,r)$ is reminiscent of the construction of spinal groups acting on rooted trees. The automorphisms $\Delta_1(\Omega_1)$ act as ``rooted'' tree automorphisms permuting the trees hanging below the vertices in the first level. The elements $\tilde{b}$ appear to be directed along the spine $x_d^\infty$. These elements however are no automorphisms of the tree.
\end{remark}
\subsection{A family of $\E_{2d}(R)$-telescopes}\label{ex:elementary}
Let $R$ be an associative unital ring. For a finite set $Y$ we write $\mathrm{M}_Y(R)$ for the ring of $Y\times Y$-indexed matrices with entries in $R$.  Let $\E_Y(R) = \langle \{e_{y,z}(r) \mid y,z \in Y, r \in R\} \rangle$ denote the subgroup generated by elementary matrices in $\mathrm{GL}_Y(R)$. 
For $Y' \subseteq Y$ we
say that a matrix $A = (a_{u,v})_{u,v \in Y} \in \mathrm{GL}_{Y}(R)$  is \emph{supported in} $Y'$, if 
$a_{y,y} = 1$ for all $y \not\in Y'$ and $a_{x,y} = 0$ for $x \neq y$ if $x$ or $y$ lies in $Y \setminus Y'$.
If $A_1$ and $A_2$ are matrices supported on two disjoint subsets of $Y$, then $A_1$ and $A_2$ commute.
We will think of $\E_{Y'}(R)$ as the subgroup of $\E_Y(R)$ generated by elementary matrices supported in $Y'$.
Observe that for $y\neq z \in Y$ the matrix
\begin{equation}\label{eq:swap}
	s_{y,z} = e_{y,z}(-1)e_{z,y}(1)e_{y,z}(-1)
\end{equation}
is supported in $\{y,z\}$ and the corresponding $(2 \times 2)$-block of $s_{y,z}$ is
\[
	\begin{pmatrix} 0 & -1 \\ 1 & 0 \end{pmatrix}.
\]
The group generated by elements of the form $s_{y,z}$ is called the group of signed permutations.
In particular, the group of signed permutations acts like the symmetric group on the standard basis vectors up to sign.

\smallskip

\noindent Let $d \geq 4$ and fix a set $X=\{x_1,\dots,x_d\}$ with $d$ elements. We will think of $X^k$ as the set of words of length $k$ in the letters $x_1,\dots,x_d$. We consider the block diagonal inclusion $\iota_n\colon \mathrm{M}_{X^n}(R) \to \mathrm{M}_{X^{n+1}}(R)$ taking $A = (a_{u,v})_{u,v \in X^n}$ to the matrix $\tilde{A} = (\tilde{a}_{u',v'})_{u',v'\in X^{n+1}}$ where 
$\tilde{a}_{ux,vy}  = a_{u,v} $ if $x=y$ and $\tilde{a}_{ux,vy} = 0$ if  $x \neq y$ for all $u,v \in X^n$ and $x,y \in X$.
The sequence of groups $\Omega_n \defeq \E_{X^n}(R)$ with the block-diagonal inclusions
\[
\iota_{n} \colon \Omega_n \rightarrow \Omega_{n+1}\]
will be equipped with the structure of a $B$-telescope for $B \defeq \E_{\{x_1,x_2\}\times X}(R)$.
The morphism $\phi_n \colon B \rightarrow \Omega_n$ for $n \geq 2$ corresponds to the obvious bijection
\[
j_n \colon \{x_1,x_2\}\times X \rightarrow \{x_d^{n-2}\} \times  \{x_1, x_2\}  \times X
\]
that maps $u$ to $x_d^{n-2}u$.
We write $\mathcal{E}_d(R) = ((\Omega_n)_{n \in \N}, (\phi_n)_{n \geq 2})$.
As before $B_n$ denotes the image of $B$ in $\Omega_n$.

\begin{proposition}\label{prop:B-telescope-E}
Let $R$ be an associative unital ring.
Then $\mathcal{E}_d(R)$ is a flexible $\E_{2d}(R)$-telescope for $d \geq 4$.
\end{proposition}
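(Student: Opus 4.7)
The plan is to mirror the structure of the proof of Proposition~\ref{prop:B-telescope}: verify~\eqref{eq:generation}, then~\eqref{eq:commutator}, then the three flexibility axioms, in each case by a support-disjointness argument. The central tool, already recorded in the excerpt, is that matrices with disjoint supports in $X^n$ commute; moreover the support of $\iota_{i,j}(A)$ for $A$ supported in $Y' \subseteq X^i$ equals $Y' \times X^{j-i} \subseteq X^j$, and conjugation of $A$ by $M$ produces a matrix supported in $\supp(A) \cup \supp(M)$.

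For the generation condition~\eqref{eq:generation}, I would use that $\iota_{i-1,i}(\Omega_{i-1})$ contains the signed permutations $\iota_{i-1,i}(s_{y_1, y_2})$ for all distinct $y_1, y_2 \in X^{i-1}$, which act on $X^i$ by swapping $y_1 x \leftrightarrow y_2 x$ up to sign for every $x \in X$. Conjugating $B_i = \E_{\{x_d^{i-2}\} \times \{x_1, x_2\} \times X}(R)$ by such an element produces the subgroup $\E_{\{y_1, y_2\} \times X}(R)$ for an arbitrary pair $y_1 \neq y_2$ in $X^{i-1}$. Any elementary generator $e_{u_1 x_1, u_2 x_2}(r)$ of $\Omega_i$ (with $u_j \in X^{i-1}$ and $x_j \in X$) lies in such a conjugate: take $\{u_1, u_2\}$ if the prefixes differ, and $\{u_1, z\}$ with $z \in X^{i-1} \setminus \{u_1\}$ otherwise; note that $z$ exists because $d \geq 4$.

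The commutation condition~\eqref{eq:commutator} follows from the observation that $\supp_{X^j}(\iota_{i,j}(B_i)) = \{x_d^{i-2}\} \times \{x_1, x_2\} \times X^{j-i+1}$ and $\supp_{X^j}(B_j) = \{x_d^{j-2}\} \times \{x_1, x_2\} \times X$ are disjoint: a common word would need its $(i-1)$-th letter to be both $x_d$ (since $j - 2 \geq i - 1$) and one of $x_1, x_2$, which is impossible for $d \geq 4$. For flexibility, I would choose $\alpha_i \defeq s_{x_d^{i-1} x_{d-1},\, x_d^i} \in \Omega_i$, so that $\alpha_{i, m}$ is supported in $\{x_d^{i-1} x_{d-1}, x_d^i\} \times X^{m-i}$ and acts as a product of commuting signed swaps $x_d^{i-1} x_{d-1} w \leftrightarrow x_d^i w$.

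With these elements, the analogue of Lemma~\ref{lem:supports} gives $\supp_{X^m}(B_{k, m}^{\alpha_{i, m}}) = \{x_d^{i-1} x_{d-1} x_d^{k-2-i}\} \times \{x_1, x_2\} \times X^{m-k+1}$ for $k \geq i + 2$, since every word in $\supp_{X^m}(B_{k, m})$ begins with $x_d^i$ and is moved by $\alpha_{i,m}$ to one beginning with $x_d^{i-1} x_{d-1}$. The three conditions \ref{it:a-b}, \ref{it:ba-b}, \ref{it:ba-ba} then reduce to disjointness of these prefixes, using $x_{d-1} \notin \{x_1, x_2\}$ for \ref{it:a-b} and \ref{it:ba-b}, and the fact that $i > j$ forces the $j$-th letters of the two supports to conflict (one is $x_{d-1}$, the other is $x_d$) for \ref{it:ba-ba}. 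I expect the main obstacle to be the generation step, because $\iota_{n-1, n}$ does not map elementary matrices to elementary matrices, so the conjugation action is a priori complicated; restricting attention to the signed permutations inside $\iota_{i-1, i}(\Omega_{i-1})$ converts the problem into a clean combinatorial statement about support sets.
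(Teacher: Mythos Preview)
Your proposal is correct and follows essentially the same route as the paper: the same choice $\alpha_i = s_{x_d^{i-1}x_{d-1},\,x_d^i}$, the same support computations, and the same use of the signed permutations in $\iota_{i-1,i}(\Omega_{i-1})$ (acting $2$-transitively on $X^{i-1}$ up to sign) to obtain all subgroups $\E_{\{y_1,y_2\}\times X}(R)$ as conjugates of $B_i$. One small slip: the disjointness in \ref{it:ba-b} is detected at position $i$, where words in $\supp(B_{k,m}^{\alpha_{i,m}})$ carry the letter $x_{d-1}$ while words in $\supp(B_{\ell,m})$ (with $\ell\geq i+2$) carry $x_d$; the relevant inequality is $x_{d-1}\neq x_d$, not $x_{d-1}\notin\{x_1,x_2\}$.
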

\begin{proof}
Let $m \geq n \geq 2$.
We observe that all elements of $B_{n,m}$ are supported in 
\begin{equation}\label{eq:support-Bn-Ed}
Z(n,m) = \{x_d^{ n-2}x_i v \mid i \in \{1,2\}, v \in X^{m-n+1}\}.
\end{equation}
Since $d > 2$ the sets $Z(n,m)$ and $Z(m,m)$ are disjoint for $m > n$ and so $B_{n,m}$ and $B_m$ commute.
Next we verify that
\[
\Omega_n = \langle \{h B_n h^{-1} \mid h \in \Omega_{n-1,n} \} \rangle,
\]
where $\Omega_{n-1}$ is identified with its image in $\Omega_n$.
Since $d \geq 3$, we note that \eqref{eq:swap} shows that $\Omega_{n-1}$ acts $2$-transitively on the set of standard basis vectors in $R^{X^{n-1}}$ up to sign. Suppose $h \in \E_{X^{n-1}}(R)$ is a signed permutation that maps $\{x_d^{n-2}\} \times \{x_1, x_2 \}$ to a two element subset $U \subseteq X^{n-1}$. Then $h B_n h^{-1} = E_{U\times X}(R)$.
Since each pair of elements of $X^n$ is contained in a set of the form $U \times X$, we can deduce that $\Omega_n$ is generated by the conjugates $h B_n h^{-1}$.

To show that $\mathcal{E}_d(R)$ is flexible, we define the elements $\alpha_n \in \E_{X^n}(R)$ by
\[
	\alpha_n = s_{x_d^{n-1}x_{d-1},x^n_d}
\]
(see \eqref{eq:swap}).
We note that $\alpha_n$ is supported on 
$x_d^{n-1} \times \{x_{d-1},x_d\}$.
Let $n \geq 1$.
The elements of $B_{n+1}$ are supported on $Z(n+1,n+1)$ (see \eqref{eq:support-Bn-Ed}).
Since $\alpha_{n,n+1}$ is supported on the set $\{x_d^{n-1}\} \times \{x_{d-1},x_d\} \times X$, which is disjoint from $Z(n+1,n+1)$ for $d \geq 4$, we see that
axiom \ref{it:a-b} is satisfied.
Now we consider axioms \ref{it:ba-b} and \ref{it:ba-ba}. Let $i \geq 1$ and let $k \geq i+2$. We observe that $B_{k,m}^{\alpha_{i,m}}$ is supported on 
\[
Z^{(i)}(k,m) = \{x_d^{ i-1}x_{d-1}x_d^{k-i-2} x_t  x \mid t \in \{1,2\}, x \in X^{m-k+1}\}.
\]

\noindent Let $k, \ell \geq i+2$ and let $m \geq k,\ell$. Then the sets $Z^{(i)}(k,m)$ and $Z(\ell,m)$ are disjoint (recall that $d \geq 4$). Hence $[B_{k,m}^{\alpha_{i,m}},B_{\ell,m}] = 1$.
Finally, let $i > j \geq 1$ be given and let $k\geq i+2$ and $\ell \geq j+2$ and $m \geq k, \ell$.
The sets $Z^{(i)}(k,m)$ and
$Z^{(j)}(\ell,m)$ are disjoint and this implies \ref{it:ba-ba}.
We conclude that the $\E_{2d}(R)$-telescope $\mathcal{E}_d(R)$ is flexible.
\end{proof}
\begin{remark}\label{rem:E-telescopes}
\begin{enumerate}[label=(\alph*)]
\item Using a similar variation as in Section \ref{ex:Alt} it is possible to define flexible $\E_{rd}(R)$-telescopes (with $2 \leq r \leq d-2$) for all associative unital rings. 
\item The group $E_{m}(R)$ is perfect for $m \geq 3$; see \cite[1.2.15]{HahnOMeara}.
\item Suppose that $R$ is a commutative ring. If $R$ is a field, a Euclidean ring or the ring of integers in a global field, then $\E_{m}(R) = \SL_m(R)$ for $m \geq 3$; see \cite[4.3.9]{HahnOMeara}.
\end{enumerate}
\end{remark}

The latter case of Remark~\ref{rem:E-telescopes} will be reflected in our notation by writing $\mathcal{SL}_d(R)$ instead of $\mathcal{E}_d(R)$.

\begin{example}\label{ex:PSL}
Let $F$ be a field and let $d \geq 4$.
Consider the telescope
$\mathcal{E}_d(F) = ((\Omega_n)_{n \in \N}, (\phi_n)_{n \geq 2})$.
In this case $\Omega_n=\SL_{d^n}(F)$.
Let $Z_n \subseteq \Omega_n$ denote the center.
The homomorphism
\[
	\iota_{n,n+1} \colon \SL_{d^n}(F) \to \SL_{d^{n+1}}(F)
\]
is a blockdiagonal inclusion and takes scalar matrices to scalar matrices; i.e. $\iota_{i,j}(Z_i) \subseteq Z_j$.
In particular, the system $(Z_n)_{n\in \N}$ satisfies the assumption of Lemma~\ref{lem:quotient-telescope} and we can deduce that
$\mathcal{PSL}_d(F) = ((\Omega_n/Z_n)_{n\in \N}, (\overline{\phi}_n)_{n\geq 2})$ is a flexible $\SL_{2d}(F)$-telescope.
\end{example}

\section{Perfect telescopes and the profinite completion}\label{sec:profinite-completion}
Here we will study flexible telescopes $\mathcal{S}$ for perfect groups $B$. We will see that the subgroup structure of $G_\mathcal{S}$ can be described in quite some detail. This will allow us to show that the group and the huge group of $\mathcal{S}$ agree and that the profinite completion of $G_\mathcal{S}$ is the direct product $\prod_{i=1}^\infty \widehat{\Omega}_i$ of profinite completions of the groups $\Omega_i$ in the telescope.

\subsection{The structure of telescopes with $B$ perfect}
Throughout the section, $B$ denotes a perfect group and we fix a flexible $B$-telescope $\mathcal{S} = ((\Omega_i)_{i\in \N},  (\phi_i)_{i \geq 2})$. We observe that this implies that $\Omega_i$ is perfect for all $i\geq 2$; indeed, by \eqref{eq:generation} $\Omega_i$ is generated by perfect subgroups.
We discuss a number of preliminary results. The following elementary but useful observation is based on a standard trick in the theory of branch groups; see \cite{Neumann86,Segal01}.

\begin{lemma}\label{lem:segal-trick}
Let $G$ be a group.
Suppose that for every $n \in \N$ there are $n$ distinct subgroups $H_{1,n},\ldots,H_{n,n}$ in $G$ that pairwise commute, i.e.\ $[H_{i,n},H_{j,n}] = 1$ for $i \neq j$.
Then every normal subgroup $N$ of finite index in $G$ contains the normal closure of $H_{i,j}'$ for suitable $i,j$.
\end{lemma}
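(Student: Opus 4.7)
The plan is to combine a pigeonhole argument with the commutation hypothesis, applied after passing to the finite quotient $G/N$. Fix a normal subgroup $N$ of finite index in $G$ and let $\pi \colon G \to G/N$ denote the quotient map. Since $G/N$ is finite, the number $K$ of subgroups of $G/N$ is finite. I would then choose $n > K$ and invoke the hypothesis to obtain $n$ pairwise distinct, pairwise commuting subgroups $H_{1,n},\dots,H_{n,n}$ of $G$.

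The images $\pi(H_{1,n}),\dots,\pi(H_{n,n})$ form a list of $n > K$ subgroups of $G/N$, so by the pigeonhole principle there exist indices $i\neq j$ with $\pi(H_{i,n}) = \pi(H_{j,n})$. Since $H_{i,n}$ and $H_{j,n}$ are distinct subgroups of $G$, the hypothesis gives $[H_{i,n},H_{j,n}] = 1$, and consequently their images commute in $G/N$. But these images coincide, so $\pi(H_{i,n})$ commutes with itself and is therefore abelian. This forces $H_{i,n}' \subseteq \ker\pi = N$, and because $N$ is normal in $G$ the full normal closure of $H_{i,n}'$ in $G$ lies in $N$, which yields the lemma with $j = n$.

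I do not expect a genuine obstacle here: the only nontrivial ingredient is the observation that collapsing two commuting subgroups onto the same image forces that image to be abelian, and everything else is the finiteness of $[G:N]$ together with pigeonhole. This is precisely the standard ``Segal trick'' reduction alluded to in the statement, and it makes no use of flexibility or of the telescope structure beyond the existence of arbitrarily many commuting subgroups in $G$.
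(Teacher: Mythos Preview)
Your proof is correct and essentially identical to the paper's: the paper counts subgroups $H$ with $N \leq H \leq G$ (equivalently, subgroups of $G/N$), applies pigeonhole to find $NH_{i,m+1} = NH_{j,m+1}$, and then observes $H_{i,m+1}' \leq [NH_{i,m+1},NH_{j,m+1}] \leq N$, which is exactly your ``coinciding images are abelian'' step phrased in $G$ rather than in $G/N$. One cosmetic remark: the commutation $[H_{i,n},H_{j,n}]=1$ follows from $i\neq j$ alone; the distinctness of the subgroups plays no role at that point.
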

\begin{proof}
Let $N$ be a normal subgroup of finite index in $G$ and let $m$ be the number of subgroups $H$ with $N \leq H \leq G$.
By assumption we can find $m+1$ distinct subgroups $H_{i,m+1}$ in $G$ with $[H_{i,m+1},H_{j,m+1}] = 1$ for $i \neq j$.
The pigeonhole principle tells us that there are distinct $i,j$ with $NH_{i,m+1} = NH_{j,m+1}$.
We deduce
\[
H_{i,m+1}' \leq [NH_{i,m+1},NH_{i,m+1}] = [NH_{i,m+1},NH_{j,m+1}] \leq N
\]
and $N$ contains the normal closure of $H_{i,m+1}'$.
\end{proof}
\begin{lemma}\label{lem:G-subgroups}
The group $G_\mathcal{S} = \langle \Delta_1(\Omega_1), C^{[1]}\rangle$
contains $\Delta_i(\Omega_i)$ and $C^{[i]}$ for all $i \in \N$.
\end{lemma}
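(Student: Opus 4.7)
The plan is to prove the lemma by induction on $i$, with the base case $i=1$ being immediate from the definition $G_\mathcal{S} = \langle \Delta_1(\Omega_1), C^{[1]}\rangle$. For the inductive step, assume $\Delta_i(\Omega_i) \subseteq G_\mathcal{S}$ and $C^{[i]} \subseteq G_\mathcal{S}$; I will produce $\Delta_{i+1}(\Omega_{i+1})$ first, and then $C^{[i+1]}$ will fall out from Remark~\ref{rem:normalform-1}. The key reduction is that by \eqref{eq:generation} it suffices to place $\Delta_{i+1}(B_{i+1})$ into $G_\mathcal{S}$, since then conjugation of $\Delta_{i+1}(B_{i+1})$ by elements of $\Delta_i(\Omega_i) \subseteq G_\mathcal{S}$ (whose $(i+1)$-st coordinate is $\iota_{i,i+1}(\Omega_i)$, and whose conjugation commutes with the transition maps $\iota_{i+1,j}$) yields all of $\Delta_{i+1}(B_{i+1}^{\iota_{i,i+1}(\Omega_i)})$, which generates $\Delta_{i+1}(\Omega_{i+1})$.

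The heart of the argument is a commutator computation. Given $c,d \in B$, I will examine
\[
[\tilde c^{[i]},\, \Delta_i(\alpha_i)^{-1}\tilde d^{[i]}\Delta_i(\alpha_i)],
\]
an element of $G_\mathcal{S}$ by the inductive hypothesis. Projecting to coordinate $j$, the entry vanishes for $j \leq i$; at $j=i+1$ it equals $[\phi_{i+1}(c), \phi_{i+1}(d)] = \phi_{i+1}([c,d])$ by virtue of \ref{it:a-b}; and at $j\geq i+2$ the two factors take the form $uv$ and $u'w'$ with $u,u'\in B_{i+1,j}$, $v \in \langle B_{k,j}\mid k\geq i+2\rangle$, and $w'\in \langle B_{k,j}^{\alpha_{i,j}} \mid k\geq i+2\rangle$. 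Using \eqref{eq:commutator} (for $[u,v]$ and $[u',v]$), the extended form of \ref{it:ba-b} (for $[u,w']$, $[u',w']$, and $[v,w']$), a direct expansion collapses $[uv,u'w']$ to $[u,u']=\iota_{i+1,j}(\phi_{i+1}([c,d]))$. Consequently
\[
[\tilde c^{[i]},\, \Delta_i(\alpha_i)^{-1}\tilde d^{[i]}\Delta_i(\alpha_i)] = \Delta_{i+1}(\phi_{i+1}([c,d])) \in G_\mathcal{S}.
\]

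Since $B$ is perfect, so is $B_{i+1}=\phi_{i+1}(B)$, meaning $B_{i+1}$ is generated (as a group) by elements of the form $\phi_{i+1}([c,d])$. Hence $\Delta_{i+1}(B_{i+1}) \subseteq G_\mathcal{S}$, and the reduction described above promotes this to $\Delta_{i+1}(\Omega_{i+1}) \subseteq G_\mathcal{S}$. Finally, Remark~\ref{rem:normalform-1} gives
\[
\tilde b^{[i+1]} = \Delta_{i+1}(\phi_{i+1}(b))^{-1}\,\tilde b^{[i]},
\]
with the first factor in $\Delta_{i+1}(\Omega_{i+1}) \subseteq G_\mathcal{S}$ and the second in $C^{[i]}\subseteq G_\mathcal{S}$; thus $C^{[i+1]} \subseteq G_\mathcal{S}$, closing the induction.

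I expect the main obstacle to be keeping the coordinate-wise commutator bookkeeping honest: the cancellations at position $j\geq i+2$ rely on six separate commutation relations, and one must carefully invoke the precise (extended) forms of \ref{it:a-b} and \ref{it:ba-b} along with \eqref{eq:commutator} pushed forward via $\iota_{\cdot,j}$. The role of $\alpha_i$ is to separate the $B_{i+1}$-contribution (which passes through unchanged by \ref{it:a-b}) from the contributions of $B_k$ with $k\geq i+2$ (which get displaced into $B_{k,j}^{\alpha_{i,j}}$), so that these contributions commute with each other and disappear in the commutator, isolating exactly the $(i+1)$-st layer.
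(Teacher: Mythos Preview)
Your proof is correct and follows essentially the same approach as the paper's: induction on $i$, with the inductive step carried out by computing a commutator of the form $[\tilde c^{[i]},(\tilde d^{[i]})^{\Delta_i(\alpha_i)^{\pm 1}}]$ coordinate-wise and showing it equals $\Delta_{i+1}(\phi_{i+1}([c,d]))$, then invoking perfectness of $B$, condition~\eqref{eq:generation}, and Remark~\ref{rem:normalform-1}. The only cosmetic difference is the direction of conjugation by $\alpha_i$; your choice $(\tilde d^{[i]})^{\Delta_i(\alpha_i)}$ in fact aligns more directly with the way \ref{it:ba-b} is stated.
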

\begin{proof}
We write $G = G_{\mathcal{S}}$.
We proceed by induction on $i$. For $i = 1$ the assertion follows from the definition of $G$.
Let $i \geq 1$ and assume that $\Delta_i(\Omega_i) \subseteq G$ and  $C^{[i]} \subseteq G$. 
Let $h,k \in B$ and $\alpha_i \in \Omega_i$ be as in Definition~\ref{def:flexible-telescope}.
Then $\tilde{h}^{[i]}, \tilde{k}^{[i]},\Delta_i(\alpha_i) \in G$.
Consider the element $g = [\tilde{h}^{[i]}, \Delta_i(\alpha_i)\tilde{k}^{[i]}\Delta_i(\alpha_i)^{-1}]$.
For every $j \leq i$ we have $\pi_j(\tilde{h}^{[i]}) = 1$ and $\pi_j(\tilde{k}^{[i]}) = 1$, which gives us $\pi_j(g) = 1$.
By applying \ref{it:a-b} we obtain
\begin{align*}
\pi_{i+1}(g)
&= [\pi_{i+1}(\tilde{h}^{[i]}), \pi_{i+1}(\Delta_i(\alpha_i)) \pi_{i+1}(\tilde{k}^{[i]}) \pi_{i+1}(\Delta_i(\alpha_i)^{-1})]\\
&= [\phi_{i+1}(h), \alpha_{i,i+1} \phi_{i+1}(k) \alpha_{i,i+1}^{-1}]\\
&= [\phi_{i+1}(h), \phi_{i+1}(k)]\\
&= \phi_{i+1}([h, k]).
\end{align*}
Let us now consider $\pi_j(g) \in \Omega_{j}$ for $j \geq i+2$.
Using \ref{it:a-b}, \ref{it:ba-b} and \eqref{eq:commutator}, we obtain
\begin{align*}
\pi_j(g_i)
&=\ [\pi_j(\tilde{h}^{[i]}), \pi_j(\Delta_i(\alpha_i)) \pi_j(\tilde{k}^{[i]}) \pi_j(\Delta_i(\alpha_i)^{-1})]\\
&=\ \Bigl[\prod_{\ell=i+1}^j \iota_{\ell,j}(\phi_{\ell}(h)), \prod_{\ell=i+1}^j \iota_{\ell,j}(\phi_{\ell}(k))^{\alpha_{i,j}}\Bigr]\\
&=\ \Bigl[\prod_{\ell=i+1}^j \iota_{\ell,j}(\phi_{\ell}(h)), \iota_{i+1,j}(\phi_{i+1}(k))\Bigr]\\
&=\ [\iota_{i+1,j}(\phi_{i+1}(h)),\iota_{i+1,j}(\phi_{i+1}(k))]\\
&=\ \iota_{i+1,j}(\phi_{i+1}([h,k])).
\end{align*}
Together these equations show that $g = \Delta_{i+1}(\phi_{i+1}([h,k]))$.
Since $B$ is perfect, we deduce that $G$ contains $\Delta_{i+1}(B_{i+1})$.
By  \eqref{eq:generation} the group $\Omega_{i+1}$ is generated by the $\Omega_{i,i+1}$-conjugates of $B_{i+1}$. As $\Delta_i(\Omega_i) \subseteq G$ and $\Delta_{i+1}(B_{i+1}) \subseteq G$, it follows that $\Delta_{i+1}(\Omega_{i+1})$ lies in $G$.
Since $\tilde{g}^{[i]} = \Delta_{i+1}(\phi_{i+1}(g))\tilde{g}^{[i+1]}$ holds for all $g \in B$, we conclude that $C^{[i+1]}$ is contained in $G$. 
\end{proof}

\begin{corollary}\label{cor:first-levels}
Under our assumptions we have $G_\mathcal{S} = \widetilde{G}_\mathcal{S}$.
In particular, the group $G_{\mathcal{S}}$ contains the infinite direct sum $\bigoplus \limits_{i \in \N} \Omega_i$.
\end{corollary}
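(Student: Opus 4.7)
The plan is to split the corollary into its two assertions and handle each with a short argument that feeds directly off the previous lemma.

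First, for the equality $G_\mathcal{S} = \widetilde{G}_\mathcal{S}$, one inclusion is tautological: the generating set of $G_\mathcal{S}$ is contained in the generating set of $\widetilde{G}_\mathcal{S}$, so $G_\mathcal{S} \subseteq \widetilde{G}_\mathcal{S}$. For the reverse inclusion, Lemma~\ref{lem:G-subgroups} gives us that $\Delta_i(\Omega_i) \subseteq G_\mathcal{S}$ for every $i \in \N$, and $C^{[1]} \subseteq G_\mathcal{S}$ by definition. Hence every generator of $\widetilde{G}_\mathcal{S}$ lies in $G_\mathcal{S}$, proving $\widetilde{G}_\mathcal{S} \subseteq G_\mathcal{S}$.

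For the second assertion, I want to show that for every $i \in \N$ and every $\omega \in \Omega_i$ the element $e_i(\omega) \in P_\mathcal{S}$ with $\omega$ in coordinate $i$ and trivial entries elsewhere lies in $G_\mathcal{S}$. Since the direct sum is generated by such elements, this suffices. The key observation is the telescoping identity
\[
  e_i(\omega) \;=\; \Delta_i(\omega)\,\Delta_{i+1}\!\bigl(\iota_{i,i+1}(\omega)\bigr)^{-1},
\]
which one verifies coordinate by coordinate: entries below $i$ are trivial, the $i$-th entry is $\omega$, and for every $j \geq i+1$ the two factors contribute $\iota_{i,j}(\omega)$ and $\iota_{i+1,j}(\iota_{i,i+1}(\omega))^{-1}$ respectively, which cancel thanks to the compatibility $\iota_{i+1,j}\circ \iota_{i,i+1} = \iota_{i,j}$ in the directed system. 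Both factors on the right belong to $G_\mathcal{S}$ by Lemma~\ref{lem:G-subgroups}, so $e_i(\omega) \in G_\mathcal{S}$, and taking finite products yields the whole direct sum.

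No genuine obstacle is expected here, since all the work was done in Lemma~\ref{lem:G-subgroups}: the present statement is essentially a bookkeeping reformulation. The only small point one has to be careful about is the telescoping cancellation, which needs the transitivity of the maps $\iota_{i,j}$ in the directed system but nothing deeper. Flexibility and perfectness of $B$ are not invoked directly in this corollary beyond what Lemma~\ref{lem:G-subgroups} has already used.
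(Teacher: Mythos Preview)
Your argument is correct and matches the paper's proof essentially verbatim: both use Lemma~\ref{lem:G-subgroups} to place every $\Delta_i(\Omega_i)$ inside $G_\mathcal{S}$, and both produce the single-coordinate elements via the telescoping identity $\Delta_i(\omega)\,\Delta_{i+1}(\iota_{i,i+1}(\omega))^{-1}$ checked coordinatewise. If anything, your write-up is slightly more explicit about the equality $G_\mathcal{S}=\widetilde{G}_\mathcal{S}$, which the paper leaves implicit.
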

\begin{proof}
Let $i \in \N$ and let $\omega \in \Omega_i$.
From Lemma~\ref{lem:G-subgroups} we know that $\Delta_i(\omega)$ and $\Delta_{i+1}(\iota_{i,i+1}(\omega^{-1}))$ lie in $G_{\mathcal{S}}$.
Thus $g \defeq \Delta_i(\omega) \Delta_{i+1}(\iota_{i,i+1}(\omega^{-1}))$ lies in $G$.
By definition we have $\pi_j(g) = 1$ for $j < i$ and $\pi_i(g) = \omega$.
For $j > i$ we obtain
\[
\pi_j(g)
= \iota_{i,j}(\omega) \iota_{i+1,j}(\iota_{i,i+1}(\omega^{-1}))
= \iota_{i,j}(\omega) \iota_{i,j}(\omega^{-1})
= 1.
\]
Together this shows that $(\omega_j)_{j \in \N} \in \bigoplus \limits_{j \in \N} \Omega_j$ with $\omega_i = \omega$ and $\omega_j = 1$ for $j \neq i$ lies in $G_{\mathcal{S}}$, which proves the claim.
\end{proof}
\begin{lemma}\label{lem:level-subgroups}
Let $n \geq 1$ and define $K_{n-1} = \ker(G_{\mathcal{S}} \to \Omega_1\times \dots \times \Omega_{n-1})$.
Then $K_{n-1} = \langle \Delta_{n}(\Omega_{n}), C^{[n]}\rangle$.
\end{lemma}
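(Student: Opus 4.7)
The plan is to argue by induction on $n$, writing $H_n = \langle \Delta_n(\Omega_n), C^{[n]}\rangle$ and showing $K_{n-1}=H_n$. The base case $n=1$ is just the definition: $K_0 = G_\mathcal{S} = H_1$. For the inductive step, assume $K_{n-1}=H_n$; I want to prove $K_n=H_{n+1}$. The inclusion $H_{n+1}\subseteq K_n$ is immediate since each generator of $H_{n+1}$ has trivial image in $\Omega_i$ for $i\leq n$ (for $\Delta_{n+1}(\Omega_{n+1})$ this is the definition of $\Delta_{n+1}$, and $\pi_i(\tilde{b}^{[n+1]})=1$ for $i\leq n+1$ by the definition of the element $\tilde{b}^{[n+1]}$).

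The crux of the proof is the reverse inclusion, and the key structural step will be to show that $\Delta_n(\Omega_n)$ normalizes $H_{n+1}$. For $\omega\in\Omega_n$ and $\omega'\in\Omega_{n+1}$ a quick coordinate-wise computation gives
\[
	\Delta_n(\omega)^{-1}\Delta_{n+1}(\omega')\Delta_n(\omega) = \Delta_{n+1}\bigl(\iota_{n,n+1}(\omega)^{-1}\,\omega'\,\iota_{n,n+1}(\omega)\bigr),
\]
which lies in $\Delta_{n+1}(\Omega_{n+1})\subseteq H_{n+1}$. For $b\in B$ one checks, using $\iota_{n+1,i}\circ\iota_{n,n+1} = \iota_{n,i}$ from the directed system and comparing projections $\pi_i$ term by term, that
\[
	\Delta_n(\omega)^{-1}\tilde{b}^{[n+1]}\Delta_n(\omega) \;=\; \Delta_{n+1}\bigl(\iota_{n,n+1}(\omega)\bigr)^{-1}\,\tilde{b}^{[n+1]}\,\Delta_{n+1}\bigl(\iota_{n,n+1}(\omega)\bigr),
\]
again an element of $H_{n+1}$. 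So $\Delta_n(\Omega_n)$ indeed normalizes $H_{n+1}$.

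Next, Remark~\ref{rem:normalform-1} gives $\tilde{b}^{[n]} = \Delta_{n+1}(\phi_{n+1}(b))\,\tilde{b}^{[n+1]}$, so $C^{[n]}\subseteq H_{n+1}$. Combined with the inductive hypothesis $K_{n-1}=H_n$ and the normalization property, this yields the set-theoretic decomposition
\[
	K_{n-1} \;=\; H_n \;=\; \langle \Delta_n(\Omega_n),H_{n+1}\rangle \;=\; \Delta_n(\Omega_n)\cdot H_{n+1}.
\]
Finally, for $g=\Delta_n(\omega)\cdot h$ with $\omega\in\Omega_n$ and $h\in H_{n+1}$, we have $\pi_n(g) = \omega\cdot \pi_n(h) = \omega$ (since $H_{n+1}\subseteq K_n$ annihilates the $n$-th coordinate). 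Hence $g\in K_n$ if and only if $\omega=1$, i.e.\ $g\in H_{n+1}$, completing the induction.

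The main obstacle is the second normalization computation: $\Delta_n(\Omega_n)$ does not commute with $\Delta_{n+1}(\Omega_{n+1})$ or with $C^{[n+1]}$, so one has to identify the correct ``shifted'' conjugator in $\Delta_{n+1}(\Omega_{n+1})$. Once the identity $\iota_{n+1,i}\circ\iota_{n,n+1} = \iota_{n,i}$ is exploited to rewrite the conjugation by $\Delta_n(\omega)$ as a conjugation by $\Delta_{n+1}(\iota_{n,n+1}(\omega))$, everything falls into place; the remainder of the argument is bookkeeping with the projections $\pi_i$ and the induction.
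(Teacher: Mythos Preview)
Your proof is correct and follows essentially the same inductive route as the paper's: both arguments rest on the observation that $\Delta_n(\omega)$ and $\Delta_{n+1}(\iota_{n,n+1}(\omega))$ agree in every coordinate except the $n$-th, together with $C^{[n]}\subseteq H_{n+1}$ via Remark~\ref{rem:normalform-1}. The only cosmetic difference is that the paper rewrites an arbitrary word $\Delta_n(\omega_1)c_1\cdots\Delta_n(\omega_r)c_r\in K_n$ directly (using $\omega_1\cdots\omega_r=1$ to replace each $\Delta_n(\omega_i)$ by $\Delta_{n+1}(\iota_{n,n+1}(\omega_i))$), whereas you package the same computation as the normalization statement $\Delta_n(\Omega_n)\,H_{n+1}\,\Delta_n(\Omega_n)^{-1}=H_{n+1}$ and then read off the coset decomposition $K_{n-1}=\Delta_n(\Omega_n)\cdot H_{n+1}$; one small point worth making explicit is that your ``immediate'' inclusion $H_{n+1}\subseteq K_n$ tacitly uses Lemma~\ref{lem:G-subgroups} to know $\Delta_{n+1}(\Omega_{n+1}),\,C^{[n+1]}\subseteq G_\mathcal{S}$.
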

\begin{proof}
From Lemma~\ref{lem:G-subgroups} we obtain the inclusion $\langle \Delta_n(\Omega_n), C^{[n]} \rangle \leq K_{n-1}$.
To establish the converse inclusion 
we proceed by induction on $n$.
For $n = 1$ we have $K_0 = G_{\mathcal{S}}$ and the assertion follows from the definition of $G_{\mathcal{S}}$.
Assume that $K_{n-1} = \langle \Delta_{n}(\Omega_{n}), C^{[n]}\rangle$ for some $n \in \N$.
Let $x \in K_n$.
Since $K_n \leq K_{n-1}$ we can write
\[
x = \Delta_n(\omega_1) c_1 \Delta_n(\omega_2) c_2 \cdots \Delta_n(\omega_r) c_r
\]
for certain $\omega_1,\dots,\omega_r \in \Omega_n$ and $c_1,\dots,c_r \in C^{[n]}$.
Projecting $x$ onto $\Omega_n$ we get 
	$1 = \omega_1 \omega_2 \cdots \omega_r$
and hence
\[
x = \Delta_{n+1}(\iota_{n,n+1}(\omega_1)) c_1 \Delta_{n+1}(\iota_{n,n+1}(\omega_2)) c_2 \cdots \Delta_{n+1}(\iota_{n,n+1}(\omega_r)) c_r.
\]
By writing $c_i = \Delta_{n+1}(h_i)c'_i$ with $c'_i \in C^{[n+1]}$ and $h_i \in B_{n+1}$ and obtain
\[
x = \Delta_{n+1}(\iota_{n,n+1}(\omega_1)h_1) c'_1 \Delta_{n+1}(\iota_{n,n+1}(\omega_2)h_2) c'_2 \cdots \Delta_{n+1}(\iota_{n,n+1}(\omega_r)h_r) c'_r,
\]
which is an element in $\langle \Delta_{n+1}(\Omega_{n+1}),C^{[n+1]} \rangle$.
Since $x \in K_n$ was arbitrary, we deduce that $K_n = \langle \Delta_{n+1}(\Omega_{n+1}),C^{[n+1]} \rangle$.
\end{proof}
Lemma~\ref{lem:level-subgroups} allows us to relate the group of a flexible $B$-telescope to the group a shifted telescope.
\begin{corollary}\label{cor:group-associated-to-shifted-B-telescope}
Let $\mathcal{S}$ be flexible $B$-telescope and let $n \in \N$.
If $B$ is perfect, then $G_{\mathcal{S}_{+n}} = K_{n}$, where $K_{n} = \ker(G_{\mathcal{S}} \to \Omega_1\times \dots \times \Omega_{n})$.
For every $n \in \N$ we have
\[
G_{\mathcal{S}} = G_{\mathcal{S}_{+n}} \oplus \bigoplus \limits_{i=1}^{n} \Omega_i \subseteq \prod \limits_{i=1}^{\infty} \Omega_i = P_{\mathcal{S}}.
\]
\end{corollary}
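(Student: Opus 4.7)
The plan is to prove the two claims in sequence, with the first following from a bookkeeping argument that reduces it to Lemma~\ref{lem:level-subgroups}, and the second from Corollary~\ref{cor:first-levels} together with an elementary ``commuting coordinates'' argument.

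First I would unwind the definition of the shifted telescope. By definition $\mathcal{S}_{+n} = ((\Omega_{i+n})_{i \in \N},(\phi_{i+n})_{i \geq 2})$, so its associated ambient product is $P_{\mathcal{S}_{+n}} = \prod_{i=1}^\infty \Omega_{i+n}$, which I identify in the obvious way with the subgroup of $P_{\mathcal{S}}$ consisting of tuples whose first $n$ coordinates are trivial. Under this identification, the diagonal map $\Delta_1^{(\mathcal{S}_{+n})} \colon \Omega_{n+1} \to P_{\mathcal{S}_{+n}}$ corresponds to $\Delta_{n+1}$ of the original telescope, and similarly the subgroup $C^{[1]}$ of $P_{\mathcal{S}_{+n}}$ corresponds to $C^{[n+1]}$ inside $P_{\mathcal{S}}$, because the homomorphism $\tau_1$ of $\mathcal{S}_{+n}$ is built from the same maps $\phi_{i+n}$ ($i \geq 2$) as the restricted homomorphism $\tau_{n+1}$ of $\mathcal{S}$. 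Thus
\[
G_{\mathcal{S}_{+n}} = \langle \Delta_{n+1}(\Omega_{n+1}), C^{[n+1]} \rangle,
\]
and by Lemma~\ref{lem:level-subgroups} this equals $K_n$.

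For the direct sum decomposition, let $L_n \defeq \bigoplus_{i=1}^n \Omega_i \subseteq P_{\mathcal{S}}$, identified with the subgroup of tuples trivial on coordinates $>n$. By Corollary~\ref{cor:first-levels} the group $G_{\mathcal{S}}$ contains $\bigoplus_{i=1}^\infty \Omega_i$, hence in particular $L_n \subseteq G_{\mathcal{S}}$. Since elements of $K_n = G_{\mathcal{S}_{+n}}$ have trivial first $n$ coordinates while elements of $L_n$ have trivial coordinates beyond $n$, the two subgroups intersect trivially and commute pointwise. Given any $g \in G_{\mathcal{S}}$, let $g_L \in L_n$ be the element with $\pi_i(g_L) = \pi_i(g)$ for $i \leq n$ and $\pi_i(g_L) = 1$ for $i > n$; then $g_L \in G_{\mathcal{S}}$, hence $g_K \defeq g_L^{-1} g \in G_{\mathcal{S}}$, and by construction $g_K \in K_n$. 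This exhibits $g$ as a product in $L_n \cdot K_n$, and combined with the above yields
\[
G_{\mathcal{S}} = G_{\mathcal{S}_{+n}} \oplus L_n.
\]

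I do not anticipate any real obstacle here; the only thing requiring a bit of care is the identification of the shifted telescope's ambient product with a subgroup of $P_{\mathcal{S}}$ in a manner compatible with the generating sets $\Delta_1$ and $C^{[1]}$. Once this identification is pinned down, Lemma~\ref{lem:level-subgroups} handles the first statement verbatim, and Corollary~\ref{cor:first-levels} together with the disjoint-coordinate observation produces the internal direct sum decomposition.
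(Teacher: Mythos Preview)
Your proposal is correct and follows essentially the same approach as the paper: both identify $G_{\mathcal{S}_{+n}}$ with $\langle \Delta_{n+1}(\Omega_{n+1}), C^{[n+1]}\rangle$ and invoke Lemma~\ref{lem:level-subgroups} for the first claim, then use Corollary~\ref{cor:first-levels} together with the disjoint-coordinate structure for the direct sum decomposition. Your write-up is slightly more explicit about the bookkeeping in the identification and in verifying the internal direct sum, but there is no substantive difference.
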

\begin{proof}
By definition we have $G_{\mathcal{S}_{+n}} = \langle \Delta_{n+1}(\Omega_{n+1}) \cup C^{[n+1]} \rangle$.
On the other hand, our assumptions allow us to apply Lemma~\ref{lem:level-subgroups} to deduce that $K_n = \langle \Delta_{n+1}(\Omega_{n+1}) \cup C^{[n+1]} \rangle$, which proves the first claim.
From Corollary~\ref{cor:first-levels} we know that $G_{\mathcal{S}}$ contains the infinite direct sum $\bigoplus \limits_{i \in \N} \Omega_i$.
Thus $K_n$ coincides with the image of the canonical map $G_{\mathcal{S}} \rightarrow \prod \limits_{i>n} \Omega_i$.
This gives us
\[
G_{\mathcal{S}}
= K_{n} \oplus \bigoplus \limits_{i=1}^{n} \Omega_i
= G_{\mathcal{S}_{+n}} \oplus \bigoplus \limits_{i=1}^{n} \Omega_i,
\]
which proves the corollary.
\end{proof}
\begin{lemma}\label{lem:normal-closure}
Let $n \geq 1$. Then $K_n$ is the normal closure of $C^{[n]}$ in $G_{\mathcal{S}}$.
\end{lemma}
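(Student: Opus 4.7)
The plan is to show that the normal closure $N$ of $C^{[n]}$ in $G_{\mathcal{S}}$ equals $K_n$. The inclusion $N \subseteq K_n$ is immediate since $C^{[n]} \subseteq K_n$ (by definition $\pi_j(\tilde{b}^{[n]}) = 1$ for $j \le n$) and $K_n$ is normal in $G_{\mathcal{S}}$. By Lemma~\ref{lem:level-subgroups} the reverse inclusion reduces to showing that $\Delta_{n+1}(\Omega_{n+1}) \subseteq N$ and $C^{[n+1]} \subseteq N$, which I will establish in this order.

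First I would produce $\Delta_{n+1}(B_{n+1})$ inside $N$ by recycling the commutator computation already performed in the proof of Lemma~\ref{lem:G-subgroups}. Given $h,k \in B$, the element
\[
g = \bigl[\tilde{h}^{[n]},\, \Delta_n(\alpha_n)\,\tilde{k}^{[n]}\,\Delta_n(\alpha_n)^{-1}\bigr]
\]
is a commutator of an element of $C^{[n]}$ with a $G_{\mathcal{S}}$-conjugate of an element of $C^{[n]}$ (note $\Delta_n(\alpha_n) \in G_{\mathcal{S}}$ by Lemma~\ref{lem:G-subgroups}); writing out the commutator as a product of conjugates of elements of $C^{[n]}$ shows $g \in N$. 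The computation in Lemma~\ref{lem:G-subgroups}, which uses the flexibility axioms \ref{it:a-b}, \ref{it:ba-b} together with \eqref{eq:commutator}, identifies $g = \Delta_{n+1}(\phi_{n+1}([h,k]))$. Since $B$ is perfect, the commutators $[h,k]$ generate $B$, so $\Delta_{n+1}(B_{n+1}) \subseteq N$.

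Next I would upgrade this to $\Delta_{n+1}(\Omega_{n+1}) \subseteq N$. For $\sigma \in \Omega_n$, functoriality of the $\iota_{i,j}$ gives
\[
\Delta_n(\sigma)\,\Delta_{n+1}(b)\,\Delta_n(\sigma)^{-1} = \Delta_{n+1}\bigl(\iota_{n,n+1}(\sigma)\, b\, \iota_{n,n+1}(\sigma)^{-1}\bigr)
\]
for every $b \in \Omega_{n+1}$, and $\Delta_n(\sigma) \in G_{\mathcal{S}}$ by Lemma~\ref{lem:G-subgroups}. Thus $N$ contains $\Delta_{n+1}(h B_{n+1} h^{-1})$ for every $h \in \Omega_{n,n+1}$, and by the generation axiom \eqref{eq:generation} we conclude $\Delta_{n+1}(\Omega_{n+1}) \subseteq N$.

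Finally, Remark~\ref{rem:normalform-1} yields $\tilde{b}^{[n+1]} = \Delta_{n+1}(\phi_{n+1}(b))^{-1}\tilde{b}^{[n]}$ for all $b \in B$; since the first factor is now known to be in $N$ and the second is in $C^{[n]} \subseteq N$, we obtain $C^{[n+1]} \subseteq N$. Combining with the previous step, $K_n = \langle \Delta_{n+1}(\Omega_{n+1}), C^{[n+1]}\rangle \subseteq N$, completing the proof. The only mild subtlety is bookkeeping to ensure each intermediate element really lies in $N$ (a conjugate of $C^{[n]}$ by an element of $G_{\mathcal{S}}$, or a product of such), but since $\Delta_n(\Omega_n) \subseteq G_{\mathcal{S}}$ is already known, nothing deeper than flexibility and perfectness of $B$ is needed.
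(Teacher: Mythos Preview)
Your proof is correct and follows essentially the same route as the paper's: both use the commutator $[\tilde{h}^{[n]}, \Delta_n(\alpha_n)\tilde{k}^{[n]}\Delta_n(\alpha_n)^{-1}] = \Delta_{n+1}(\phi_{n+1}([h,k]))$ from Lemma~\ref{lem:G-subgroups}, perfectness of $B$ to get $\Delta_{n+1}(B_{n+1}) \subseteq N$, conjugation by $\Delta_n(\Omega_n)$ together with \eqref{eq:generation} to get $\Delta_{n+1}(\Omega_{n+1}) \subseteq N$, and then Lemma~\ref{lem:level-subgroups} to conclude. Your write-up is slightly more explicit about the easy inclusion and about why the commutator lies in $N$, but there is no meaningful difference in approach.
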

\begin{proof}
Let $N \trianglelefteq G_{\mathcal{S}}$ be a normal subgroup that contains $C^{[n]}$. 
Let $h,k \in B$, then $\tilde{h}^{[n]}, \tilde{k}^{[n]} \in C^{[n]}$.
Recall from Lemma~\ref{lem:G-subgroups} that $G_{\mathcal{S}}$ contains $\Delta_i(\Omega_i)$ and $C^{[i]}$ for all $i \in \N$.
As in Lemma~\ref{lem:G-subgroups} it follows from \ref{it:a-b} and \ref{it:ba-b} that
\[
[\tilde{h}^{[n]}, \Delta_n(\alpha_n)\tilde{k}^{[n]}\Delta_n(\alpha_n)^{-1}] = \Delta_{n+1}(\phi_{n+1}([h,k])),
\]
which is contained in $N$.
Since $B$ is perfect, we deduce that $N$ contains $\Delta_{n+1}(B_{n+1})$.
As a normal subgroup of $G_{\mathcal{S}}$, the group $N$ contains the $\Delta_n(\Omega_n)$-conjugates of $\Delta_{n+1}(B_{n+1})$, which generate $\Delta_{n+1}(\Omega_{n+1})$ by~\eqref{eq:generation}.
Thus $\Delta_{n+1}(\Omega_{n+1})$ lies in $N$.
Since $C^{[n+1]}$ is clearly contained in $\langle \Delta_{n+1}(\Omega_{n+1}), C^{[n]}\rangle$, it remains to apply Lemma \ref{lem:level-subgroups}, which gives us $K_n = \langle \Delta_{n+1}(\Omega_{n+1}), C^{[n+1]}\rangle \subseteq N$.
\end{proof}
\begin{lemma}\label{lem:normal-closure-of-Delta} 
Let $N$ be the normal closure of $\Delta_i(\Omega_i)$ in $G_{\mathcal{S}}$.
For $i \geq 2$ we have $N = K_{i-1}$.
For $i=1$ we always have $N \supseteq K_{2}$ and
if $\Omega_2$ is simple and $\Omega_3$ is non-trivial, then $N = G_{\mathcal{S}}$ .
\end{lemma}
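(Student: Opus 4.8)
The plan is to prove both inclusions, reducing the harder one to the question of which subgroups $C^{[n]}$ lie in $N$. The inclusion $N\subseteq K_{i-1}$ is immediate: $\pi_j\circ\Delta_i=1$ for $j<i$ shows $\Delta_i(\Omega_i)\subseteq K_{i-1}$, and $K_{i-1}\trianglelefteq G_{\mathcal S}$. For the reverse inclusion I use the descriptions already available. By Lemma~\ref{lem:level-subgroups} we have $K_{i-1}=\langle \Delta_i(\Omega_i),C^{[i]}\rangle$, so for $i\geq 2$ it suffices to prove $C^{[i]}\subseteq N$. For $i=1$, Lemma~\ref{lem:normal-closure} gives that $K_2$ is the normal closure of $C^{[2]}$, so $N\supseteq K_2\iff C^{[2]}\subseteq N$; and since $G_{\mathcal S}=\langle\Delta_1(\Omega_1),C^{[1]}\rangle$, we have $N=G_{\mathcal S}\iff C^{[1]}\subseteq N$.

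The workhorse for producing elements of $C^{[\cdot]}$ inside $N$ is the congruence: for $a\in N$ and $u,v\in G_{\mathcal S}$ one has $[u,ava^{-1}]\equiv a[u,v]a^{-1}\pmod N$, which follows from $[u,a],[u,a^{-1}]\in N$ together with normality of $N$. I will always take $u=\tilde h^{[i]},\,v=\tilde k^{[i]}\in C^{[i]}\subseteq G_{\mathcal S}$ (Lemma~\ref{lem:G-subgroups}), so that $[u,v]=\widetilde{[h,k]}^{[i]}$ by Remark~\ref{rem:tau_n-is-a-morphism}. For the unconditional $i=1$ statement I take $a=\Delta_1(\alpha_1)\in\Delta_1(\Omega_1)\subseteq N$; the identity from Lemma~\ref{lem:G-subgroups} gives $[u,ava^{-1}]=\Delta_2(\phi_2([h,k]))$, while writing $\widetilde{[h,k]}^{[1]}=\Delta_2(\phi_2([h,k]))\,\widetilde{[h,k]}^{[2]}$ (Remark~\ref{rem:normalform-1}) and noting that $\Delta_1(\alpha_1)$ fixes $\Delta_2(\phi_2([h,k]))$ under conjugation (by \ref{it:a-b}) lets the two $\Delta_2$-terms cancel in the congruence, yielding $\widetilde{[h,k]}^{[2]}\in N$. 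Since $B$ is perfect, $C^{[2]}\subseteq N$, hence $N\supseteq K_2$.

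The crux is the case $i\geq 2$, where I can anchor one level lower. Because $i\geq 2$ the flexibility element $\alpha_{i-1}\in\Omega_{i-1}$ exists, and $\alpha_{i-1,i}=\iota_{i-1,i}(\alpha_{i-1})\in\Omega_i$, so $a'\defeq\Delta_i(\alpha_{i-1,i})\in\Delta_i(\Omega_i)\subseteq N$. With the same $u,v\in C^{[i]}$ the congruence reads $[u,a'va'^{-1}]\equiv a'\,\widetilde{[h,k]}^{[i]}\,a'^{-1}\pmod N$, and the key point is that the left-hand side is now \emph{trivial}: levels $\leq i$ vanish because $\tilde h^{[i]},\tilde k^{[i]}$ do, and on each level $j\geq i+1$ the factors of $a'\tilde k^{[i]}a'^{-1}$ lie in conjugates $B_{\ell,j}^{\alpha_{i-1,j}^{-1}}$ with $\ell\geq i+1$, which by \ref{it:ba-b} (applied with flexibility parameter $i-1$, legitimate since all $B$-indices are $\geq (i-1)+2$) commute with the factors $B_{\ell'',j}$ of $\tilde h^{[i]}$. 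Hence $\widetilde{[h,k]}^{[i]}\in N$, and perfectness of $B$ gives $C^{[i]}\subseteq N$, so $N=K_{i-1}$. The asymmetry is transparent: $\alpha_{i-1}$ has no analogue for $i=1$, which is exactly why that case produces only the shifted containment $C^{[2]}\subseteq N$.

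Finally, for the conditional statement that $N=G_{\mathcal S}$ when $\Omega_2$ is simple, I use $K_2\subseteq N$ to pass to $G_{\mathcal S}/K_2\cong\Omega_1\times\Omega_2$, in which $N$ becomes the normal closure of the diagonal $\overline{\Delta_1(\Omega_1)}=\{(\omega,\iota_{1,2}(\omega))\}$. Since $\Omega_2$ is perfect and simple, hence nonabelian with trivial centre, $[\,\overline{\Delta_1(\Omega_1)},\,1\times\Omega_2\,]$ generates $1\times[\iota_{1,2}(\Omega_1),\Omega_2]=1\times\Omega_2$, which is nontrivial as soon as $\iota_{1,2}(\Omega_1)\neq 1$ (guaranteed by the nondegeneracy hypotheses, $\Omega_3\neq 1$ in particular ruling out a truncated telescope). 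Thus $\Delta_2(\Omega_2)\subseteq N$, and then $\tilde b^{[1]}=\Delta_2(\phi_2(b))\,\tilde b^{[2]}$ with $\tilde b^{[2]}\in C^{[2]}\subseteq N$ gives $C^{[1]}\subseteq N$, whence $N=G_{\mathcal S}$. I expect the one genuinely delicate point to be the triviality computation in the third paragraph, namely getting the variance of the $\alpha_{i-1}$-conjugates right so that \ref{it:ba-b} applies on every level; the remaining steps are bookkeeping with the lemmas already established.
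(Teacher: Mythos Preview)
Your argument for $i\geq 2$ is correct and takes a slightly different route than the paper. The paper conjugates with $\Delta_i(\alpha_{1,i})$ and needs a two-step commutator (using both \ref{it:ba-b} and \ref{it:ba-ba}): first $x=[\tilde h^{[i]},\Delta_i(\alpha_{1,i})]\in N$, then $[\tilde k^{[i]},x]=[\tilde k^{[i]},\tilde h^{[i]}]$. You instead conjugate with $\Delta_i(\alpha_{i-1,i})$, which makes the single commutator $[\tilde h^{[i]},a'\tilde k^{[i]}a'^{-1}]$ vanish outright and only uses \ref{it:ba-b}. This is a clean simplification. One small remark on the ``variance'' point you flag: \ref{it:ba-b} is stated for conjugation by $\alpha_{i-1,m}$, whereas your $a'va'^{-1}$ produces conjugates by $\alpha_{i-1,m}^{-1}$; this is harmless because \ref{it:ba-b} is symmetric in its two $B$-indices, so conjugating the relation by $\alpha_{i-1,m}^{-1}$ and swapping the roles gives exactly what you need (alternatively just take $a'=\Delta_i(\alpha_{i-1,i})^{-1}$).

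There is, however, a genuine gap in your treatment of the conditional $i=1$ case. You need $\iota_{1,2}(\Omega_1)\neq 1$ (equivalently, not central in the simple group $\Omega_2$), and you dismiss this with ``guaranteed by the nondegeneracy hypotheses, $\Omega_3\neq 1$ in particular ruling out a truncated telescope''. This is not automatic from the telescope axioms: if $\iota_{1,2}(\Omega_1)=1$ then \eqref{eq:generation} only gives $\Omega_2=B_2$, and then $\Omega_3=B_3$ by \eqref{eq:commutator}, which need not be trivial. The paper supplies precisely this missing step using flexibility: since $\Omega_3\neq 1$, condition \eqref{eq:generation} forces $B_3$ to be nontrivial (and perfect); then \ref{it:ba-b} with parameter $1$ gives $[B_3^{\alpha_{1,3}},B_3]=1$, so if $\alpha_{1,3}=1$ we would get $B_3$ abelian, a contradiction. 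Hence $\alpha_{1,2}\neq 1$, which exhibits a nontrivial element of $\iota_{1,2}(\Omega_1)$. Once you insert this argument, your proof is complete.
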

\begin{proof}
Let $N$ be the normal closure of $\Delta_i(\Omega_i)$ in $G_{\mathcal{S}}$.
Clearly, $K_{i-1}$ contains $N$. Let $j = \max(2,i)$. Let $h,k \in B$ be arbitrary. Then
\begin{align*}
 x \defeq [\tilde{h}^{[j]},\Delta_{i}(\alpha_{1,i})] &= \tilde{h}^{[j]}\Delta_{i}(\alpha_{1,i})(\tilde{h}^{[j]})^{-1}\Delta_{i}(\alpha_{1,i})^{-1}
 \end{align*}
 lies in $N$. By \ref{it:ba-b} and \ref{it:ba-ba} we have
 \[
 	[\tilde{k}^{[j]},x] = [\tilde{k}^{[j]},\tilde{h}^{[j]}] \in N.
 \]
Since $B$ and consequently $C^{[j]}$ are perfect, we deduce that $C^{[j]} \subseteq N$. Lemma~\ref{lem:normal-closure} implies that
$K_{j} \subseteq N$. If $i \geq 2$, then $j=i$ and we deduce that $K_{i-1} = \Delta_i(\Omega_i)K_{i} \subseteq N$.
 
If $i = 1$ we assume that $\Omega_2$ is simple and $\Omega_3$ is non-trivial. By the above argument $N$ contains $K_{2} = \ker(G_{\mathcal{S}} \to \Omega_1\times\Omega_2)$, since $N$ contains $\Delta_1(\Omega_1)$, it projects onto the first factor $\Omega_1$. By simplicity of $\Omega_2$ it remains to show that $\pi_2(N)$ is non-trivial. We claim that the element $\pi_2(\Delta_1(\alpha_1)) = \alpha_{1,2}$ is non-trivial.
Indeed, since $\Omega_3$ is non-trivial \eqref{eq:generation} implies that the group $B_3$ is non-trivial and perfect. The element $\alpha_{1,3} = \iota_{2,3}(\alpha_{1,2})$ satisfies $[B_3^{\alpha_{1,3}},B_3] = 1$ by \ref{it:ba-b}, hence $\alpha_{1,3} \neq 1$ and therefore $\alpha_{1,2} \neq 1$.
\end{proof}

\begin{proposition}
Let $\mathcal{S}$ be a flexible $B$-telescope for a perfect group $B$.
Assume that $\Omega_i$ is non-trival for infinitely many $i \in \N$. Then
\begin{enumerate}[label=(\alph*)]
\item $G_\mathcal{S}$ is not linear.
\item $G_\mathcal{S}$ is $\ell^2$-acyclic.
\end{enumerate}
\end{proposition}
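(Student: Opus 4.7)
For part (a), the strategy is to extract from $G_\mathcal{S}$ enough commuting non-abelian subgroups to contradict any linear representation. By Corollary~\ref{cor:first-levels}, $G_\mathcal{S}$ contains $\bigoplus_{i\geq 2}\Omega_i$ as a normal subgroup. Since $B$ is perfect, and since~\eqref{eq:generation} presents each $\Omega_i$ (for $i\geq 2$) as generated by $\Omega_{i-1,i}$-conjugates of the perfect subgroup $\phi_i(B)$, each such $\Omega_i$ is itself perfect, hence non-abelian whenever non-trivial. The hypothesis therefore provides an infinite family of pairwise commuting non-abelian subgroups of $G_\mathcal{S}$, namely the non-trivial summands of $\bigoplus_{i\geq 2}\Omega_i$. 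I would conclude by invoking the fact that any such family inside $\GL_n(F)$ has cardinality bounded by a function of $n$ alone: commuting subgroups jointly refine the isotypic decomposition of $F^n$, which has at most $n$ components, and Schur's lemma then forces all but finitely many of the commuting subgroups to act by scalars on each component, hence to be abelian.

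For part (b), the plan exploits the internal direct product decomposition $G_\mathcal{S} = K_n \times F_n$ from Corollary~\ref{cor:group-associated-to-shifted-B-telescope}, where $F_n = \Omega_1\times\cdots\times\Omega_n$ and $K_n\cong G_{\mathcal{S}_{+n}}$ is itself a telescope group. The hypothesis guarantees that $K_n$ is infinite for every $n$, so in particular $\beta_0^{(2)}(K_n) = 0$. Applying the K\"unneth formula for $\ell^2$-Betti numbers to this decomposition gives
\[
\beta_p^{(2)}(G_\mathcal{S}) = \sum_{k+\ell=p}\beta_k^{(2)}(K_n)\cdot \beta_\ell^{(2)}(F_n),
\]
which in the principal case when every $\Omega_i$ is finite collapses to $\beta_p^{(2)}(G_\mathcal{S}) = \beta_p^{(2)}(K_n)/|F_n|$. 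Since $|F_n|\to\infty$ while $\beta_p^{(2)}(K_n)$ can be controlled via L\"uck's approximation theorem applied to the residual chain of finite index normal subgroups $(K_n)_n$ (using that $G_\mathcal{S}$ is residually finite in this case), one concludes $\beta_p^{(2)}(G_\mathcal{S}) = 0$ for every $p$.

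The main obstacle I foresee lies in the general case of (b), where the $\Omega_i$ may be infinite or non-amenable: L\"uck's approximation does not apply directly, and one must appeal to a Cheeger--Gromov type vanishing theorem, exploiting the infinite normal subgroup $\bigoplus_{i\geq 2}\Omega_i$ together with the rich commuting direct-sum structure it carries inside $G_\mathcal{S}$. Locating (or deriving from the structure theory of Section~\ref{sec:profinite-completion}) the right black-box vanishing principle for $\ell^2$-Betti numbers in the presence of such normal subgroups is where the real work of the proof resides.
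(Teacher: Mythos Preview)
Your argument for (a) is correct and coincides with the paper's: $G_\mathcal{S}$ contains $\bigoplus_{i\geq 2}\Omega_i$ by Corollary~\ref{cor:first-levels}, the $\Omega_i$ are perfect for $i\geq 2$, and an infinite family of pairwise commuting non-abelian subgroups obstructs linearity (the paper cites \cite{Abert06}, you sketch the isotypic argument).

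For (b) there are genuine gaps. In your ``principal case'' the identity $\beta_p^{(2)}(G_\mathcal{S}) = \beta_p^{(2)}(K_n)/|F_n|$ is nothing but the multiplicativity of $\ell^2$-Betti numbers under finite index; it carries no information beyond what you started with, and in particular does not bound $\beta_p^{(2)}(K_n)$. L\"uck's approximation theorem would express $\beta_p^{(2)}(G_\mathcal{S})$ as a limit of \emph{ordinary} Betti numbers of the finite quotients $F_n$, but it requires $G_\mathcal{S}$ to be of type $F_{p+1}$, which is nowhere established. The paper bypasses this entirely: when infinitely many $\Omega_i$ are non-trivial and finite, the normal subgroup $\bigoplus_{i\in S}\Omega_i$ is infinite and locally finite, hence amenable, and \cite[Thm.~7.2]{Lueck:l2-invariants} gives $\ell^2$-acyclicity directly.

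In the remaining case some $\Omega_n$ is infinite, and here your proposed Cheeger--Gromov black box does not apply: the relevant vanishing theorem needs the infinite normal subgroup to be amenable, and $\bigoplus_i\Omega_i$ need not be. The paper's device is instead an induction on the degree $p$. Pick $n$ minimal with $\Omega_n$ infinite; then $\Omega_1,\dots,\Omega_{n-1}$ are finite and $H = G_{\mathcal{S}_{+n}}\times\Omega_n$ has finite index in $G_\mathcal{S}$. K\"unneth gives
\[
b_p^{(2)}(H) \;=\; \sum_{i+j=p} b_i^{(2)}(G_{\mathcal{S}_{+n}})\, b_j^{(2)}(\Omega_n).
\]
The shifted telescope $\mathcal{S}_{+n}$ satisfies the same hypotheses, so by induction $b_i^{(2)}(G_{\mathcal{S}_{+n}})=0$ for $i<p$; only the term $i=p$, $j=0$ survives, and it vanishes because $\Omega_n$ is infinite, so $b_0^{(2)}(\Omega_n)=0$. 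The finite-index formula then gives $b_p^{(2)}(G_\mathcal{S})=0$. Your observation that $\beta_0^{(2)}(K_n)=0$ is exactly the base case of this induction; what you are missing is to turn the K\"unneth identity around and feed it into an induction on $p$ rather than trying to let $n\to\infty$.
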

\begin{proof}
By Corollary~\ref{cor:first-levels} $G_\mathcal{S}$ contains the infinite direct sum $\bigoplus_{i=1}^\infty \Omega_i$. Since $B$ is perfect, the groups $\Omega_i$ are perfect for $i\geq 2$. If infinitely many $\Omega_i$ are non-trivial, then $G_\mathcal{S}$ cannot be linear by \cite[Corollary 6]{Abert06} (see also \cite{KS2023}).

Assume now that $\Omega_i$ is a non-trivial finite group for an infinite set of indices $i \in S \subseteq  \N$.
Then $\bigoplus_{i\in S} \Omega_i$ is an infinite amenable normal subgroup of $G_\mathcal{S}$ and it follows from \cite[Thm.~7.2]{Lueck:l2-invariants} that $G_\mathcal{S}$ is $\ell^2$-acyclic.
We now assume that $\Omega_i$ is infinite for almost all $i$ and
we prove that the $k$-th $\ell^2$-Betti number of $G_\mathcal{S}$ vanishes using induction on $k$.
We observe that if $\Omega_i$ is infinite, then $G_\mathcal{S}$ is infinite (by Corollary~\ref{cor:first-levels}) and thus the $0$-th $\ell^2$-Betti number of $G_\mathcal{S}$ vanishes, i.e., $b_0^{(2)}(G_\mathcal{S}) = 0$. 
Now let $k \geq 1$ and let $n \in \N$ be minimal such that $\Omega_n$ is infinite.
Then by Corollary \ref{cor:group-associated-to-shifted-B-telescope} $G_\mathcal{S}$ contains a finite index subgroup $H$ isomorphic to 
\[
	G_{\mathcal{S}_{+n}} \times \Omega_n.
\]
By induction hypothesis, the K\"unneth formula and the induction formula for $\ell^2$-Betti numbers \cite[Theorem 6.54 (5), (6)]{Lueck:l2-invariants} we obtain
\[
	b_k^{(2)}(G_\mathcal{S}) = \frac{1}{|G_\mathcal{S}:H|} \sum_{i+j = k} b_i^{(2)}(G_{\mathcal{S}_{+n}})b_{j}^{(2)}(\Omega_n)
	= \frac{1}{|G_\mathcal{S}:H|} b_k^{(2)}(G_{\mathcal{S}_{+n}})b_{0}^{(2)}(\Omega_n) = 0.
\]
\end{proof}
\subsection{The profinite completion}
Corollary~\ref{cor:first-levels} has the following direct consequence.
\begin{corollary}\label{cor:dense}
Let $\mathcal{S} = ((\Omega_i)_{i\in \N},  (\phi_i)_{i \geq 2})$ be a flexible $B$-telescope for a perfect group $B$.
The group $G_{\mathcal{S}} = \langle \Delta_1(\Omega_1), C^{[1]}\rangle$ projects onto $\Omega_1\times \dots \times \Omega_n$ for all $n \geq 1$. In particular, the image of $G_\mathcal{S}$ is dense in $\prod_{i=1}^\infty \widehat{\Omega}_i$.
\end{corollary}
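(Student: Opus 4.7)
The plan is to read off both claims directly from Corollary~\ref{cor:first-levels}, which tells us that $\bigoplus_{i \in \N} \Omega_i \subseteq G_{\mathcal{S}}$; no further flexibility or perfectness input is needed beyond that.

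For the first assertion, I would fix $n \geq 1$ and consider the projection $p_n \colon P_{\mathcal{S}} \to \Omega_1 \times \dots \times \Omega_n$. Given an arbitrary $(\omega_1, \dots, \omega_n) \in \Omega_1 \times \dots \times \Omega_n$, I would lift it to $G_{\mathcal{S}}$ as the finite product $e_1 \cdot e_2 \cdots e_n \in \bigoplus_{i \in \N} \Omega_i \subseteq G_{\mathcal{S}}$, where $e_i$ denotes the element of the direct sum that is $\omega_i$ in the $i$-th coordinate and trivial elsewhere. This shows $p_n(G_{\mathcal{S}}) = \Omega_1 \times \dots \times \Omega_n$.

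For the density statement, I would use the canonical diagonal map $G_{\mathcal{S}} \hookrightarrow P_{\mathcal{S}} \subseteq \prod_{i=1}^\infty \widehat{\Omega}_i$ induced by the dense inclusions $\Omega_i \hookrightarrow \widehat{\Omega}_i$. A basic open set in the product topology on $\prod_{i=1}^\infty \widehat{\Omega}_i$ has the form $U_1 \times \dots \times U_n \times \prod_{i>n} \widehat{\Omega}_i$ with $U_i \subseteq \widehat{\Omega}_i$ open. Since $\Omega_i$ is dense in $\widehat{\Omega}_i$, each $U_i$ contains some $\omega_i \in \Omega_i$. By the first assertion, the tuple $(\omega_1, \dots, \omega_n)$ is attained by some element of $G_{\mathcal{S}}$, and that element lies in the prescribed open set. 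Hence the image of $G_{\mathcal{S}}$ meets every basic open set and is therefore dense.

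There is no real obstacle: the whole corollary is a corollary in the strict sense, repackaging Corollary~\ref{cor:first-levels} together with the elementary fact that a subgroup of a product of profinite groups is dense as soon as its projections to all finite products of the factors are surjective onto the corresponding finite products of the (dense) original groups.
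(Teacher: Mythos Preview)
Your proof is correct and matches the paper's approach: the paper states this corollary as a direct consequence of Corollary~\ref{cor:first-levels} without further detail, and your argument is precisely the routine unpacking of that claim.
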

This result allows us to deduce the following theorem.
\begin{theorem}\label{thm:completion}
Let $B$ be a perfect group and let $\mathcal{S} = ((\Omega_i)_{i\in \N},  (\phi_i)_{i \geq 2})$ be a flexible $B$-telescope.
Then the inclusion $G_{\mathcal{S}} \to P_\mathcal{S}$ induces an isomorphism
\[
	\widehat{G}_{\mathcal{S}} \stackrel{\cong}{\longrightarrow} \prod_{i=1}^\infty \widehat{\Omega}_i.
\]
If $\Omega_1$ and $B$ are finitely generated, then $G_\mathcal{S}$ is finitely generated.
If all $\Omega_i$ are residually finite, then $G_\mathcal{S}$ is residually finite.
\end{theorem}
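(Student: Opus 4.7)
Finite generation is immediate: $G_\mathcal{S} = \langle \Delta_1(\Omega_1), C^{[1]}\rangle$ with $C^{[1]} = \tau_1(B)$, and both $\Delta_1$ and $\tau_1$ (Remark \ref{rem:tau_n-is-a-morphism}) are homomorphisms, so finite generating sets for $\Omega_1$ and $B$ yield $G_\mathcal{S} = \langle \Delta_1(S_1) \cup \tau_1(S_2)\rangle$. Residual finiteness follows because $P_\mathcal{S} = \prod_i \Omega_i$ is residually finite whenever each $\Omega_i$ is (separate non-identity elements coordinate-by-coordinate), and residual finiteness passes to subgroups.

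For the profinite completion statement, I would use Corollary \ref{cor:dense}, which tells us that the natural map $f\colon G_\mathcal{S} \to \prod_i \widehat{\Omega}_i$ has dense image. By the universal property, $f$ induces a continuous homomorphism $\widehat{f}\colon \widehat{G}_\mathcal{S} \to \prod_i \widehat{\Omega}_i$ whose image is compact and dense, hence all of $\prod_i \widehat{\Omega}_i$; so $\widehat{f}$ is surjective. Injectivity of $\widehat{f}$ is equivalent to the \emph{congruence-subgroup statement}: every finite-index normal subgroup $N \trianglelefteq G_\mathcal{S}$ contains $K_n$ for some $n \geq 1$. Indeed, given this, $G_\mathcal{S}/N$ factors through $G_\mathcal{S}/K_n \cong \Omega_1 \times \cdots \times \Omega_n$ by Corollary \ref{cor:group-associated-to-shifted-B-telescope} and is therefore a continuous quotient of $\prod_i \widehat{\Omega}_i$, which forces $\ker \widehat{f} = 1$.

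To establish the congruence-subgroup statement I would apply Segal's trick (Lemma \ref{lem:segal-trick}) to the family
\[
H_i \defeq \Delta_i(\alpha_i) \cdot C^{[i+1]} \cdot \Delta_i(\alpha_i)^{-1}, \qquad i \geq 1,
\]
which is contained in $G_\mathcal{S}$ since both $\Delta_i(\alpha_i)$ and $C^{[i+1]}$ are, by Lemma \ref{lem:G-subgroups}. Each $H_i$ is perfect as a homomorphic image of the perfect group $B$, and a direct coordinatewise computation shows $\pi_j(H_i) = 1$ for $j \leq i+1$, while for $j \geq i+2$ every element of $\pi_j(H_i)$ lies in the subgroup of $\Omega_j$ generated by $B_{i+2,j}^{\alpha_{i,j}}, \ldots, B_{j,j}^{\alpha_{i,j}}$. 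Flexibility condition \ref{it:ba-ba} is then exactly what is needed to deduce $[H_{i_1}, H_{i_2}] = 1$ for $i_1 \neq i_2$, and inspection of coordinate $\min(i_1,i_2)+2$ shows the $H_i$ are pairwise distinct. If only finitely many $B_k$ are non-trivial, then \eqref{eq:generation} forces $\Omega_k = 1$ for all large $k$, the telescope collapses to a finite product of non-trivial factors, and the theorem follows directly from Corollary \ref{cor:dense}; otherwise $H_i \neq 1$ for all $i$ and Segal's trick applies to this infinite family of pairwise commuting distinct perfect subgroups.

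For a finite-index normal $N \trianglelefteq G_\mathcal{S}$, Segal's trick then yields an index $i$ such that $N$ contains the $G_\mathcal{S}$-normal closure of $H_i' = H_i$. Since this normal closure contains the conjugate $\Delta_i(\alpha_i)^{-1} H_i \Delta_i(\alpha_i) = C^{[i+1]}$ and is itself normal in $G_\mathcal{S}$, it also contains the normal closure of $C^{[i+1]}$, which equals $K_{i+1}$ by Lemma \ref{lem:normal-closure}. Hence $K_{i+1} \subseteq N$, as required. The main technical hurdle is the construction of the family $(H_i)$ and the coordinatewise verification that axiom \ref{it:ba-ba} is exactly tailored to deliver their pairwise commutation; once that is in place, the proof assembles from the preceding lemmas almost formally.
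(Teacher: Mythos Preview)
Your proof is essentially correct and follows the same strategy as the paper's: reduce to the ``congruence subgroup property'' (every finite-index normal $N$ contains some $K_n$), apply Lemma~\ref{lem:segal-trick} to a family of pairwise commuting conjugates of the groups $C^{[k]}$, and finish with Lemma~\ref{lem:normal-closure}. The paper uses the family $(C^{[n]})^{\Delta_i(\alpha_i)}$ for $i \in \{1, \dots, n-2\}$ with $n$ varying, whereas you use $(C^{[i+1]})$ conjugated by $\Delta_i(\alpha_i)$ for $i \geq 1$; either choice works and relies on \ref{it:ba-ba} in the same way.

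Two small points. First, there is a sign slip: with the convention $g^h = h^{-1}gh$, your definition $H_i = \Delta_i(\alpha_i)\,C^{[i+1]}\,\Delta_i(\alpha_i)^{-1}$ yields $\pi_j(H_i) \subseteq \langle B_{k,j}^{\alpha_{i,j}^{-1}} \mid k\geq i+2\rangle$, not $\langle B_{k,j}^{\alpha_{i,j}}\rangle$; you should take $H_i = \Delta_i(\alpha_i)^{-1}\,C^{[i+1]}\,\Delta_i(\alpha_i)$ so that \ref{it:ba-ba} applies directly. Second, your distinctness check at coordinate $\min(i_1,i_2)+2$ can fail if $B_{\min(i_1,i_2)+2}$ happens to be trivial; but this is moot, since once each $H_i$ is nontrivial and perfect, the commutation $[H_{i_1},H_{i_2}]=1$ already forces $H_{i_1}\neq H_{i_2}$ (else $H_{i_1}$ would be abelian, hence trivial). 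Both are cosmetic fixes.
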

\begin{proof}
The finite index subgroups of $G_{\mathcal{S}}$ that contain $K_n$ correspond to the finite index subgroups of $\Omega_1\times \dots \times \Omega_n$ by Corollary \ref{cor:dense}.
Given a finite index normal subgroup $N$ of  $G_{\mathcal{S}}$, it is sufficient to prove that $N$ contains $K_n$ for some $n$ (one might call this the ``congruence subgroup property'').

Let $n \geq 2$ be given. The subgroups $\Delta(\alpha_i^{-1})C^{[n]}\Delta(\alpha_i)$ commute pairwise for all $i \in \{1,2,\dots,n-2\}$ by assumption \ref{it:ba-ba}.
Thus we can deduce from Lemma \ref{lem:segal-trick} that $N$ contains the derived subgroup of some $\Delta(\alpha_i^{-1})C^{[n]}\Delta(\alpha_i)$.
Since $N$ is normal and $C^{[n]}$ is perfect - being an image of the perfect group $B$ - we deduce that $C^{[n]} \subseteq N$.
In this case Lemma \ref{lem:normal-closure} implies $K_n \subseteq N$.
\end{proof}

Let $P = \prod_{n=1}^{\infty} S_n$ be a product of finite groups. We recall that a frame in $P$ is a
finitely generated subgroup $G \leq P$ such that it contains
$\bigoplus_{n=1}^{\infty} S_n$ and the induced map $\widehat{G} \rightarrow P$ is an isomorphism (see Definition~\ref{def:frame-intro}).
Consequently, flexible $B$-telescopes of finite groups give rise to surprisingly simple examples of frames. We will see next that the frames provided by our main example can be generated by the minimal number of generators.
Let $d \geq 5$ and $2 \leq r \leq d-3$ be given. We consider the flexible $\Alt(rd)$-telescope $\mathcal{A}(d,r)$ defined in \S~\ref{ex:Alt}.
\begin{theorem}\label{thm:frame-minimal-Alt}
The group $G_{\mathcal{A}(d,r)}$ is $2$-generated and a frame in $\prod \limits_{\ell=1}^{\infty} \Alt(d^\ell)$.
\end{theorem}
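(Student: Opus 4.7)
My plan is as follows. By Proposition~\ref{prop:B-telescope}, the system $\mathcal{A}(d,r)$ is a flexible $\Alt(rd)$-telescope, and since $rd \geq 10$, the base group $B = \Alt(rd)$ is perfect (in fact simple). The frame part of the statement then follows essentially for free from the work of this section: Theorem~\ref{thm:completion} identifies
\[
\widehat{G_{\mathcal{A}(d,r)}} \;\cong\; \prod_{\ell=1}^{\infty} \widehat{\Alt(d^\ell)} \;=\; \prod_{\ell=1}^{\infty} \Alt(d^\ell),
\]
and Corollary~\ref{cor:first-levels} places $\bigoplus_{\ell=1}^\infty \Alt(d^\ell)$ inside $G_{\mathcal{A}(d,r)}$. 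Hence, as soon as $G_{\mathcal{A}(d,r)}$ is shown to be finitely generated, both conditions of Definition~\ref{def:frame-intro} are satisfied.

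The substantive content of the theorem is the sharpening from the naive $4$-generation (combining $2$-generators of $\Omega_1 = \Alt(d)$ with $2$-generators of $B = \Alt(rd)$) to two generators. My plan is to fix generators $\sigma_1, \sigma_2$ of $\Omega_1$ and $b_1, b_2$ of $B$ and to set
\[
g_1 \;=\; \Delta_1(\sigma_1)\, \tilde{b}_1, \qquad g_2 \;=\; \Delta_1(\sigma_2)\, \tilde{b}_2,
\]
writing $H = \langle g_1, g_2\rangle$. Since $\pi_1(g_i) = \sigma_i$, the map $\pi_1\colon H \to \Omega_1$ is surjective and $G_{\mathcal{A}(d,r)} = H \cdot K_1$ for $K_1 = \ker \pi_1$. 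By Lemma~\ref{lem:normal-closure}, $K_1$ is the normal closure of $C^{[1]}$ in $G_{\mathcal{A}(d,r)}$, so the task reduces to proving $C^{[1]} \subseteq H$.

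To put $C^{[1]}$ inside $H$, I would form words in $g_1, g_2$ with trivial $\pi_1$-projection and analyze them level-by-level by means of the support calculations in Lemma~\ref{lem:supports} and the flexibility conditions \ref{it:a-b}--\ref{it:ba-ba}. Once $\tilde{b}_1$ and $\tilde{b}_2$ are shown to lie in $H$, the subgroup $\Delta_1(\Omega_1)\cap H$, recovered by multiplying any $H$-preimage of $\sigma \in \Omega_1$ by the now-available elements of $K_1$, provides enough conjugators to normally generate $\tau_1(B) = C^{[1]}$ from $\tilde{b}_1, \tilde{b}_2$ by simplicity of $\Alt(rd)$.

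The main obstacle is the explicit combinatorics of this last step. The cleanest case is $r \leq d-4$: then one can choose $\sigma_1$ supported on $\{x_{r+1},\ldots,x_{d-1}\}$, and Lemma~\ref{lem:supports} shows that $\iota_{1,\ell}(\sigma_1)$ has support disjoint from $\supp_{X^\ell}(B_{k,\ell})$ for every $2 \leq k \leq \ell$. Thus $\Delta_1(\sigma_1)$ commutes with $\tilde{b}_1$, the power $g_1^{|\sigma_1|}$ collapses to $\tilde{b}_1^{|\sigma_1|}\in C^{[1]}\cap H$, and an elementary bootstrap using conjugation by $H$-preimages of elements of $\Omega_1$ recovers $\tilde{b}_1$ itself and then (via the same trick for $g_2$) also $\tilde{b}_2$. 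The boundary case $r=d-3$ leaves no room to place such a $\sigma_1$ together with a $\sigma_2$ still generating $\Omega_1$, and the separation must be carried out by a coupled argument using both $g_1$ and $g_2$ simultaneously; this is where I expect the most delicate bookkeeping, and where the use of $\alpha_i$ from Definition~\ref{def:flexible-telescope} will be crucial to disentangle the level-$\ell$ contributions.
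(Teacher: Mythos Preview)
Your frame argument is correct and matches the paper exactly. The gap is in the $2$-generation part.

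You try to work at level $1$, taking $g_i = \Delta_1(\sigma_i)\tilde{b}_i$ with $\sigma_1,\sigma_2$ generating $\Alt(d)$. For the ``cleanest'' case $r\le d-4$ you put $\sigma_1$ inside $\{x_{r+1},\dots,x_{d-1}\}$ so that $\Delta_1(\sigma_1)$ commutes with all of $C^{[1]}$, and then $g_1^{|\sigma_1|}=\tilde{b}_1^{|\sigma_1|}$. Fine, and with a coprimality assumption on $|\sigma_1|$ and $|b_1|$ you recover $\tilde{b}_1$. But your claim that ``the same trick'' then handles $g_2$ is where the argument breaks: any $\sigma_2$ that together with $\sigma_1$ generates $\Alt(d)$ must move a point in $\{x_1,\dots,x_r,x_d\}$, so $\Delta_1(\sigma_2)$ does \emph{not} commute with $C^{[1]}$ and $g_2^{|\sigma_2|}$ does not collapse. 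The ``elementary bootstrap'' you allude to (conjugating by $H$-preimages of $\Omega_1$) only gives you elements like $\Delta_1([\sigma_2,\sigma_1^k])$, and turning these into $\tilde{b}_2\in H$ is exactly the hard part you have not done. So even in the case you label cleanest, the argument is incomplete; for $r=d-3$ you have nothing at all.

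The paper avoids this entirely by a reduction you are missing: it is enough to show that \emph{some} level subgroup $K_{n-1}=G_{\mathcal{S}_{+(n-1)}}$ is $2$-generated, since $G_{\mathcal{A}(d,r)}\cong K_{n-1}\times\bigoplus_{j=1}^{n-1}\Alt(d^j)$ by Corollary~\ref{cor:group-associated-to-shifted-B-telescope}, the finite factor is itself $2$-generated, and the two factors share no simple quotient so there is no proper subdirect product. Once $n$ is large, $\Alt(d^n)$ has enough room to pick \emph{two} $p$-cycles $\sigma_1,\sigma_2$ (for a prime $p$ with $d^n/2<p<d^n-(r+1)$) whose supports are arranged so that $\Delta_n(\sigma_1)$ commutes with $\tilde\tau_1^{[n]}$ and $\Delta_n(\sigma_2)$ commutes with a suitable $\Delta_{n-1}(\omega)$-conjugate of $\tilde\tau_2^{[n]}$, while $\langle\sigma_1,\sigma_2\rangle=\Alt(d^n)$ by Jordan's theorem. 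Coprimality of $p$ with $|\Alt(rd)|$ then separates each product into its factors. The whole point is that at level $1$ there is not enough room to satisfy both the disjoint-support constraints and the generation constraint simultaneously; passing to a deep level is the missing idea.
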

\begin{proof}
Let $\Gamma = G_{\mathcal{A}(d,r)}$ be the group of $\mathcal{A}(d,r)$.
From Proposition~\ref{prop:B-telescope} we know that $\mathcal{A}(d,r)$ is flexible.
Since the group $B$ is the perfect group $\Alt(rd)$, we can conclude from Theorem~\ref{thm:completion} that the inclusion $\Gamma \rightarrow \prod \limits_{\ell=1}^{\infty} \Omega_{\ell}$ induces an isomorphism $\widehat{\Gamma} \rightarrow \prod \limits_{\ell=1}^{\infty} \Omega_{\ell}$.
Corollary~\ref{cor:first-levels} tells us that the direct sum $\bigoplus \limits_{\ell=1}^{\infty} \Omega_{\ell}$ is contained in $\Gamma$, hence it is a frame.

It remains to show that $\Gamma$ is $2$-generated. We observe that it is sufficient to prove that the subgroup $K_{n}=G_{\mathcal{S}_{+n}}$ is $2$-generated for some $n$. Indeed, it is known that $\bigoplus_{j=1}^n \Alt(d^j)$ is $2$-generated and since $\bigoplus_{j=1}^n \Alt(d^j)$ and $K_n$ don't share a non-trivial factor, the product $K_n\times\bigoplus_{j=1}^n \Alt(d^j)$ does not admit a proper subdirect subgroup.

Recall that $X =\{x_1,\dots,x_d\}$. Let $n \in \N$ be such that $d^n \geq \max(18,3(r+1),2rd)$. Note that $n \geq 2$. We fix a prime number $p$ satisfying 
\[
d^n/2 < p < (1+\frac{1}{3})d^n/2  = \frac{2}{3}d^n \leq d^n - (r+1),
\]
which exists by the main theorem of \cite{Nagura52} .
Let $\sigma_1$ be a $p$-cycle in $\Alt(X^n)$ that fixes $\{x_d^{n-1}\}\times\{x_1,\dots,x_r, x_d\}$ and moves $\{x_1^{n-1}\}\times X$. Let $\sigma_2$ be a $p$-cycle that moves every fixed point of $\sigma_1$ and fixes $\{x_1^{n-1}\}\times \{x_1,\dots,x_r, x_d\}$.
Since $p > d^n/2$, the supports of $\sigma_1$ and $\sigma_2$ cannot be disjoint and by construction they generate a transitive subgroup $T \leq \Alt(d^n)$.
Using an old theorem of C.~Jordan one can show that any transitive subgroup $T \subseteq \Alt(d^n)$ containing a $p$-cycle is equal to  the alternating group $\Alt(d^n)$; see \cite[Theorem 4.2]{HandbookComp}.
Let $\tau_1, \tau_2$ be any generating set for $\Alt(rd)$. We observe that $p > d^n/2 \geq rd$ and hence the order of each $\tau_i$ is coprime to $p$. This implies that $\tau_1^p$ and $\tau_2^p$ generate $\Alt(rd)$.

We define $a = \Delta_{n}(\sigma_1)\tilde{\tau}_1^{[n]}$ and we note that by construction $\Delta_{n}(\sigma_1)$ and $\tilde{\tau}_1^{[n]}$ commute. Taking $a^p$ and $a^{\ord(\tau_1)}$ we see that $\langle a \rangle$ contains $\Delta_{n}(\sigma_1)$ and $\tilde{\tau}_1^{[n]}$ . Let $\omega\in \Alt(X^{n-1})$ be such that $\omega(x_1^{n-1}) = x_d^{n-1}$.
Consider the elements
\begin{equation}\label{eq:generator-c}
c = \Delta_{n-1}(\omega)^{-1}\tilde{\tau}_2^{[n]}\Delta_{n-1}(\omega) = \Delta_{n}(\omega_{n-1,n})^{-1}\tilde{\tau}_2^{[n]}\Delta_{n}(\omega_{n-1,n})
\end{equation}
and $b = \Delta_{n}(\sigma_2)c$. Again $\Delta_n(\sigma_2)$ and $c$ commute and so $\langle b \rangle$ contains $\Delta_n(\sigma_2)$ and $c$.
We conclude that the group $H =\langle a,b \rangle$ contains $\tilde{\tau}^{[n]}_1$ , $c$ and $\Delta_n(\langle \sigma_1,\sigma_2 \rangle ) = \Delta_n(\Alt(d^n))$;  in particular, it contains $\Delta_n(\omega_{n-1,n})$. Thus by \eqref{eq:generator-c} $H$ also contains $\tilde{\tau}_2^{[n]}$ and since $\tau_1$ and $\tau_2$ generate $\Alt(rd)$, we deduce that $H = \langle \Delta_n(\Alt(X^n)), C^{[n]} \rangle = K_{n-1}$ by Lemma~\ref{lem:level-subgroups}. This proves that $K_{n-1}$ is $2$-generated and completes the proof of the theorem.
\end{proof}
An analogous result applies to the $\SL_{2d}(\F_q)$-telescope defined in \S~\ref{ex:elementary}.
In order to prove it, we need a result about generating sets of $\SL_n(\F_q)$.
\begin{lemma}\label{lem:generators-SL}
Let $n \geq 11$, $\ell$ a prime number and let $\F_{q}$ be a finite field with $q = \ell^k$. Suppose that $p$ is an odd prime number satisfying $n/2 < p < n-5$. Then $\SL_n(\F_q)$ is generated by a pair of matrices  $U,V \in \SL_n(\F_q)$ with orders
	$\ord(U) = 2p$ and $\ord(V) = 3p$.
The generators $U$ and $V$ can be chosen such that each of them is supported on at most $p+5$ standard basis vectors.
\end{lemma}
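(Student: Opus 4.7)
The plan is to build $U$ and $V$ as block-disjoint products of a $p$-cycle permutation matrix with a small matrix of order $2$ or $3$, extract a copy of $\Alt(n)$ from $\langle U,V\rangle$ via Jordan's theorem, and upgrade it to $\SL_n(\F_q)$. Concretely, let $\sigma_U\in\SL_n(\F_q)$ be the permutation matrix of a $p$-cycle on $\{e_1,\dots,e_p\}$, which lies in $\SL_n(\F_q)$ because $p$ is odd (determinant $(-1)^{p-1}=1$), and let $\tau\in\SL_n(\F_q)$ be an involution supported on $\{e_{p+1},e_{p+2}\}$: take $\tau=\mathrm{diag}(-1,-1,1,\dots,1)$ if $q$ is odd, and an appropriate unipotent $2\times 2$ Jordan block if $q$ is even. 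Setting $U=\sigma_U\tau$ we obtain $\ord(U)=2p$ with support of size $p+2\leq p+5$. Symmetrically, let $\sigma_V$ be the permutation matrix of a $p$-cycle on $\{e_{n-p+1},\dots,e_n\}$ and let $\rho\in\SL_n(\F_q)$ have order $3$ with support $\{e_1,e_2,e_3\}$, which is disjoint from $\supp(\sigma_V)$ thanks to $p<n-5$. Such a $\rho$ exists in every characteristic: the $3\times 3$ unipotent Jordan block if $\mathrm{char}(\F_q)=3$, the matrix $\mathrm{diag}(\omega,\omega^{-1},1)$ for a primitive cube root of unity $\omega\in\F_q$ when $3\mid q-1$, and the companion matrix of the irreducible quadratic with cube-root-of-unity roots (padded by a $1$) when $3\mid q+1$. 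Then $V=\sigma_V\rho$ has order $3p$ and support of size $p+3\leq p+5$.

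To see that $\langle U,V\rangle=\SL_n(\F_q)$, observe first that $U^2$ and $V^3$ are pure permutation matrices of $p$-cycles (since $\gcd(p,2)=\gcd(p,3)=1$) with supports $\{1,\dots,p\}$ and $\{n-p+1,\dots,n\}$. Since $n<2p$ these supports overlap, and their union is $\{1,\dots,n\}$; hence the permutation group they generate is transitive, is contained in $\Alt(n)$, and contains a $p$-cycle with $p<n-2$. Jordan's theorem, as invoked in the proof of Theorem~\ref{thm:frame-minimal-Alt} via \cite[Thm.~4.2]{HandbookComp}, gives that this subgroup equals $\Alt(n)$, so $\Alt(n)\leq\langle U,V\rangle$ via the embedding into $\SL_n(\F_q)$ by permutation matrices. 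Moreover, $\tau=U^p$ and a suitable power of $V^p$ recover $\rho$, so both auxiliary blocks lie in $\langle U,V\rangle$. Conjugating $\rho$ by $\Alt(n)$ produces order-$3$ elements of $\SL_n(\F_q)$ with arbitrary $3$-element supports in the basis; because $\rho$ has entries outside $\{0,\pm 1\}$ (or a nondiagonal unipotent structure in characteristic $3$), these conjugates escape the subgroup of monomial matrices. A standard commutator then produces a transvection, and $\SL_n(\F_q)$ is generated by the $\Alt(n)$-conjugates of any transvection (cf.~\cite[1.2.10]{HahnOMeara}), giving $\langle U,V\rangle=\SL_n(\F_q)$.

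The main obstacle is the last step: certifying that $\Alt(n)$ together with the extra elements $\tau,\rho$ generates all of $\SL_n(\F_q)$ rather than some proper overgroup of the embedded $\Alt(n)$. This requires a case analysis depending on $\mathrm{char}(\F_q)\in\{2,3\}$ and on the divisibility of $q\pm 1$ by $3$, in which one either exhibits an explicit transvection as a commutator of $\rho$ with an $\Alt(n)$-translate of itself (or of $\tau$), or invokes the classification of maximal overgroups of $\Alt(n)$ inside $\SL_n(\F_q)$ to rule out containment in any of them. The support bound $p+5$ is set precisely so that both the permutation block and the small auxiliary block fit comfortably inside one $U$ (resp.\ $V$), while the gap $p<n-5$ leaves enough room to place the order-$3$ block $\rho$ on basis vectors disjoint from $\supp(\sigma_V)$.
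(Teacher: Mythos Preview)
Your overall strategy mirrors the paper's: build $U$ and $V$ as commuting products of a $p$-cycle permutation matrix with a small block of the complementary order, extract $\Alt(n)$ from $\langle U^2, V^3\rangle$ via Jordan's theorem, then upgrade to $\SL_n(\F_q)$. The difference lies in the choice of the small auxiliary blocks, and this is precisely where your argument breaks.

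Take $q$ odd with $3 \mid q-1$ (for instance $q=7$). Then your $\tau = \mathrm{diag}(-1,-1,1,\dots,1)$ and your $\rho = \mathrm{diag}(\omega,\omega^{-1},1,\dots,1)$ are both \emph{diagonal} matrices. Consequently $\langle \Alt(n), \tau, \rho\rangle$ is contained in the group of monomial matrices of determinant~$1$, a proper subgroup of $\SL_n(\F_q)$. No commutator of monomial matrices is a transvection, so the sentence ``a standard commutator then produces a transvection'' cannot hold here. Your justification that ``$\rho$ has entries outside $\{0,\pm 1\}$ \dots\ these conjugates escape the subgroup of monomial matrices'' conflates two things: having a nontrivial root of unity on the diagonal is still monomial. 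In the remaining cases (characteristic~$2$ or~$3$, or $3\mid q+1$ with the companion-matrix $\rho$) your blocks are non-monomial and the approach could in principle be salvaged, but you have not actually produced the promised transvection there either; the case analysis you defer is the whole content of the step.

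The paper avoids this difficulty by taking the auxiliary $5\times 5$ blocks to be $(2,3)$-generators $A,B$ of $\SL_5(\F_q)$, whose existence for all $q$ is quoted from \cite{Tchakerian05}. After obtaining $\Alt(n)$, one uses its $(n-2)$-transitivity to conjugate the $A$-block so that it sits in the same $5\times 5$ corner as the $B$-block; together they then generate a full copy of $\SL_5(\F_q)$ inside $\langle U,V\rangle$, which contains elementary matrices, and the $\Alt(n)$-conjugates of a single elementary matrix give all of $\SL_n(\F_q)$. This is also why the support bound in the statement is $p+5$ rather than the $p+3$ your construction would yield.
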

\begin{proof}
The group $\SL_5(\F_q)$ is $(2,3)$-generated, i.e., there are generators $A,B \in \SL_5(\F_q)$ of order $2$ and $3$; see \cite{Tchakerian05}.
Let $U$ be a block diagonal matrix
\[
	U = \left(\begin{array}{c|ccc|c}C_1 & & & & \\
	\hline
	& 1 & & &  \\
	& & \ddots & &  \\
	& & & 1 &  \\
	\hline
	& & & & A \end{array}\right)
\] 
whose upper left $(p\times p)$-block $C_1$ is a $p$-cycle permutation matrix. Let $V$ be a block diagonal matrix 
\[
	V = \left(\begin{array}{c|ccc|c} B & & & &\\
	\hline
	& 1 & & & \\
	& & \ddots & & \\
	& & & 1 &  \\
	\hline
	& & & & C_2\end{array}\right)
\] 
 whose lower right $(p\times p)$-block $C_2$ is a $p$-cycle permutation matrix.
Since $p > n/2 > 5$ we have $\ord(U)=2p$ and $\ord(V) = 3p$.
The group $\langle U, V \rangle$ contains the permutation matrices $U^2$, $V^{3}$ which generate a transitive subgroup of $\Alt(n)$ that contains a $p$-cycle, so by \cite[Theorem 4.2]{HandbookComp} they generate the group of alternating permutation matrices. Since $\Alt(n)$ acts $n-2$-transitively (and $n-2>5$), we can pick a permutation matrix $P \in \langle U, V \rangle$ such that $PU^pP^{-1}$ has the $5\times 5$-block $A$ in the upper left position. It follows that $PU^pP^{-1}$ and $V^p$ generate a copy of $\SL_5(\F_q)$ in the upper left block, that contains in particular elementary matrices. By conjugating these elementary matrices with permutation matrices, we find all elementary matrices in $\langle U, V \rangle$ and we deduce that
$\SL_n(\F_q) = \langle U, V \rangle$. Note that by construction each of $U$ and $V$ is supported on at most $p+5$ standard basis vectors.
\end{proof}
\begin{theorem}\label{thm:frame-minimal-SL}
Let $d \geq 4$ and let $\F_q$ be a finite field. The group $G_{\mathcal{SL}_d(\F_q)}$ is a $2$-generated frame for $\prod\limits_{\ell=1}^{\infty}\SL_{d^\ell}(\F_q)$.
\end{theorem}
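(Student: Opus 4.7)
The plan is to follow the proof of Theorem~\ref{thm:frame-minimal-Alt} step by step, substituting Proposition~\ref{prop:B-telescope-E} for Proposition~\ref{prop:B-telescope} and using Lemma~\ref{lem:generators-SL} in place of the explicit input on alternating groups, while rerunning the support calculus of \S\ref{ex:elementary}. The frame property follows quickly: Proposition~\ref{prop:B-telescope-E} makes $\mathcal{SL}_d(\F_q)$ a flexible $\SL_{2d}(\F_q)$-telescope, and since $2d \geq 8$, Remark~\ref{rem:E-telescopes}(b) says $B=\SL_{2d}(\F_q) = \E_{2d}(\F_q)$ is perfect. Theorem~\ref{thm:completion} then yields $\widehat{G}_{\mathcal{SL}_d(\F_q)} \cong \prod_{\ell \geq 1} \SL_{d^\ell}(\F_q)$, and Corollary~\ref{cor:first-levels} supplies the inclusion of $\bigoplus_{\ell \geq 1} \SL_{d^\ell}(\F_q)$; together these verify Definition~\ref{def:frame-intro}.

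For the $2$-generation assertion I would follow the reduction used in the Alt proof. By Corollary~\ref{cor:group-associated-to-shifted-B-telescope}, $G_{\mathcal{SL}_d(\F_q)}$ splits as $K_{n-1} \oplus \bigoplus_{j=1}^{n-1} \SL_{d^j}(\F_q)$. Both factors are perfect and their non-abelian simple quotients (the $\PSL_{d^j}(\F_q)$ for $j<n$ in the finite part, versus the $\PSL_{d^\ell}(\F_q)$ for $\ell\geq n$ in the dense image of $K_{n-1}$) are disjoint, so no proper subdirect subgroup surjects onto both. Thus it suffices to exhibit $2$ generators of $K_{n-1}$ for some large $n$. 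To this end I fix generators $\tau_1, \tau_2$ of $\SL_{2d}(\F_q)$, take $n$ large enough that $d^n \geq 2|\SL_{2d}(\F_q)|$ holds, and pick a prime $p$ in the Nagura window $d^n/2 < p < d^n-8$ with $p > |\SL_{2d}(\F_q)|$, so that $p$ is automatically coprime to the orders of $\tau_1, \tau_2$. Lemma~\ref{lem:generators-SL} then produces $U, V \in \SL_{d^n}(\F_q)$ of orders $2p, 3p$ generating $\SL_{d^n}(\F_q)$ with supports on at most $p+5$ standard basis vectors each. Because $d^n - (p+5) \geq 3$, one may conjugate $U$ and $V$ by signed permutation matrices in $\SL_{d^n}(\F_q)$ so that $\supp_{X^n}(U) \cap (\{x_d^{n-1}\} \times \{x_1, x_2, x_d\}) = \emptyset$ and $\supp_{X^n}(V) \cap (\{x_1^{n-1}\} \times \{x_1, x_2, x_d\}) = \emptyset$; the support analysis of Proposition~\ref{prop:B-telescope-E} then forces $\Delta_n(U)$ to commute with $\tilde\tau_1^{[n]}$ and $\Delta_n(V)$ to commute with the conjugate $c = \Delta_{n-1}(\omega)^{-1} \tilde\tau_2^{[n]} \Delta_{n-1}(\omega)$, where $\omega \in \SL_{d^{n-1}}(\F_q)$ is a signed permutation taking $x_d^{n-1}$ to $x_1^{n-1}$.

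Setting $a = \Delta_n(U)\tilde\tau_1^{[n]}$ and $b = \Delta_n(V) c$, the coprimality of orders lets one extract $\Delta_n(U), \tilde\tau_1^{[n]}, \Delta_n(V)$, and $c$ individually from suitable powers of $a$ and $b$. Since $\langle U, V\rangle = \SL_{d^n}(\F_q)$, the subgroup $\langle a,b\rangle$ contains $\Delta_n(\SL_{d^n}(\F_q))$, in particular $\Delta_n(\omega_{n-1,n})$; conjugating $c$ back then recovers $\tilde\tau_2^{[n]}$, and $C^{[n]}$ lies in $\langle a,b\rangle$ because $\tau_1, \tau_2$ generate $\SL_{2d}(\F_q)$. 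Finally Lemma~\ref{lem:level-subgroups} identifies $\langle a,b\rangle$ with $K_{n-1}$. The principal technical hurdle is the simultaneous combinatorial placement of the supports of $U$ and $V$ to respect all commutativity constraints while keeping $p$ inside the Nagura window and coprime to the orders of the fixed generators of $\SL_{2d}(\F_q)$; both requirements are met by taking $d^n$ sufficiently large in terms of $d$ and $q$.
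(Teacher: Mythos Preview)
Your approach is essentially the paper's, and the frame property as well as the reduction to $2$-generating some $K_{n-1}$ are handled correctly. The support placement for $U$ and $V$ is also fine. However, there is a genuine gap in the separation step.

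You set $a = \Delta_n(U)\,\tilde\tau_1^{[n]}$ with $\ord(U) = 2p$ and claim that ``coprimality of orders'' lets you extract $\Delta_n(U)$ and $\tilde\tau_1^{[n]}$ individually from $\langle a \rangle$. For commuting elements $x,y$ this requires $\gcd(\ord(x),\ord(y))=1$. You only arranged $p > |\SL_{2d}(\F_q)|$, which forces $\gcd(p,\ord(\tau_1))=1$; it says nothing about the factor $2$ in $\ord(U)=2p$. If $\ord(\tau_1)$ is even---the generic situation for an arbitrary generator of $\SL_{2d}(\F_q)$---then $\gcd(2p,\ord(\tau_1)) \geq 2$, and from $\langle a\rangle$ you recover only $\Delta_n(U)^{\ord(\tau_1)}$, hence at best $\Delta_n(U)^2$. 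Inspecting the construction in Lemma~\ref{lem:generators-SL}, $U^2$ is the bare $p$-cycle permutation block and has lost the $\SL_5$-block $A$, so $\langle U^2, V\rangle$ need not equal $\SL_{d^n}(\F_q)$ and the argument collapses. The symmetric problem arises with the factor $3$ in $\ord(V)=3p$ whenever $3 \mid \ord(\tau_2)$.

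The paper addresses exactly this point by invoking a $(2,3)$-generation theorem for $\SL_{2d}(\F_q)$ due to Pellegrini (applicable since $2d\ge 8$): one takes generators $s,t$ of $B$ with $\ord(s)=2$ and $\ord(t)=3$, and then pairs $t$ with $U$ and $s$ with $V$. This gives $\gcd(\ord(U),\ord(t))=\gcd(2p,3)=1$ and $\gcd(\ord(V),\ord(s))=\gcd(3p,2)=1$, so the separation into $\Delta_n(U)$, $\Delta_n(V)$, and the two $C^{[n]}$-factors goes through cleanly. Your proof becomes correct if you replace the arbitrary $\tau_1,\tau_2$ by such a $(2,3)$-generating pair, or otherwise force $\ord(\tau_1)$ to be odd and $\ord(\tau_2)$ to be coprime to~$3$.
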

\begin{proof}
Let $\Gamma=G_{\mathcal{SL}_d(\F_q)}$ denote the group of the $\SL_{2d}(\F_q)$-telescope.
From Proposition~\ref{prop:B-telescope-E} we know that $\mathcal{SL}_d(\F_q)$ is flexible.
Moreover the group $B = \SL_{2d}(\F_q)$ is perfect since $2d \geq 3$. Again Theorem~\ref{thm:completion} implies that the inclusion $\Gamma \rightarrow \prod \limits_{\ell=1}^{\infty} \Omega_{\ell}$ induces an isomorphism $\widehat{\Gamma} \rightarrow \prod \limits_{\ell=1}^{\infty} \Omega_{\ell}$ and Corollary~\ref{cor:first-levels} shows that it is a frame.
 
 Let $q = \ell^k$ for a prime number $\ell$. We will show that $\Gamma$ is $2$-generated. As in the proof of Theorem \ref{thm:frame-minimal-Alt} it is sufficient to prove that the level subgroup $K_n$ is $2$-generated for some $n$.
 Choose $n$ such that $d^n \geq 24$.
 By the main theorem of \cite{Nagura52} there is a prime number $p$ such that $d^n/2 < p < \frac{2}{3}d^n \leq d^n -8$. Let $U,V \in \SL_{d^n}(\F_q)$ be generators of order $2p$ and $3p$ as in Lemma~\ref{lem:generators-SL}. 
Since $2d\geq 8$ it follows from \cite{Pellegrini17} that $B = \SL_{2d}(\F_q)$ can be generated by two elements $s,t$ where $\ord(s) = 2$ and $\ord(t) = 3$. 
 By Lemma~\ref{lem:generators-SL}  there are $d^n-p-5 \geq 3$ standard basis vectors, that are not contained in the support of $U$; i.e., we may conjugate $\tilde{t}^{[n]}$ with an element $\omega \in \Delta_n(\Omega_n)$ such that $\delta_1\tilde{t}^{[n]}\delta_1^{-1}$ and $\Delta_n(U)$ commute.
 Similarly, we may conjugate $\tilde{s}^{[n]}$ with an element $\delta_2 \in \Delta_n(\Omega_n)$ such that $\delta_2\tilde{s}^{[n]}\delta_2^{-1}$ and $\Delta_n(V)$ commute.
 We deduce that 
 \begin{align*}
 \langle \Delta_n(U)\omega\tilde{t}^{[n]}\omega^{-1}, \Delta_n(V)\tilde{s}^{[n]}\rangle &\supseteq \langle \Delta_n(U)^{3},\Delta_n(V)^2,\delta_1\tilde{t}^{[n]}\delta_1^{-1},\delta_2\tilde{s}^{[n]}\delta_2^{-1} \rangle\\ 
 &\supseteq \langle \Delta_n(\Omega_n), C^{[n]}\rangle = K_{n-1}. \qedhere
 \end{align*}
 \end{proof}
The construction of telescopes of matrix groups is not limited to fields and is able to provide more exotic examples.
Let $R$ be an associative unital ring and let
 $\mathcal{E}_d(R)$ denote the telescope defined in Example~\ref{ex:elementary}.
The profinite completion of the associated group $G_{\mathcal{E}_d(R)}$ is given by
$\widehat{G_{\mathcal{E}_d(R)}} \cong \prod_{j=1}^\infty \widehat{\E_{d^j}(R)}$, where $\E_n(R)$ denotes the subgroup of $\GL_n(R)$ generated by elementary matrices.
Now $G_{\mathcal{E}_d(R)}$ is finitely generated if and only if $\E_{d}(R)$ is finitely generated. In this case, we deduce that the minimal number of generators needed to generated $\bigoplus \limits_{n=1}^{m} \E_{d^n}(R)$ is bounded above by some constant only depending on $d$ and $R$.
Although this is probably known, we would like to mention that the latter shows, without using any arithmetic, that $\E_{d^m}(R)$ does not admit quotients of a given finite cardinality when $m$ is chosen large enough.

\medskip 

Let us consider a concrete instance.
Given $d \geq 4$, we consider the flexible $\SL_{2d}(\Z)$-telescope $\mathcal{SL}_d(\Z)$ described in \S \ref{ex:elementary}.
Since $\SL_{2d}(\Z)$ is perfect and finitely generated, it follows from Theorem \ref{thm:completion} that $G_{\mathcal{SL}_d(\Z)}$ is a finitely generated group whose profinite completion is isomorphic to
\[
\widehat{G_{\mathcal{SL}_d(\Z)}} \cong \prod_{k=1}^\infty \widehat{\SL_{d^\ell}(\Z)} \cong \prod_{k=1}^\infty \prod_{p \text{ prime }} \SL_{d^k}(\Z_p),
\]
where the latter isomorphism is a consequence of the congruence subgroup property of $\SL_n(\Z)$ for $n\geq 3$.
\begin{corollary}\label{cor:products-sln-4-generated}
The group $\prod \limits_{i=1}^{k} \SL_{d^{i}}(\Z)$ is $4$-generated for every $d \geq 4$ and every $k \in \N$.
\end{corollary}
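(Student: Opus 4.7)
\medskip

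\noindent\textbf{Proof plan for Corollary~\ref{cor:products-sln-4-generated}.}
The plan is to deduce the bound from the definition of $G_{\mathcal{SL}_d(\Z)}$ together with the density statement in Corollary~\ref{cor:dense}, so that essentially no new work beyond classical facts about $\SL_n(\Z)$ is required.

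First I would recall that in the telescope $\mathcal{SL}_d(\Z)$ we have $\Omega_n = \E_{d^n}(\Z)$, and since $d^n \geq 4 \geq 3$, Remark~\ref{rem:E-telescopes}(c) identifies $\Omega_n$ with $\SL_{d^n}(\Z)$. Next, since $B = \SL_{2d}(\Z)$ is perfect for $2d \geq 3$, the telescope $\mathcal{SL}_d(\Z)$ falls within the setting of Section~\ref{sec:profinite-completion} and Corollary~\ref{cor:dense} applies: for every $k \in \N$ the canonical map
\[
  G_{\mathcal{SL}_d(\Z)} \longrightarrow \Omega_1 \times \dots \times \Omega_k = \prod_{i=1}^k \SL_{d^i}(\Z)
\]
is surjective. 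Consequently it is enough to show that $G_{\mathcal{SL}_d(\Z)}$ itself is generated by $4$ elements, since any generating set descends through the quotient map.

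By definition $G_{\mathcal{SL}_d(\Z)} = \langle \Delta_1(\Omega_1), C^{[1]} \rangle$, where $\Delta_1\colon \Omega_1 \to P_{\mathcal{S}}$ is injective and $C^{[1]} = \tau_1(B)$ is a homomorphic image of $B$ (see Remark~\ref{rem:tau_n-is-a-morphism}). Hence a generating set of $\Omega_1 = \SL_d(\Z)$ together with a generating set of $B = \SL_{2d}(\Z)$ yields, via $\Delta_1$ and $\tau_1$, a generating set for $G_{\mathcal{SL}_d(\Z)}$ of the same total cardinality. Invoking the classical fact that $\SL_n(\Z)$ is $2$-generated for every $n \geq 2$ (e.g.\ by elementary matrices $E_{1,2}(1)$ and a suitable permutation-type matrix), both $\SL_d(\Z)$ and $\SL_{2d}(\Z)$ can be generated by $2$ elements each, so $G_{\mathcal{SL}_d(\Z)}$ is generated by $4$ elements. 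Combining this with the surjection above gives the claim.

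There is no serious obstacle here: the substantive content is contained in Theorem~\ref{thm:completion} and Corollary~\ref{cor:dense}, and the only external input is the elementary observation that $\SL_n(\Z)$ is $2$-generated. The only point deserving a brief justification in the write-up is that $G_{\mathcal{SL}_d(\Z)}$ surjects onto each finite truncation $\prod_{i=1}^k \SL_{d^i}(\Z)$, which is exactly Corollary~\ref{cor:dense}.
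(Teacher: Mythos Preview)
Your proposal is correct and follows essentially the same route as the paper: both argue that $G_{\mathcal{SL}_d(\Z)}=\langle \Delta_1(\Omega_1),C^{[1]}\rangle$ is $4$-generated because $\SL_d(\Z)$ and $\SL_{2d}(\Z)$ are each $2$-generated (the paper cites Trott for $n\geq 3$), and then use the surjection onto $\prod_{i=1}^k \SL_{d^i}(\Z)$. The only cosmetic difference is that you invoke Corollary~\ref{cor:dense} for the surjection while the paper cites Corollary~\ref{cor:group-associated-to-shifted-B-telescope}; either reference suffices.
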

\begin{proof}
It is a classical result of Trott~\cite{Trott62} that $\SL_n(\Z)$ can be generated by $2$ elements for $n \geq 3$. Hence $G_{\mathcal{SL}_d(\Z)}$ is $4$-generated and maps onto $\prod \limits_{i=1}^{k} \SL_{d^{i}}(\Z)$ by Corollary~\ref{cor:group-associated-to-shifted-B-telescope}.
\end{proof}
\subsection{Simplicity of the head}
 Let $\mathcal{S}=((\Omega_i)_{i \in \N},  (\phi_i)_{i \geq 2})$ be a flexible $B$-telescope for a perfect group $B$. Recall that $Q_\mathcal{S} = G_\mathcal{S}/\bigoplus_{i\in \N} \Omega_i$ is called the \emph{head} of $\mathcal{S}$.
 It follows from Theorem \ref{thm:completion} that the head $Q_{\mathcal{S}}$ does not admit any finite quotients.
 By Corollary \ref{cor:group-associated-to-shifted-B-telescope} there is a canonical isomorphism between the head $Q_\mathcal{S}$ and the head $Q_{\mathcal{S}_{+n}}$ of any shifted $B$-telescope.
We will later see examples where the head is a simple group. The following criterion gives one way to verify this.
\begin{lemma}\label{lem:simplicity-criterion}
Let $B$ be a perfect finite group and let $\mathcal{S} = ((\Omega_i)_{i\in \N},  (\phi_i)_{i \geq 2})$ be a flexible $B$-telescope.
Assume that $\Omega_1$ is finitely generated. We consider the group 
\[
	\mathcal{N}^\infty = \bigcup_{j=1}^\infty \bigcap_{k=j}^{\infty} N_{G_\mathcal{S}}(C^{[k]}).
\]
Assume that every maximal subgroup $M\subseteq G_\mathcal{S}$ containing $\mathcal{N}^\infty$ satisfies $\mathrm{core}(M)=\mathrm{core}(\mathcal{N}^\infty)$. Then $G_\mathcal{S}/\mathrm{core}(\mathcal{N}^\infty)$ is a simple group.
\end{lemma}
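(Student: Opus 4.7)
The strategy is to prove the contrapositive: every normal subgroup $\tilde{N}\trianglelefteq G_\mathcal{S}$ with $\tilde{N}\supsetneq N_0\defeq\mathrm{core}(\mathcal{N}^\infty)$ coincides with $G_\mathcal{S}$. Two preliminary observations set the stage. First, $G_\mathcal{S}$ is finitely generated by Theorem~\ref{thm:completion}, since $B$ is finite and $\Omega_1$ is finitely generated. Second, a short computation with the commutation axiom \eqref{eq:commutator} shows that each $C^{[k]}$ is contained in $\mathcal{N}^\infty$ (the homomorphism property of $\tau_k$ plus \eqref{eq:commutator} implies $\tilde{b}^{[j]}$ normalizes $C^{[k]}$ for $k\geq j$); using the flexibility axioms \ref{it:a-b}--\ref{it:ba-b} one further checks that $\Delta_i(\alpha_i)\in\mathcal{N}^\infty$ for every $i$.

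Fix such a $\tilde{N}$ and consider the subgroup $H\defeq \tilde{N}\cdot\mathcal{N}^\infty$ of $G_\mathcal{S}$, which is a subgroup since $\tilde{N}$ is normal. If $H$ is a proper subgroup, then by finite generation $H$ is contained in some maximal subgroup $M$; since $M\supseteq \mathcal{N}^\infty$, the hypothesis gives $\mathrm{core}(M)=N_0$, and normality of $\tilde{N}\subseteq M$ forces $\tilde{N}\subseteq \mathrm{core}(M)=N_0$, contradicting $\tilde{N}\supsetneq N_0$. This handles the easy case and reduces everything to the situation $H=G_\mathcal{S}$.

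Assume now $\tilde{N}\cdot\mathcal{N}^\infty=G_\mathcal{S}$. Then $\tilde{N}\cap \mathcal{N}^\infty$ is normal in both factors $\tilde{N}$ and $\mathcal{N}^\infty$ and therefore normal in their product $G_\mathcal{S}$. Being contained in $\mathcal{N}^\infty$ it must lie inside $N_0$, and together with the obvious reverse inclusion this yields $\tilde{N}\cap\mathcal{N}^\infty=N_0$. Passing to the quotient produces a semidirect decomposition
\[
    G_\mathcal{S}/N_0 \;=\; (\tilde{N}/N_0)\rtimes(\mathcal{N}^\infty/N_0)
\]
with non-trivial normal factor $\tilde{N}/N_0$, and the task becomes to rule this out.

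This last step is the main obstacle. A naive attempt to collapse the semidirect product by showing $\Delta_1(\Omega_1)\subseteq\mathcal{N}^\infty$ cannot succeed, because the telescope axioms impose no commutation constraints on the first level $\Omega_1$. Instead, the plan is to exploit Lemma~\ref{lem:normal-closure}, which identifies the normal closure of $C^{[k]}$ in $G_\mathcal{S}$ as the level kernel $K_k$: starting from an element of $\tilde{N}\setminus N_0$, one aims to extract --- via conjugation and the commutator identities used in the proof of Lemma~\ref{lem:G-subgroups}, which involve the flexibility elements $\alpha_i\in\mathcal{N}^\infty$ --- a non-trivial element of some $C^{[k]}$ lying inside $\tilde{N}$, conclude $K_k\subseteq \tilde{N}$, and then bootstrap along the shifted telescopes $\mathcal{S}_{+n}$ (cf.\ Corollary~\ref{cor:group-associated-to-shifted-B-telescope}) to reach $\tilde{N}=G_\mathcal{S}$. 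Making this extraction precise in the presence of the semidirect decomposition, where the $\mathcal{N}^\infty$-component of an element can interfere with its $\tilde{N}$-component, is the technical heart of the argument.
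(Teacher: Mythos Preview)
Your argument contains a genuine error and a genuine gap, and it misses the clean idea that makes the paper's proof work.

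\textbf{The error.} You claim that $\tilde{N}\cap\mathcal{N}^\infty$ is normal in $\tilde{N}$. This is not justified: for $t\in\tilde{N}$ and $x\in\tilde{N}\cap\mathcal{N}^\infty$ you certainly get $txt^{-1}\in\tilde{N}$, but there is no reason why $txt^{-1}$ should lie in $\mathcal{N}^\infty$, since $\mathcal{N}^\infty$ is not normal in $G_\mathcal{S}$ and $t$ need not belong to it. (Concretely, with $G=S_4$, $N=A_4$, $H=\langle(1234)\rangle$ one has $NH=G$ but $N\cap H=\{e,(13)(24)\}$ is not normal in $G$.) So the semidirect decomposition $G_\mathcal{S}/N_0=(\tilde{N}/N_0)\rtimes(\mathcal{N}^\infty/N_0)$ is not established.

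\textbf{The gap.} Even setting the decomposition aside, you explicitly leave the ``technical heart of the argument'' as a plan. The proposed extraction of a non-trivial element of some $C^{[k]}$ inside $\tilde{N}$ is not carried out, and it is unclear how the flexibility identities alone would produce such an element starting from an arbitrary $g\in\tilde{N}\setminus N_0$.

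\textbf{What the paper does instead.} After reaching $\tilde{N}\mathcal{N}^\infty=G_\mathcal{S}$ (your ``easy case'' argument is the same as the paper's), the paper works in the \emph{quotient} $G_\mathcal{S}/\tilde{N}$ rather than inside $\tilde{N}$. The projection $p\colon\mathcal{N}^\infty\to G_\mathcal{S}/\tilde{N}$ is surjective; lift a finite generating set of $G_\mathcal{S}/\tilde{N}$ to elements $n_1,\dots,n_r\in\mathcal{N}^\infty$. By definition of $\mathcal{N}^\infty$ there is a single $j$ with $n_i\in N_{G_\mathcal{S}}(C^{[j]})$ for all $i$, so $p(C^{[j]})$ is \emph{normal} in $G_\mathcal{S}/\tilde{N}$. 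Since the normal closure of $C^{[j]}$ in $G_\mathcal{S}$ is $K_j$ (Lemma~\ref{lem:normal-closure}) and $\bigoplus_i\Omega_i\subseteq\mathrm{core}(\mathcal{N}^\infty)\subseteq\tilde{N}$, one gets $p(C^{[j]})=G_\mathcal{S}/\tilde{N}$. Now the hypothesis that $B$ is \emph{finite} --- which you never use --- forces $C^{[j]}$ and hence $G_\mathcal{S}/\tilde{N}$ to be finite. But $Q_\mathcal{S}$ has no non-trivial finite quotients by Theorem~\ref{thm:completion}, so $\tilde{N}=G_\mathcal{S}$. This bypasses entirely the need to locate elements inside $\tilde{N}$.
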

 We note that $\bigoplus_{i\in \N} \Omega_i$ is always contained in $\mathcal{N}^\infty$ (and thus in $\mathrm{core}(\mathcal{N}^\infty)$).
 \begin{proof}
 Let $N \trianglelefteq G_\mathcal{S}$ be a normal subgroup that contains $\mathrm{core}(\mathcal{N}^\infty)$ properly. By assumption $N$ is not contained in a maximal subgroup $M$ containing $\mathcal{N}^\infty$.
 Since $N\mathcal{N}^\infty$ is larger than $\mathcal{N}^\infty$ and not contained in any maximal subgroup containing $\mathcal{N}^\infty$,
 we have $N\mathcal{N}^\infty = G_\mathcal{S}$, i.e., the restriction of the canonical projection $p \colon \mathcal{N}^\infty \to G_\mathcal{S}/N$ is onto.
 By assumption $G_{\mathcal{S}}$ is finitely generated; we pick a finite generating set $g_1,\dots, g_r$ and choose inverse images $n_1,\dots, n_r \in  \mathcal{N}^\infty$ under $p$.
 By definition of $\mathcal{N}^\infty$ there is a $j \geq 1$ such that $n_1,\dots,n_r$ normalize $C^{[j]}$.
 In turn, the image $p(C^{[j]})$ in $G_\mathcal{S}/N$ is a normal subgroup.
 By Lemma \ref{lem:normal-closure} the normal closure of $C^{[j]}$ generates $G_\mathcal{S}$ with $\bigoplus_{i\in\N} \Omega_i \subseteq N$ and hence $p(C^{[j]}) =G_\mathcal{S}/N$.
Since $C^{[j]}$ is  finite, it follows that $G_\mathcal{S}/N$ is finite. However, $Q_{\mathcal{S}}$ doesn't admit non-trivial finite quotients, thus 
$N = G_{\mathcal{S}}$ and $G/\mathrm{core}(\mathcal{N}^\infty)$ is simple. 
 \end{proof}
\section{Normal subgroups of $G_{\mathcal{S}}$}\label{sec:normal-subs}

Let $\mathcal{S} = ((\Omega_i)_{i \in \N},(\phi_i)_{i \geq 2})$ be a flexible $B$-telescope for a perfect group $B$.
The aim of this section is to study the structure of normal and subnormal subgroups of $G_{\mathcal{S}}$ for telescopes of finitely generated non-abelian simple groups.

\subsection{Normal subgroups of $G_{\mathcal{S}}$}

\begin{proposition}\label{prop:structure-of-normal-subgroups}
Let $B$ be a finitely generated, perfect group.
Suppose that there is a flexible $B$-telescope $\mathcal{S} = ((\Omega_i)_{i \in \N},(\phi_i)_{i \geq 2})$ and  assume that each $\Omega_i$ is a finitely generated non-abelian simple group. If the head $Q_\mathcal{S}$ of $G_\mathcal{S}$ is simple, then
 for each normal subgroup $N$ of $G_{\mathcal{S}}$ exactly one of the following holds.
\begin{enumerate}
\item There is a subset $I \subseteq \N$ such that $N = \bigoplus \limits_{i \in I} \Omega_i$.
\item There is some $n \in \N$ and a finite subset $I \subseteq \{1,\ldots,n\}$ such that $N = G_{\mathcal{S}_{+n}} \oplus \bigoplus \limits_{i \in I} \Omega_i$.
\end{enumerate}
\end{proposition}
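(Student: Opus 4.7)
The plan is to split the analysis according to the image of $N$ in the simple quotient $Q_\mathcal{S}$, which must be trivial or all of $Q_\mathcal{S}$. These two alternatives will yield the two cases of the statement (which are mutually exclusive since the second forces $N$ to contain elements of full support via $G_{\mathcal{S}_{+n}}$).

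For the first case, where the image is trivial and hence $N \subseteq \bigoplus_{i} \Omega_i$, I would define $I = \{i : \Omega_i \subseteq N\}$; this is well-defined because $N \cap \Omega_i$ is a normal subgroup of the simple group $\Omega_i$ (since $\Omega_i$ normalises $N$). The inclusion $\bigoplus_{i \in I} \Omega_i \subseteq N$ is immediate. For the converse, if $x = (x_j)_j \in N$ has $x_j \neq 1$ for some $j$, then $[x, \sigma] = ([x_j, \sigma])_j \in N \cap \Omega_j$ for every $\sigma \in \Omega_j$; since $\Omega_j$ is non-abelian simple, the normal closure of $\{[x_j, \sigma] : \sigma \in \Omega_j\}$ is $\Omega_j$, forcing $j \in I$ and hence $N = \bigoplus_{i \in I} \Omega_i$.

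For the second case, where $N \cdot \bigoplus_i \Omega_i = G_\mathcal{S}$, I would set $M = N \cap \bigoplus_i \Omega_i$; since $M$ is normal in $G_\mathcal{S}$ and contained in $\bigoplus_i \Omega_i$, the previous argument gives $M = \bigoplus_{j \in J} \Omega_j$ for some $J \subseteq \N$. With $I = \N \setminus J$, the quotient map induces $G_\mathcal{S}/N \cong \bigoplus_{i \in I} \Omega_i$. The crucial step is to show $I$ is finite. For this, Theorem~\ref{thm:completion} ensures that $G_\mathcal{S}$ is finitely generated; picking a finite generating set $g_1, \ldots, g_r$, each image $\bar{g}_l$ in $\bigoplus_{i \in I} \Omega_i$ has finite support, so all $\bar{g}_l$ lie in $\bigoplus_{i \in I_0} \Omega_i$ for some finite $I_0 \subseteq I$. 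But the $\bar{g}_l$ generate $\bigoplus_{i \in I} \Omega_i$, and since every $\Omega_i$ is non-trivial this forces $I \subseteq I_0$.

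Fixing $n$ with $I \subseteq \{1, \ldots, n\}$ gives $\bigoplus_{l > n} \Omega_l \subseteq N$. Invoking the internal direct sum $G_\mathcal{S} = G_{\mathcal{S}_{+n}} \oplus \bigoplus_{l \leq n} \Omega_l$ from Corollary~\ref{cor:group-associated-to-shifted-B-telescope} together with $G_{\mathcal{S}_{+n}}/\bigoplus_{l > n} \Omega_l = Q_{\mathcal{S}_{+n}} \cong Q_\mathcal{S}$, the quotient $G_\mathcal{S}/\bigoplus_{l > n} \Omega_l$ decomposes as $Q_\mathcal{S} \oplus \bigoplus_{l \leq n} \Omega_l$, a finite direct product of non-abelian simple groups ($Q_\mathcal{S}$ is non-abelian because $G_\mathcal{S}$ is perfect, being generated by the perfect subgroups $\Delta_1(\Omega_1)$ and $C^{[1]}$). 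Its normal subgroups are subproducts by the commutator argument above, so the image of $N$ contains the $Q_\mathcal{S}$-factor (as $N$ surjects onto $Q_\mathcal{S}$) and meets each $\Omega_l$ in $\Omega_l$ for $l \in J \cap \{1, \ldots, n\}$ and trivially otherwise. Pulling back yields $N = G_{\mathcal{S}_{+n}} \oplus \bigoplus_{l \in J \cap \{1, \ldots, n\}} \Omega_l$, as required. I expect the main obstacle to be the finiteness of $I$ in the second case, which crucially uses the finite generation of $G_\mathcal{S}$ coming from Theorem~\ref{thm:completion}; the rest is a routine combination of the structure of normal subgroups of finite direct products of non-abelian simple groups with the telescope's internal direct sum decomposition.
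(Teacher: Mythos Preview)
Your proof is correct and follows essentially the same strategy as the paper's: split on whether $N$ maps trivially into the simple head $Q_\mathcal{S}$, use finite generation of $G_\mathcal{S}$ to cut down to finitely many factors, and conclude via the direct-sum decomposition of Corollary~\ref{cor:group-associated-to-shifted-B-telescope} together with simplicity of the $\Omega_i$. The only organizational difference is in the second case: the paper reaches the finite index $n$ via an ascending union $N_k = \langle N, \bigoplus_{i \leq k} \Omega_i\rangle$ and then argues $K_{k_0} \subseteq N$, whereas you identify $G_\mathcal{S}/N \cong \bigoplus_{i \in I} \Omega_i$ via the second isomorphism theorem and read off finiteness of $I$ from finite generation of the quotient---a slightly more streamlined route to the same conclusion.
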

\begin{proof}
Let $N$ be a normal subgroup of $G_{\mathcal{S}}$.
Suppose first that $N \subseteq \bigoplus \limits_{i \in \N} \Omega_i$.
Then it clearly follows that $N = \bigoplus \limits_{i \in I} \Omega_i$, where $I \subseteq \N$ consists of those numbers $i$ for which the image of the canonical projection $\pi_i \colon N \rightarrow \Omega_i$ is non-trivial.
We may therefore assume that $N$ is not contained in $\bigoplus \limits_{i \in \N} \Omega_i$.
For each $k \in \N$ let $N_k$ denote the normal subgroup of $G_{\mathcal{S}}$ that is generated by $N$ and $\bigoplus \limits_{i=1}^{k} \Omega_i$.
Then $N_{\infty} \defeq \bigcup \limits_{k=1}^{\infty} N_k$ is a normal subgroup of $G_{\mathcal{S}}$ that properly contains $\bigoplus \limits_{i \in \N} \Omega_i$.
Since the head $Q_\mathcal{S}$ is assumed to be simple, it follows that $N_{\infty} = G_{\mathcal{S}}$.
Using our assumption that $B$ and $\Omega_1$ are finitely generated, it follows that $G_{\mathcal{S}}$ is finitely generated.
We therefore obtain $N_{\infty} = N_{k_0}$ for some $k_0 \in \N$.
Hence $G_{\mathcal{S}} = \langle N \cup \bigoplus \limits_{i=1}^{k_0} \Omega_i \rangle$ and it follows from the simplicity of $\Omega_i$ that $K_{k_0}$ is contained in $N$.
Together with Corollary~\ref{cor:group-associated-to-shifted-B-telescope} this gives us
\[
G_{\mathcal{S}_{+k_0}}
= K_{k_0}
\subseteq N
\subseteq G_{\mathcal{S}_{+k_0}} \oplus \bigoplus \limits_{i=1}^{k_0} \Omega_i.
\]
As a consequence, there is a finite set $I \subseteq \{1,\ldots,k_0\}$ such that $N = G_{\mathcal{S}_{+k_0}} \oplus \bigoplus \limits_{i \in I} \Omega_i$, which proves the claim.
\end{proof}

\begin{corollary}\label{cor:fg-direct-factors}
Let $B$ be a finitely generated, perfect group.
Suppose that there is a flexible $B$-telescope $\mathcal{S} = ((\Omega_i)_{i \in \N},(\phi_i)_{i \geq 2})$ and that each $\Omega_i$ is a finitely generated non-abelian simple group. If the head $Q_\mathcal{S}$ is simple, 
then the following are equivalent for a subgroup $N \leq G_{\mathcal{S}}$.
\begin{enumerate}
\item $N$ is a finitely generated normal subgroup of $G_{\mathcal{S}}$.
\item $N$ is a direct factor of $G_{\mathcal{S}}$.
\end{enumerate}
\end{corollary}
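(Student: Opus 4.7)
The plan is to deduce the equivalence directly from the classification of normal subgroups provided by Proposition~\ref{prop:structure-of-normal-subgroups}. Before invoking it, I would first note that the ambient group $G_{\mathcal{S}}$ is finitely generated: since each $\Omega_i$ is finitely generated and $B$ is finitely generated, Theorem~\ref{thm:completion} applies to both $\mathcal{S}$ and every shifted telescope $\mathcal{S}_{+n}$. This is useful because direct factors of finitely generated groups are themselves finitely generated (project a finite generating set onto the factor); this immediately gives the implication (2)$\Rightarrow$(1), since every direct factor is in particular normal.

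For the implication (1)$\Rightarrow$(2), I would apply Proposition~\ref{prop:structure-of-normal-subgroups} to a finitely generated normal subgroup $N \leq G_{\mathcal{S}}$ and treat the two resulting cases separately. In the first case $N = \bigoplus_{i\in I}\Omega_i$ for some $I \subseteq \N$; finite generation forces $I$ to be finite, because any finite collection of elements of $\bigoplus_{i\in \N}\Omega_i$ has support contained in a finite set of indices and each $\Omega_i$ is non-trivial. In the second case, $N = G_{\mathcal{S}_{+n}} \oplus \bigoplus_{i\in I}\Omega_i$ for some $n \in \N$ and finite $I \subseteq \{1,\dots,n\}$, so $I$ is automatically finite.

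In both cases I would then complete the argument by invoking the decomposition
\[
G_{\mathcal{S}} \;=\; G_{\mathcal{S}_{+n}} \;\oplus\; \bigoplus_{i=1}^{n}\Omega_i
\]
from Corollary~\ref{cor:group-associated-to-shifted-B-telescope}, taking $n = \max I$ in the first case. Splitting $\{1,\dots,n\} = I \sqcup (\{1,\dots,n\}\setminus I)$ on the right-hand side exhibits $N$ as a direct factor, with complement $G_{\mathcal{S}_{+n}} \oplus \bigoplus_{i \in \{1,\dots,n\}\setminus I}\Omega_i$ in the first case and $\bigoplus_{i\in \{1,\dots,n\}\setminus I}\Omega_i$ in the second. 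Since all genuine work has already been done in Proposition~\ref{prop:structure-of-normal-subgroups} and Corollary~\ref{cor:group-associated-to-shifted-B-telescope}, there is no real obstacle; the only subtlety to double-check is the elementary fact that an infinite direct sum of non-trivial groups is never finitely generated, which rules out the infinite-$I$ branch of the classification.
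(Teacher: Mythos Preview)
Your proposal is correct and follows essentially the same route as the paper: both directions are handled exactly as you describe, invoking Proposition~\ref{prop:structure-of-normal-subgroups} for the classification and Corollary~\ref{cor:group-associated-to-shifted-B-telescope} for the splitting, with finite generation of $G_{\mathcal{S}}$ used to get (2)$\Rightarrow$(1). The only cosmetic difference is that the paper chooses any $n$ with $I \subseteq \{1,\dots,n\}$ rather than $n=\max I$, which also covers the trivial case $I=\emptyset$.
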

\begin{proof}
Since $B$ and $\Omega_1$ are finitely generated it follows that $G_{\mathcal{S}}$ is finitely generated.
Thus every direct factor of $G_{\mathcal{S}}$, being a quotient of $G_{\mathcal{S}}$, is a finitely generated normal subgroup, which gives us $\textit{(2)} \Rightarrow \textit{(1)}$.
Suppose now that $N$ is a finitely generated, normal subgroup of $G_{\mathcal{S}}$.
In view of Proposition~\ref{prop:structure-of-normal-subgroups} there are two cases to consider.
Suppose first that there is a subset $I \subseteq \N$ such that $N = \bigoplus \limits_{i \in I} \Omega_i$.
Since $N$ is finitely generated it follows that $I$ is finite.
Thus there is some $n \in \N$ such that $I$ is contained in $I_n \defeq \{1,\ldots,n\}$.
We can therefore use Corollary~\ref{cor:group-associated-to-shifted-B-telescope} to deduce that
\[
G_{\mathcal{S}}
= G_{\mathcal{S}_{+n}} \oplus \bigoplus \limits_{i \in I_n} \Omega_i
= G_{\mathcal{S}_{+n}} \oplus N \oplus \bigoplus \limits_{i \in I_n \setminus I} \Omega_i,
\]
which proves the claim in this case.
Suppose next that there is some $n \in \N$ and a finite subset $I \subseteq I_n$ such that $N = G_{\mathcal{S}_{+n}} \oplus \bigoplus \limits_{i \in I} \Omega_i$.
Then another application of Corollary~\ref{cor:group-associated-to-shifted-B-telescope} gives us
\[
G_{\mathcal{S}}
= G_{\mathcal{S}_{+n}} \oplus \bigoplus \limits_{i \in I_n} \Omega_i
= N \oplus \bigoplus \limits_{i \in I_n \setminus I} \Omega_i,
\]
which proves the corollary.
\end{proof}

\subsection{Residually finite $t$-groups}

A classical theme in group theory is the study of groups whose subnormal subgroups are subject to certain restrictions.
A natural but limiting restriction is to ask that all subnormal subgroups are normal, or equivalently that normality is a transitive relation.
Reflecting the latter characterization, groups satisfying this property are called $t$-groups.
Since their introduction by Best and Taussky~\cite{BestTaussky41}, $t$-groups gained a considerable amount of interest and many efforts have been made in understanding their structure and finding equivalent or related conditions, see e.g.~\cite{Gaschutz57,Robinson64,BrewsterSehgal87,DardanoDeMari21}.
Despite of many structural properties that can be gained for general $t$-groups and related groups, essentially all examples studied in the literature appear to be solvable.
One reason being that solvable groups guarantee to have many subnormal subgroups.
Another one was pointed out by Robinson~\cite[Section 13.4]{Robinson96} in the case of finite groups.
Using the affirmative solution of the famous Schreier conjecture that the outer automorphism group of a finite simple group is solvable,
he showed that every finite, perfect $t$-group that has no non-trivial abelian normal subgroups is already a product of non-abelian finite simple groups.
Nevertheless, the following result shows that the class of finitely generated $t$-groups also contains examples of infinite groups that have many normal subgroups and yet are far from being solvable.

\begin{theorem}\label{thm:t-group}
Let $B$ be a finitely generated, perfect group.
Suppose that there is a flexible $B$-telescope $\mathcal{S} = ((\Omega_i)_{i \in \N},(\phi_i)_{i \geq 2})$ and that each $\Omega_i$ is a finitely generated non-abelian simple group. If the head $Q_\mathcal{S}$ is simple, then
 $G_{\mathcal{S}}$ is a $t$-group.
\end{theorem}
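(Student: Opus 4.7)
The strategy is to prove the statement by induction on the length of a subnormal chain, which reduces matters to showing that every $2$-subnormal subgroup of $G_\mathcal{S}$ is normal. So fix $H \trianglelefteq N \trianglelefteq G_\mathcal{S}$; the goal is to show $H \trianglelefteq G_\mathcal{S}$. Since $N$ is normal in $G_\mathcal{S}$, Proposition~\ref{prop:structure-of-normal-subgroups} applies and $N$ is of one of two shapes: either $N = \bigoplus_{i \in I} \Omega_i$ for some $I \subseteq \N$, or $N = G_{\mathcal{S}_{+n}} \oplus \bigoplus_{i \in I} \Omega_i$ for some finite $I \subseteq \{1,\dots,n\}$.

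In either shape, each factor $\Omega_i$ appearing in the sum is a normal subgroup of $N$, so $\Omega_i$ normalizes $H$, and hence $H \cap \Omega_i$ is normal in $\Omega_i$. Simplicity forces $H \cap \Omega_i \in \{1, \Omega_i\}$. Set $J = \{ i \in I : \Omega_i \subseteq H\}$. For $i \in I \setminus J$ we have $[H, \Omega_i] \subseteq H \cap \Omega_i = 1$, so $H$ centralizes $\Omega_i$; since $\Omega_i$ is non-abelian simple, and therefore centerless, this forces the $\Omega_i$-coordinate of every element of $H$ to be trivial. Factoring out the $\Omega_i$ for $i \in J$, which lie entirely inside $H$, yields $H = H' \oplus \bigoplus_{i \in J} \Omega_i$, with $H'$ trivial in the first shape and $H' = H \cap G_{\mathcal{S}_{+n}} \trianglelefteq G_{\mathcal{S}_{+n}}$ in the second.

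In the first shape we are immediately done, since $H = \bigoplus_{i \in J} \Omega_i$ is already listed in Proposition~\ref{prop:structure-of-normal-subgroups} as a normal subgroup of $G_\mathcal{S}$. In the second shape the plan is to apply Proposition~\ref{prop:structure-of-normal-subgroups} again, now to the shifted telescope $\mathcal{S}_{+n}$. It is again a flexible $B$-telescope whose groups are finitely generated non-abelian simple, and its head is canonically isomorphic to $Q_\mathcal{S}$ (as observed after Corollary~\ref{cor:group-associated-to-shifted-B-telescope}) and therefore simple. Thus $H'$ itself has one of the two standard forms relative to $\mathcal{S}_{+n}$, and reassembling $H = H' \oplus \bigoplus_{i \in J} \Omega_i$ exhibits $H$ as a subgroup of $G_\mathcal{S}$ of one of the two standard forms in Proposition~\ref{prop:structure-of-normal-subgroups} for $\mathcal{S}$ itself, hence normal in $G_\mathcal{S}$.

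The only real subtlety is the splitting $H = H' \oplus \bigoplus_{i \in J} \Omega_i$ in the middle step: it is crucial that the simple factors $\Omega_i$ are non-abelian, since this gives both the trivial/full dichotomy for $H \cap \Omega_i$ and the centerless property that kills the $\Omega_i$-components when $H \cap \Omega_i = 1$. Once this decomposition is in place, the proof is a bookkeeping exercise in iterating Proposition~\ref{prop:structure-of-normal-subgroups} between $\mathcal{S}$ and $\mathcal{S}_{+n}$.
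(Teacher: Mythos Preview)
Your proof is correct and follows essentially the same approach as the paper's: apply Proposition~\ref{prop:structure-of-normal-subgroups} to the normal subgroup $N$, split off the simple factors $\Omega_i$ from $H$ using the trivial/full dichotomy, and then apply the proposition again to the shifted telescope $\mathcal{S}_{+n}$. The paper's write-up is slightly terser about the splitting step (it just says ``using again the fact that each $\Omega_i$ is non-abelian and simple''), whereas you spell out explicitly how centerlessness of $\Omega_i$ kills the $i$-th coordinate when $H \cap \Omega_i = 1$; otherwise the arguments are the same.
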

\begin{proof}
Let $N$ be a normal subgroup of $G_{\mathcal{S}}$ and let $M$ be a normal subgroup of $N$.
We have to show that $M$ is normal in $G_{\mathcal{S}}$.
Suppose first that $N = \bigoplus \limits_{i \in I} \Omega_i$ for some subset $I \subseteq \N$.
Since each $\Omega_i$ is simple it follows that $M = \bigoplus \limits_{i \in J} \Omega_i$ for some subset $J \subseteq I$, which shows that $M$ is normal in $G_{\mathcal{S}}$.
In view of Proposition~\ref{prop:structure-of-normal-subgroups} we can therefore assume that there is some $n \in \N$ and a finite subset $I \subseteq \{1,\ldots,n\}$ such that $N = G_{\mathcal{S}_{+n}} \oplus \bigoplus \limits_{i \in I} \Omega_i$.
Using again the fact that each $\Omega_i$ is non-abelian and simple, we deduce that $M = M_1 \oplus \bigoplus \limits_{i \in J} \Omega_i$, where $J \subseteq I$ and $M_1$ is a normal subgroup of $G_{\mathcal{S}_{+n}}$.
It therefore remains to show that $M_1$ is normal in $G_{\mathcal{S}}$.
From the assumptions on $\mathcal{S}$ it directly follows that $\mathcal{S}_{+n}$ is a flexible $B$-telescope. The head $Q_{\mathcal{S}_{+n}}$ is canoncially isomorphic to $Q_\mathcal{S}$ and is hence simple.
We can therefore apply Proposition~\ref{prop:structure-of-normal-subgroups} to deduce that either $M_1 = \bigoplus \limits_{i \in J} \Omega_i$ for an appropriate subset $J \subseteq \N_{\geq n+1}$ or $M_1 = G_{\mathcal{S}_{+m}} \oplus \bigoplus \limits_{i \in J} \Omega_i$ for some $m \in \N$ and $J \subseteq \{n+1,\ldots,m\}$.
Now the claim follows since in both cases $M_1$ is normal in $G_{\mathcal{S}}$.
\end{proof}

Consider the $\Alt(rd)$-telescope $\mathcal{A}(d,r)$.
Using Theorem~\ref{thm:frame-minimal-Alt}, we can now deduce the following exotic property of $G_{\mathcal{A}(d,r)}$.

\begin{corollary}\label{cor:fg-subnormal-2-generated}
Let $N$ be a subnormal subgroup of $G_{\mathcal{A}(d,r)}$.
If $N$ is finitely generated, then $N$ is $2$-generated.
\end{corollary}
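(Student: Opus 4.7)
The plan is to combine the $t$-group property of $G_{\mathcal{A}(d,r)}$ with the structure theorem for its normal subgroups, and finish with a standard generation result for direct products whose factors share no common finite quotient.

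First I would verify that the hypotheses of Theorem~\ref{thm:t-group} are met by $\mathcal{S} \defeq \mathcal{A}(d,r)$: the base $B = \Alt(rd)$ is finitely generated and perfect (since $rd \geq 10$), the telescope $\mathcal{A}(d,r)$ is flexible by Proposition~\ref{prop:B-telescope}, each $\Omega_i = \Alt(d^i)$ is a finitely generated non-abelian simple group, and $Q_{\mathcal{A}(d,r)}$ is simple (established in Section~\ref{sec:simplicity-alt}). Therefore $G_{\mathcal{A}(d,r)}$ is a $t$-group and every subnormal subgroup, in particular $N$, is normal. Since $N$ is finitely generated, Corollary~\ref{cor:fg-direct-factors} applies and Proposition~\ref{prop:structure-of-normal-subgroups} leaves only two possibilities: either $N = \bigoplus_{i \in I}\Omega_i$ with $I \subseteq \N$, which must be finite by finite generation, or $N = G_{\mathcal{S}_{+n}} \oplus \bigoplus_{i \in I}\Omega_i$ for some $n \in \N$ and a finite $I \subseteq \{1,\ldots,n\}$.

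Next I would uniformize these two cases. Choose $m \in \N$ large enough so that the proof of Theorem~\ref{thm:frame-minimal-Alt} applied to the shifted $\Alt(rd)$-telescope $\mathcal{A}(d,r)_{+m}$ (which has exactly the same structure as the unshifted one) exhibits $G_{\mathcal{S}_{+m}}$ as $2$-generated; such $m$ exists since the argument only requires $d^m$ to be sufficiently large in terms of $d$ and $r$. By Corollary~\ref{cor:group-associated-to-shifted-B-telescope} we may expand $G_{\mathcal{S}_{+n}} = G_{\mathcal{S}_{+m}} \oplus \bigoplus_{n<i\leq m}\Omega_i$, so after absorbing the extra summands the subgroup $N$ is either trivial, a finite direct sum of $\Omega_i$'s, or of the form $N = A \oplus F$ where $A = G_{\mathcal{S}_{+m}}$ is $2$-generated and $F = \bigoplus_{i \in I'}\Omega_i$ is a finite direct sum with $I' \subseteq \{1,\ldots,m\}$.

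To conclude, I would invoke two classical facts. First, a direct product of finitely many pairwise non-isomorphic non-abelian finite simple groups is $2$-generated (a consequence of Hall's analysis using Goursat's lemma); applied to $F$, whose factors are the pairwise distinct alternating groups $\Alt(d^i)$, this gives $d(F) \leq 2$. Second, if two groups $A$ and $F$ are each $2$-generated and share no non-trivial common quotient, then $A \oplus F$ is $2$-generated, again by a direct Goursat argument on projections of a two-element subset onto the two factors. The no-common-quotient hypothesis holds in our situation: by Theorem~\ref{thm:completion} the finite simple quotients of $A = G_{\mathcal{S}_{+m}}$ are the $\Omega_j = \Alt(d^j)$ with $j > m$, whereas the simple quotients of $F$ lie among the $\Omega_i$ with $i \leq m$, and the alternating groups $\Alt(d^k)$ are pairwise non-isomorphic for distinct $k$. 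This yields $d(N) \leq 2$.

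The argument is almost entirely an assembly of already-proved facts; the only step requiring a little care is verifying that Theorem~\ref{thm:frame-minimal-Alt} applies to the shifted telescope and checking the disjointness of composition factors needed for the Goursat-based generation lemma. I expect the latter to be the main (and still quite mild) obstacle, since one must rule out common non-abelian simple quotients between an infinite telescope group and the finite tail, a task handled cleanly by the explicit description of $\widehat{G_{\mathcal{S}_{+m}}}$ given by Theorem~\ref{thm:completion}.
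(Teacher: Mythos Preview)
Your argument is correct, but it takes a considerably longer route than the paper's. After invoking Theorem~\ref{thm:t-group} to conclude that $N$ is normal, the paper simply applies Corollary~\ref{cor:fg-direct-factors} to deduce that $N$ is a \emph{direct factor} of $G_{\mathcal{A}(d,r)}$, and then observes that any direct factor is in particular a quotient. Since $G_{\mathcal{A}(d,r)}$ is $2$-generated by Theorem~\ref{thm:frame-minimal-Alt}, so is every quotient, and the proof is complete in two lines.

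You invoke Corollary~\ref{cor:fg-direct-factors} but then abandon its conclusion and instead redo the structural analysis via Proposition~\ref{prop:structure-of-normal-subgroups}, eventually reassembling $N$ as a product of a shifted telescope group and a finite sum of alternating groups and finishing with a Goursat argument. This works, but it essentially reproves (a variant of) the $2$-generation step already contained inside Theorem~\ref{thm:frame-minimal-Alt}. The paper's approach buys brevity and avoids having to verify that the shifted telescope is $2$-generated or that the two factors share no common simple quotient; your approach has the minor advantage of making the structure of $N$ explicit, but at the cost of repeating machinery that Theorem~\ref{thm:frame-minimal-Alt} has already packaged.
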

\begin{proof}
From Corollary~\ref{cor:fg-direct-factors} we know that $N$ is a direct factor of $G_{\mathcal{A}(d,r)}$.
As such $N$ is a quotient of $G_{\mathcal{A}(d,r)}$ and we can deduce from Theorem~\ref{thm:frame-minimal-Alt} that $N$ is $2$-generated.
\end{proof}


\section{Direct factors of profinite completions}\label{sec:direct-factors-prof-completions}
Let $G$ be a finitely generated, residually finite group and let $N$ be a finitely generated, normal subgroup of $G$.
Let $\iota \colon N \rightarrow G$ denote the inclusion map and let $\widehat{\iota} \colon \widehat{N} \rightarrow \widehat{G}$ be the induced map on the profinite completions.
In this setting, Goldstein and Guralnick~\cite[Question 3.1]{GoldsteinGuralnick06} raised the following question.

\begin{question}[Goldstein and Guralnick]\label{quest:Goldstein-Guralnick}
Suppose that the image of $\widehat{N}$ under $\widehat{\iota}$ is a direct factor of $\widehat{G}$.
Does it follow that $N$ is a direct factor of $G$?
\end{question}

In the case where $G$ is virtually polycyclic, a positive solution of Question~\ref{quest:Goldstein-Guralnick} was provided by Nikolov and Segal~\cite[Theorem 1]{NikolovSegal07}.
In the same work they also constructed a counterexample for the general case, which is based on the methods in~\cite{KassabovNikolov06}.
Finitely presented examples were given by Bridson~\cite{Bridson09}.
Here we want to present a new counterexample that, as we will see later, is amenable.
To the best of our knowledge, all previously known counterexamples to Question~\ref{quest:Goldstein-Guralnick} contain a non-abelian free group.
Our approach is based on the two following observations. 

\begin{lemma}\label{lem:inner-not-inner}
Let $N$ be a finitely generated residually finite group. If $\tau$ is a finite order automorphism of $N$ that is not inner, but the induced automorphism $\widehat{\tau}$
on $\widehat{N}$ is inner, then $G = N \rtimes \langle \tau \rangle$ provides a counterexample to Question \ref{quest:Goldstein-Guralnick}.
\end{lemma}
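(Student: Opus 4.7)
The plan is to verify that $G = N \rtimes \langle \tau \rangle$ satisfies the hypothesis of Question~\ref{quest:Goldstein-Guralnick} while violating its conclusion. Concretely, one must check four items: $G$ is finitely generated and residually finite, $N$ is a finitely generated normal subgroup of $G$, the image $\widehat{\iota}(\widehat{N})$ is a direct factor of $\widehat{G}$, and $N$ itself is not a direct factor of $G$.

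The first two items are essentially immediate. The group $G$ is generated by any finite generating set of $N$ together with $\tau$, and $N$ is finitely generated and normal by construction. For residual finiteness, the characteristic subgroups $N_n \defeq \bigcap \{L \leq N : [N : L] \leq n\}$ are normal in $G$ (being characteristic in the normal subgroup $N$), of finite index in $G$, and intersect trivially.

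To see that $N$ is not a direct factor of $G$, I would argue by contradiction. Suppose $G = N \times H$ for some subgroup $H$. Then $H$ centralizes $N$ and projects isomorphically onto $G/N = \langle \tau \rangle$, so there exists $h = n_0 \tau \in H$ for some $n_0 \in N$. The equation $h m h^{-1} = m$ for all $m \in N$ then forces $\tau m \tau^{-1} = n_0^{-1} m n_0$ for all $m \in N$, which means $\tau$ agrees with conjugation by $n_0^{-1}$ on $N$, contradicting the hypothesis that $\tau$ is not inner.

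The main step is to show that $\widehat{N}$ is a direct factor of $\widehat{G}$. Since $N$ is finitely generated, every finite index subgroup of $N$ contains a further finite index subgroup that is normal in $G$, so the sequence $1 \to N \to G \to \langle\tau\rangle \to 1$ induces an exact sequence $1 \to \widehat{N} \to \widehat{G} \to \langle\tau\rangle \to 1$, identifying $\widehat{G}$ with $\widehat{N} \rtimes_{\widehat{\tau}} \langle\tau\rangle$. Writing $\widehat{\tau} = \mathrm{conj}(\widehat{n})$ for the inner witness $\widehat{n} \in \widehat{N}$ provided by the hypothesis, I would consider the element $\widehat{n}^{-1}\tau \in \widehat{G}$, which centralizes $\widehat{N}$ and surjects onto a generator of $\langle\tau\rangle$, making $\langle \widehat{n}^{-1}\tau\rangle$ the natural candidate for a complement. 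The step I expect to be most delicate is ensuring that $\widehat{n}^{-1}\tau$, after a suitable modification of $\widehat{n}$ by a central element of $\widehat{N}$, has order exactly $k \defeq \mathrm{ord}(\tau)$: since $(\widehat{n}^{-1}\tau)^k = \widehat{n}^{-k}$ lies in $Z(\widehat{N})$ (because $\widehat{\tau}^k = \mathrm{id}$), one has to solve $z^k = \widehat{n}^{-k}$ in $Z(\widehat{N})$ and replace $\widehat{n}$ by $\widehat{n}z$, which relies on divisibility features of the center that must be available in the profinite setting of interest.
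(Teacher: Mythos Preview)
Your argument follows the same route as the paper's: identify $\widehat{G}$ with $\widehat{N}\rtimes_{\widehat{\tau}}\langle\tau\rangle$, take $\gamma\in\widehat{N}$ with $\widehat{\tau}=\mathrm{conj}(\gamma)$, and use $(\gamma^{-1},\tau)$ to split off a complement to $\widehat{N}$; then observe that no element $(n,\tau)\in G$ centralises $N$ because $\tau$ is not inner. The paper's proof is much terser than yours---it simply asserts the decomposition $\widehat{G}=\widehat{N}\times\langle(\gamma^{-1},\tau)\rangle$ in one line and does not discuss the order of $(\gamma^{-1},\tau)$ at all.

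Your caution about that order is well placed: one computes $(\gamma^{-1},\tau)^k=(\gamma^{-k},1)$, and in general one only knows $\gamma^{k}\in Z(\widehat{N})$, so a direct-product complement of order exactly $k$ requires $\gamma^{k}$ to be a $k$-th power in $Z(\widehat{N})$. The paper does not address this, and your proposed fix (adjust $\gamma$ by a central $k$-th root) is the right shape but not available for arbitrary $N$. However, in the paper's sole application (Theorem~\ref{thm:counterexample-goldberg-guralnick}) one has $\widehat{N}\cong\prod_{\ell}\Alt(d^{\ell})$, whose center is trivial; hence $\gamma^{k}=1$ automatically and no divisibility argument is needed. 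So the delicate step you flag simply evaporates in the intended setting, and both the paper's one-line assertion and your more careful version go through there without modification.
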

\begin{proof}
Since $N$ has finite index in $G$, we have $\widehat{G} = \widehat{N} \rtimes \langle \widehat{\tau} \rangle$.
By assumption $\widehat{\tau}$ is given by conjugation with $\gamma \in \widehat{N}$ and hence $\widehat{G}$ decomposes as a direct product
$\widehat{G} = \widehat{N} \times \langle (\gamma^{-1},\tau) \rangle$.
On the other hand, $\tau$ is not inner and hence there is no element $(n,\tau) \in G$ that centralizes $N$. 
\end{proof}
Despite of the fact that the existence of a single frame in an infinite product of finite non-abelian simple groups was difficult to achieve, it turns out that the property of being a frame is quite stable under combination.
This is demonstrated by the following very useful lemma, see~\cite[Lemma 2.2]{KassabovNikolov06}.
\begin{lemma}[Kassabov, Nikolov]\label{lem:gluing}
Let $\prod \limits_{n \in \N} C_n$ be a product of finite groups.
For each $n \in \N$ let $A_n,B_n \leq C_n$ be subgroups with $\langle A_n,B_n \rangle = C_n$.
Suppose that $G$ is a frame for $\prod \limits_{n \in \N} A_n$ and that $H$ is a frame for $\prod \limits_{n \in \N} B_n$.
Then $K = \langle G \cup H \rangle$ is a frame in $\prod \limits_{n \in \N} C_n$.
\end{lemma}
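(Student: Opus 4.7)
I plan to verify the three defining properties of a frame for $K$ in $P := \prod_n C_n$ in turn. Finite generation is clear: a generating set is the union of finite generating sets for $G$ and $H$. For the inclusion $\bigoplus_n C_n \subseteq K$, observe that $G$ contains the copy of $A_n$ supported at coordinate $n$ (since $\bigoplus_m A_m \subseteq G$), and likewise $H$ contains $B_n$ at coordinate $n$; as $\langle A_n, B_n\rangle = C_n$, the group $K$ contains the copy of $C_n$ at coordinate $n$, hence $\bigoplus_n C_n \subseteq K$. Density of $K$ in $P$ is then immediate, and by compactness the induced map $\widehat K \to P$ is surjective.

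The real work is injectivity of $\widehat K \to P$. It suffices to show that every finite-index normal subgroup $L \trianglelefteq K$ contains a congruence subgroup $K_N := K \cap \prod_{n>N} C_n$ for some $N$. Since $G$ is a frame for $\prod_n A_n$, open subgroups of $\widehat G \cong \prod_n A_n$ contain a tail $\prod_{n>N_1} A_n$, and pulling this back one finds $L \cap G \supseteq G \cap \prod_{n>N_1} A_n$ for some $N_1$; analogously $L \cap H \supseteq H \cap \prod_{n>N_2} B_n$. Set $N = \max(N_1, N_2)$.

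The main obstacle is to amalgamate these two coordinate-wise facts into the single statement $K_N \subseteq L$. I would do this by constructing an explicit factorisation of $\pi \colon K \to K/L$. The inclusion $G \cap \prod_{n>N} A_n \subseteq L$ allows $\pi|_G$ to factor through the projection $p_{\leq N}|_G\colon G \to \prod_{n \leq N} A_n$ as $\tilde\pi_G \circ p_{\leq N}|_G$ for a unique homomorphism $\tilde\pi_G$; one obtains $\tilde\pi_H$ similarly. By the universal property of the free product, $\tilde\pi_G$ and $\tilde\pi_H$ assemble to a homomorphism
\[
\Psi \colon \prod_{n \leq N} A_n \ast \prod_{n \leq N} B_n \longrightarrow K/L.
\]
Next I would check that $\Psi$ descends through the natural surjection $\eta\colon \prod_{n \leq N} A_n \ast \prod_{n \leq N} B_n \twoheadrightarrow \prod_{n \leq N} C_n$: one needs (i) that for $n \neq m$, elements from the $n$-th and $m$-th factors have commuting images in $K/L$, and (ii) that for each $n$, the images of $A_n$ and $B_n$ satisfy the defining relations of $C_n$. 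Both hold automatically from the structure of $\bigoplus_n C_n \subseteq K$: for (i) the relevant elements of $K$ have disjoint supports, and for (ii) they already lie inside the single subgroup $C_n \subseteq \bigoplus_n C_n \subseteq K$. Thus $\Psi$ descends to $\tilde\Psi\colon \prod_{n \leq N} C_n \to K/L$, and verifying the identity $\pi = \tilde\Psi \circ p_{\leq N}|_K$ on the generating subgroups $G$ and $H$ extends it to all of $K$. Hence $K_N = \ker(p_{\leq N}|_K) \subseteq \ker\pi = L$, which gives the required injectivity.
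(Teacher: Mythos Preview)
Your proof is correct. Note that the paper does not actually supply a proof of this lemma; it is quoted from Kassabov--Nikolov \cite[Lemma~2.2]{KassabovNikolov06}, so there is no in-paper argument to compare against. Your approach---reducing injectivity of $\widehat{K}\to P$ to a congruence subgroup statement and then factoring $\pi\colon K\to K/L$ through $\prod_{n\le N}C_n$ via the free product $\prod_{n\le N}A_n \ast \prod_{n\le N}B_n$---is a clean and standard way to carry this out, and all the steps (surjectivity of $p_{\le N}|_G$ onto $\prod_{n\le N}A_n$ because $\bigoplus_n A_n\subseteq G$; the descent of $\Psi$ through $\eta$ because the relevant relations already hold among elements of $\bigoplus_n C_n\subseteq K$; agreement of the two homomorphisms $\pi$ and $\tilde\Psi\circ p_{\le N}$ on the generating set $G\cup H$) are justified.
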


Let $d \geq 5$ and let $r \leq d-3$ be given.
Consider the telescope $\mathcal{A}(d,r)$ and the associated frame $G_{\mathcal{A}(d,r)} \subseteq \prod_{\ell \in \N } \Alt(X^\ell) = P$, where $X = \{x_1,x_2,\dots,x_d\}$.
We define $\delta \in \Alt(X)$ as $\delta = (x_1, x_d)(x_2,x_{d-1})$ and put further $\epsilon_\ell = \delta \times \cdots \times \delta \in \Alt(X^\ell)$.
Then $\epsilon = (\epsilon_1,\epsilon_2,\epsilon_3,\dots) \in P$ is an element of order two, which will be used now in order to apply Lemma~\ref{lem:inner-not-inner}.

\begin{theorem}\label{thm:counterexample-goldberg-guralnick}
The group  $N = \langle G_{\mathcal{A}(d,r)}, G_{\mathcal{A}(d,r)}^{\varepsilon} \rangle \subseteq P$
 is a $4$-generated residually finite group with $\widehat{N} = P$. Let $\tau$ be the automorphism of $N$ induced by conjugation with $\epsilon$. Then $\tau$ is not inner, but the induced automorphism $\widehat{\tau}$ on $P$ is inner.
In particular, $N \rtimes \langle \tau \rangle$ is a counterexample to Question~\ref{quest:Goldstein-Guralnick}
\end{theorem}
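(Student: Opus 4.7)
The plan is to verify the four claims of the theorem in sequence and then conclude via Lemma~\ref{lem:inner-not-inner}. The elementary parts come first: conjugation by $\epsilon$ preserves the number of generators, so since $G_{\mathcal{A}(d,r)}$ is $2$-generated by Theorem~\ref{thm:frame-minimal-Alt}, the conjugate $G_{\mathcal{A}(d,r)}^\epsilon$ is again $2$-generated, whence $N$ is $4$-generated. Residual finiteness of $N$ is immediate from $N \subseteq P$. Since $\epsilon \in P$, conjugation by $\epsilon$ is an inner, hence continuous, automorphism of $P$ that preserves $\bigoplus_\ell \Alt(X^\ell)$, so $G_{\mathcal{A}(d,r)}^\epsilon$ is another frame in $P$; applying Lemma~\ref{lem:gluing} with $A_n = B_n = C_n = \Alt(X^n)$ then shows that $N$ is also a frame in $P$, and in particular $\widehat{N} \cong P$. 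Under this identification, $\widehat{\tau}$ is the inner automorphism of $P$ given by conjugation with $\epsilon$.

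The crux is showing that $\tau$ is not inner on $N$. Since each $\Alt(X^\ell)$ is non-abelian simple for $d^\ell \geq 5$, the product $P$ has trivial center, and since $N$ is dense in $P$, the centralizer $C_P(N)$ coincides with $Z(P) = \{1\}$. Thus, if $\tau$ were equal to conjugation by some $g \in N$, the element $\epsilon^{-1} g$ would centralize $N$ and hence be trivial, forcing $g = \epsilon \in N$. So $\tau$ being inner is equivalent to $\epsilon \in N$, and it suffices to prove $\epsilon \notin N$.

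My approach to this last step is a Hamming-distance invariant along the $x_d$-spine. For $\ell \geq 1$ and $v \in X^\ell$, let $d_\ell(v)$ denote the number of coordinates of $v$ differing from $x_d$; thus $d_\ell(x_d^\ell) = 0$ but $d_\ell(\epsilon_\ell(x_d^\ell)) = d_\ell(x_1^\ell) = \ell$. I claim that every element $s$ in the natural generating set $\Delta_1(\Alt(X)) \cup C^{[1]} \cup \epsilon C^{[1]} \epsilon^{-1}$ of $N$ satisfies $|d_\ell(\pi_\ell(s)(v)) - d_\ell(v)| \leq 2$ for all $v \in X^\ell$ and all $\ell$ (noting that $\epsilon \Delta_1(\Alt(X)) \epsilon^{-1} = \Delta_1(\Alt(X))$ because $\epsilon$ acts coordinatewise). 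For $\Delta_1(\sigma)$ this is obvious, since only the first coordinate of $v$ is affected. For $\tilde{b}^{[1]}$, the commuting factors $\iota_{k,\ell}(\phi_k(b))$ making up $\pi_\ell(\tilde{b}^{[1]})$ have pairwise disjoint supports in $X^\ell$ (a given $v$ matches the prefix pattern $x_d^{k-2} \cdot \{x_1,\dots,x_r\} \cdot X$ for at most one $k$), and each such factor modifies only the coordinates $v_{k-1}$ and $v_k$. The bound then transfers to $\epsilon \tilde{b}^{[1]} \epsilon^{-1}$ because $\epsilon_\ell = \delta^{\otimes \ell}$ acts coordinatewise on $X^\ell$ and therefore cancels on all coordinates left fixed by $\tilde{b}^{[1]}$ at the level of $\epsilon_\ell^{-1}(v)$.

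By induction on word length, any $g \in N$ expressible as a product of $n$ such generators satisfies $d_\ell(\pi_\ell(g)(x_d^\ell)) \leq 2n$ for every $\ell$. If $\epsilon$ lay in $N$ it would admit some fixed word length $n_0$, forcing $\ell \leq 2n_0$ for every $\ell$, which is absurd. Hence $\epsilon \notin N$, the automorphism $\tau$ is not inner, and Lemma~\ref{lem:inner-not-inner} applied to $\tau$ (of order $2$, since $\epsilon^2 = 1$) produces the required counterexample to Question~\ref{quest:Goldstein-Guralnick}. The main technical point is the two-coordinate bound for the conjugated generators $\epsilon \tilde{b}^{[1]} \epsilon^{-1}$, which rests on the coordinate-wise nature of $\epsilon$ together with the disjoint-support structure of the factors $\phi_k(b)$.
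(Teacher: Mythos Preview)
Your argument is correct. The first several steps---$4$-generation via Theorem~\ref{thm:frame-minimal-Alt}, residual finiteness from $N\subseteq P$, the frame property via Lemma~\ref{lem:gluing}, and the reduction of ``$\tau$ not inner'' to $\epsilon\notin N$ using density of $N$ and triviality of $Z(P)$---match the paper's proof exactly.

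The genuine difference lies in how you establish $\epsilon\notin N$. The paper introduces a \emph{consistency} invariant: a point $w\in X^i$ is consistent for $g$ if $\pi_{i+n}(g)$ acts on words beginning with $w$ by the prefix action of $\pi_i(g)$, and one tracks the proportion $\cons_i(g)$ of such points. The paper verifies that $\cons_i$ is subadditive in the sense $\cons_i(gh)\geq \cons_i(g)+\cons_i(h)-1$, is invariant under conjugation by $\epsilon$, and tends to $1$ on the generators of $G_{\mathcal{A}(d,r)}$; hence $\cons_i(g)\to 1$ for every $g\in N$, whereas $\cons_i(\epsilon)=0$ for all $i$. Your approach instead uses the Hamming distance $d_\ell(\,\cdot\,)$ to $x_d^\ell$: each generator of $N$ changes $d_\ell$ by at most $2$ (your bound is even slightly generous, since $\tilde{b}^{[1]}$ in fact changes $d_\ell$ by at most $1$), so any fixed word length gives a uniform bound on $d_\ell(\pi_\ell(g)(x_d^\ell))$, while $\epsilon$ sends $x_d^\ell$ to $x_1^\ell$ with $d_\ell=\ell$. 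Both arguments exploit that $\epsilon$ acts coordinatewise, but yours is more elementary---it avoids the asymptotic ratio and the subadditivity inequality and works with a single word $x_d^\ell$ rather than a density over all of $X^\ell$. The paper's consistency invariant, on the other hand, is more robust: it is defined for arbitrary elements of $P$ and reappears elsewhere in the paper (e.g.\ Lemma~\ref{lem:consistent-alt}) to control the kernel of the quotient map to $Q_{\mathcal{A}(d,r)}$, so it earns its keep beyond this single theorem.
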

\begin{proof}
By Theorem \ref{thm:frame-minimal-Alt} the group $G_{\mathcal{A}(d,r)}$ is a $2$-generated frame, so $N$ can be generated with $4$ elements. The group $N$ is residually finite, because it is a subgroup of $P$.
It follows from Lemma \ref{lem:gluing} that $N$ is a frame in $P$ and hence $\widehat{N} \cong P$ via the natural inclusion.
Call a point $w \in X^{i}$ \emph{consistent} for an element $g \in P$, if $\pi_{i+n}(g)(wy) = \pi_i(g)(w)y$ for all $n \in \N$ and all $y \in X^n$.
Note that $\epsilon$ does not admit any consistent points.
The ratio of the number of consistent points in $X^{i}$ by $d^{i}$ will be called the \emph{consistency of $g$} at level $i$ and is denoted by $\cons_i(g)$. It is easy to check that consistency is invariant under conjugation with $\epsilon$.
We observe that for all $g,h \in P$
 \[
 	\cons_i(gh) \geq \cons_i(g)+ \cons_i(h) -1
 \]
  for all $i$. Indeed, if $w \in X^{i}$ is consistent for $h$ and $h(w)$ is consistent for $g$, then $w$ is consistent for $gh$.
By construction the elements in $\Delta_i(\Alt(X^{i}))$ are consistent at all points in $X^{i}$.
For $c \in \Alt(rd)$ the element $\tilde{c}^{[1]}$ is consistent at all points in $X^{i}$ except for the $r+1$ points $x_d^{i}, x_d^{i-1}x_1, \dots, x_d^{i-1}x_r$. 
 In particular, $\lim_{i \to \infty} \cons(\tilde{c}^{[1]}) = 1$ and by induction on the word length the formula above shows that this holds true for all elements in $G_{\mathcal{A}(d,r)}$ and all elements in $N$.
 We conclude that $\epsilon$ does not lie in $N$. The group $N$ is dense in $P$ and $P$ has trivial center, therefore this implies that $\tau$ is not inner.
\end{proof}
If one starts with another non-trivial element $\delta$, one can achieve that $\tau$ has any given finite order.
The amenability of the group $N \rtimes \langle \tau \rangle$ in Theorem~\ref{thm:counterexample-goldberg-guralnick} is a consequence of a criterion of Juschenko, Nekrashevych and de la Salle~\cite{JuschenkoNekrashevychdelaSalle16} and will be verified in a more general context in \S~\ref{subsec:amenability-of-embeddings}, which is why we omit it here. 
\begin{remark}
One can show that $N$ is the group $G_{\mathcal{S}}$ for a $B$-telescope $\mathcal{S}$ with $B = \Alt(rd)\times \Alt(rd)$.
\end{remark}

\section{Actions of telescopes}\label{sec:actions}
\begin{definition}\label{def:action}
Let $\mathcal{S} = ((\Omega_i)_{i \in \N},  (\phi_i)_{i \geq 2})$ be a $B$-telescope.
 Let $(X_i)_{i\in \N}$ be a directed sequence of sets with compatible maps
\[
	\iota_{i,j}\colon X_i \to X_j
\]
for all $i < j$, such that
each $X_i$ is an $\Omega_i$-set and the map $\iota_{i,j}$ satisfies
\begin{equation}\label{eq:compatibility}
	\iota_{i,j}(\omega.x) = \iota_{i,j}(\omega).\iota_{i,j}(x)
\end{equation}
for all $i<j$, all $\omega \in \Omega_i$, and all $x \in X_i$. On the right hand side $\iota_{i,j}(\omega)$ refers to the transition maps in the $B$-telescope and $\iota_{i,j}(x)$ refers to the transition maps of the system $((X_i),(\iota_{i,j})_{i<j})$.
The image of $X_i$ in $X_j$ will be denoted by $X_{i,j}$.
We say that $\mathcal{S}$ \emph{acts} on $(X_i)_{i\in \N}$ if there is a $\kappa\geq 2$ such that $B_{i+n}$ acts trivially on $X_{i,i+n}$ for all $n \geq \kappa$ and every $i \in \N$.
\end{definition}

For simplicity $\iota_{i,i}$ denotes the identity map on $X_i$.
We define $X_\infty = \varinjlim_{i\in \N} X_i$. The image of $x \in X_i$ in $X_\infty$ will be denoted by $[x]$ and we write $X_{i,\infty}$ for the subset $\{[x] \mid x \in X_i\} \subseteq X_\infty$. Observe that $X _\infty = \bigcup_{i=1}^\infty X_{i,\infty}$.
We say that a sequence $(y_i)_i$ in a set $Y$ \emph{stabilizes} if there is some $t\geq 1$ such that $y_i = y_t$ for all $i \geq t$. In this case we write $\lim_{i\to \infty} y_i$ to denote the stable value $y_t$. 
\begin{lemma}\label{lem:action}
Suppose that $\mathcal{S}$ acts on $(X_i)_{i\in \N}$. Then for every  $\gamma \in \widetilde{G}_\mathcal{S}$ and every $x \in X_i$ there is some $t\geq i$ such that 
\begin{equation}\label{eq:stability}
\pi_{k}(\gamma). \iota_{i,k}(x) = \iota_{t,k}(\pi_{t}(\gamma). \iota_{i,t}(x))
\end{equation}
 for all $k \geq t$. In particular, the sequence 
$[\pi_{i+k}(\gamma). \iota_{i,i+k}(x)]$ stabilizes
and
\[
	\gamma.[x] = \lim_{k\to \infty} [\pi_{i+k}(\gamma). \iota_{i,i+k}(x)]
\]
defines an action of the huge group $\widetilde{G}_\mathcal{S}$ on $X_\infty$.
\end{lemma}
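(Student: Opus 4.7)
The plan is to verify \eqref{eq:stability} first on a generating set of $\widetilde{G}_\mathcal{S}$, and then to show that the set of $\gamma$ satisfying \eqref{eq:stability} is closed under products and inverses. Since $\widetilde{G}_\mathcal{S}$ is generated by the $\Delta_j(\Omega_j)$ for $j \geq 1$ together with $C^{[1]}$, the only generators to check are $\Delta_j(\omega)$ for $\omega \in \Omega_j$ and $\tilde{g}$ for $g \in B$; their inverses $\Delta_j(\omega^{-1})$ and $\widetilde{g^{-1}}$ are of the same form by Remark~\ref{rem:tau_n-is-a-morphism}. Once \eqref{eq:stability} holds on all of $\widetilde{G}_\mathcal{S}$, the limit definition of $\gamma.[x]$, its independence from the representative $x$, and the action axioms will follow from routine bookkeeping.

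For $\gamma = \Delta_j(\omega)$ I would take $t = \max(i,j)$. Since $\pi_k(\gamma) = \iota_{j,k}(\omega)$ for $k \geq t$, factorising $\iota_{j,k} = \iota_{t,k} \circ \iota_{j,t}$ and $\iota_{i,k} = \iota_{t,k} \circ \iota_{i,t}$ and applying compatibility \eqref{eq:compatibility} twice directly yields \eqref{eq:stability}. The more delicate case is $\gamma = \tilde{g}$ with $g \in B$, where I would choose $t = i+\kappa-1$. By construction $\pi_k(\tilde{g}) = \prod_{\ell=2}^k \iota_{\ell,k}(\phi_\ell(g))$, and axiom \eqref{eq:commutator} implies that the factors $\iota_{\ell,k}(\phi_\ell(g)) \in B_{\ell,k}$ pairwise commute. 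For $\ell \geq i+\kappa$ the action hypothesis says that $B_\ell$ fixes $X_{i,\ell}$, and then compatibility \eqref{eq:compatibility} shows that $\iota_{\ell,k}(\phi_\ell(g))$ fixes $\iota_{i,k}(x) \in X_{i,k}$. Permuting these trivially-acting factors to act first leaves the truncated product $\prod_{\ell=2}^{t}\iota_{\ell,k}(\phi_\ell(g)).\iota_{i,k}(x)$, which equals $\iota_{t,k}(\pi_t(\tilde{g}).\iota_{i,t}(x))$ by one more application of compatibility.

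Closure under products is a short telescoping argument: given $\gamma_1, \gamma_2$ satisfying \eqref{eq:stability} and $x \in X_i$, fix a threshold $t_2 \geq i$ for $\gamma_2$ at $x$, set $y = \pi_{t_2}(\gamma_2).\iota_{i,t_2}(x) \in X_{t_2}$, and fix a threshold $t_1 \geq t_2$ for $\gamma_1$ at $y$. For $k \geq t_1$ one then computes
\[
\pi_k(\gamma_1\gamma_2).\iota_{i,k}(x) = \pi_k(\gamma_1).\iota_{t_2,k}(y) = \iota_{t_1,k}\bigl(\pi_{t_1}(\gamma_1).\iota_{t_2,t_1}(y)\bigr) = \iota_{t_1,k}\bigl(\pi_{t_1}(\gamma_1\gamma_2).\iota_{i,t_1}(x)\bigr),
\]
where the final equality uses stability of $\gamma_2$ at level $t_1$. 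Taking classes in $X_\infty$ this identity also reads $(\gamma_1\gamma_2).[x] = \gamma_1.(\gamma_2.[x])$. Well-definedness on $X_\infty$ reduces to the observation that if $x = \iota_{i',i}(x')$ with $x' \in X_{i'}$ then $\iota_{i,k}(x) = \iota_{i',k}(x')$, so choosing $k$ beyond both thresholds produces the same class; and $1.[x] = [x]$ is immediate.

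The main technical obstacle is the stability for the spinal generators $\tilde{g}$. Unlike $\Delta_j(\omega)$, whose projections vanish at all but one level, $\tilde{g}$ has a non-trivial projection at every level beyond the first, so one cannot stabilise by pushing through $\iota$'s alone. The action hypothesis is precisely what truncates the infinite tail of $\pi_k(\tilde{g})$ acting on $\iota_{i,k}(x)$, and the commutativity axiom \eqref{eq:commutator} is what lets one rearrange the surviving factors into a form that can be recognised as $\iota_{t,k}$ applied to $\pi_t(\tilde{g})$ acting at level $t$.
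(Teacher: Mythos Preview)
Your proof is correct and follows essentially the same route as the paper: verify stability on the generators $\Delta_j(\omega)$ and $\tilde{g}$, then use a telescoping argument for closure under products, noting that the generating set is symmetric so inverses need no separate treatment. The only cosmetic differences are that the paper takes $t=i+\kappa$ for $\tilde{g}$ (you take $t=i+\kappa-1$, which is equally valid), and that the trivially-acting factors $\iota_{\ell,k}(\phi_\ell(g))$ for $\ell\geq i+\kappa$ already sit on the right of the product and hence act first on $\iota_{i,k}(x)$, so no permutation via \eqref{eq:commutator} is actually needed there.
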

\begin{proof}
Suppose that $\gamma_1, \gamma_2 \in \widetilde{G}_\mathcal{S}$ satisfy \eqref{eq:stability} for all $i \geq 1$ and all $x \in X_i$.  Then $\gamma_1\gamma_2$ has the same property. Indeed, let $x \in X_i$ be given and let $t \geq i$ be such that $\iota_{t,k}(\pi_{t}(\gamma_2). \iota_{i,t}(x)) = \pi_{k}(\gamma_2). \iota_{i,k}(x)$ and  for all $k \geq t$. Put $y = \pi_{t}(\gamma_2). \iota_{i,t}(x)$.
Let $s \geq t$ be such that $\iota_{s,k}(\pi_{s}(\gamma_1). \iota_{t,s}(y)) = \pi_{k}(\gamma_1). \iota_{t,k}(y)$ for all $k \geq s$. Then
\begin{align*}
   \iota_{s,k}(\pi_{s}(\gamma_1). \iota_{t,s}(y)) = \pi_{k}(\gamma_1). \iota_{t,k}(y) = \pi_k(\gamma_1).\pi_k(\gamma_2).\iota_{i,k}(x)
   = \pi_k(\gamma_1\gamma_2).\iota_{i,k}(x)
\end{align*}
holds for all $k \geq s$. In particular, it is sufficient to prove \eqref{eq:stability} for a set of generators of $\widetilde{G}_\mathcal{S}$. Once this has been verified, the same calculation implies directly that $\gamma.[x] = \lim_{k\to \infty} [\pi_{i+k}(\gamma). \iota_{i,i+k}(x)]$ defines an action of $\widetilde{G}_\mathcal{S}$ on $X_\infty$.

Let  $\omega \in \Omega_i$ and let $x \in X_i$.
By the compatibility conditions \eqref{eq:compatibility} we have $\iota_{i,i+k}(\omega).\iota_{i,i+k}(x) = \iota_{i,i+k}(\omega.x)$ for all $k \geq 0$.
Let $g \in B$. By assumption $\phi_{k}(g)$ acts trivially on $X_{i,k}$ if $k \geq i+\kappa$.
In particular,
\begin{align*}
        \iota_{i+\kappa,k}(\pi_{i+\kappa}(\tilde{g}).\iota_{i,i+\kappa}(x)) &= \iota_{i+\kappa,k}\Bigl(\prod_{j=2}^{i+\kappa}\iota_{j,i+\kappa}(\phi_j(g)).\iota_{i,i+\kappa}(x)\Bigr)\\
        &= \prod_{j=2}^{i+\kappa}\iota_{j,k}(\phi_j(g)).\iota_{i,k}(x) & \text{\small (by \eqref{eq:compatibility})} \\
        &=\prod_{j=2}^{k}\iota_{j,k}(\phi_j(g)).\iota_{i,k}(x)\\
        &= \pi_{k}(\tilde{g}).\iota_{i,k}(x)
\end{align*}
for all $k \geq i+\kappa$.
Now the Lemma follows since by definition $\widetilde{G}_\mathcal{S} = \langle \{\Delta_i(\Omega_i) \mid i \in \N \} \cup C^{[1]} \rangle$.
\end{proof}

The elements $\Delta_i(\omega)$ with $\omega \in \Omega_i$ act in the obvious way by
\[
	\Delta_i(\omega).[x] = [\iota_{i,j}(\omega).x]
\]
for $x \in X_j$ with $j\geq i$.
Let $g \in B$ and let $x \in X_i$, then $\tilde{g}^{[n]}$ acts by
\[
	\tilde{g}^{[n]}.[x] :=  [\pi_{i+\kappa}(\tilde{g}^{[n]}).\iota_{i,i+\kappa}(x)].
\]
 \begin{remark}
 In general, the action of $\widetilde{G}_\mathcal{S}$ on $X_\infty$ is not faithful.
 The elements of $\bigoplus_{i \in \N} \Omega_i$ are contained in $\widetilde{G}_\mathcal{S}$ and act trivially on $X_\infty$. Therefore the action always factors through the head $Q_\mathcal{S}$.
In particular, if $\mathcal{S}$ is a flexible $B$-telescope for a perfect group $B$, then 
by Corollary \ref{cor:group-associated-to-shifted-B-telescope} the image of $G_\mathcal{S}$ in $\Sym(X_\infty)$ agrees with the image of the group of any shifted telescope $\mathcal{S}_{+n}$.
 \end{remark}
 
We observe that  every element of $\widetilde{G}_\mathcal{S}$ acts locally like an element $\Delta_j(\Omega_j)$ for some $j \in \N$.
More precisely, for every  $\gamma \in \widetilde{G}_\mathcal{S}$ and every $x \in X_i$, there is 
some $j \geq i$ and an element $\omega \in \Omega_j$ such that 
$\Delta_j(\omega)^{-1}\gamma$ stabilizes $[x] \in X_\infty$.

\begin{example}\label{ex:subset-action}
Natural constructions of group actions can be extended to $B$-telescopes.
We illustrate this with an easy example.
Let $\mathcal{S}$ denote a $B$-telescope that acts on $((X_i)_{i\in \N},(\iota_{i,j})_{i<j})$ with injective transition maps. Assume $|X_1| \geq k$. Let $Y_i$ denote the set of $k$-element subsets of $X_i$. The action of $\Omega_i$ on $X_i$ induces an action of $\Omega_i$ on $Y_i$.
Moreover, the injective inclusion maps $\iota_{i,j}\colon X_i \to X_j$ induce injective maps $\iota'_{i,j} \colon Y_i \to Y_j$ that are compatible with the actions of $\mathcal{S}$.
We observe that $Y_{i,j}$ is the set of $k$-element subsets of $X_{i,j}$. By assumption $B_{i+n}$ acts trivially on $X_{i,i+n}$ for $n\geq \kappa$ and thus $B_{i+n}$ acts trivially on $Y_{i,i+n}$. In total, we have an action of $\mathcal{S}$ on $(Y_i)_{i \in \N}$.
\end{example}
\begin{lemma}\label{lem:primitive-action}
Let $\mathcal{S}= ((\Omega_i)_{i \in \N},  (\phi_i)_{i \geq 2})$ be a flexible $B$-telescope for a perfect group $B$. Assume that $\mathcal{S}$ acts on $(X_i)_{i \in \N}$. 
Suppose there is $n_0\geq 1$ such that the action of $\Omega_i$ on $X_i$ is primitive (resp.\ $k$-transitive) for all $i \geq n_0$. Assume that $X_\infty$ has at least three elements (resp.\ at least $k$ elements). Then the action of $G_{\mathcal{S}}$
on $X_\infty$ is primitive (resp. $k$-transitive).
\end{lemma}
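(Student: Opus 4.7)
The plan is to exploit that elements $\Delta_j(\omega)$ with $\omega \in \Omega_j$ lie in $G_{\mathcal{S}}$ (by Lemma~\ref{lem:G-subgroups}, using that $B$ is perfect and $\mathcal{S}$ is flexible) and act on $X_\infty$ by $\Delta_j(\omega).[u] = [\omega.u]$ for $u \in X_j$. In particular, the action of $\Omega_j$ on $X_j$ is intertwined with the action of $\Delta_j(\Omega_j) \subseteq G_{\mathcal{S}}$ on $X_{j,\infty}$ via the canonical map $X_j \to X_{j,\infty}$. A key elementary observation I would record first is that if $[x] \neq [y]$ in $X_\infty$ and both $[x],[y]$ lie in $X_{j,\infty}$, then any representatives $u, v \in X_j$ are automatically distinct: if they were equal, then $[x]=[u]=[v]=[y]$.

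For the $k$-transitive claim, given two tuples $([x_1],\ldots,[x_k])$ and $([y_1],\ldots,[y_k])$ of pairwise distinct elements in $X_\infty$, I would use the ascending union $X_\infty = \bigcup_{i} X_{i,\infty}$ to pick a single $j \geq n_0$ such that all $2k$ elements have representatives $u_i, v_i \in X_j$. By the preceding observation, the $u_i$ are pairwise distinct, and so are the $v_i$. By $k$-transitivity of $\Omega_j$ on $X_j$, some $\omega \in \Omega_j$ satisfies $\omega.u_i = v_i$ for all $i$, so $\Delta_j(\omega) \in G_{\mathcal{S}}$ sends $[x_i]$ to $[y_i]$.

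For primitivity, transitivity is the $k=1$ case. To rule out non-trivial blocks, I would consider a $G_{\mathcal{S}}$-invariant equivalence relation $\sim$ on $X_\infty$ such that $[x] \sim [y]$ for some $[x] \neq [y]$, and argue that $\sim$ has a single class. Fix $j_0 \geq n_0$ with representatives $u, v \in X_{j_0}$ of $[x],[y]$; these are distinct. For each $j \geq j_0$, pull back $\sim$ to $X_j$ via $a \sim_j b \iff [a] \sim [b]$. Since $\Delta_j(\omega)$ preserves $\sim$ and acts on $X_{j,\infty}$ via $\omega$, the relation $\sim_j$ is $\Omega_j$-invariant; and $\sim_j$ is non-trivial because the images in $X_j$ of $u$ and $v$ are distinct (by the initial observation) yet $\sim_j$-equivalent. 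Primitivity of $\Omega_j$ on $X_j$ forces $\sim_j$ to have a single class, so all of $X_{j,\infty}$ is $\sim$-equivalent. Taking the union over $j \geq j_0$ shows that $X_\infty$ is a single $\sim$-class, as needed. (The hypothesis that $X_\infty$ has at least three elements ensures the primitivity statement is non-degenerate.)

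The argument is essentially a transfer principle, so there is no deep obstacle. The main point requiring care is the direct-limit bookkeeping: checking that distinct elements of $X_\infty$ must have distinct representatives in any single $X_j$ where they both live, and that the $\Delta_j(\Omega_j)$-action on $X_{j,\infty}$ genuinely intertwines with the $\Omega_j$-action on $X_j$. Both facts follow directly from the definitions and the action formula in Lemma~\ref{lem:action}.
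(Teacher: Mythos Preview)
Your proof is correct and follows essentially the same strategy as the paper: both arguments reduce to the action of $\Delta_j(\Omega_j)$ on $X_{j,\infty}$ via Lemma~\ref{lem:G-subgroups}, and the $k$-transitivity argument is identical. For primitivity the paper works directly with a putative block $Y\subsetneq X_\infty$, finds a level $i$ where $Y\cap X_{i,\infty}$ is proper and non-singleton, and uses primitivity of $\Omega_i$ to move the block non-trivially (this requires first observing that $X_i\to X_{i,\infty}$ is injective); your equivalent formulation via $G_{\mathcal{S}}$-invariant equivalence relations avoids this injectivity step and the need to find a level where the intersection is simultaneously non-singleton and proper, so it is marginally cleaner, but the two arguments are standard reformulations of one another.
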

\begin{proof}
Assume there is some $k \geq 1$ such that the action of $\Omega_i$ on $X_i$ is $k$-transitive for all $i \geq n_0$.
By assumption $X_\infty$ has at least $k$ elements. Let $[x_1],[x_2],\dots,[x_k]$ and $[y_1],[y_2],\dots,[y_k]$ be two $k$-tuples of distinct elements in $X_\infty$.
Without loss of generality, we may assume that $x_1,\dots,x_k, y_1,\dots,y_k \in X_i$ for some $i \geq n_0$. Since the images in $X_\infty$ are distinct, we have $x_s \neq x_t$ and $y_s \neq y_t$ for all $s \neq t$.
Using the $k$-transitivity of the action of $\Omega_i$ on $X_i$, we see that there is an element $g \in \Omega_i$ with $g.x_s = y_s$ for all $s \in \{1,2,\dots,k\}$.
From the definition of the action we conclude that $\Delta_i(g)[x_s] = [y_s]$.
Since the groups $\Delta_{i}(\Omega_i)$ are contained in $G_\mathcal{S}$ by Lemma~\ref{lem:G-subgroups}, it follows that the action of $G_{\mathcal{S}}$ on $X_\infty$ is $k$-transitive.

Suppose next that $X_\infty$ contains at least $3$ elements and that the action of $\Omega_i$ on $X_i$ is primitive for $i \geq n_0$.
Assume that $Y \subsetneq X_\infty$ is a subset with $|Y| \geq 2$. We will show that $Y$ cannot be a block for the action of $G_{\mathcal{S}}$.
Since $X_\infty = \bigcup_{i=1}^\infty X_{i,\infty}$ there is some $i \in \N$ such that $Y_i = Y \cap X_{i,\infty}$ has at least two elements and is different from $X_{i,\infty}$. Since the action of $\Omega_i$ is primitive and $|X_i| \geq 3$, the equivariant map $X_i \to X_{i,\infty}$ is injective. By primitivity there are elements $g \in \Omega_i$ and $y,z \in Y_i$ with
\[
	\Delta_i(g).y \not \in Y_i \quad \text{ and } \quad \Delta_i(g).z \in Y_i \subseteq Y.
\]
Since $X_i$ is invariant under the action of $\Delta_i(\Omega_i)$, we conclude that $\Delta_i(g).y \not\in Y$. Again Lemma~\ref{lem:G-subgroups} implies that the groups $\Delta_{i}(\Omega_i)$ are contained in $G_\mathcal{S}$ and consequently $Y$ cannot be a block for the action of $G_\mathcal{S}$ on $X_\infty$, i.e., the action is primitive.
\end{proof}
We say that $\mathcal{S}$ acts \emph{primitively} on $(X_i)_{i \in \N}$ if $|X_\infty| \geq 3$ and the action of $\Omega_i$ on $X_i$ is primitive for all but finitely many $i$.
Observe that in this case for large $i$ the set $X_{i,\infty}$ is an $\Omega_i$-equivariant image of the primitive $\Omega_i$-set $X_i$. Therefore, it is either trivial or the map $X_i \to X_{i,\infty}$ is injective. Using $|X_\infty| \geq 3$ it follows that $|X_i| \geq 3$ and that the inclusions $\iota_{i,i+1}\colon X_i \to X_{i+1}$ are injective for all sufficiently large $i$. In particular, the action of $\Omega_i$ on $X_i$ is transitive for all large $i$.
\begin{example}\label{ex:alt-action}
 Let $d \geq 5$ and let $2 \leq r \leq d-3$ be given.
We describe an action of the telescope $\mathcal{A}(d,r) = ((\Alt(d^{i})_{i\in \N}), (\phi_n)_{n\geq 2})$ defined in Section \ref{ex:Alt}. Recall that $X = \{x_1,\dots,x_d\}$ is a set with $d$ elements. We think of $X^{i}$ as the set of words of length $i$ in the letters $X$. For $j > i$ we define the transition maps
\[
	\iota_{i,j} \colon X^i \to X^{j} \quad u \mapsto ux_{d}^{j-i},
\] 
i.e., $\iota_{i,j}$ appends $x_{d}^{j-i}$ to the word $u$.
Note that $X_{i,j} = X^{i} \times \{x_d^{j-i}\}$.
For all $\sigma \in \Alt(X^{i})$ and $u \in X^{i}$ we have
\[
	\iota_{i,j}(\sigma).\iota_{i,j}(u) = \iota_{i,j}(\sigma).ux_{d}^{j-i} = \sigma(u)x_{d}^{j-i} = \iota_{i,j}(\sigma.u),
\]
which shows that these transition maps are compatible in the sense of \eqref{eq:compatibility} with the transition maps in $\mathcal{A}(d,r)$.
We claim that this is an action of $\mathcal{A}(d,r)$ in the sense of Definition \ref{def:action}.
Recall that the support of
$B_{i+n}$ is $\{x_d^{i+n-2}\}\times \{x_1,\dots,x_r\} \times X$; see Lemma~\ref{lem:supports}. In particular, using $r \leq d-3 < d$ we see that for $n \geq 2$ the support of $B_{i+n}$ is disjoint from $X_{i,i+n} = X^{i} \times \{x_{d}\}^{n}$ and $B_{i+n}$ acts trivially on $X_{i,i+n}$.

\smallskip

\noindent The transition maps $X_i \to X_j$ are injective and the cardinality of these sets is unbounded, hence $X_\infty$ is an infinite set.
Since the action of $\Alt(d^{i})$ on $d^{i}$ points is $d^{i}-2$ transitive, it follows from Lemma \ref{lem:primitive-action}
that the action of $G_\mathcal{S}$ on $X_\infty$ is highly transitive, i.e., $k$-transitive for every $k$.
 In particular, $\St_{G_\mathcal{S}}(x)$ is a maximal subgroup for every $x \in X_\infty$. 
 
Using the criterion of Nash-Williams one can show that the action of $G_{\mathcal{A}(d,r)}$ on $X_\infty$ is recurrent.
\end{example}

 %

Let $F$ be a field and let $d \geq 4$. Let us consider the $\mathrm{SL}_{2d}(F)$-telescope $\mathcal{SL}_d(F) = ((\Omega_n)_{n\in \N}, (\phi_n)_{n\geq 2})$ defined in Section~\ref{ex:elementary}. Recall that $\Omega_{n} = \SL_{X^n}(F) \cong \SL_{d^n}(F)$, where the matrix coordinates are indexed by $X^n$
 with $X = \{x_1,\dots,x_d\}$, i.e, $\SL_{X^n}(F)$ acts on the vector space $V_n := F[X^n]$ spanned by a basis identified with $X^n$.
 We note that $V_{n+k}$ is canonically isomorphic to $V_n \otimes_F F[X^k]$ and we will tacitly use this identification.
  \begin{example}\label{ex:action-sl}
Consider the injective linear map
 \[
 	\iota'_{i,i+n}\colon V_i  \to V_{i+n},
 \]
which maps a vector $u \in V_i$ to the vector $u\otimes x_{3}^n$.
The system $((V_i),\iota_{i,i+n})$ is compatible with the transition maps of the telescope $\mathcal{SL}_d(F)$. 
We claim that this defines an action of $\mathcal{SL}_d(F)$ on $(V_n)_{n\in \N}$. 
To this end, let $i \geq 2$ be given and fix some $n \geq 2$.
As we have seen in the proof of Proposition \ref{prop:B-telescope-E}, the group $B_{i+n}$ is supported on $Z(i+n,i+n)$.
Thus it remains to note that $Z(i+n,i+n)$ does not contain any of the vectors in $V_{i,i+n} = F[X^{i}\times\{x_{3}^n\}]$. 
 \end{example}
\begin{example}\label{ex:action-projectivespace}
 The actions we have just defined give rise to actions of the telescope $\mathcal{PSL}_d(F) = ((P\Omega_n)_{n\in N}, (\phi_n)_{n\geq 2})$ from Example \ref{ex:PSL}.
We define $P_n$ to be the projective space of $V_n = F[X^n]$ and we note that $\PSL_{X^n}(F)$ acts on $P_n$. Using these actions, we can construct a faithful action of $Q_{\mathcal{PSL}_d(F)}$.
The maps $\iota'_{i,i+n}$ defined above induce a map $\iota_{i,i+n}\colon P_i \to P_{i+n}$ between the projective spaces. The system $((P_i),\iota_{i,j})$ is compatible with the transition maps of the telescope $\mathcal{PSL}_d(F)$. We claim that this defines an action of $\mathcal{PSL}_d(F)$ on $(P_i)_{i\in\N}$.   It suffices to check that $B_{i+n}$ acts trivially on $P_{i,i+n}$. As above, for 
 $i \geq 2$ and $n \geq 2$
 the group $B_{i+n}$ is supported on $Z(i+n,i+n)$; see Proposition \ref{prop:B-telescope-E}. 
 By construction $P_{i,i+n}$ contains all lines spanned by vectors in the subspace $F[X^{i}\times\{x_{3}^n\}]$.
Consequently, $B_{i+n}$ acts trivially on $P_{i,i+n}$.
 The action of $\PSL_{X^n}(F)$ on $P_n$ is $2$-transitive and in particular primitive.
This implies that the action of $Q_{\mathcal{PSL}_d(F)}$ on $P_\infty$ is also $2$-transitive and that stabilizers are maximal subgroups.
We will see later that the action is faithful.  
\end{example}
 
\section{Simplicity of the head for $\mathcal{A}(d,r)$}\label{sec:simplicity-alt}
Here we consider only the telescope $\mathcal{A}(d,r)$ for $d \geq 5$ and $2 \leq r \leq d-3$. We will prove the following result.
\begin{theorem}\label{thm:simplehead-alt}
Let $d \geq 5$ and let $2 \leq r \leq d-3$ be given.
The head $Q_{\mathcal{A}(d,r)} = G_{\mathcal{A}(d,r)}/\bigoplus_{\ell=1}^\infty \Alt(X^{\ell})$ is a $2$-generated infinite simple group.
\end{theorem}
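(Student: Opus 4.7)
The $2$-generation part is immediate from Theorem~\ref{thm:frame-minimal-Alt}: the group $G_{\mathcal{A}(d,r)}$ is $2$-generated and $Q_{\mathcal{A}(d,r)}$ is one of its quotients. For infiniteness, I would use the action of $G_{\mathcal{A}(d,r)}$ on the direct limit $X_\infty$ described in Example~\ref{ex:alt-action}; since $X_\infty$ is countably infinite, the action is highly transitive by Lemma~\ref{lem:primitive-action}, and elements of $\bigoplus_\ell \Alt(X^\ell)$ act trivially on $X_\infty$, the quotient $Q_{\mathcal{A}(d,r)}$ inherits a transitive action on the infinite set $X_\infty$, hence is itself infinite.

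For simplicity, the plan is to invoke Lemma~\ref{lem:simplicity-criterion}. The hypotheses are satisfied because $B = \Alt(rd)$ is finite and perfect and $\Omega_1 = \Alt(X)$ is finite. The main task is then to identify $\core(\mathcal{N}^\infty)$ with $\bigoplus_\ell \Alt(X^\ell)$ and to check the maximality condition. Using the support data in Lemma~\ref{lem:supports} together with the flexibility axioms \ref{it:a-b}--\ref{it:ba-ba}, one shows that the elements $\Delta_i(\omega)$ with $\omega(x_d^i) = x_d^i$, the subgroups $C^{[m]}$ for all $m$, and the direct sum $\bigoplus_\ell \Alt(X^\ell)$ all lie in $\mathcal{N}^\infty$ and each of these fixes the base point $[x_d^\infty] \in X_\infty$; a direct check gives the reverse inclusion $\mathcal{N}^\infty \subseteq \St_{G_{\mathcal{A}(d,r)}}([x_d^\infty])$. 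By the $2$-transitivity from Lemma~\ref{lem:primitive-action}, the stabilizer $\St([x_d^\infty])$ is maximal, so every maximal subgroup containing $\mathcal{N}^\infty$ equals $\St([x_d^\infty])$ itself, with core equal to the kernel $K$ of the action on $X_\infty$. A final support analysis, based on the fact that the first $n$ letters of canonical representatives in $X_\infty$ eventually exhaust $X^n$, shows $K = \bigoplus_\ell \Alt(X^\ell)$, and Lemma~\ref{lem:simplicity-criterion} concludes that $Q_{\mathcal{A}(d,r)}$ is simple.

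The main obstacle is the faithfulness step $K = \bigoplus_\ell \Alt(X^\ell)$, which relies on the subtle observation above; the other identifications are routine if tedious consequences of Lemma~\ref{lem:supports}. A self-contained alternative---matching the promise of a ``short self-contained proof'' at the beginning of Section~\ref{sec:simplicity-alt}---is to argue directly: given $N \trianglelefteq G_{\mathcal{A}(d,r)}$ with $\bigoplus_\ell \Alt(X^\ell) \subsetneq N$, pick $g \in N$ with infinite support and, via suitably chosen commutators with elements of $\Delta_i(\Omega_i)$ and $C^{[m]}$ (guided by Lemma~\ref{lem:supports}), trap a non-trivial element of some $\Delta_i(\Omega_i)$ with $i \geq 2$ inside $N$. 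Since $\Alt(X^i)$ is simple, this forces $\Delta_i(\Omega_i) \subseteq N$; Lemma~\ref{lem:normal-closure-of-Delta} then upgrades this to $K_{i-1} \subseteq N$, and Corollary~\ref{cor:group-associated-to-shifted-B-telescope} yields $N = G_{\mathcal{A}(d,r)}$.
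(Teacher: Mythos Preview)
Your outline correctly identifies the architecture of both of the paper's arguments, but it misses the one technical ingredient on which either route actually rests: the weak normal form of Lemma~\ref{lem:weak-normalform} and its consequence Lemma~\ref{lem:consistent-alt}, which says that every $g\in G_{\mathcal{A}(d,r)}\setminus\bigoplus_\ell\Alt(X^\ell)$ has a \emph{consistent} point $w\in X^k$ in its support, i.e.\ $\pi_{k+n}(g)(wy)=\pi_k(g)(w)\,y$ for all $n$ and all $y\in X^n$.

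In the action-based approach your proposed proof of faithfulness (``the first $n$ letters of canonical representatives eventually exhaust $X^n$'') does not work: the value $g.[u]$ depends only on $\pi_m(g)$ for $m$ large, so covering $X^n$ by representatives says nothing about $\pi_n(g)$. The paper instead uses Lemma~\ref{lem:consistent-alt} directly---a consistent $w$ with $\pi_k(g)(w)\neq w$ gives $g.[w]=[\pi_k(g)(w)]\neq[w]$. There is also a gap in the $\mathcal{N}^\infty$ step: from $\mathcal{N}^\infty\subseteq\St([x_d])$ alone you cannot conclude that every maximal subgroup containing $\mathcal{N}^\infty$ equals $\St([x_d])$; you need equality. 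The paper establishes the opposite inclusion $\St_{Q_\mathcal{S}}([x_d])\subseteq\text{image of }\mathcal{N}^\infty$ by an induction on word length, and then uses maximality of the stabilizer.

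In the ``self-contained'' alternative, ``suitably chosen commutators guided by Lemma~\ref{lem:supports}'' hides precisely the hard part. The commutator $[\Delta_{k+1}(\tau),g]$ lands in $\Delta_{k+1}(\Alt(X^{k+1}))$ \emph{only because} $w$ is a consistent point for $g$; without consistency such a commutator will typically still have unbounded support. Producing a consistent point is exactly what the normal form $g=f\,\delta\,\tilde{\sigma}^{[m+1]}\,\eta$ with $\sigma\in\Alt(X^m\times\{x_1,\dots,x_r\}\times X)$ is for, and that in turn is obtained by an induction on word length in which each new letter from $C^{[1]}$ is absorbed into a single directed factor at the cost of increasing $m$ by one.
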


We have seen in Theorem \ref{thm:frame-minimal-Alt} that $G_{\mathcal{A}(d,r)}$ is $2$-generated. It remains to prove the simplicity of the head.
The key step in doing so will be to show that for elements $g \in  G_{\mathcal{A}(d,r)}$ not contained $\bigoplus_{i=1}^\infty \Alt(X^{i})$ the projections $\pi_n(g)$ are non-trivial for all large $n$. In fact, we will show a bit more and we will see that such elements move a substantial fraction of the points in $X^n$.
Our tool to verify this claim is a weak normal form theorem, that we are going to explain first.

\smallskip

\noindent Let  $m \geq 0$ be given. For all $i \geq m+2$ we define the embedding 
\[\phi^{(m)}_i\colon \Alt(X^m\times \{x_1,\dots,x_r\} \times X) \to \Alt(X^{i})\] 
using the canonical identification of the sets $X^m\times\{x_1,\dots,x_r\} \times X$ and $X^m\times\{x_d^{i-m-2}\}\times \{x_1,\dots,x_r\} \times X$.
Note that for $m=0$ the homomorphisms $\phi_i^{(0)} = \phi_i$ are the structure maps of the telescope. We observe that the images of $\iota_{i,j}\circ\phi^{(m)}_i$ and $\phi^{(m)}_j$ have disjoint supports for $i < j$ and hence commute.
Now we imitate the definition of $\tilde{b}^{[n]}$ for elements $b \in \Alt(\{x_1,\dots,x_r\}\times X)$.
Let $n > m$. For $\sigma \in \Alt(X^m\times \{x_1,\dots,x_r\} \times X)$ we define $\tilde{\sigma}^{[n]}  \in \prod_{i=1}^\infty \Alt(X^{i})$ by $\pi_i(\tilde{\sigma}^{[n]}) = 1$ for all $i <n+1$ and by
\[
	\pi_i(\tilde{\sigma}^{[n]})  = \prod_{k=n+1}^{i} \iota_{k,i}\phi^{(m)}_k(\sigma)
\]
for $i \geq n+1$.
This gives the recursive description 
\begin{equation}\label{eq:tilde-recursive}
 \tilde{\sigma}^{[n]} = \Delta_{n+1}(\phi^{(m)}_{n+1}(\sigma)) \tilde{\sigma}^{[n+1]} =  \tilde{\sigma}^{[n+1]}\Delta_{n+1}(\phi^{(m)}_{n+1}(\sigma)).
\end{equation}

\begin{lemma}\label{lem:weak-normalform}
Let $g \in G_{\mathcal{A}(d,r)}$. There are $m \geq 0$, $\sigma \in \Alt(X^m\times\{x_1,\dots,x_r\}\times X)$, $\delta, \eta \in \Delta_{m+1}(\Alt(X^{m+1}))$ and $f \in \bigoplus_{i=1}^m \Alt(X^{i})$ 
such that
\[
	g = f \delta \tilde{\sigma}^{[m+1]} \eta.
\]
The number $m$ is at most the word length of $g$ with respect to the generating set $\Delta_1(\Alt(X)) \cup C^{[1]}$.
\end{lemma}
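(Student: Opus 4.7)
The plan is induction on the word length $n$ of $g$ with respect to $\Delta_1(\Alt(X))\cup C^{[1]}$; the case $n=0$ is trivial by taking $m=0$ and every factor equal to $1$. For the inductive step I factor $g=g'h$ with $g'$ of word length $n-1$ and $h$ a single generator (inverses of generators have the same form, since $\tau_1$ is a homomorphism), and apply the inductive hypothesis to get $g'=f'\delta'\tilde{\sigma}'^{[m'+1]}\eta'$ with $m'\le n-1$. There will be two cases depending on whether $h=\Delta_1(\omega)$ or $h=\tilde b$, and only the latter will force $m$ to increase.

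The easy case $h=\Delta_1(\omega)$ will keep $m=m'$. By Corollary~\ref{cor:first-levels} I write $\Delta_1(\omega)=\Delta_1(\omega)^{\mathrm{low}}\cdot\Delta_{m'+1}(\iota_{1,m'+1}(\omega))$, where the first factor lies in $\bigoplus_{i=1}^{m'}\Alt(X^i)$ and has support confined to the first $m'$ levels. That finite-support factor commutes past $\eta'$, $\tilde{\sigma}'^{[m'+1]}$ and $\delta'$ by disjointness of supports and merges with $f'$, while $\Delta_{m'+1}(\iota_{1,m'+1}(\omega))$ is absorbed into $\eta'$. The result is a normal form at $m=m'\le n$.

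The substantial case $h=\tilde b$ with $b\in B$ will raise $m$ to $m'+1$. First I shift $g'$ to a level-$(m'+1)$ normal form $g'=F\Delta\tilde{\Sigma}^{[m'+2]}H$, with $\Sigma\in\Alt(X^{m'+1}\times\{x_1,\dots,x_r\}\times X)$ and $H=\Delta_{m'+2}(\iota_{m'+1,m'+2}(\eta'_{m'+1}))$; this shift-up is obtained by expanding via Remark~\ref{rem:normalform-1} together with the compatibility $\phi^{(m')}_k=\phi^{(m'+1)}_k\circ j$ for $k\ge m'+3$, where $j$ is the natural embedding of $\Alt(X^{m'}\times\{x_1,\dots,x_r\}\times X)$ into $\Alt(X^{m'+1}\times\{x_1,\dots,x_r\}\times X)$ along the $\{x_d\}$-slice at position $m'+1$. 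Next I write $\tilde b=\prod_{j=2}^{\infty}\Delta_j(\phi_j(b))$ as a product of pairwise commuting factors (by \eqref{eq:commutator}) and extract the part supported at levels $\le m'+1$ as $\mu^{\mathrm{low}}\in\bigoplus_{i=1}^{m'+1}\Alt(X^i)$, obtaining $\tilde b=\mu^{\mathrm{low}}\cdot\Delta_{m'+2}(\zeta')\cdot\tilde b^{[m'+2]}$ with $\zeta'=\pi_{m'+2}(\tilde b)$. Pushing $\mu^{\mathrm{low}}$ to the left absorbs it into $F$, and combining $H$ with $\Delta_{m'+2}(\zeta')$ yields a single middle factor which I further decompose as $\Delta_{m'+2}(\iota_{m'+1,m'+2}(\tau^*))\cdot\Delta_{m'+2}(\phi_{m'+2}(b))$ with $\tau^*=\eta'_{m'+1}\cdot\pi_{m'+1}(\tilde b)$ (the two pieces commute, since $\iota_{m'+1,m'+2}(\pi_{m'+1}(\tilde b))$ lies in the subgroup generated by the $\iota_{j,m'+2}(B_j)$ for $j\le m'+1$, each of which commutes with $B_{m'+2}$ by \eqref{eq:commutator}).

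The main obstacle is now to pass this middle factor across the two tilde-tails, and the two pieces are handled by opposite moves. The $\iota$-piece moves to the left past $\tilde{\Sigma}^{[m'+2]}$ via the conjugation identity $\iota_{m'+1,k}(\tau)\iota_{i,k}(\phi^{(m'+1)}_i(\rho))=\iota_{i,k}(\phi^{(m'+1)}_i(\tau\rho\tau^{-1}))\iota_{m'+1,k}(\tau)$, which is valid because the support of $\phi^{(m'+1)}_i(\rho)$ in $X^i$ has arbitrary first $m'+1$ coordinates and is therefore setwise preserved by $\iota_{m'+1,k}(\tau)$; this move conjugates $\Sigma$ to $\Sigma_1=(\tau^*)^{-1}\Sigma\tau^*$ and the piece is absorbed into $\Delta$. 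The $B$-piece $\Delta_{m'+2}(\phi_{m'+2}(b))$ instead moves to the right past $\tilde b^{[m'+2]}$; here the two elements in fact commute, because $\tilde b^{[m'+2]}$ is a product of factors in $\iota_{i,k}(B_i)$ for $i\ge m'+3$, each of which commutes with $\iota_{m'+2,k}(B_{m'+2})$ by \eqref{eq:commutator}. Once the two tails are adjacent, I recognise $\tilde b^{[m'+2]}=\widetilde{b_{\mathrm{embed}}}^{[m'+2]}$ for the element $b_{\mathrm{embed}}\in\Alt(X^{m'+1}\times\{x_1,\dots,x_r\}\times X)$ acting as $b$ on the $\{x_d^{m'+1}\}$-slice, fuse the two tails into $\widetilde{\Sigma_{\mathrm{new}}}^{[m'+2]}$ with $\Sigma_{\mathrm{new}}=\Sigma_1\cdot b_{\mathrm{embed}}$, and keep the surviving $\Delta_{m'+2}(\phi_{m'+2}(b))$ as the new $\eta$. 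This yields the normal form for $g$ at level $m=m'+1\le n$, completing the induction.
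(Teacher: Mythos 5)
Your proof is correct and follows essentially the same route as the paper's: induction on word length, expanding the new tilde-generator via the recursion $\tilde b^{[n]} = \Delta_{n+1}(\phi_{n+1}(b))\tilde b^{[n+1]}$, and using the key support observation that conjugating a tilde-tail by a diagonal element based at level $m'+1$ yields another tilde-tail over $\Alt(X^{m'+1}\times\{x_1,\dots,x_r\}\times X)$; the only difference is that you peel off the last letter rather than the first, which mirrors the bookkeeping. (Your parenthetical claim that the two middle pieces commute is justified only for the $\pi_{m'+1}(\tilde b)$-part of $\tau^*$ and not for the $\eta'_{m'+1}$-part, but since the $\iota$-piece moves left and the $B$-piece moves right without ever crossing each other, that commutativity is never actually needed.)
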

\begin{proof}
Let $g \in G_{\mathcal{A}(d,r)}$ and let $\ell$ denote the word length of $g$ with respect to $\Delta_1(\Alt(X)) \cup C^{[1]}$.
We proceed by induction on the word length. In every step the number $m$ increases by at most $1$.
For $\ell = 0$ there is nothing to do. Assume that $\ell = 1$. If $g = \Delta_1(\omega)$, then we set $m=0$ and $\delta = g$ and $\eta = f = \sigma = 1$.
If $g = \tilde{b}^{[1]}$, then we set $m=0$, $\sigma = b$ and $\delta = \eta = f = 1$.

Assume that $\ell > 1$. We write $g = h g'$ with $h \in \Delta_1(\Alt(X)) \cup C^{[1]}$ and $g'$ of length $\ell -1$.
By induction hypothesis there is $m \leq \ell-1$ such that we may write $g' = f \delta \tilde{\sigma}^{[m+1]} \eta$ as above.
We note that $\bigoplus_{i=1}^m \Alt(X^{i})$ is normal, so $hf = f_1h$ with $f_1\in \bigoplus_{i=1}^m \Alt(X^{i})$.

If $h = \Delta_1(\omega)$, then $h = f_2 \Delta_{m+1}(\omega)$ with $f_2 \in \bigoplus_{i=1}^m \Alt(X^{i})$ and we have
\[
	g = hg' = f_1f_2 \underbrace{\Delta_{m+1}(\omega)\delta}_{\in \Delta_{m+1}(\Alt(X^{m+1}))} \tilde{\sigma}^{[m+1]}\eta
\]
and this proves the claim.

Suppose now that $h = \tilde{b}^{[1]}$ for some $b \in \Alt(\{x_1,\dots,x_r\}\times X)$.
We write 
\[
     \tilde{b}^{[1]} = \Delta_2(\phi_2(b)) \dots \Delta_{m+2}(\phi_{m+2}(b)) \tilde{b}^{[m+2]} = f_2 \delta' \tilde{b}^{[m+2]}
 \]
 for some $f_2 \in \bigoplus_{i=1}^{m+1}\Alt(X^{i})$ and $\delta' \in \Delta_{m+2}(\Alt(X^{m+2}))$.
Here is the crucial observation: By looking at the supports one sees that $(\tilde{b}^{[m+2]})^\delta = \tilde{\tau}^{[m+2]}$ for some $\tau \in \Alt(X^{m+1} \times \{x_1,\dots,x_r\} \times X)$.
In addition, by equation \eqref{eq:tilde-recursive} we have $\tilde{\sigma}^{[m+1]} = \tilde{\sigma}^{[m+2]} \eta'$ with $\eta' = \Delta_{m+2}(\phi^{(m)}_{m+2}(\sigma))$.
This gives
\[
	g = hg' = f_1f_2 \delta'\tilde{b}^{[m+2]} \delta \tilde{\sigma}^{[m+1]}\eta = f_1f_2 \delta'\delta \tilde{\tau}^{[m+2]}\tilde{\sigma}^{[m+2]} \eta'\eta
\]
and the assertion follows since $\tilde{\tau}^{[m+2]}\tilde{\sigma}^{[m+2]} = \widetilde{(\tau\sigma)}^{[m+2]}$ and moreover $\delta\delta', \eta'\eta$ are contained in  $\Delta_{m+2}(\Alt(X^{m+2}))\bigoplus_{i=1}^{m+1}\Alt(X^{i})$.
\end{proof}

Let $\sigma \in \Alt(X^{i})$. We define the \emph{support volume} of $\sigma$ on $X^{i}$ to 
 be $\vol(\sigma) =  \frac{|\supp_{X^i}(\sigma)|}{d^{i}}$. For $g \in G_{\mathcal{A}(d,r)}$ we write $\vol_i(g) = \vol(\pi_i(g))$.
 We say that $g \in G_{\mathcal{A}(d,r)}$ is \emph{consistent} at $w \in X^{i}$, if $\pi_{i+n}(g)(wy) = \pi_i(g)(w) y$ for all $n \geq 1$ and all $y \in X^{n}$. 
 
 \begin{lemma}\label{lem:consistent-alt}
Let $g \in G_{\mathcal{A}(d,r)}$. 
Assume that $g \not\in \bigoplus_{i\in \N} \Alt(X^{i})$. Then there is $k \in \N$ and a consistent point $w \in X^{k}$ with
 $g(w) \neq w$. In particular, $\vol_i(g)$ is bounded away from zero and $\pi_j(g) \neq 1$ for all $j \geq k$.
 \end{lemma}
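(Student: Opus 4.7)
The plan is to feed $g$ into the weak normal form of Lemma~\ref{lem:weak-normalform}, writing $g = f\,\delta\,\tilde{\sigma}^{[m+1]}\,\eta$ with $\delta = \Delta_{m+1}(\alpha_\delta)$, $\eta = \Delta_{m+1}(\alpha_\eta)$ and $\sigma \in \Alt(X^m\times\{x_1,\dots,x_r\}\times X)$, and then construct an explicit consistent moved word. Because $f$ vanishes above level $m$ and both $\delta$ and $\eta$ act consistently at every word of length at least $m+1$, the consistency of $g$ at a word $w\in X^k$ with $k>m$ will reduce to the consistency of $\tilde\sigma^{[m+1]}$ at $\eta(w)$. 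In the easy case $\sigma = 1$ one has $g = f\cdot\Delta_{m+1}(\alpha_\delta\alpha_\eta)$, and the hypothesis $g\notin\bigoplus_{i\in\N}\Alt(X^{i})$ forces $\alpha_\delta\alpha_\eta \neq 1$; any $w\in X^{m+1}$ moved by $\alpha_\delta\alpha_\eta$ is then a consistent moved point of $g$.

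Assume from now on that $\sigma\neq 1$. The engine of the proof will be a letter-by-letter description of the disjoint supports of the factors of $\pi_k(\tilde{\sigma}^{[m+1]}) = \prod_{\ell=m+2}^{k} \iota_{\ell,k}\phi_\ell^{(m)}(\sigma)$: using that the middle coordinate of any triple in $\supp(\sigma)$ lies in $\{x_1,\dots,x_r\}$ while $x_d\notin\{x_1,\dots,x_r\}$, I will show (i) that for $(u,x_i,x_j)\in\supp(\sigma)$ the word $u\,x_d^{a}\,x_i\,x_j\,z$ lies in the support of exactly one factor, namely the one with $\ell = a+m+2$, and this assignment survives appending arbitrary suffixes, and (ii) that any word whose first non-$x_d$ letter past position $m$ lies in $\{x_{r+1},\dots,x_{d-1}\}$ lies outside every factor's support, together with all its suffix extensions. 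Now fix $(u,x_i,x_j)\in\supp(\sigma)$, set $(u',x_{i'},x_{j'}) = \sigma(u,x_i,x_j)$ and, for $k\ge m+3$, try the first candidate $w = \alpha_\eta^{-1}(u\,x_d)\cdot x_d^{k-m-3}\cdot x_i\,x_j$. Its image $\eta(w) = u\,x_d^{k-m-2}\,x_i\,x_j$ fits the pattern in (i) with $a = k-m-2$, so the support analysis gives consistency of $w$ for $g$ together with $g(w) = \alpha_\delta(u'x_d)\cdot x_d^{k-m-3}\cdot x_{i'}\,x_{j'}$; if $(x_{i'},x_{j'})\ne(x_i,x_j)$ or $\alpha_\delta(u'x_d)\ne \alpha_\eta^{-1}(u\,x_d)$ we have $g(w)\ne w$ and we are done.

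The hard case is when both equalities hold. Then $u\neq u'$ (because $\sigma$ moves $(u,x_i,x_j)$), and the equation $\alpha_\eta\alpha_\delta(u'x_d) = u\,x_d \neq u'\,x_d$ reveals that $u'x_d$ is moved by $\alpha_\eta\alpha_\delta$. I will exploit this with an auxiliary \emph{consistent fixed} word: pick $z_1\in\{x_{r+1},\dots,x_{d-1}\}$, which exists because $r\le d-3$, and set $\tilde w = \alpha_\eta^{-1}(u'x_d)\cdot z_1\cdot x_d^{k-m-2}$. Its image $\eta(\tilde w) = u'\,x_d\,z_1\,x_d^{k-m-2}$ has its first non-$x_d$ letter past position $m$ equal to $z_1$, so by (ii) it and all its extensions lie outside every factor's support; hence $\tilde{\sigma}^{[m+1]}$ fixes $\eta(\tilde w)$ consistently, and $g(\tilde w) = \alpha_\delta(u'x_d)\cdot z_1\cdot x_d^{k-m-2}$. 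Then $g(\tilde w) = \tilde w$ would force $\alpha_\eta\alpha_\delta(u'x_d) = u'x_d$, contradicting the previous sentence, so $\tilde w$ is the desired consistent moved point. The ``in particular'' assertions follow at once: for any consistent moved $w\in X^k$, consistency gives $\pi_j(g)(wy) = \pi_k(g)(w)\cdot y \neq wy$ for every extension $wy$ with $y\in X^{j-k}$, so $\vol_j(g) \geq d^{-k}$ and $\pi_j(g) \neq 1$ for every $j\geq k$. The main technical obstacle is exactly the two-candidate structure just described: the natural candidate $w$ may coincidentally be fixed by $g$, and the saving observation is that in exactly this situation the point $u'x_d$ is forced to be moved by $\alpha_\eta\alpha_\delta$, which in turn guarantees that $\tilde w$ is moved.
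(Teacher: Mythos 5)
Your proof is correct and follows essentially the same route as the paper: apply the weak normal form of Lemma~\ref{lem:weak-normalform} and then locate a consistently moved word by a disjoint-support analysis of the factors $\iota_{\ell,k}\phi_\ell^{(m)}(\sigma)$, using a letter outside $\{x_1,\dots,x_r\}$ to escape the support of $\tilde{\sigma}^{[m+1]}$ when the first candidate fails. The paper shortens the case analysis by first conjugating by $\eta$ (the assertion is invariant under conjugation by the globally consistent elements of $\Delta_{m+1}(\Alt(X^{m+1}))$), which reduces to $g=\delta\tilde{\sigma}^{[m+1]}$ and replaces your two-candidate argument with a clean split on whether $\supp(\delta)$ meets $X^m\times\{x_d\}$.
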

 \begin{proof}
%
%
 By Lemma \ref{lem:weak-normalform} there is some $m$ such that we may write $g = f\delta\tilde{\sigma}^{[m+1]}\eta$. We will see that for some $k \leq m+3$ we can find a consistent~$w \in X^k$ in the support of~$g$. To prove the claim, we may work modulo $\bigoplus_{i=1}^m \Alt(X^{i})$ and ignore $f$. Also the assertion is invariant under conjugation with $\Delta_{m+1}(\Alt(X^{m+1}))$ and hence we can assume that 
 \[
 	g = \delta \tilde{\sigma}^{[m+1]}
 \]
 with $\delta \in \Delta_{m+1}(\Alt(X^{m+1}))$ and $\sigma \in \Alt(X^{m}\times\{x_1,\dots,x_r\}\times X)$.
 If $\sigma = 1$ then every point $w \in X^{m+1}$ in the support of  $\delta$ is consistent and we may pick $k=m+1$.
 
 
Suppose that $\sigma \neq 1$ and consider the case that for some $v \in X^m$ the point $vx_d$ lies in the support of $\delta$.
Then $w = vx_dx_{d-1} \in X^{m+2}$ lies in the support of $\delta$ but  does not belong to the support of $\tilde{\sigma}^{[m+1]}$. In particular, it is consistent and we can take $k = m+2$.
Finally, we assume that the support of $\delta$ is disjoint from $X^m\times \{x_d\}$. Then we pick $k = m+3$ and an element $w \in X^k$ in the support of $\phi_{k}^{(m)}(\sigma)$.
As $w$ is not in the support of $\phi^{(m)}_{j}(\sigma)$ for any $j \neq k$, it is moved consistently.
We conclude that $\vol_i(g) \geq \frac{1}{d^k}$ for all $i \geq k$.
 \end{proof}
 With these preparations, we can give two proofs of Theorem \ref{thm:simplehead-alt}. The first proof is shorter and explains more directly how consistently moved points imply simplicity. The second proof uses actions and contains more information about the structure of $Q_{\mathcal{A}(d,r)}$.
 
 \begin{proof}[First proof of Theorem \ref{thm:simplehead-alt}]
 Let $N \subseteq  G_{\mathcal{A}(d,r)}$ be a normal subgroup that contains $\bigoplus_{i\in \N} \Alt(X^{i})$ properly.
Pick $g \in N \setminus \bigoplus_{i\in \N} \Alt(X^{i})$. By Lemma \ref{lem:consistent-alt} there is some $k \geq 1$ and a consistent point $w \in X^k$ in the support of $g$. Say $g(w) = v$. Let $\tau \in \Alt(X^{k+1})$ be non-trivial with support in $\{w\}\times X$. Say $\tau$ acts as $\tau(wx) = w t(x)$ for some $t \in \Alt(X)$. Then for all $x \in X$ and $y \in X^{i}$  we obtain (using that $w$ is a consistent point for $g$)
\begin{align*}
		[\Delta_{k+1}(\tau),g](vxy) &= \Delta_{k+1}(\tau)g\Delta_{k+1}(\tau)^{-1}g^{-1}(vxy)\\  
		&= \Delta_{k+1}(\tau)g(\tau^{-1}(wx)y)
		= \Delta_{k+1}(\tau)g(wt^{-1}(x)y)\\
		 &= \Delta_{k+1}(\tau)(vt^{-1}(x)y) = vt^{-1}(x)y.
\end{align*}
Moreover, $[\Delta_{k+1}(\tau),g]$ acts trivially on all points that do not start with $v$.
This means that $[\Delta_{k+1}(\tau),g] \in \Delta_{k+1}(\Alt(X^{k+1}))$ and is non-trivial. Since $\Alt(X^{k+1})$ is simple, we deduce that $\Delta_{k+1}(\Alt(X^{k+1}))$ is contained in $N$. By Lemma \ref{lem:normal-closure-of-Delta} this implies that $N$ contains $K_{k+1}$. Since $G_{\mathcal{A}(d,r)}$ is generated by $K_{k+1}$ and
 $\bigoplus_{i\in \N} \Alt(X^{i})$ (see Corollary~\ref{cor:group-associated-to-shifted-B-telescope}) we deduce that $N = G_{\mathcal{A}(d,r)}$. It follows that $Q_{\mathcal{A}(d,r)}$ is simple.
 \end{proof}
\begin{proof}[Second proof of Theorem \ref{thm:simplehead-alt} ]
We consider the action of $ \mathcal{S} = \mathcal{A}(d,r)$ on $(X_i)_{i\in \N}$ defined in Example \ref{ex:alt-action}. It follows from Lemma \ref{lem:consistent-alt} that the kernel of this action is $\bigoplus_{i\in\N} \Alt(X^{i})$, i.e., the action induces a faithful action of $Q_{\mathcal{S}}$ on $X_\infty$.

We establish the simplicity of $Q_{\mathcal{S}}$ using Lemma \ref{lem:simplicity-criterion}.
To this end, it suffices to verify that $\mathcal{N}^\infty = \bigcup_{j=1}^\infty \bigcap_{k=j}^\infty N_{G_{\mathcal{S}}}(C^{[k]})$  is a maximal subgroup of $G_{\mathcal{S}}$.
We do so by proving that $\mathcal{N}^\infty$ is the stabilizer of $x = [x_d] \in X_\infty$ in $G_{\mathcal{S}}$.
Indeed, as the action is primitive, this is a maximal subgroup.

We consider this stabilizer in $Q_{\mathcal{S}}$ and we claim that $\St_{Q_{\mathcal{S}}}(x)$ is generated by $Z^\infty = \bigcup_{j=1}^\infty \bigcap_{k=j}^\infty Z_{G_\mathcal{S}}(C^{[k]})$ and the groups $C^{[k]}$ with $k \in \N$ modulo $\bigoplus_{i\in \N} \Omega_i$. 
In that case the desired equality follows from the maximality of the stabilizer.
By definition every element of $Q_\mathcal{S}$ can be written as a product of images of elements in $\bigcup_{k \in \N} \Delta_k(\Omega_k) \cup \bigcup_{j=1}^\infty C^{[j]}$. Recall that the image of $\Delta_k(\Omega_k)$ in $Q_\mathcal{S}$ is contained in the image of $\Delta_{k+1}(\Omega_{k+1})$.  Let $g \in \St_{Q_\mathcal{S}}(x)$. We prove the claim by induction on the word length with respect to this generating set. For the induction base we observe that an element $\Delta_k(\sigma)$ with $\sigma \in \Omega_k$ fixes $x$ exactly if $\sigma$ fixes $x_d^k$; in this case $\Delta_k(\sigma)$ centralizes $C^{[j]}$ for all $j > k$, since these groups have a disjoint support for the action on $X^n$ for all $n$. 

Let $g \in \St_{Q_\mathcal{S}}(x)$ of length $\ell > 1$. For the induction step there is nothing to do, if the first or last letter of $g$ is in the image of $C^{[j]}$. We assume that $g = \omega_1 c_2 \omega_3 \dots c_{\ell-1}\omega_\ell$ with $\omega_i$ in the image of $\Delta_{k_i}(\Omega_{k_i})$ and $c_i$ in the image of $C^{[k_i]}$. If $\omega_\ell$ fixes $x$, then it centralizes  $C^{[j]}$ for $j > k_\ell$ and there is nothing to do. 
Otherwise, we may replace $c_{\ell-1}$ by $\omega' c'_{\ell-1}$ to assume that $k_{\ell-1} \geq k_\ell +2$. Then $\omega_\ell^{-1}c'_{\ell-1}\omega_\ell$ fixes $x$. Since $\omega_\ell$ moves $x_d^{k_\ell}$, we have $z := \omega_\ell^{-1}(x_d^{k_\ell}) \neq x_d^{k_\ell}$. The element $\omega_\ell^{-1}c'_{\ell-1}\omega_\ell$ is supported in $\bigcup_{j \geq k_\ell+2} \{z x_d^{j-k_{\ell}-2}\}\times \{x_1,x_2,\dots,x_r\} \times X$ and hence centralizes $C^{[k_{\ell}+2]}$. Now the remaining word $\omega_1 c_2 \omega_3 \dots \omega' \omega_{\ell}$ fixes $x$ and the assertion follows by induction.
\end{proof}

Recall that a finitely generated group is $\LEF$ if it is isomorphic to a subgroup of an ultraproduct of finite groups.
A finitely presented $\LEF$ group is residually finite; see \cite{VershikGordon}.
Lemma \ref{lem:consistent-alt} shows that the homomorphism $G_{\mathcal{A}(d,r)} \to \prod_{\ell\in \N}\Alt(d^\ell)$ induces an injective homomorphism from the head
$Q_{\mathcal{A}(d,r)}$ into the ultraproduct 
$\prod_{\mathcal{U}} \Alt(d^\ell)$ for every free ultrafilter $\mathcal{U}$ on $\N$.
We deduce the following result:
\begin{corollary}
$Q_{\mathcal{A}(d,r)}$ is a simple $\LEF$ group and is not finitely presented.
\end{corollary}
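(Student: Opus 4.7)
The plan is to handle the three claims separately. Simplicity is already contained in Theorem~\ref{thm:simplehead-alt}, so nothing further needs to be done there. For the $\LEF$ property, I would fix a free ultrafilter $\mathcal{U}$ on $\N$ and consider the composition
\[
G_{\mathcal{A}(d,r)} \hookrightarrow \prod_{\ell \in \N} \Alt(d^\ell) \longrightarrow \prod_{\mathcal{U}} \Alt(d^\ell),
\]
where the first map is the canonical inclusion $g \mapsto (\pi_\ell(g))_{\ell \in \N}$ built into the telescope construction. The aim is to show that this composition descends to an \emph{injective} homomorphism on $Q_{\mathcal{A}(d,r)}$, which would exhibit the head as a subgroup of an ultraproduct of finite groups, hence as an $\LEF$ group.

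The crux is identifying the kernel of the composition. If $g \in \bigoplus_{i\in\N}\Alt(X^i)$, then $\pi_\ell(g)=1$ for all but finitely many $\ell$, and any such cofinite set lies in every free ultrafilter on $\N$; so the image of $g$ in $\prod_{\mathcal{U}}\Alt(d^\ell)$ is trivial. Conversely, if $g \notin \bigoplus_{i\in\N}\Alt(X^i)$, then Lemma~\ref{lem:consistent-alt} provides an integer $k$ such that $\pi_j(g) \neq 1$ for every $j \geq k$; since $\{j : j \geq k\}$ is cofinite and $\mathcal{U}$ is free, the image in the ultraproduct is non-trivial. Thus the kernel is exactly $\bigoplus_{i\in\N}\Alt(X^i)$, which is precisely the subgroup quotiented out in the definition of $Q_{\mathcal{A}(d,r)}$, and the $\LEF$ property follows.

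For the failure of finite presentability, I would apply the Vershik--Gordon theorem that every finitely presented $\LEF$ group is residually finite. By Theorem~\ref{thm:simplehead-alt} the group $Q_{\mathcal{A}(d,r)}$ is infinite and simple, hence its only finite quotient is the trivial group, which rules out residual finiteness. Consequently $Q_{\mathcal{A}(d,r)}$ cannot be finitely presented. I do not anticipate any substantial obstacle; the argument is essentially bookkeeping on top of Lemma~\ref{lem:consistent-alt} and the already-established simplicity of the head.
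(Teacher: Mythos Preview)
Your proposal is correct and follows essentially the same route as the paper: simplicity is quoted from Theorem~\ref{thm:simplehead-alt}, the $\LEF$ property is obtained by using Lemma~\ref{lem:consistent-alt} to identify the kernel of the map into the ultraproduct $\prod_{\mathcal{U}}\Alt(d^\ell)$ as exactly $\bigoplus_{i\in\N}\Alt(X^i)$, and the failure of finite presentability is deduced from the Vershik--Gordon fact that finitely presented $\LEF$ groups are residually finite together with the infiniteness and simplicity of the head.
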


\section{Simplicity of the head for $\mathcal{PSL}_d(F)$}\label{sec:simplicity-psl}

In this section we study the telescope $\mathcal{PSL}_d(F)$ over some fixed field $F$ defined in \S \ref{ex:PSL} and we establish the following result.

\begin{theorem}\label{thm:simplicity-psl}
Let $d \geq 4$ and let $F$ be a field. The head $Q_{\mathcal{PSL}_d(F)}$ is a simple group.
\end{theorem}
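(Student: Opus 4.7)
The plan is to adapt the two-step strategy from the second proof of Theorem \ref{thm:simplehead-alt} to the projective setting. We work with the action of $\mathcal{S} = \mathcal{PSL}_d(F)$ on the system $(P_i)_{i \in \N}$ of projective spaces constructed in Example~\ref{ex:action-projectivespace} and study the induced action of $\widetilde{G}_\mathcal{S}$ on $P_\infty$. Since the action of $\PSL_{d^n}(F)$ on $P_n$ is $2$-transitive for every $n$, Lemma~\ref{lem:primitive-action} shows that the action of $G_\mathcal{S}$ on $P_\infty$ is $2$-transitive and in particular primitive, so the stabilizer of any point $p \in P_\infty$ is a maximal subgroup of $G_\mathcal{S}$. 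By the general remark after Lemma~\ref{lem:action} the kernel of the action contains $\bigoplus_{i\in\N} \PSL_{d^i}(F)$, so the action factors through $Q_\mathcal{S}$. The core of the proof is therefore to show that this action is faithful, which combined with primitivity and simplicity of $\PSL_{d^{k+1}}(F)$ for $d^{k+1}\geq 3$ will yield the result.

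For faithfulness I would translate Lemmas~\ref{lem:weak-normalform} and \ref{lem:consistent-alt} to the projective setting. First, for every $m \geq 0$ and $i \geq m+2$ define auxiliary embeddings $\phi_i^{(m)}\colon \SL_{X^m \times \{x_1,x_2\}\times X}(F) \to \PSL_{d^i}(F)$ using the identification of $X^m \times \{x_1,x_2\}\times X$ with the block $X^m \times \{x_d^{i-m-2}\} \times \{x_1,x_2\}\times X \subseteq X^i$, and the corresponding elements $\tilde\sigma^{[n]}$; note that $\phi_i^{(0)}=\phi_i$. An induction on word length in the generating set $\Delta_1(\PSL_d(F))\cup C^{[1]}$, identical in form to the proof of Lemma~\ref{lem:weak-normalform}, shows that every $g \in G_\mathcal{S}$ decomposes as $g = f\,\delta\,\tilde\sigma^{[m+1]}\,\eta$ with $f \in \bigoplus_{i=1}^m \PSL_{d^i}(F)$, $\delta,\eta \in \Delta_{m+1}(\PSL_{d^{m+1}}(F))$, and $\sigma \in \SL_{X^m \times \{x_1,x_2\}\times X}(F)$. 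The only ingredients needed are the flexibility axioms, which hold for $\mathcal{E}_d(F)$ by Proposition~\ref{prop:B-telescope-E} and descend to $\mathcal{PSL}_d(F)$ by Lemma~\ref{lem:quotient-telescope}.

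Next, using this normal form, for any $g \notin \bigoplus_{i}\PSL_{d^i}(F)$ one produces a line $[v]\in P_k$ with $k \leq m+3$ that is \emph{consistent} for $g$, meaning that $\pi_{k+n}(g)$ sends $[v\otimes x_3^n]$ to $(\pi_k(g).[v])\otimes x_3^n$ for all $n\geq 0$, and such that $\pi_k(g).[v]\neq [v]$. The argument parallels Lemma~\ref{lem:consistent-alt}: if $\sigma=1$, any line in a basis vector supported by $\delta$ works at level $m+1$; if $\sigma\neq 1$, choose a basis vector $v\in V_k$ lying in the support of $\phi_k^{(m)}(\sigma)$ but outside the support of $\phi_j^{(m)}(\sigma)$ for every $j\neq k$, which is possible by the disjointness of the block supports used in Proposition~\ref{prop:B-telescope-E}. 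This immediately yields faithfulness of the $Q_\mathcal{S}$-action on $P_\infty$.

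To conclude, let $N \trianglelefteq G_\mathcal{S}$ strictly contain $\bigoplus_{i}\PSL_{d^i}(F)$ and pick $g\in N$ together with a consistent moved line $[v]\in P_k$. Choose a transvection $\bar\tau \in \PSL_{d^{k+1}}(F)$ supported on the $2$-plane spanned by $v\otimes x_3$ and some $u$ not proportional to it and fixing the line $[v\otimes x_3]$. The consistency of $[v]$ forces $\pi_j([\Delta_{k+1}(\bar\tau),g])=1$ for all $j>k+1$: the conjugate $g\Delta_{k+1}(\bar\tau)g^{-1}$ agrees with $\Delta_{k+1}(g\bar\tau g^{-1})$ on the tensor factors $x_3^{j-k-1}$ thanks to consistency, and the two contributions cancel. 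On the other hand, because $g$ moves $[v]$, the projection $\pi_{k+1}([\Delta_{k+1}(\bar\tau),g])$ is nontrivial. Thus $N\cap \Delta_{k+1}(\PSL_{d^{k+1}}(F))$ is a nontrivial normal subgroup of the simple group $\PSL_{d^{k+1}}(F)$, so $\Delta_{k+1}(\PSL_{d^{k+1}}(F))\subseteq N$. By Lemma~\ref{lem:normal-closure-of-Delta} this gives $K_k\subseteq N$, and combined with $\bigoplus_{i\leq k}\PSL_{d^i}(F)\subseteq N$ and Corollary~\ref{cor:group-associated-to-shifted-B-telescope} we obtain $N=G_\mathcal{S}$, proving that $Q_\mathcal{S}$ is simple. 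The main obstacle is the commutator computation of the third paragraph: carefully verifying that consistency of $[v]$ actually kills the projections of $[\Delta_{k+1}(\bar\tau),g]$ at all levels $j > k+1$ requires tracking the action of $\pi_j(g)$ on the tensor factors $x_3^{j-k-1}$ modulo the center $Z_j$; lifting $\bar\tau$ to $\SL$ and arguing on supports of matrices, as in Proposition~\ref{prop:B-telescope-E}, is the cleanest way to carry this out.
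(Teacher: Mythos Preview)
Your normal-form step (reproducing Lemma~\ref{lem:weak-normalform-sl}) is correct and matches the paper. The gap is in the passage from ``consistent line'' to the commutator conclusion, and in the existence of such a line in the first place.

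First, consistency of a single projective line $[v]$ is too weak for the commutator step. In the alternating case one uses that consistency at a \emph{point} $w$ forces $\pi_j(g)$ to biject the whole fibre $\{w\}\times X^{j-k}$ onto $\{g(w)\}\times X^{j-k}$ via $wz\mapsto g(w)z$; this fibre-control is what makes $[\Delta_{k+1}(\tau),g]$ land in $\Delta_{k+1}(\Omega_{k+1})$ for every $\tau$ supported in $\{w\}\times X$. The linear analogue would be the vector-level statement $\pi_j(g)(v\otimes u)=\pi_k(g)(v)\otimes u$ for all $u$. Your hypothesis only pins down $[v\otimes x_3^{\,n}]$, so there is no reason $\pi_j(g)\,\iota_{k+1,j}(\bar\tau)\,\pi_j(g)^{-1}$ should equal $\iota_{k+1,j}\bigl(\pi_{k+1}(g)\,\bar\tau\,\pi_{k+1}(g)^{-1}\bigr)$ up to scalar; your last paragraph flags this as the crux but does not provide the argument, and in fact it cannot be supplied from line-consistency alone.

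Second, even the stronger vector-level consistency need not yield a \emph{moved} point. After reducing to $g=\delta\tilde\sigma^{[m+1]}$ with $\sigma\neq 1$, the natural strongly consistent vectors are those supported on $X^m\times\{x_3,\dots,x_{d-1}\}$ (outside the support of every $\phi^{(m)}_j(\sigma)$), and on such $v$ one has $g(v\otimes u)=\pi_{m+1}(\delta)(v)\otimes u$. But $\pi_{m+1}(\delta)$ may act as a scalar on that entire subspace, so no such $v$ gives a moved line. Your alternative suggestion---a basis vector in the matrix support of $\phi_k^{(m)}(\sigma)$---can be an eigenvector of $\sigma$ and hence a fixed line; matrix support is not the same as ``moved,'' unlike for permutations.

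The paper sidesteps both problems by abandoning the search for a consistent moved point and instead manufacturing directly a non-scalar element of $\Delta_{m+3}(\SL_{X^{m+3}}(F))$ inside $N$ via a tailored commutator. The key extra ingredient, with no permutation analogue, is Lemma~\ref{lem:linear-non-commute}: if the restriction of an automorphism to a subspace $U$ is non-scalar, one can find $\tau\in\SL(U)$ with $[\alpha,\tau\oplus\id]$ non-scalar. The argument then splits on whether $\pi_{m+1}(\delta)$ is scalar on $F[X^m\times\{x_d\}]$; in each branch a commutator with a $\tau$ supported away from the remaining factors of $\tilde\sigma^{[m+1]}$ produces the desired element. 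Your outline can be repaired by replacing the ``consistent moved line plus transvection'' step with this algebraic lemma and the accompanying case split.
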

Our argument is similar to the one for $\mathcal{A}(d,r)$. The first step is a weak normal form theorem for the telescope $\mathcal{PSL}_d(F)$.
Let $Y$ be a finite set. Recall that for $g \in \SL_{Y}(F)$ the support $\supp(g)$ of $g$ is the set of basis elements $y \in Y$ such that $g(y) \neq y$ or $g(y^*) \neq y^*$ where $y^*$ is the associated element in the dual basis. In other words, a basis element $y \in Y$ does not lie in the support if and only if $g(y) = y$ and the complement $F[Y\setminus \{y\}]$ is $g$-stable.
Let  $m \geq 0$ be given. For all $j \geq 0$ we define
\[
Y(m,j) = X^m\times\{x_d^{j}\}\times \{x_1,x_2\} \times X \subseteq X^{m+j+2}.
\]
For $i \geq m+2$ let $\phi^{(m)}_i\colon \SL_{Y(m,0)}(F) \to \SL_{X^{i}}(F)$ be the embedding induced from the
canonical identification of the sets $Y(m,0)$ and $Y(m,i-m-2)$, i.e. the elements in the image are supported in $Y(m,i-m-2)$.
As for $\mathcal{A}(d,r)$, we note that for $m=0$ we have $\phi_i^{(0)} = \phi_i$ and that the images of $\iota_{i,j}\circ\phi^{(m)}_i$ and $\phi^{(m)}_j$ have disjoint supports for $i < j$ and hence commute.
Let $n > m$. For $s \in \SL_{Y(m,0)}(F)$ we define $\tilde{s}^{[n]} \in  \prod_{i=1}^\infty \SL_{X^{i}}(F)$ by $\pi_i(\tilde{s}^{[n]}) = 1$ for all $i <n+1$ and by
\[
\pi_i(\tilde{s}^{[n]})  = \prod_{k=n+1}^{i} \iota_{k,i}(\phi^{(m)}_k(s))
\]
for $i \geq n+1$.
As before, we obtain the recursive description 
\begin{equation}\label{eq:tilde-recursive-sl}
 \tilde{s}^{[n]} = \Delta_{n+1}(\phi^{(m)}_{n+1}(s)) \tilde{s}^{[n+1]} =  \tilde{s}^{[n+1]}\Delta_{n+1}(\phi^{(m)}_{n+1}(s)).
\end{equation}
\begin{lemma}\label{lem:weak-normalform-sl}
For every $g \in G_{\mathcal{SL}_d(F)}$ there are $m \geq 0$, $\sigma \in \SL_{Y(m,0)}$, $\delta, \eta \in \Delta_{m+1}(\SL_{X^{m+1}}(F))$ and $f \in \bigoplus_{i=1}^m \SL_{X^{i}}(F)$ 
such that
\[
	g = f \delta \tilde{\sigma}^{[m+1]} \eta.
\]
The number $m$ is at most the word length of $g$ with respect to the generating set $\Delta_1(\SL_{X}(F)) \cup C^{[1]}$.
\end{lemma}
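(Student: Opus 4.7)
The plan is to mimic the proof of Lemma \ref{lem:weak-normalform}, proceeding by induction on the word length $\ell$ of $g$ with respect to the generating set $\Delta_1(\SL_X(F)) \cup C^{[1]}$. The base cases $\ell \leq 1$ are immediate: for $\ell = 0$ take $m = 0$ with all factors trivial; for $\ell = 1$ set either $\delta = g$ when $g = \Delta_1(\omega)$, or $\sigma = b$ when $g = \tilde{b}^{[1]}$. For the inductive step, factor $g = hg'$ with $h$ a generator and apply the induction hypothesis to write $g' = f \delta \tilde{\sigma}^{[m+1]} \eta$; normality of $\bigoplus_{i=1}^m \SL_{X^i}(F)$ allows us to commute $h$ past $f$ to $hf = f_1 h$ for some $f_1$ in this direct sum.

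If $h = \Delta_1(\omega)$, then $\Delta_1(\omega) \Delta_{m+1}(\iota_{1,m+1}(\omega))^{-1}$ is trivial in all coordinates of index $\geq m+1$ and hence lies in $\bigoplus_{i=1}^m \SL_{X^i}(F)$; writing $\Delta_1(\omega) = f_2 \Delta_{m+1}(\iota_{1,m+1}(\omega))$ and absorbing $\Delta_{m+1}(\iota_{1,m+1}(\omega))\delta$ into the new $\delta$ finishes this case. If $h = \tilde{b}^{[1]}$ with $b \in B = \SL_{Y(0,0)}(F)$, the recursion \eqref{eq:tilde-recursive-sl} gives $\tilde{b}^{[1]} = f_2 \delta' \tilde{b}^{[m+2]}$ with $f_2 \in \bigoplus_{i=1}^{m+1} \SL_{X^i}(F)$ and $\delta' \in \Delta_{m+2}(\SL_{X^{m+2}}(F))$, while $\tilde{\sigma}^{[m+1]} = \tilde{\sigma}^{[m+2]} \eta'$ with $\eta' = \Delta_{m+2}(\phi_{m+2}^{(m)}(\sigma))$. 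Substituting and reordering yields
\[
g = f_1 f_2 (\delta'\delta)\,(\tilde{b}^{[m+2]})^{\delta}\,\tilde{\sigma}^{[m+2]}\, \eta' \eta,
\]
so it suffices to show that $(\tilde{b}^{[m+2]})^\delta = \widetilde{\tau}^{[m+2]}$ for some $\tau \in \SL_{Y(m+1,0)}(F)$ and that the product $\widetilde{\tau}^{[m+2]} \tilde{\sigma}^{[m+2]}$ can be rewritten as $\widetilde{\rho}^{[m+2]}$ for a suitable $\rho \in \SL_{Y(m+1,0)}(F)$.

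Writing $\delta = \Delta_{m+1}(\omega)$, for $k \geq m+3$ the element $\phi_k(b)$ is supported on $\{x_d^{m+1}\} \times \{x_d^{k-m-3}\} \times \{x_1, x_2\} \times X \subseteq X^k$, and conjugation by the block-diagonal matrix $\iota_{m+1, k}(\omega)$ merely replaces the prefix $x_d^{m+1}$ by $\omega^{-1}(x_d^{m+1})$, producing an element of the precise form $\phi_k^{(m+1)}(\tau)$ for a single $\tau \in \SL_{Y(m+1,0)}(F)$ independent of $k$; this gives $(\tilde{b}^{[m+2]})^\delta = \widetilde{\tau}^{[m+2]}$. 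Next we identify $\sigma \in \SL_{Y(m,0)}(F)$ with its image $\sigma'' \in \SL_{Y(m+1,0)}(F)$ under the natural embedding of $Y(m,0)$ into $Y(m+1,0)$ as $X^m \times \{x_d\} \times \{x_1, x_2\} \times X$; this identifies $\phi_k^{(m)}(\sigma)$ with $\phi_k^{(m+1)}(\sigma'')$ for every $k \geq m+3$, so $\tilde{\sigma}^{[m+2]} = \widetilde{\sigma''}^{[m+2]}$. Disjointness of the supports of $\iota_{k,i}(\phi_k^{(m+1)}(\tau))$ and $\iota_{\ell,i}(\phi_\ell^{(m+1)}(\sigma''))$ for $k \neq \ell$ (verified exactly as in \eqref{eq:commutator}) implies that the factors commute levelwise, whence $\widetilde{\tau}^{[m+2]} \widetilde{\sigma''}^{[m+2]} = \widetilde{(\tau \sigma'')}^{[m+2]}$ has the form demanded by the lemma at parameter $m+1$. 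The bound $m \leq \ell$ is maintained since each inductive step increments the parameter by at most one. The main technical obstacle is the support bookkeeping in the conjugation identity $(\tilde{b}^{[m+2]})^\delta = \widetilde{\tau}^{[m+2]}$, but it is essentially forced by the block-diagonal nature of $\iota_{m+1,k}$, which leaves the last $k-m-1$ coordinates of $X^k$ untouched and thereby preserves both the telescoped shape and its presentation via $\phi_k^{(m+1)}$.
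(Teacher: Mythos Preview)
Your proof is correct and follows exactly the approach the paper intends; its own proof is the single sentence ``The proof of Lemma \ref{lem:weak-normalform} carries over to this situation mutatis mutandis,'' and you have carried out precisely that adaptation. One phrasing is slightly loose: writing that conjugation by $\iota_{m+1,k}(\omega)$ ``replaces the prefix $x_d^{m+1}$ by $\omega^{-1}(x_d^{m+1})$'' is language inherited from the permutation setting and does not literally apply, since $\omega \in \SL_{X^{m+1}}(F)$ need not send basis vectors to basis vectors. The correct statement (which you implicitly use) is that $\iota_{m+1,k}(\omega) = \omega \otimes \id_{F[X^{k-m-1}]}$ preserves the subspace $F[X^{m+1}]\otimes F[W]$ for any $W \subseteq X^{k-m-1}$, so the conjugate of $\phi_k(b)$ remains supported on $X^{m+1}\times W = Y(m+1,k-m-3)$ and hence lies in the image of $\phi_k^{(m+1)}$; independence of $\tau$ from $k$ then follows because the first tensor factor carries all the dependence on $\omega$ and the second carries all the dependence on $b$, neither involving $k$ after the canonical identification.
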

\begin{proof}
The proof of Lemma \ref{lem:weak-normalform} carries over to this situation mutatis mutandis.
\end{proof}
In the proof of Theorem \ref{thm:simplicity-psl} we use the following elementary lemma.
\begin{lemma}\label{lem:linear-non-commute}
 Let $V = U \oplus W$ be a finite dimensional vector space over $F$ and assume $\dim U \geq 2$. Let $\alpha\colon V \to V$ be an automorphism such that $\alpha|_U$ is not a scalar multiple of $\id_U$. 
 Then there is $\tau \in \SL(U)$ such that the commutator $[\alpha,\tau \oplus \id_W]$ is not a scalar multiple of the identity. 
 \end{lemma}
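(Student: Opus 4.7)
The plan is to argue by contradiction: suppose $[\alpha,\tau\oplus\id_W] = \lambda_\tau \id_V$ for every $\tau\in\SL(U)$ and some $\lambda_\tau \in F^\times$. Writing $\tau' := \tau \oplus \id_W$, this is equivalent to $\alpha\tau'\alpha^{-1} = \lambda_\tau \tau'$, and multiplying two such relations for $\tau_1,\tau_2$ immediately shows that $\tau \mapsto \lambda_\tau$ is a group homomorphism $\SL(U) \to F^\times$.

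Next I would rule out non-trivial scalars. Picking $\tau$ to be an elementary transvection in $\SL(U)$ (which exists since $\dim U \geq 2$), the element $\tau'$ is unipotent. Its conjugate $\alpha\tau'\alpha^{-1} = \lambda_\tau \tau'$ must also be unipotent, but the eigenvalues of $\lambda_\tau \tau'$ are all equal to $\lambda_\tau$, forcing $\lambda_\tau = 1$. Since elementary transvections generate $\SL(U)$, the homomorphism $\tau \mapsto \lambda_\tau$ is trivial and $\alpha$ centralizes every $\tau \oplus \id_W$.

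With centralization in hand, I would write $\alpha$ in block form with respect to $V = U \oplus W$ as $\alpha = \begin{pmatrix} A & B \\ C & D \end{pmatrix}$. The centralizer condition unpacks to $A\tau = \tau A$, $\tau B = B$, and $C\tau = C$ for every $\tau \in \SL(U)$. Schur's lemma, combined with the irreducibility of the $\SL(U)$-representation on $U$, forces $A = \mu\, \id_U$ for some $\mu \in F$. Since the same irreducible representation has neither a non-zero invariant vector nor a non-zero trivial quotient (equivalently, $\SL(U)$ acts transitively on $U\setminus\{0\}$ and the span of $\{u-\tau u\}$ is all of $U$ for $\dim U \geq 2$), we obtain $B=0$ and $C=0$. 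Hence $\alpha|_U = \mu\, \id_U$ is a scalar, contradicting the hypothesis.

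The main obstacle is the second step: showing $\lambda_\tau = 1$ uniformly in $F$. The unipotence trick bypasses any delicate case analysis involving small fields or the abelianization of $\SL(U)$ (which is non-trivial for $\SL_2(\F_3)$), and handles every field at once.
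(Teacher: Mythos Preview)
Your argument is correct and takes a genuinely different route from the paper's proof. The paper proceeds constructively: it first reduces to $\dim U = 2$ by splitting off a $2$-dimensional summand, then writes $\alpha$ in block form and, by a short case analysis on whether the $2\times 2$ block $A_1$ is scalar or not, explicitly exhibits a $T_1 \in \SL_2(F)$ for which $T_1^{-1}A T_1$ is not a scalar multiple of $A$ (looking at the $A_1$-block in the first case and the $C$-block in the second). Your approach is instead by contradiction and representation-theoretic: the homomorphism $\tau \mapsto \lambda_\tau$ is killed by the unipotence of transvections, and then the centralizing condition forces $\alpha|_U$ scalar via a Schur-type argument. Your proof avoids the reduction to $\dim U = 2$ and the case split, and the unipotence trick handles all fields uniformly without appealing to perfection of $\SL(U)$; the paper's proof, on the other hand, is more hands-on and actually produces the required $\tau$. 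One small remark: your invocation of Schur's lemma is slightly loose over non-algebraically-closed $F$ (Schur only gives a division algebra), but the stronger conclusion that the centralizer of $\SL(U)$ in $\End(U)$ is exactly $F$ follows immediately from commutation with elementary matrices, so this is not a gap.
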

 \begin{proof}
Splitting $U$ into two components, we may assume $\dim U = 2$.
Choose bases of $U$ and $W$. Then $\alpha$ and $\tau$ are represented by matrices
 \[
 	A = \begin{pmatrix} A_1 & B \\ C & A_2 \end{pmatrix}, T = \begin{pmatrix} T_1 & 0 \\ 0 & 1 \end{pmatrix}
 \]
 where $A_1, T_1$ are $2\times 2$-matrices. We calculate
  \[
 	T^{-1}AT = \begin{pmatrix} T_1^{-1}A_1T_1 & T_1^{-1}B \\ CT_1 & A_2 \end{pmatrix}.
 \]
If $A_1$ is not a scalar matrix, then it is easy to find $T_1 \in \SL_2(F)$ such that $T_1^{-1}A_1T_1$ is not a scalar multiple of $A_1$.
 
 Assume that $A_1 = \lambda I$ is a scalar matrix. We note that $C \neq 0$ is this case.
Suppose that $C$ has rank $2$ (this is in particular the case if $\lambda = 0$, since $\alpha$ is an automorphism), then pick $x,y \in F^2$ to be an arbitrary basis. Suppose that $C$ has rank $1$. In this case, we pick a basis $x,y \in F^2$ with $Cx \neq 0$ and $Cy = 0$. We define $T_1$ with $T_1x = y$ and $T_1y = -x$, then $CT_1$ is not a scalar multiple of $C$.
\end{proof}

\begin{proof}[Proof of Theorem \ref{thm:simplicity-psl}]
An element of the form $\Delta_n(\lambda I)$ for  $n\in \N$ and $\lambda \in F$ in $G_{\mathcal{SL}_d(F)}$ will be briefly called a scalar in this proof.
Let $N \subseteq G_{\mathcal{SL}_d(F)}$ be a normal subgroup that contains the subgroup $\bigoplus_{i\in \N} \SL_{X^{i}}(F)$ and all scalars.
Let $g \in N$ be an element that is not congruent to a scalar  modulo $\bigoplus_{i\in \N} \SL_{X^{i}}(F)$. We show that $N$ contains a non-scalar element in $\Delta_n(\SL_{X^n}(F))$. Then the simplicity of $\PSL_{X^n}(F)$ implies $N = G_{\mathcal{SL}_d(F)}$ as in the first proof of Theorem \ref{thm:simplehead-alt}.

By Lemma \ref{lem:weak-normalform-sl} there is some $m$ such that we may write $g = f\delta\tilde{\sigma}^{[m+1]}\eta$. To prove the claim, we may work modulo $\bigoplus_{i=1}^m \SL_{X^{i}}(F)$ and ignore $f$. Also the assertion is invariant under conjugation with $\Delta_{m+1}(\SL_{X^{m+1}}(F))$ and we can assume that 
 \[
 	g = \delta \tilde{\sigma}^{[m+1]}
 \]
 with $\delta \in \Delta_{m+1}(\SL_{X^{m+1}}(F))$ and $\sigma \in \SL_{Y(m,0)}$.
If $\sigma = 1$, then $\delta$ is not a scalar and there is nothing left to do.
 
Assume now that $\sigma \neq 1$. Suppose there is some $v \in X^m$ such that $v\otimes x_d$ is not an eigenvector for $\delta = \Delta_{m+1}(\omega)$.
Then $F[X^{m+3}] = U \oplus W$ where $U = F[X^m \times \{x_d\} \times \{x_3\} \times X]$ and $W$ is the complement with respect to the basis $X^{m+3}$. We observe that $\tilde{\sigma}^{[m+1]}$ preserves this decomposition and acts trivially on $U$.
 By Lemma \ref{lem:linear-non-commute}, there is $\tau \in \SL_{X^{m+3}}(F)$ such that  $\tau$ commutes with $\tilde{s}^{[m+1]}$ but $[\omega\otimes\id_{F[X^2]},\tau]$ is not a scalar matrix. Thus $[g,\Delta_{m+3}(\tau)]$ lies in $N$ and is a non-scalar element from $\Delta_{m+3}(\SL_{X^{m+3}}(F))$.
 
Next, we assume that $\delta$ acts by a scalar $\lambda$ on $F[X^m\times \{x_d\}]$. Let $k = m+3$. Multiplying $g$ with $\Delta_{m+1}(\lambda^{-1} I)$ we may assume that $\lambda = 1$. 

We observe that for $\sigma\neq 1$ the element $\tilde{\sigma}^{[m+1]}$ does not act like a scalar on $F[X^m\times \{x_d\} \times \{x_1,x_2,x_3\} \times X]$
and only the component $\Delta_{m+3}(\phi^{(m)}_{m+3}(\sigma))$ acts non-trivially on this subspace.
Let $\tau \in \SL_{X^{m+3}}(F)$ be supported in $X^m\times\{x_d\}\times\{x_1,x_2,x_3\}\times X$ such that $[\tilde{\sigma}^{[m+1]},\Delta_{m+3}(\tau)] = [\Delta_{m+3}(\phi^{(m)}_{m+3}(\sigma)),\Delta_{m+3}(\tau)]$ is not a scalar matrix. Then
\begin{align*}
	[g,\Delta_{m+3}(\tau)] &= [\delta\tilde{\sigma}^{[m+1]},\Delta_{m+3}(\tau)] \\
	&= \delta[\tilde{\sigma}^{[m+1]},\Delta_{m+3}(\tau)]\delta^{-1}[\delta,\Delta_{m+3}(\tau)] \\
	&=  \delta[\Delta_{m+3}(\phi^{(m)}_{m+3}(s)),\Delta_{m+3}(\tau)]\delta^{-1}[\delta,\Delta_{m+3}(\tau)]
\end{align*}
and since $\delta$ acts trivially on vectors in $F[X^m\times\{x_d\}]$ we see that this is a non-scalar element from $\Delta_{m+3}(\SL_{X^{m+3}}(F))$ that is contained in $N$.
\end{proof}
From the simplicity of $Q_{\mathcal{PSL}_d(F)}$ it follows that all of its non-trivial actions are faithful. We deduce:
\begin{corollary}
The action of $Q_{\mathcal{PSL}_d(F)}$ on $P_\infty$ defined in Example \ref{ex:action-projectivespace} is faithful.
\end{corollary}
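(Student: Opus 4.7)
The plan is to invoke Theorem~\ref{thm:simplicity-psl}, which asserts that $Q_{\mathcal{PSL}_d(F)}$ is simple. The kernel of the action of $Q_{\mathcal{PSL}_d(F)}$ on $P_\infty$ is a normal subgroup, hence by simplicity it is either trivial or all of $Q_{\mathcal{PSL}_d(F)}$. It therefore suffices to exhibit a single element of $Q_{\mathcal{PSL}_d(F)}$ that acts non-trivially on $P_\infty$, and the corollary will follow immediately.

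I would choose any non-identity element $\overline{\omega} \in \Omega_1 = \PSL_d(F)$. By Lemma~\ref{lem:G-subgroups} the element $\Delta_1(\overline{\omega})$ lies in $G_{\mathcal{PSL}_d(F)}$, and the action of $G_{\mathcal{PSL}_d(F)}$ on $P_\infty$ factors through the head $Q_{\mathcal{PSL}_d(F)}$ (as noted in the remark following Lemma~\ref{lem:action}, since $\bigoplus_{i\in \N} \Omega_i$ acts trivially on $P_\infty$). Since $\overline{\omega}$ acts non-trivially on the projective space $P_1$, we may pick a line $\ell \in P_1$ with $\overline{\omega}.\ell \neq \ell$. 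From the description of the action in Example~\ref{ex:action-projectivespace} one has $\Delta_1(\overline{\omega}).[\ell] = [\overline{\omega}.\ell]$, where $[\cdot]$ denotes the image in $P_\infty$.

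The only point that requires verification is that the canonical map $P_1 \to P_\infty$ is injective, so that indeed $[\ell] \neq [\overline{\omega}.\ell]$ in $P_\infty$. This is immediate from the construction: the transition maps $P_i \to P_{i+n}$ are induced from the injective linear maps $\iota'_{i,i+n}\colon V_i \to V_{i+n}$, $v \mapsto v \otimes x_3^n$ of Example~\ref{ex:action-sl}, which send distinct lines to distinct lines, so each map $P_i \to P_\infty$ in the directed system is injective. Hence $\Delta_1(\overline{\omega})$ acts non-trivially on $P_\infty$, its image in $Q_{\mathcal{PSL}_d(F)}$ acts non-trivially, and simplicity forces the kernel of the action to be trivial. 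There is no real obstacle here; the heavy lifting has already been done in establishing Theorem~\ref{thm:simplicity-psl}, and what remains is only the observation that an obvious element acts in an obvious non-trivial way.
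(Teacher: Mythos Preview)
Your proof is correct and follows the same approach as the paper: simplicity of $Q_{\mathcal{PSL}_d(F)}$ (Theorem~\ref{thm:simplicity-psl}) forces any non-trivial action to be faithful. The paper states this in one line and relies on the $2$-transitivity already noted in Example~\ref{ex:action-projectivespace} for non-triviality, whereas you instead exhibit an explicit element $\Delta_1(\overline{\omega})$ acting non-trivially; both are fine and essentially the same argument.
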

As for the alternating telescope we may deduce the following:
\begin{corollary}
Let $F$ be a finite field.
The head $Q_{\mathcal{PSL}_d(F)}$ is a simple $\LEF$ group and is not finitely presented.
\end{corollary}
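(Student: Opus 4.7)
Simplicity of $Q_{\mathcal{PSL}_d(F)}$ was already established as Theorem~\ref{thm:simplicity-psl}, so only the $\LEF$ property and the failure of finite presentability remain. My plan follows exactly the template used for the alternating head in the previous section, but exploits simplicity to sidestep the need for a full analogue of Lemma~\ref{lem:consistent-alt}.

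For $\LEF$, I fix a free ultrafilter $\mathcal{U}$ on $\N$. Since $F$ is finite, each $\PSL_{d^n}(F)$ is a finite group, so the ultraproduct $\prod_{\mathcal{U}} \PSL_{d^n}(F)$ is an ultraproduct of finite groups. Composing the inclusion $G_{\mathcal{PSL}_d(F)} \hookrightarrow \prod_n \PSL_{d^n}(F)$ with the canonical projection to the ultraproduct yields a homomorphism
\[
\Phi\colon G_{\mathcal{PSL}_d(F)} \longrightarrow \prod_{\mathcal{U}} \PSL_{d^n}(F).
\]
Elements of $\bigoplus_n \PSL_{d^n}(F)$ have cofinitely many trivial coordinates and therefore lie in $\ker(\Phi)$, so $\Phi$ descends to a homomorphism $\bar\Phi\colon Q_{\mathcal{PSL}_d(F)} \to \prod_{\mathcal{U}} \PSL_{d^n}(F)$. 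I would then argue that $\bar\Phi$ is non-trivial by picking any non-trivial $\omega \in \PSL_d(F)$ (which exists since $d \geq 4$) and observing that $\pi_n(\Delta_1(\omega)) = \iota_{1,n}(\omega) \neq 1$ for every $n$, so the class of $\Delta_1(\omega)$ in $Q_{\mathcal{PSL}_d(F)}$ does not lie in $\ker(\bar\Phi)$. Since $Q_{\mathcal{PSL}_d(F)}$ is simple, $\ker(\bar\Phi)$ is then necessarily trivial and $\bar\Phi$ is an embedding into an ultraproduct of finite groups, which witnesses $\LEF$.

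For the failure of finite presentability, I invoke the Vershik--Gordon theorem (cited already in the excerpt) that a finitely presented $\LEF$ group is residually finite. Since $Q_{\mathcal{PSL}_d(F)}$ is simple, ruling out residual finiteness reduces to showing that it is infinite; but this is immediate from the corollary just preceding the statement, which shows that $Q_{\mathcal{PSL}_d(F)}$ acts faithfully on $P_\infty$, together with the $2$-transitivity noted in Example~\ref{ex:action-projectivespace}, since a group acting transitively on an infinite set is itself infinite. An infinite simple group has no non-trivial finite quotients and so cannot be residually finite, completing the argument. The approach has no real obstacle since all of the substantial input (simplicity of the head, faithfulness of the projective action, Vershik--Gordon) has already been established; simplicity is what makes the non-triviality check for $\bar\Phi$ a one-line matter rather than requiring a dedicated analogue of Lemma~\ref{lem:consistent-alt}.
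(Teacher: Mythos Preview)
Your proof is correct and follows the same template the paper uses (and leaves implicit) for the alternating case: embed into an ultraproduct of the finite groups $\PSL_{d^n}(F)$ to get $\LEF$, then invoke Vershik--Gordon and the fact that an infinite simple group is not residually finite.

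The one genuine difference is in how you establish injectivity of $\bar\Phi$. The paper's template (from the alternating case) relies on Lemma~\ref{lem:consistent-alt}, which shows directly that \emph{every} nontrivial element of the head has nontrivial projection $\pi_j$ for all large $j$; the analogous statement for $\mathcal{PSL}_d(F)$ would be extracted from the weak normal form Lemma~\ref{lem:weak-normalform-sl}. You instead exploit simplicity (Theorem~\ref{thm:simplicity-psl}) to reduce the question to exhibiting a \emph{single} element with nontrivial image, and $\Delta_1(\omega)$ works because the block-diagonal inclusion of a non-scalar matrix is again non-scalar. This is a clean shortcut: it trades a structural lemma for the simplicity result already in hand, and is arguably the more economical route here since no $\mathcal{PSL}$ analogue of Lemma~\ref{lem:consistent-alt} is stated explicitly in the paper.
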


\section{Embedding residually finite groups into simple groups}\label{sec:embedding-groups}

The goal of this section is to show that every finitely generated, residually finite, (amenable) group $G$ embeds into a finitely generated, simple (amenable) $\LEF$ group $Q$.
In fact we will see in Theorem~\ref{thm:embeddings-into-simple-groups} that $G$ embeds in another finitely generated, residually finite, (amenable) group $H$ that admits a projection onto $Q$, whose kernel intersects the image of $G$ in $H$ trivially.
We will see that it is sufficient to prove this result for perfect groups.
As the first step in the proof of Theorem \ref{thm:embeddings-into-simple-groups}, we start by embedding a finitely generated residually finite perfect group into the automorphism group of a spherically symmetric rooted tree. We take this perspective to illustrate the similarity between telescopes and groups acting on rooted trees. We briefly recall some standard terminology.

\subsection{Groups acting on rooted trees}
Let $X = (X_n)_{n \in \N}$ be a sequence of finite sets of cardinality $\abs{X_n} \geq 2$, which we will think of as alphabets.
For each $\ell \in \N_0$, we consider the set of words of length $\ell$ given by $X^{\ell} \defeq X_1 \times \ldots \times X_{\ell}$, where $X^{0} = \{\emptyset\}$ is defined to be the singleton consisting of the unique word $\emptyset$ of length $0$.
The \emph{spherically homogeneous rooted tree associated to $X$}, denoted by $\T = \T_X$, is the rooted tree with vertex set $X^{\ast} = \bigcup \limits_{\ell=0}^{\infty} X^{\ell}$ and root vertex $\emptyset$, where two vertices $v,w$ are connected by an edge if and only if there is a letter $x \in X_n$ for some $n \in \N$ such that either $v = wx$ or $w = vx$.
We write $\Aut(\T)$ for the group of all automorphisms of $\T$ that fix the root $\emptyset$.
Note that the distance of a vertex $v$ to the root, which we call the \emph{level} of $v$, is preserved under the action of $\Aut(\T)$ and coincides with the word length of $v$. It therefore follows that the sets $X^{\ell}$ of vertices of level $\ell$ are stable under the action of $\Aut(\T)$.

%
%
Many important examples of groups acting on rooted trees are generated by tree automorphisms that are either rooted or directed.
Rooted automorphisms are easy to describe.
They are given by those tree automorphisms that only permute the first letter of a word in $X^{\ast}$.
Thus the group of rooted automorphisms of $\T$ is canonically isomorphic to $\Sym(X_1)$.
In order to define directed tree automorphisms of $\T$ we need to fix a direction in $\T$, which is given by a sequence $o = (o_i) \in X^{\infty} \defeq \prod \limits_{i \in \N} X_i$.
Let $Y_i$ denote the complement of $o_i$ in $X_i$ and let $\mathcal{S} \defeq \prod \limits_{i \in \N} Y_i$.
Consider now a group $G$ that acts on $X_i$ for every $i \geq 2$ and let $G_i \leq \Sym(X_i)$ denote the image of $G$ under this action.
Given $g \in G$, we write $g_i$ for the image of $g$ in $G_i$.
Let $\T_{i}$ be the spherically homogeneous rooted tree associated to the subsequence $(X_i, X_{i+1}, \ldots)$ of $X$.
Note that we obtain an embedding $\rho_i \colon G_i \rightarrow \Aut(\T_i)$ of $G_i$ as a group of rooted automorphisms of $\T_i$ by setting
\[
\rho_i(g_i)x_ix_{i+1}\cdots x_k = (g_ix_i)x_{i+1}\cdots x_k.
\]
In the following we will identify $G_i$ with its image under $\rho_i$.
For every $\alpha \in \mathcal{S}$ and every $g \in G$ we can recursively define a family of directed automorphisms $\tilde{g}_{[i]}^\alpha \in \Aut(\T_i)$ as follows.
Given a letter $x \in X_i$ and a word $v \in X_{i+1} \times \ldots \times X_k$ we define
\[
	\tilde{g}_{[i]}^\alpha(xv) =
	\begin{cases}
	x\tilde{g}_{[i+1]}^{\alpha}(v) & \text{ if } x = o_i\\
	xg_{i+1}(v) & \text{ if } x = \alpha_i\\
	xv & \text{ if } x \not\in \{o_i,\alpha_i\}.
	\end{cases}
\]
We observe that $\tilde{\cdot}^\alpha_{[i]} \colon G \rightarrow \Aut(\T_i),\ g \mapsto \tilde{g}^\alpha_{[i]}$ defines a homomorphism with kernel $\bigcap_{k \geq i} \ker(G \to G_k)$, whose image we denote by $\tilde{G}_{[i]}^\alpha$.


\subsection{Embedding perfect groups into groups of $B$-telescopes}\label{subsec:embedding-perfect-groups}
In this section we construct the telescope used in the proof of the embedding theorem.
We fix a non-trivial, finitely generated, residually finite, perfect group $G$ and a sequence $(N_i)_{i \geq 2}$ of normal subgroups of finite index in $G$.
Suppose that $\bigcap \limits_{i \geq 2} N_i = 1$ and that each $N_i$ has index at least $6$ in $G$.
We consider the canonical action of $G$ on the finite quotients $X_i \defeq G / N_i$, which we will think of as the alphabets of the previous subsection.
Let $X_1$ be a set of cardinality $6$ and let $o,\alpha,\delta,\varepsilon,y,z \in \prod \limits_{n \in \N} X_n$ be sequences of pairwise distinct elements in each coordinate $X_n$.
By viewing $o$ as a direction in the tree $\T = \T_X$, where $X = (X_n)_{n \in \N}$, we obtain a map $\tilde{\cdot}^\alpha_{[1]} \colon G \rightarrow \Aut(\T)$ as above.
Note that the condition $\bigcap \limits_{i \geq 2} N_i = 1$ guarantees that $\tilde{\cdot}^\alpha_{[1]}$ defines an isomorphism onto its image $\tilde{G}_{[1]}^\alpha$.
The main step in the proof of Theorem~\ref{thm:embeddings-into-simple-groups} is the construction of a flexible $B$-telescope $\mathcal{H} \defeq ((\Omega_{\ell})_{\ell \in \N},(\phi_{\ell})_{\ell \geq 2})$ whose associated group $G_{\mathcal{H}}$ contains $\tilde{G}_{[1]}^\alpha$.
To abbreviate some notation we write $o^i \defeq o_1o_2 \cdots o_i$.
Consider the group $B_0 \defeq \Alt(\{\delta,\varepsilon\} \times \{y,z,o\})$.
For each $n \geq 2$ let
\[
\psi_n^{0} \colon B_0 \rightarrow \Alt( \{o^{n-2}\} \times \{\delta_{n-1},\varepsilon_{n-1}\} \times \{y_{n},z_{n},o_{n}\})
\]
be the isomorphism that is induced by the obvious bijection between the sets $\{\delta,\varepsilon\} \times \{y,z,o\}$ and $\{o^{n-2}\} \times \{\delta_{n-1},\varepsilon_{n-1}\} \times \{y_{n},z_{n},o_{n}\}$.
Moreover we define the map
\[
\psi_n^{\alpha} \colon G \rightarrow \Alt( \{o^{n-2}\} \times \{\alpha_{n-1}\} \times X_n)
\]
by setting $\psi_n^{\alpha}(g)(o^{n-2} \alpha_{n-1} x) = o^{n-2} \alpha_{n-1} g(x)$ for every $n \geq 2$.
We can now define the group $B$ for our $B$-telescope as the direct product $B = B_0 \times G$.
For each $n \in \N$ let $\Omega_n = \Alt(X^n)$. We write $P_\mathcal{H} = \prod_{n} \Omega_n$.
As in the case of our $\Alt(rd)$-telescopes we consider the embeddings $\iota_{i,j} \colon \Omega_i \rightarrow \Omega_j$ given by $\iota_{i,j}(\sigma)(vw) = \sigma(v)w$, where $v$ has word length $i$.
Next we define the maps
\[
\phi_n \colon B \rightarrow \Omega_n,\ (b,g) \mapsto \psi_n^0(b) \cdot \psi_n^{\alpha}(g)
\]
for every $n \geq 2$.
Since the supports of the groups $\psi_n^0(B_0)$ and $\psi_n^{\alpha}(G)$ are disjoint by construction, it follows that $\phi_n$ is a well-defined homomorphism.
In order to see that $\tilde{G}^\alpha_{[1]}$ is contained in $G_{\mathcal{H}}$, we apply the embedding
\[
\iota \colon \SAut(\T) \rightarrow P_{\mathcal{H}},\ \gamma \mapsto (\pi_n(\gamma))_{n \in \N},
\]
where $\SAut(\T)$ denotes the subgroup of tree automorphisms that act as even permutations on each level of $\T$.
From the definitions it is evident that the restriction of $\tilde{\cdot}^{[1]} \colon B \rightarrow P_{\mathcal{H}}$ to $G$ has the same image as $\iota \circ \tilde{\cdot}^\alpha_{[1]}$.
Using the fact that $\tilde{G}^\alpha_{[1]}$ consists of automorphisms of $\T$, it moreover follows that the intersection $\tilde{G}^\alpha_{[1]} \cap \bigoplus \limits_{n \in \N} \Omega_n$ is trivial.
Once we have proven that $\mathcal{H}$ is a flexible $B$-telescope, this will allow us to view $\tilde{G}^\alpha_{[1]}$ as a subgroup of $Q_{\mathcal{H}}$.

\begin{lemma}\label{lem:gluing-alternating-groups}
Let $X$ be a finite set and let $Y,Z \subseteq X$ be subsets with $Y \cap Z \neq \emptyset$.
Suppose that $\abs{Y},\abs{Z} \geq 3$.
Then the subgroup of $\Alt(X)$ generated by $\Alt(Y)$ and $\Alt(Z)$ coincides with $\Alt(Y \cup Z)$.
\end{lemma}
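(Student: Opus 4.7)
The plan is as follows. The inclusion $\langle \Alt(Y), \Alt(Z)\rangle \subseteq \Alt(Y\cup Z)$ is immediate, so the content lies in the reverse inclusion. Set $H = \langle \Alt(Y), \Alt(Z)\rangle$ and $W = Y\cup Z$. If $Y\subseteq Z$ or $Z\subseteq Y$ there is nothing to prove, so we may assume both $Y\setminus Z$ and $Z\setminus Y$ are non-empty, and fix some $p \in Y\cap Z$.

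First I would check that $H$ acts transitively on $W$. Indeed, since $|Y|\geq 3$ the group $\Alt(Y)$ is transitive on $Y$, so every $w \in Y$ lies in the $\Alt(Y)$-orbit of $p$, and symmetrically for $w \in Z$; hence $H.p = W$.

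Next I would prove that the action of $H$ on $W$ is primitive. Let $\Delta \subseteq W$ be a block containing $p$. Since $\Alt(Y)$ fixes $W\setminus Y$ pointwise, for any $\sigma \in \Alt(Y)$ we have $\sigma\Delta \cap (W\setminus Y) = \Delta \cap (W\setminus Y)$. If $\Delta \cap (W\setminus Y) \neq \emptyset$, then $\sigma\Delta \cap \Delta \neq \emptyset$ forces $\sigma\Delta = \Delta$ for all $\sigma \in \Alt(Y)$; combined with the transitivity of $\Alt(Y)$ on $Y$ and $p \in \Delta\cap Y$, this yields $Y \subseteq \Delta$. Otherwise $\Delta \subseteq Y$, and then $\Delta$ is a block for the primitive group $\Alt(Y)$ on $Y$, hence $\Delta \in \{\{p\}, Y\}$. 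The same dichotomy with $Z$ in place of $Y$ produces four combinations. The combinations $\Delta \subseteq Y$ with $Z \subseteq \Delta$, and $Y \subseteq \Delta$ with $\Delta \subseteq Z$, are excluded by $Y\not\subseteq Z$ and $Z\not\subseteq Y$; the combination $\Delta \subseteq Y$ and $\Delta \subseteq Z$ forces $\Delta \subseteq Y\cap Z$, which rules out $\Delta = Y$ or $\Delta = Z$, leaving $\Delta = \{p\}$; finally $Y\subseteq \Delta$ and $Z\subseteq \Delta$ give $\Delta = W$. Thus only the trivial blocks occur and $H$ is primitive.

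Finally, $H$ contains every $3$-cycle of $\Alt(Y)$, so I would invoke Jordan's classical theorem, which asserts that a primitive subgroup of $\Sym(W)$ containing a $3$-cycle automatically contains $\Alt(W)$. Together with the inclusion $H \subseteq \Alt(W)$ this gives $H = \Alt(W)$. The delicate step is the primitivity argument, where the interaction between the two partial constraints imposed by $\Alt(Y)$ and $\Alt(Z)$ on a block $\Delta$ must be combined carefully with the non-containment hypotheses to eliminate all intermediate possibilities.
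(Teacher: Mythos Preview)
Your argument is correct. The transitivity step is fine, the block analysis is clean (the four-case split correctly eliminates all nontrivial blocks once you assume $Y\not\subseteq Z$ and $Z\not\subseteq Y$), and Jordan's theorem then finishes the job since $\Alt(Y)$ supplies a $3$-cycle.

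Your route, however, differs substantially from the paper's. The paper's proof is a one-line appeal to the O'Nan--Scott theorem, i.e., to the classification of maximal subgroups of alternating groups. Your proof is considerably more elementary: it uses only the primitivity of $\Alt(Y)$ on $Y$ (clear for $|Y|\geq 3$) together with Jordan's classical 1871 result that a primitive subgroup of $\Sym(W)$ containing a $3$-cycle contains $\Alt(W)$. This is arguably the more natural approach for a lemma of this size, and it avoids invoking a deep structural theorem for what is essentially a connectivity statement. The paper's approach has the virtue of brevity if one is willing to take O'Nan--Scott as a black box; yours is self-contained and would be preferable in an expository setting.
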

\begin{proof}
The claim is a direct consequence of the O'Nan–Scott theorem. 
\end{proof}

\begin{proposition}\label{prop:H-is-flexible-B-system}
The telescope $\mathcal{H}$ defined above is a flexible $B$-telescope.
\end{proposition}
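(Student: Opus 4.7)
My approach mirrors the proof of Proposition~\ref{prop:B-telescope}: check the commutator condition~\eqref{eq:commutator} and the flexibility axioms \ref{it:a-b}--\ref{it:ba-ba} by a support analysis in $X^n$, and verify the generation condition~\eqref{eq:generation} by combining the $2$-transitivity of $\Omega_{n-1,n}$ on $X^{n-1}$ with Lemma~\ref{lem:gluing-alternating-groups} and a Jordan-style argument. First I record the basic support datum: since $\phi_n(b,g) = \psi_n^0(b)\psi_n^\alpha(g)$ is a product of two commuting permutations,
\[
\supp_{X^n}(B_n) \subseteq \{o^{n-2}\}\times\{\delta_{n-1},\varepsilon_{n-1},\alpha_{n-1}\}\times X_n,
\]
so for $i \leq j$ the support $\supp_{X^j}(B_{i,j})$ has $(i-1)$-st coordinate in $\{\delta_{i-1},\varepsilon_{i-1},\alpha_{i-1}\}$, while for $i<j$ any point of $\supp_{X^j}(B_j)$ has $(i-1)$-st coordinate $o_{i-1}$. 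Pairwise distinctness of the sequences $o,\alpha,\delta,\varepsilon,y,z$ gives $[B_{i,j},B_j] = 1$, which establishes \eqref{eq:commutator}.

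For flexibility I set $\alpha_\ell \defeq (o^{\ell-1}y_\ell,\,o^{\ell-1}z_\ell,\,o^\ell) \in \Alt(X^\ell)$. Then $\supp_{X^i}(\alpha_i) \subseteq \{o^{i-1}\}\times\{y_i,z_i,o_i\}$, which is disjoint from $\supp_{X^{i+1}}(B_{i+1})$ at coordinate $i$, giving \ref{it:a-b}. For the remaining two axioms I would compute $\supp_{X^m}(B_{k,m}^{\alpha_{i,m}})$ by applying $\alpha_{i,m}^{-1}$ to $\supp_{X^m}(B_{k,m})$; since $B_{k,m}$ with $k \geq i+2$ has first $i$ coordinates all equal to $o$, and $\alpha_i^{-1}(o^i) = o^{i-1}z_i$, I obtain
\[
\supp_{X^m}(B_{k,m}^{\alpha_{i,m}}) = \{o^{i-1}z_i\,o^{k-2-i}\}\times\{\delta_{k-1},\varepsilon_{k-1},\alpha_{k-1}\}\times X_k\times\cdots\times X_m.
\]
Condition~\ref{it:ba-b} then follows by comparing the $i$-th coordinate ($z_i$ versus $o_i$) of the two supports, and~\ref{it:ba-ba} by comparing the $j$-th coordinate ($o_j$ versus $z_j$) using $i > j$, so that $\alpha_{i,m}$ leaves coordinate $j$ untouched while $\alpha_{j,m}$ replaces $o_j$ by $z_j$.

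The principal obstacle is the generation axiom~\eqref{eq:generation}, because the seed $\psi_n^0(B_0)$ is only supported on the three letters $\{y_n,z_n,o_n\}$ of the possibly much larger alphabet $X_n$. I would proceed in three stages. First, $|X^{n-1}|\geq 6$ ensures at least $2$-transitivity of $\Omega_{n-1}$ on $X^{n-1}$, so conjugating $\psi_n^0(B_0) = \Alt(\{o^{n-2}\}\times\{\delta_{n-1},\varepsilon_{n-1}\}\times\{y_n,z_n,o_n\})$ by elements of $\Omega_{n-1,n}$ produces $\Alt(U\times\{y_n,z_n,o_n\})$ for every $2$-element subset $U \subseteq X^{n-1}$; iterating Lemma~\ref{lem:gluing-alternating-groups} over overlapping $U$ yields $\Alt(Y)$ for the entire slab $Y = X^{n-1}\times\{y_n,z_n,o_n\}$. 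Second, transitivity of $\Omega_{n-1}$ on $X^{n-1}$ lets me conjugate $\psi_n^\alpha(G)$ to a transitive subgroup $T_v \subseteq \Alt(\{v\}\times X_n)$ for every $v \in X^{n-1}$. Let $H \leq \Omega_n$ be generated by $\Alt(Y)$ and all $T_v$; then $H$ is transitive on $X^n$ (any $(v,x)$ is reached from $(v,y_n) \in Y$ by $T_v$), and $H$ is primitive by the following block analysis: replacing a nontrivial block $\Delta$ by a translate, one may assume $\Delta \cap Y \neq \emptyset$; since each $\sigma \in \Alt(Y)$ fixes $X^n\setminus Y$ pointwise, the block condition forces $\Delta \cap Y$ to be $\Alt(Y)$-invariant, whence (by primitivity of $\Alt(Y)$ on $Y$) either $\Delta\subseteq Y$ or $\Delta \supseteq Y$; a parallel argument using that each $T_v$ is transitive on $\{v\}\times X_n$ of size $\geq 6 > 3$ and fixes the other columns pointwise forces $\Delta$ to be a singleton or all of $X^n$. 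Since $H$ is primitive and contains the $3$-cycle $(o^{n-2}\delta_{n-1}y_n,\,o^{n-2}\delta_{n-1}z_n,\,o^{n-2}\delta_{n-1}o_n)$, Jordan's theorem yields $H \supseteq \Alt(X^n) = \Omega_n$, completing the proof.
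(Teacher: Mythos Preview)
Your support analysis for~\eqref{eq:commutator} and the flexibility axioms \ref{it:a-b}--\ref{it:ba-ba}, together with your choice $\alpha_\ell = (o^{\ell-1}y_\ell,\,o^{\ell-1}z_\ell,\,o^\ell)$, matches the paper's argument exactly; this part is fine (modulo a minor imprecision: ``$\Delta\cap Y$ is $\Alt(Y)$-invariant'' is literally true only when $\Delta\not\subseteq Y$, but the conclusion you draw is correct once the case $\Delta\subseteq Y$ is handled by primitivity of $\Alt(Y)$ on $Y$, as you implicitly do).

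For the generation axiom~\eqref{eq:generation} you take a genuinely different route. Both arguments begin the same way, producing the slab $\Alt(Y)$ with $Y=X^{n-1}\times\{y_n,z_n,o_n\}$ via $2$-transitivity of $\Omega_{n-1}$ and repeated use of Lemma~\ref{lem:gluing-alternating-groups}. From there the paper stays with Lemma~\ref{lem:gluing-alternating-groups}: it conjugates the slab by elements $q\in Q_v$ (the transitive copy of $G$ on column $\{v\}\times X_n$), observes that $\Alt(Y)$ and $\Alt(Y)^q$ overlap on the large set $(X^{n-1}\setminus\{v\})\times\{y_n,z_n,o_n\}$, glues to obtain $\Alt(Y\cup\{v\}\times X_n)$, and finally glues over all $v$ to reach $\Omega_n$. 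Your approach instead shows that $H=\langle\Alt(Y),\{T_v\}\rangle$ is primitive by a block analysis and then invokes Jordan's theorem on the $3$-cycle contained in $\psi_n^0(B_0)$. Both arguments are correct. The paper's route is entirely self-contained within the gluing lemma and avoids any appeal to the classification-free but nontrivial Jordan theorem; your route trades that conjugation-and-intersection computation for a cleaner block argument plus a classical black box. Either is acceptable.
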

\begin{proof}
The verification of~\eqref{eq:commutator} and the flexibility axioms (F1) - (F3) follows along the lines of the verification of the corresponding axioms for $\mathcal{A}(d,r)$, which was carried out in
Proposition~\ref{prop:B-telescope} by looking at the supports.
The only difference is that one has to replace the $3$-cycles $(x_d^{\ell-1}x_{d-2},x_d^{\ell-1}x_{d-1},x_d^{\ell})$ from the flexibility of $\mathcal{A}(d,r)$ with the $3$-cycles $(o^{\ell-1} y_{\ell},o^{\ell-1} z_{\ell},o^{\ell}) \in \Omega_{\ell}$.
What remains to prove is~\eqref{eq:generation}, i.e.\ that the group $K \defeq \langle \{ hB_{n+1} h^{-1} \mid h \in \Omega_{n,n+1} \} \rangle$ coincides with $\Omega_{n+1}$ for every $n \in \N$.
Recall that $\Omega_n = \Alt(X^n)$.
By definition we have
\[
B_{n+1} = \phi_{n+1}(B_0 \times G)
= \langle \psi_{n+1}^{0}(B_0), \psi_{n+1}^{\alpha}(G) \rangle.
\]
Consider the group $\psi_{n+1}^{0}(B_0) = \Alt( \{o^{n-1}\} \times \{\delta_{n},\varepsilon_{n}\} \times \{y_{n+1},z_{n+1},o_{n+1}\})$.
Since $\Omega_n$ acts $2$-transitively on $X^n$, it follows by conjugating $\psi_{n+1}^{0}(B_0)$ with the $\iota_{n,n+1}$-image of the stabilizer of $o^{n-1} \delta_{n}$ in $\Omega_{n}$ that
$\Alt( \{v,o^{n-1}\delta_n\} \times \{y_{n+1},z_{n+1},o_{n+1}\})$ is contained in $K$ for every $v \in X^n$.
In view of Lemma~\ref{lem:gluing-alternating-groups} this implies that $K$ contains the group $\Alt(X^{n} \times \{y_{n+1},z_{n+1},o_{n+1}\})$.
Since $G$ acts transitively on $X_{n+1}$ we deduce that $\psi_{n+1}^{\alpha}(G)$ acts transitively on $\{o^{n-1}\} \times \{\alpha_n\} \times X_{n+1}$.
Given $v \in X^n$, we can therefore choose an appropriate $\Omega_{n,n+1}$-conjugate $Q_v \leq \Alt(\{v\} \times X_{n+1})$ of $\psi_{n+1}^{\alpha}(G)$ that acts transitively on $\{v\} \times X_{n+1}$.
Since the intersection
\[
\Alt(X^{n} \times \{y_{n+1},z_{n+1},o_{n+1}\}) \cap \Alt(X^{n} \times \{y_{n+1},z_{n+1},o_{n+1}\})^q
\]
contains $\Alt((X^{n} \setminus \{v\}) \times \{y_{n+1},z_{n+1},o_{n+1}\})$ for each $q \in Q_v$, we may apply Lemma~\ref{lem:gluing-alternating-groups} to deduce that $\Alt(X^{n} \times \{y_{n+1},z_{n+1},o_{n+1}\} \cup \{v\} \times X_{n+1})$ is contained in $K$.
Using the fact that $X^{n+1}$ is covered by subsets of the form $X^{n} \times \{y_{n+1},z_{n+1},o_{n+1}\} \cup \{v\} \times X_{n+1}$, a final application of Lemma~\ref{lem:gluing-alternating-groups} shows that $K = \Omega_{n+1}$, which completes the proof.
\end{proof}

\begin{remark}\label{rem:phi-n-not-injective}
Unlike in our previous examples of $B$-telescopes, the maps $\phi_n$ in $\mathcal{H}$ are far from being embeddings.
Indeed, the group $B$ contains $G$ and hence is typically infinite, while each $\Omega_n$ is finite.
\end{remark}

\subsection{Telescopes and Cantor sets}
The ultimate goal of this section is to prove that
 the groups of the telescopes $\mathcal{A}(d,r)$ and  $\mathcal{H}$ are amenable (provided that $G$ is amenable).
This will be done by applying a celebrated criterion of Juschenko, Nekrashevych and de la Salle~\cite{JuschenkoNekrashevychdelaSalle16} for detecting amenability of groups acting on a Cantor set.
The first step is therefore to define appropriate actions of the groups $G_{\mathcal{H}}$ and $G_{\mathcal{A}(d,r)}$ on a Cantor set.
It will be convenient to put these two groups on a common ground as follows.
Consider again a sequence $X = (X_n)_{n \in \N}$ of finite sets $X_n$ of cardinality $\abs{X_n} \geq 2$ and let $\mathfrak{C} \defeq \prod \limits_{n=1}^{\infty} X_n$ denote the Cantor set given by the product of these sets.
We fix a point $o = (o_n) \in \mathfrak{C}$.
Consider now a flexible $B$-telescope $\mathcal{S} = ((\Omega_n)_{n \in \N},  (\phi_n)_{n \geq 2})$ consisting of transitive subgroups $\Omega_n \leq \Sym(X^n)$, where
the transition maps $\iota_{n,m} \colon \Omega_n \rightarrow \Omega_m$ are defined by letting $\iota_{n,m}(\omega)$ act on words of length $m$ by permuting their initial subwords of length $n$.
We assume that 
\begin{equation}\label{eq:assumption-for-well-def-action}
\supp(B_i) \subseteq \{o^{i-2}\}\times (X_{i-1}\setminus\{o_{i-1}\}) \times X_i.
\end{equation}

We show that $\mathfrak{C}$ carries an action of $Q_\mathcal{S}$ by homeomorphisms.
For every $n \in \N$ we have an action of $\Omega_n$ on $X^n = \prod_{i=1}^n X_i$ and these actions induce actions of $\widetilde{G}_{\mathcal{S}}$ on $\mathfrak{C}$ by acting on the first $n$ coordinates. More precisely, for $g \in \widetilde{G}_{\mathcal{S}}$ and $\xi = (\xi_1,\xi_2,\dots)$ we define
\[
  g._n \xi = \Delta_n(\pi_n(g))(\xi).
\]

\begin{lemma}\label{lem:well-defined-action}
For every $g \in \widetilde{G}_\mathcal{S}$ and every $\xi \in \mathfrak{C}$ the sequence
$(g._n\xi)_{n\in \N}$ stabilizes and 
\[
	g.\xi = \lim_{n \to \infty} g._n \xi
\]
defines an action of $\widetilde{G}_\mathcal{S}$ on $\mathfrak{C}$ by homeomorphisms.
\end{lemma}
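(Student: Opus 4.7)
The plan is to verify the stabilization claim on the generating set $\{\Delta_i(\Omega_i) \mid i \geq 1\} \cup C^{[1]}$ of $\widetilde{G}_\mathcal{S}$, extend it to arbitrary elements by induction on word length, and then check the action axioms and continuity. Writing $\xi|_n = (\xi_1,\dots,\xi_n) \in X^n$, the definition unfolds to $g._n\xi = (\pi_n(g).\xi|_n,\xi_{n+1},\xi_{n+2},\dots)$. For a generator of the first type, $\pi_n(\Delta_i(\omega)) = \iota_{i,n}(\omega)$ acts by permuting only the first $i$ letters, so $\Delta_i(\omega)._n\xi$ is constant in $n$ once $n \geq i$.

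The main computation is for $\tilde{b} \in C^{[1]}$, where $\pi_n(\tilde b) = \prod_{k=2}^n \iota_{k,n}(\phi_k(b))$. By hypothesis \eqref{eq:assumption-for-well-def-action}, the $k$-th factor moves $\xi|_n$ only when $\xi_1=o_1,\dots,\xi_{k-2}=o_{k-2}$ and $\xi_{k-1}\neq o_{k-1}$. If $\xi = o$ no factor acts, so $\tilde b._n\xi = \xi$ for every $n$. Otherwise, letting $m = m(\xi)$ be the least index with $\xi_m \neq o_m$, these two constraints together force $k = m+1$; hence for every $n \geq m+1$, $\tilde b._n\xi$ is obtained by applying $\phi_{m+1}(b)$ to the first $m+1$ coordinates of $\xi$ and leaving the rest fixed, so it is independent of $n$.

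For general $g \in \widetilde{G}_\mathcal{S}$, the fact that $\pi_n$ is a homomorphism and $\Omega_n$ acts on $X^n$ yields the identity $(gh)._n\xi = g._n(h._n\xi)$ at each finite level. A short induction on word length in the generators gives stabilization for every $g$, and passing to the limit produces both $g.\xi := \lim_{n\to\infty} g._n\xi$ and the action law $(gh).\xi = g.(h.\xi)$. For continuity it suffices to treat the generators: $\Delta_i(\omega)$ is continuous because it only permutes the first $i$ coordinates, and for $\tilde b$ the description above shows that on the cylinder $\{\eta : \eta|_{m+1} = \xi|_{m+1}\}$ around a point $\xi\neq o$ the map $\tilde b.\_$ coincides with a fixed finite-level permutation, while at $\xi = o$ any $\eta$ agreeing with $o$ on the first $N$ coordinates has $\tilde b.\eta$ agreeing with $o = \tilde b.o$ on the first $N$ coordinates as well. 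Since $g^{-1}$ acts by the same construction, each $g$ is a homeomorphism. The only real obstacle is the case analysis for $\tilde b$: the support hypothesis \eqref{eq:assumption-for-well-def-action} is precisely what isolates a unique nontrivial factor in $\pi_n(\tilde b)$.
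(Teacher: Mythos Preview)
Your proof is correct and follows essentially the same approach as the paper's: reduce to generators via the identity $(gh)._n\xi = g._n(h._n\xi)$, handle $\Delta_i(\omega)$ trivially, and use the support hypothesis \eqref{eq:assumption-for-well-def-action} to show that for $\tilde b$ at most one factor $\phi_k(b)$ acts nontrivially on a given $\xi$. Your treatment is somewhat more explicit than the paper's (which refers back to Lemma~\ref{lem:action} for the reduction step and phrases the $C^{[k]}$ argument in terms of disjoint clopen supports rather than the index $m(\xi)$), but the underlying argument is the same.
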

\begin{proof}
As in the proof of Lemma \ref{lem:action} it is sufficient to verify the assertions on a set of generators.
The huge group $\widetilde{G}_\mathcal{S}$ is generated by elements in $\Delta_j(\Omega_j)$ and $C^{[k]}$.
For $g  \in \Delta_j(\Omega_j)$ the action at $\xi$ is stable for all $n \geq j$ by definition of the transition maps and, as they only act on the first $n$ entries, it is clear that they are homeomorphisms.

Now we consider the elements of $C^{[k]}$. By assumption the supports of the actions of the groups $B_i$ on $\mathfrak{C}$ are disjoint clopen sets and every $\xi \in \mathfrak{C}$ can live in at most one of these support sets. On each of these support sets, the action agrees with the action of an element in $\Delta_{k+j}(\Omega_{k+j})$.
This implies that the action of each $g \in C^{[k]}$ stabilizes and is continuous.
\end{proof}

Let us now study the action of $G_{\mathcal{S}}$ on $\mathfrak{C}$ that is provided by Lemma~\ref{lem:well-defined-action}.
We start by determining the kernel of this action.

\begin{lemma}\label{lem:kernel-is-direct-sum}
Suppose that the head $Q_{\mathcal{S}}$ of $\mathcal{S}$ is simple.
Then the kernel of the action of $\widetilde{G}_{\mathcal{S}}$ on $\mathfrak{C}$ is given by $\bigoplus \limits_{n=1}^{\infty} \Omega_{\ell}$.
In particular, $Q_{\mathcal{S}}$ acts faithfully on $\mathfrak{C}$.
\end{lemma}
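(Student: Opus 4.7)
My plan is to sandwich the kernel $K$ between two subgroups via a normality/simplicity argument. I first establish the inclusion $\bigoplus_{n=1}^\infty \Omega_n \subseteq K$. Indeed, any element $g \in \bigoplus_{n=1}^\infty \Omega_n \subseteq \widetilde{G}_\mathcal{S}$ has $\pi_n(g) = 1$ for all sufficiently large $n$, so for every $\xi \in \mathfrak{C}$ the sequence $g._n\xi = \Delta_n(\pi_n(g))(\xi)$ equals $\xi$ from some index onward; taking the stable limit from Lemma~\ref{lem:well-defined-action} yields $g.\xi = \xi$.

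Next, I observe that $K$ is visibly a normal subgroup of $\widetilde{G}_\mathcal{S}$, since the action is a homomorphism $\widetilde{G}_\mathcal{S} \to \Homeo(\mathfrak{C})$. Combined with the inclusion above, the image $K/\bigoplus_{n=1}^\infty \Omega_n$ is a normal subgroup of the head $Q_\mathcal{S}$. By the simplicity hypothesis, this quotient is either trivial or all of $Q_\mathcal{S}$; equivalently, $K = \bigoplus_{n=1}^\infty \Omega_n$ or the action of $\widetilde{G}_\mathcal{S}$ on $\mathfrak{C}$ is trivial.

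To rule out the second alternative, it suffices to exhibit a single element of $\widetilde{G}_\mathcal{S}$ acting non-trivially on $\mathfrak{C}$. Since $Q_\mathcal{S}$ is simple (and hence non-trivial), the group $\widetilde{G}_\mathcal{S}$ cannot coincide with $\bigoplus_{n=1}^\infty \Omega_n$, so in particular some $\Omega_N$ is non-trivial. Pick $\omega \in \Omega_N \setminus \{1\}$ and a word $w = w_1 \cdots w_N \in X^N$ with $\omega(w) \neq w$. For any $\xi \in \mathfrak{C}$ whose first $N$ letters are $w_1,\dots,w_N$, the definition of the action gives
\[
	\Delta_N(\omega).\xi = (\omega(w)_1, \dots, \omega(w)_N, \xi_{N+1}, \xi_{N+2}, \dots) \neq \xi,
\]
so $\Delta_N(\omega) \notin K$. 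Consequently $K = \bigoplus_{n=1}^\infty \Omega_n$, and the induced action of $Q_\mathcal{S}$ on $\mathfrak{C}$ is faithful, as claimed. The argument is essentially formal; the only non-obvious point is the justification that some $\Omega_N$ is non-trivial, which is forced by the non-triviality of the simple head.
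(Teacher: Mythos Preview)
Your proof is correct and follows essentially the same approach as the paper's: establish the inclusion $\bigoplus_n \Omega_n \subseteq K$, invoke normality and simplicity of $Q_\mathcal{S}$, and rule out the trivial action. The paper's version is terser, simply asserting that the direct sum acts trivially ``by construction'' and that ``the action is not trivial'' without further justification; you have supplied both details explicitly. One small remark: in the ambient setting of the lemma each $\Omega_n$ is assumed to be a transitive subgroup of $\Sym(X^n)$ with $|X^n|\geq 2$, so non-triviality of some $\Omega_N$ is immediate and the detour through simplicity of $Q_\mathcal{S}$ is not needed.
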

\begin{proof}
Let $N$ denote the kernel of the action of $\widetilde{G}_{\mathcal{S}}$ on $\mathfrak{C}$.
We know that $\bigoplus \limits_{\ell=1}^{\infty} \Omega_{\ell}$ is a subgroup of $\widetilde{G}_{\mathcal{S}}$ and by construction it acts trivially on $\mathfrak{C}$.
This shows that $\bigoplus \limits_{\ell=1}^{\infty} \Omega_{\ell} \subseteq N$.
Using our assumption that $Q_{\mathcal{S}} = \widetilde{G}_{\mathcal{S}} / \bigoplus \limits_{\ell=1}^{\infty} \Omega_{\ell}$ is simple and (noting that the action is not trivial), it follows that $N$ coincides with $\bigoplus \limits_{\ell=1}^{\infty} \Omega_{\ell}$.
\end{proof}

\begin{remark}\label{rem:QS-not-finitely-presented}
Assume that $G_\mathcal{S}$ is finitely generated and contains $\bigoplus \limits_{\ell \in \N} \Omega_{\ell}$.
Note that $\bigoplus \limits_{\ell \in \N} \Omega_{\ell}$ is not finitely generated as a normal subgroup of $G_{\mathcal{S}}$.
As a consequence, $Q_{\mathcal{S}} = G_{\mathcal{S}} / \bigoplus \limits_{\ell \in \N} \Omega_{\ell}$ is not finitely presented, which is not obvious if $Q_{\mathcal{S}}$ would have been defined directly via its action on $\mathfrak{C}$.
\end{remark}

\subsection{Amenability of $G_{\mathcal{S}}$}\label{subsec:amenability-of-embeddings}

Now that we have a faithful action of $Q_{\mathcal{S}}$ on $\mathfrak{C}$, we want to formulate conditions for this action that ensure the amenability of $Q_{\mathcal{S}}$.
Since $\bigoplus \limits_{\ell \in \N} \Omega_{\ell}$ is locally finite and hence amenable, Lemma~\ref{lem:kernel-is-direct-sum} implies that these conditions then also ensure the amenability of $G_{\mathcal{S}}$.
For a definition of amenability we refer the reader to \cite{Juschenko22}.
We need to introduce some notation that will be used to formulate the amenability criterion in~\cite{JuschenkoNekrashevychdelaSalle16} for our present setting.
To do so, we consider for each $v \in X^{\ast}$ the open subset $U_v \subseteq \mathfrak{C}$ consisting of all sequences starting with $v$.
Using these sets we can conveniently generalize our previous notion of consistency of an element $\alpha \in \Homeo(\mathfrak{C})$ at $v \in X^{\ell}$ by imposing that there is some $\omega \in \Omega_{\ell}$ such that $\alpha$ coincides with $\Delta_{\ell}(\omega)$ on $U_v$.

\begin{definition}\label{def:bounded-type}
We say that $\alpha \in \Homeo(\mathfrak{C})$ is of \emph{bounded type} if there is some $C \geq 0$ such that
\[
\abs{\Set{v \in X^{\ell}}{ \alpha \text{ is not consistent at } v}} \leq C
\]
for every $\ell \in \N$. 
\end{definition}

\begin{definition}\label{def:germ}
Let $\Gamma$ be a group that acts on $\mathfrak{C}$.
For each $\xi$ the group of \emph{germs of $\Gamma$ at $\xi$} is defined as $\mathcal{O}_{\xi}(\Gamma) = \St_{\Gamma}(\xi) / \sim$,
where $g,h \in \St_{\Gamma}(\xi)$ satisfy $g \sim h$ if and only if $g$ and $h$ coincide on some neighbourhood of $\xi$ in $\mathfrak{C}$.
\end{definition}

The following result is a special case of~\cite[Theorem 16]{JuschenkoNekrashevychdelaSalle16}.

\begin{theorem}[Juschenko, Nekrashevych, de la Salle]\label{thm:amenability-criterion}
Let $G$ be a group that acts faithfully
by homeomorphisms of bounded type on $\mathfrak{C}$.
If the group of germs $\mathcal{O}_{\xi}(G)$ is amenable for every $\xi \in \mathfrak{C}$, then $G$ is amenable.
\end{theorem}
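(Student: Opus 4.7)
The plan is to follow the strategy of Juschenko--Monod as extended by Juschenko, Nekrashevych and de la Salle, centered on the notion of \emph{extensive amenability}: an action $G \curvearrowright Y$ is extensively amenable if $G$ admits an invariant mean on the set $\mathcal{P}_f(Y)$ of finite subsets of $Y$, equivalently on the $G$-module $\bigoplus_Y \mathbb{Z}/2\mathbb{Z}$. The decisive reduction principle I would invoke is that if $G \curvearrowright Y$ is extensively amenable and every point stabilizer $\St_G(y)$ is amenable, then $G$ itself is amenable. The task thus splits into verifying amenability of stabilizers and proving extensive amenability of a well-chosen action.

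First I would fix $\xi \in \mathfrak{C}$ and study the action of $G$ on the orbit $Y = G \cdot \xi$. The point stabilizer sits in a short exact sequence $1 \to K_\xi \to \St_G(\xi) \to \mathcal{O}_\xi(G) \to 1$, where $K_\xi$ is the subgroup of elements fixing a neighborhood of $\xi$. Bounded type implies that every $g \in K_\xi$ agrees on some cylinder $U_v$ around $\xi$ with an element acting as a permutation of the finitely many initial words of length $|v|$; so $K_\xi$ is a directed union of groups embedding into the $\Omega_{|v|}$-stabilizers of finite tails, and in particular is locally finite. Since $\mathcal{O}_\xi(G)$ is amenable by hypothesis and extensions of amenable groups are amenable, this gives amenability of $\St_G(\xi)$.

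The core of the argument is establishing extensive amenability of $G \curvearrowright G \cdot \xi$. For this I would introduce the cocycle $c \colon G \to \mathcal{P}_f(X^{\ast})$ sending $g$ to the finite set of vertices at which $g$ is not consistent; bounded type is precisely the assertion that $c$ takes values in $\mathcal{P}_f(X^{\ast})$, and a direct check shows it is a cocycle for the natural $G$-action on $\mathcal{P}_f(X^{\ast})$. One then invokes the Juschenko--Monod absorption principle: since the coefficient module $\mathcal{P}_f(X^{\ast})$ is an extensively amenable $G$-set (the action being by an essentially tree-like transformation with bounded displacement), the cocycle $c$ twists the regular action on $G \cdot \xi$ into an extensively amenable action on $G \cdot \xi \times \mathcal{P}_f(X^{\ast})$, and this property descends to $G \cdot \xi$ itself. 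Combining with amenability of $\St_G(\xi)$ through the reduction principle yields the amenability of $G$.

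The main obstacle is the extensive-amenability step: one has to leverage bounded type in a quantitative way, since for generic actions on Cantor sets the orbital Schreier graphs are far from recurrent. The key technical point is that the discrepancy cocycle $c$ encodes all of the ``non-local'' behavior of $G$, and controlling it via a recurrent random walk (or, in the JNS framework, verifying the inheritance property for cocycles into extensively amenable modules) is exactly where the finite-cardinality bound from Definition~\ref{def:bounded-type} must be used.
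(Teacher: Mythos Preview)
The paper does not give a proof of this theorem; it is stated as a special case of \cite[Theorem~16]{JuschenkoNekrashevychdelaSalle16} and used as a black box. So there is no in-paper argument to compare against, only the original JNS proof. Your outline does follow the genuine architecture of that proof --- extensive amenability of an orbital action together with amenability of isotropy, yielding amenability of $G$ --- and you correctly identify the bounded-type hypothesis as the input to the extensive-amenability step.

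There is, however, a real gap in your treatment of the stabilizer. You assert that $K_\xi = \ker\bigl(\St_G(\xi) \to \mathcal{O}_\xi(G)\bigr)$ is locally finite because an element fixing a cylinder $U_v$ ``embeds into the $\Omega_{|v|}$-stabilizers of finite tails.'' This does not follow: an element $g$ that is the identity on $U_v$ may act arbitrarily on the complement of $U_v$, subject only to the bounded-type constraint, and the group of all such $g$ has no reason to be finite or locally finite. Bounded type limits the \emph{number} of inconsistent vertices per level, not the complexity of $g$ away from $\xi$. In the actual JNS argument one does not prove amenability of $\St_G(\xi)$ for a single $\xi$ via this short exact sequence. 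Instead, the hypothesis that \emph{every} germ group $\mathcal{O}_\xi(G)$ is amenable is used globally, through the groupoid of germs: one embeds $G$ into the topological full group of this groupoid and proves amenability of the latter. The uniform bound $C$ from Definition~\ref{def:bounded-type} enters by forcing recurrence of the orbital Schreier graphs, which is the mechanism behind extensive amenability; your remark that this is the main technical obstacle is correct, but the ``inheritance for cocycles into extensively amenable modules'' you invoke is not a one-line fact and is precisely where the work in \cite{JuschenkoNekrashevychdelaSalle16} lies.
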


The following lemma provides us with a normal form for elements in $Q_{\mathcal{S}}$ that will be suitable for verifying the conditions of Theorem~\ref{thm:amenability-criterion}.
For simplicity, we will denote the elements of $Q_\mathcal{S}$ in that lemma and its proof with representatives from $G_\mathcal{S}$.
Recall that in $Q_\mathcal{S}$ we have $\Delta_n(\Omega_n) \subseteq \Delta_{n+1}(\Omega_{n+1})$ for all $n$.

\begin{lemma}\label{lem:normalform-short-proof}
For every $q \in Q_{\mathcal{S}}$ there are $m,n \in \N$ such that $q$ is represented by an element of the form $\varepsilon c_1^{\delta_1} \cdots c_m^{\delta_m} \eta$, where $c_i \in C^{[n+1]}$, $\delta_i \in \Delta_{n}(\Omega_{n})$, $\varepsilon, \eta \in \Delta_{n+1}(\Omega_{n+1})$, and
$\pi_n(\delta_i^{-1})(o^{n}) \neq \pi_n(\delta_j^{-1})(o^{n})$ for all $i \neq j$.
\end{lemma}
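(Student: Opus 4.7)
Plan:

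I would proceed by induction on the length $L$ of a word representing $g \in G_\mathcal{S}$ in the generating set $\Delta_1(\Omega_1) \cup C^{[1]}$, allowing $n$ in the target form to grow as needed at each step.

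The first step is to record two identifications available in $Q_\mathcal{S}$. First, since $\Delta_k(\omega) \cdot \Delta_{k+1}(\iota_{k,k+1}(\omega^{-1})) \in \bigoplus_i \Omega_i$, one has the chain of inclusions $\Delta_1(\Omega_1) \subseteq \Delta_2(\Omega_2) \subseteq \cdots \subseteq \Delta_{n+1}(\Omega_{n+1})$ inside $Q_\mathcal{S}$. Second, iterating Remark~\ref{rem:normalform-1} yields $\tilde{b}^{[1]} = \Delta_2(\phi_2(b)) \cdots \Delta_{n+1}(\phi_{n+1}(b)) \cdot \tilde{b}^{[n+1]}$, and the support hypothesis \eqref{eq:assumption-for-well-def-action} implies that each factor $\Delta_k(\phi_k(b))$ with $k \leq n+1$ is supported in $\{o^{k-2}\} \times (X_{k-1} \setminus \{o_{k-1}\}) \times X^{\cdot}$, which is disjoint from $\{o^n\} \times X^{\cdot}$ and hence from the support of any element of $C^{[n+1]}$. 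Each such factor therefore commutes with every element of $C^{[n+1]}$, and writing $E_b := \prod_{k=2}^{n+1} \Delta_k(\phi_k(b)) \in \Delta_{n+1}(\Omega_{n+1})$ we obtain the factorization $\tilde{b}^{[1]} = \tilde{b}^{[n+1]} \cdot E_b = E_b \cdot \tilde{b}^{[n+1]}$ in $Q_\mathcal{S}$, with the two factors commuting.

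Equipped with this, the base cases $L \leq 1$ are immediate. For $L \geq 2$, write $g = hg'$ and apply induction to $g'$, obtaining a representation $\varepsilon' c_1^{\delta_1} \cdots c_m^{\delta_m} \eta'$ for some $n$. When $h = \Delta_1(\omega)$ the factor is absorbed into the left-hand $\varepsilon$-piece via the chain of inclusions above, leaving the $c_i^{\delta_i}$-list and its position data unchanged. When $h = \tilde{b}^{[1]} = c_0 E_b$, one has to insert a fresh $C^{[n+1]}$-factor $c_0$ into the list; the subtlety is that $c_0$ cannot simply be commuted past the left-hand $\varepsilon'$ of the IH form. The device is to increase the index to $n+1$ before the insertion: using $C^{[n+1]} \subseteq \Delta_{n+2}(\Omega_{n+2}) \cdot C^{[n+2]}$, expand each $c_i$ of the IH form as $c_i = E_i' c_i''$ with $E_i' \in \Delta_{n+2}(\Omega_{n+2})$ and $c_i'' \in C^{[n+2]}$, verify by the same disjoint-support argument that $E_i'^{\delta_i}$ commutes with $C^{[n+2]}$ (its $(n{+}1)$-th coordinate lies in $X_{n+1} \setminus \{o_{n+1}\}$), slide all such factors into a new left-hand piece $\varepsilon''$, and place $c_0'' \in C^{[n+2]}$ into the list with $\delta_0 = 1 \in \Delta_{n+1}(\Omega_{n+1})$ at position $o^{n+1}$. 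The distinctness of the positions $u_i \in X^n$ ascends to distinctness of $(u_i, o_{n+1}) \in X^{n+1}$.

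Finally, I enforce distinctness of positions through two rules applied repeatedly to the product $\prod c_i^{\delta_i}$. \emph{Commutation}: if $u_i \neq u_j$, the conjugates $c_i^{\delta_i}$ and $c_j^{\delta_j}$ are supported in the disjoint cylinders $\{u_i\} \times X^{\cdot}$ and $\{u_j\} \times X^{\cdot}$, so they commute and may be reordered. \emph{Collapse}: if $u_i = u_j = u$ with $\delta_k = \Delta_n(\sigma_k)$, then $\sigma_j \sigma_i^{-1}$ fixes $o^n$ in $\Omega_n$; because the support of $C^{[n+1]}$ lies above $o^n$ and $\Stab_{\Omega_n}(o^n)$ permutes $X^n \setminus \{o^n\}$ while fixing $o^n$, conjugation by $\Delta_n(\sigma_j \sigma_i^{-1})$ is trivial on every element of $C^{[n+1]}$, and combined with the fact that $\tau_{n+1}\colon B \to C^{[n+1]}$ is a homomorphism (Remark~\ref{rem:tau_n-is-a-morphism}) this gives $c_i^{\delta_i} c_j^{\delta_j} = (c_i c_j)^{\delta_j}$ as a single conjugate at position $u$. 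Finitely many alternations of commutation (to bring equal-position factors adjacent) and collapse reduce the product to one with pairwise distinct positions, which completes the proof. The chief obstacle, and the place I would have to check most carefully, is the inductive step $h = \tilde{b}^{[1]}$: one must re-index everything up to level $n{+}1$ and verify that the resulting $E_i'^{\delta_i}$-factors really do commute with $C^{[n+2]}$, which is the linchpin that lets us move them into $\varepsilon''$ without disturbing the $\Delta_{n+1}$-conjugator structure.
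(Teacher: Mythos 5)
Your scheme is essentially the paper's: induction on word length in $\Delta_1(\Omega_1)\cup C^{[1]}$, absorption of rooted generators into the $\Delta_{n+1}$-buffer, a level bump from $n$ to $n+1$ when a $C^{[1]}$-generator arrives (splitting each old $c_i$ into a $\Delta_{n+2}$-head times a $C^{[n+2]}$-tail and sliding the heads out by disjointness of supports), and the same commutation/collapse rules for conjugates at distinct/equal positions. The only structural difference is that you multiply the new generator on the left, so it meets $\varepsilon'$ instead of $\eta$, and that reversal is where the one genuine error sits. Writing $\tilde b^{[1]}=E\,\tilde b^{[n+2]}$ with $E=\prod_{k=2}^{n+2}\Delta_k(\phi_k(b))\in\Delta_{n+2}(\Omega_{n+2})$, you arrive at $g=E\,\tilde b^{[n+2]}\,\varepsilon'\cdots$, and $\tilde b^{[n+2]}$ does \emph{not} commute with an arbitrary $\varepsilon'\in\Delta_{n+1}(\Omega_{n+1})$ (its support is the cylinder over $o^{n+1}$, which $\varepsilon'$ may move), so the fresh factor cannot be placed in the list with $\delta_0=1$ at position $o^{n+1}$. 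The correct move --- and the very reason the level bump helps --- is $\tilde b^{[n+2]}\varepsilon'=\varepsilon'(\tilde b^{[n+2]})^{\varepsilon'}$: the fresh factor enters with conjugator $\delta_0=\varepsilon'\in\Delta_{n+1}(\Omega_{n+1})$ at position $\pi_{n+1}((\varepsilon')^{-1})(o^{n+1})$, which is exactly the mirror image of the paper's computation $\eta\,\tilde b^{[1]}=(\tilde b^{[n+2]})^{\Delta_{n+1}(\tau^{-1})}\Delta_{n+2}(\rho)$ on the right. A secondary consequence of working on the left: since the fresh factor now sits at the left end of the list, the heads $E_i'^{\delta_i}$ should be slid rightwards into $\eta$ rather than leftwards into $\varepsilon''$; pushing them left forces a commutation with $(\tilde b^{[n+2]})^{\varepsilon'}$ that can fail when $\pi_{n+1}((\varepsilon')^{-1})(o^{n+1})$ lies in $\{\omega_i^{-1}(o^{n})\}\times(X_{n+1}\setminus\{o_{n+1}\})$. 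Both repairs stay entirely inside your framework, and your merging step then absorbs any collision between the new position and the old ones; the remaining ingredients (the support computations, and the collapse rule via triviality of conjugation by spine-stabilizing elements of $\Delta_n(\Omega_n)$ on $C^{[n+1]}$) are correct and coincide with the paper's.
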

\begin{proof}
We prove the lemma by induction on the word length of $q \in Q_{\mathcal{S}}$ with respect to the generating set $\Delta_1(\Omega_1) \cup C^{[1]}$.
If the word length is $1$, then the claim is clear.
Suppose that $q = \varepsilon c_1^{\delta_1} \cdots c_m^{\delta_m} \eta$ as in the lemma.
Let $b_i \in B$, $\omega_i \in \Omega_n$, and $\sigma,\tau \in \Omega_{n+1}$ be such that $c_i = \tilde{b}_i^{[n+1]}$, $\delta_i = \Delta_{n}(\omega_i)$, $\varepsilon = \Delta_{n+1}(\sigma)$, and $\eta = \Delta_{n+1}(\tau)$.
By multiplying $q$ from right with an element of the form $\Delta_1(\rho)$ for some $\rho \in \Omega_1$ we obtain
\[
q \Delta_1(\rho)
= \varepsilon c_1^{\delta_1} \cdots c_m^{\delta_m} \eta \Delta_1(\rho)
= \varepsilon c_1^{\delta_1} \cdots c_m^{\delta_m} \eta',
\]
where $\eta' = \Delta_{n+1}(\tau \iota_{1,n+1}(\rho))$ and the equations are considered in $Q_{\mathcal{S}}$.
Thus $q \Delta_1(\rho)$ can be written in a normal form as in the lemma.
Let us next multiply $q$ from the right with an element of the form $\tilde{b}^{[1]}$ for some $b \in B$.
We first calculate
\begin{align*}
\eta \tilde{b}^{[1]}
&= \Delta_{n+1}(\tau) \tilde{b}^{[n+2]} \Delta_{n+2}(\iota_{2,n+2}(\phi_2(b)) \cdots \iota_{n+1,n+2}(\phi_{n+1}(b)) \phi_{n+2}(b))\\
&= (\tilde{b}^{[n+2]})^{\Delta_{n+1}(\tau^{-1})} \Delta_{n+2}(\iota_{n+1,n+2}(\tau) \cdot \iota_{2,n+2}(\phi_2(b)) \cdots \phi_{n+2}(b))\\
&= c_{m+1}^{\delta_{m+1}} \Delta_{n+2}(\rho),
\end{align*}
where $c_{m+1} = \tilde{b}^{[n+2]}$ and $\delta_{m+1} \in \Delta_{n+1}(\Omega_{n+1})$, $\rho \in \Delta_{n+2}(\Omega_{n+2})$ are chosen appropriately.
Let us next consider the element $c_1^{\delta_1} \cdots c_m^{\delta_m}$ more closely.
For every $k \geq 0$ we have
\begin{align*}
\supp_{X^{n+k+2}}(c_i^{\delta_i})
&= \delta_i^{-1}(\supp_{X^{n+k+2}}(c_i))\\
&\subseteq \delta_i^{-1}(\{o^{n}\} \times X_{n+1} \setminus \{o_{n+1}\} \times X_{n+2} \times \cdots \times X_{n+k+2})\\
&= \{\omega_i^{-1}(o^{n})\} \times X_{n+1} \setminus \{o_{n+1}\} \times X_{n+2} \times \cdots \times X_{n+k+2}).
\end{align*}
As a consequence we see that
\begin{equation}\label{eq:merging-conjugates}
c_i^{\delta_i}c_j^{\delta_j} = (\widetilde{b_ib_j}^{[n+1]})^{\delta_i}
\end{equation}
if $\omega_i^{-1}(o^{n}) = \omega_j^{-1}(o^{n})$ and $[c_i^{\delta_i},c_j^{\delta_j}] = 1$ otherwise.
In view of this, our calculation of $\eta \tilde{b}^{[1]}$, and the assumption
\[
\omega_i^{-1}(o^{n}) = \pi_n(\delta_i^{-1})(o^{n}) \neq \pi_n(\delta_j^{-1})(o^{n}) = \omega_j^{-1}(o^{n})
\]
for $i \neq j$, we obtain
\begin{align*}
q \tilde{b}^{[1]}
&= \varepsilon c_1^{\delta_1} \cdots c_m^{\delta_m} c_{m+1}^{\delta_{m+1}} \Delta_{n+2}(\rho)\\
&= \Delta_{n+1}(\sigma) (\tilde{b}_1^{[n+1]})^{\delta_1} \cdots (\tilde{b}_m^{[n+1]})^{\delta_m} c_{m+1}^{\delta_{m+1}} \Delta_{n+2}(\rho)\\
&= \Delta_{n+2}\bigg(\iota_{n+1,n+2}(\sigma) \cdot \prod \limits_{i=1}^m \phi_{n+2}(b_i)^{\iota_{n,n+2}(\omega_i)}\bigg) \cdot d_1^{\delta_1} \cdots d_m^{\delta_m} c_{m+1}^{\delta_{m+1}} \cdot \Delta_{n+2}(\rho),
\end{align*}
where $d_i = \tilde{b}_i^{[n+2]}$ for every $i$.
By `merging' $c_{m+1}^{\delta_{m+1}}$ with some $d_i^{\delta_i}$ as in~\eqref{eq:merging-conjugates} if necessary we see that $q \tilde{b}^{[1]}$ can be written in the desired normal form.
\end{proof}

\begin{proposition}\label{prop:amenability-of-GS}
Suppose that $B$ is a perfect, amenable group and that $Q_{\mathcal{S}}$ is simple.
Then the groups $G_{\mathcal{S}}$ and $Q_{\mathcal{S}}$ are amenable as well.
\end{proposition}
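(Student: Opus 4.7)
The plan is to invoke the Juschenko--Nekrashevych--de la Salle criterion (Theorem~\ref{thm:amenability-criterion}) for the faithful action of $Q_\mathcal{S}$ on the Cantor set $\mathfrak{C}$ provided by Lemma~\ref{lem:well-defined-action} and Lemma~\ref{lem:kernel-is-direct-sum}. First I would reduce to proving amenability of $Q_\mathcal{S}$: by Corollary~\ref{cor:first-levels} the kernel of the canonical projection $G_\mathcal{S}\twoheadrightarrow Q_\mathcal{S}$ is the locally finite (hence amenable) group $\bigoplus_{i\in\N}\Omega_i$, so amenability of $Q_\mathcal{S}$ together with closure of amenability under extensions yields amenability of $G_\mathcal{S}$.

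To apply Theorem~\ref{thm:amenability-criterion} I would verify its two hypotheses. For bounded type I would use the weak normal form of Lemma~\ref{lem:normalform-short-proof}, which writes a representative of any $q\in Q_\mathcal{S}$ in the form $\varepsilon c_1^{\delta_1}\cdots c_m^{\delta_m}\eta$ with $\varepsilon,\eta\in\Delta_{n+1}(\Omega_{n+1})$, $\delta_i\in\Delta_n(\Omega_n)$ and $c_i=\tilde b_i^{[n+1]}\in C^{[n+1]}$. The factors $\varepsilon,\eta,\delta_i$ act on only the first $n+1$ coordinates and are consistent at every level $\ell\geq n+1$. For each directed factor $\tilde b_i^{[n+1]}$, the support condition~\eqref{eq:assumption-for-well-def-action} confines any point of inconsistency at level $\ell$ to $\{o^\ell\}\cup(\{o^{\ell-1}\}\times Y_i)$, where $Y_i\subseteq X_\ell$ is the projection of $\supp(\phi_{\ell+1}(b_i))$ to the $\ell$-th coordinate; by the structure of the telescope this set has cardinality bounded independently of $\ell$. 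Conjugation by $\delta_i$ moves the spine along which this cluster of inconsistency sits but preserves its cardinality, and bounded type is preserved under finite products.

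The main obstacle is to verify amenability of the germ groups $\mathcal{O}_\xi(Q_\mathcal{S})$ for every $\xi\in\mathfrak{C}$. Using the normal form once more, any factor $\Delta_\ell(\omega)$ that fixes the initial segment $\xi_1\cdots\xi_\ell$ acts as the identity on $U_{\xi_1\cdots\xi_\ell}$ and contributes trivially to the germ, so the germ at $\xi$ of any element of $\St_{Q_\mathcal{S}}(\xi)$ is determined entirely by its directed factors $c_i^{\delta_i}$. The local behaviour of such a factor near $\xi$ is controlled by a single value $\phi_k(b_i)\in\Omega_k$: if $\xi$ eventually leaves the spine $o$, say first at position $K$, only the component $\phi_{K+1}(b_i)$ acts on a neighbourhood of $\xi$, making $\mathcal{O}_\xi(Q_\mathcal{S})$ a subquotient of $B$; if $\xi$ stays eventually on the spine, the germ group is a countable directed union of such subquotients. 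Since $B$ is amenable and amenability is closed under subgroups, quotients and countable directed unions, $\mathcal{O}_\xi(Q_\mathcal{S})$ is amenable in either case. The hypotheses of Theorem~\ref{thm:amenability-criterion} being met, $Q_\mathcal{S}$ and hence $G_\mathcal{S}$ are amenable.
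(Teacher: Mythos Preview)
Your overall strategy coincides with the paper's: reduce to $Q_\mathcal{S}$ via the locally finite kernel, then apply Theorem~\ref{thm:amenability-criterion} to the faithful action on $\mathfrak{C}$. For bounded type the paper takes a shorter route than yours: it simply observes that the generators in $\Delta_1(\Omega_1)\cup C^{[1]}$ are of bounded type and that bounded-type homeomorphisms form a group, so no normal-form analysis is needed there.

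The genuine gap is in your treatment of the germ groups. You analyse the local behaviour of $c_i=\tilde b_i^{[n+1]}$ near $\xi$ according to where $\xi$ leaves the spine $o$, but the relevant factor is $c_i^{\delta_i}$, whose ``spine'' is $\delta_i^{-1}(o^\infty)$, not $o^\infty$; the conjugation cannot be ignored. More seriously, even granting that each individual germ is ``controlled by a single $\phi_k(b_i)$'', this does not by itself exhibit $\mathcal{O}_\xi(Q_\mathcal{S})$ as a subquotient of $B$: different stabilising elements $q$ come with different $n$, different $\delta_i$, and there is no map $B\to\mathcal{O}_\xi$ in sight. The paper supplies exactly this missing step: after reducing the germ of $q$ to that of a single $c^{\delta}$ (using that the $c_i^{\delta_i}$ have pairwise disjoint supports), it shows that a nontrivial germ forces $\delta(\xi)=o^\infty$, hence $\xi=v\,o_{\ell+1}o_{\ell+2}\cdots$, and then replaces $\delta$ by a \emph{fixed} $\Delta_\ell(\omega)$ with $\omega$ depending only on $\xi$. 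This yields a surjective homomorphism from $(C^{[n+1]})^{\Delta_\ell(\omega)}$, a quotient of $B$, onto $\mathcal{O}_\xi(Q_\mathcal{S})$. Your case split ``$\xi$ eventually leaves the spine'' versus ``$\xi$ stays on the spine'' does not produce such a map, and the claimed ``directed union of subquotients'' in the second case is not substantiated.
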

\begin{proof}
From Lemma~\ref{lem:kernel-is-direct-sum} we know that $G_{\mathcal{S}}$ is an extension of $Q_{\mathcal{S}}$ by the elementary amenable group $\bigoplus \limits_{n \in \N} \Omega_{n}$.
It therefore suffices to prove the claim for $Q_{\mathcal{S}}$, which we think of as a subgroup $\Homeo(\mathfrak{C})$.
From the definition of the generators in $\Delta_1(\Omega_1)  \cup C^{[1]}$ it directly follows that they act as homeomorphisms of bounded type on $\mathfrak{C}$.
Using this and the elementary fact that the set of homeomorphisms of bounded type of $\mathfrak{C}$ forms a group, we deduce that $Q_{\mathcal{S}}$ consists of homeomorphisms of bounded type.
Let us next verify that $\mathcal{O}_{\xi}(Q_{\mathcal{S}})$ is a homomorphic image of $B$, and hence amenable, for every $\xi \in \mathfrak{C}$.
To see this, let $q \in Q_{\mathcal{S}}$ be an element that fixes $\xi$.
Suppose first that $q$ is represented by $\Delta_{n}(\omega)$ for some $n \in \N$ and $\omega \in \Omega_{n}$.
Then $\omega$ fixes $v \defeq \xi_1\ldots\xi_n$ and hence acts as the identity on the open neighbourhood $U_v$ of $\xi$ that consists of all sequences that start with $v$.
As a consequence, $q$ is trivial in $\mathcal{O}_{\xi}(Q_{\mathcal{S}})$.
Suppose next that $q \in Q_{\mathcal{S}}$ is arbitrary.
From Lemma~\ref{lem:normalform-short-proof} we know that $q$ is represented by an element of the form $\varepsilon c_1^{\delta_1} \cdots c_m^{\delta_m} \eta$, where $c_i \in C^{[n+1]}$, $\delta_i \in \Delta_{n}(\Omega_{n})$, $\varepsilon, \eta \in \Delta_{n+1}(\Omega_{n+1})$, and
$\pi_n(\delta_i^{-1})(o^{n}) \neq \pi_n(\delta_j^{-1})(o^{n})$ for $i \neq j$.
Then $\varepsilon \cdot \eta$ fixes $\xi$ and $c_1^{\delta_1} \cdots c_m^{\delta_m}$ fixes $\eta(\xi)$.
Indeed, this follows from the fact that elements in $\Delta_{n+1}(\Omega_{n+1})$ only permute initial subwords of length $n+1$ while the elements in $C^{[n+1]}$ fix initial subwords of length $n+1$.
In this case we just have seen that $\varepsilon \cdot \eta$ represents the trivial element in $\mathcal{O}_{\xi}(Q_{\mathcal{S}})$.
Thus the germ of $q$ at $\xi$ can be written as $\eta^{-1} c_1^{\delta_1} \cdots c_m^{\delta_m} \eta$.
Since $\pi_n(\delta_i^{-1})(o^{n}) \neq \pi_n(\delta_j^{-1})(o^{n})$ for $i \neq j$ it follows that the supports of the elements $c_i^{\delta_i}$ lie in disjoint open subsets of $\mathfrak{C}$.
Thus there is some index $i_0$ such that the germ of $q$ at $\xi$ is represented by $c_{i_0}^{\delta_{i_0} \eta}$.
We can therefore assume that $q$ is represented by $c^{\delta}$ for appropriate elements $c \in C^{[n+1]}$ and $\delta \in \Delta_{n+1}(\Omega_{n+1})$.
Without loss of generality we may assume that the restriction of $c$ to every neighbourhood of $\delta(\xi)$ is non-trivial.
From the definition of $C^{[n+1]}$ it is evident that $\supp_{\mathfrak{C}}(c)$ is contained in $U_{o^{n-1}}$.
Thus $\delta(\xi)$ lies in $U_{o^{n-1}}$.
Let $b \in B$ be such that $c = \widetilde{b}^{[n+1]}$.
Since $\widetilde{b}^{[n+1]} = \Delta_{n+2}(\phi_{n+2}(b)) \cdot \widetilde{b}^{[n+2]}$ fixes $\xi$ and since the supports of $\Delta_{n+2}(\phi_{n+2}(b))$ and $\widetilde{b}^{[n+2]}$ in $\mathfrak{C}$ are disjoint we deduce that both of these elements fix $\delta(\xi)$.
As we have already seen, $\Delta_{n+2}(\phi_{n+2}(b))$ represents the trivial element in $\mathcal{O}_{\delta(\xi)}(Q_{\mathcal{S}})$.
Hence $\delta(\xi)$ lies in $\supp_{\mathfrak{C}}(\widetilde{b}^{[n+2]}) \subseteq U_{o^{n}}$.
Inductively we obtain $\delta(\xi) \in \bigcap \limits_{k \geq n} U_{o^{n}} = \{o^{\infty}\}$, i.e.\ $\delta(o^{\infty}) = \xi$.
This shows that there is some $\ell \in \N$ and a word $v \in X^{\ell}$ such that $\xi = v o_{\ell+1} o_{\ell+2} \cdots$.
Thus there is some $\omega \in \Omega_{\ell}$ with $\omega(\xi) = o^{\infty}$, from which we deduce that the images of $c^{\delta}$ and $c^{\Delta_{\ell}(\omega)}$ in $\mathcal{O}_{\xi}(Q_{\mathcal{S}})$ coincide.
Since $\omega$ only depends on $\xi$ this shows that $\mathcal{O}_{\xi}(Q_{\mathcal{S}})$ is a homomorphic image of $(C^{[n+1]})^{\Delta_{\ell}(\omega)}$ and therefore amenable.
\end{proof}

\begin{corollary}\label{cor:2-generated-amenable}
Given two integers $d \geq 5$ and $2 \leq r \leq d-3$,
the groups $G_{\mathcal{A}(d,r)}$ and $Q_{\mathcal{A}(d,r)}$ are amenable.
\end{corollary}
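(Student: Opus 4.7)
The plan is to apply Proposition~\ref{prop:amenability-of-GS} directly to the telescope $\mathcal{A}(d,r)$. To do so, one needs to match $\mathcal{A}(d,r)$ to the framework introduced just before the proposition, to verify that $B=\Alt(rd)$ is a perfect amenable group, and to recall that $Q_{\mathcal{A}(d,r)}$ is simple. The last two items are essentially immediate: since $r \geq 2$ and $d \geq 5$, we have $rd \geq 10$, so $\Alt(rd)$ is a non-abelian finite simple group and in particular perfect; being finite it is amenable. Simplicity of $Q_{\mathcal{A}(d,r)}$ is exactly Theorem~\ref{thm:simplehead-alt}.

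Next, $\mathcal{A}(d,r)$ is a flexible $\Alt(rd)$-telescope (Proposition~\ref{prop:B-telescope}) in which $\Omega_n = \Alt(X^n) \leq \Sym(X^n)$ is transitive and the transition maps $\iota_{n,m}$ act on words of length $m$ by permuting their initial subwords of length $n$, as required by the setup preceding Proposition~\ref{prop:amenability-of-GS}.

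The only condition that actually needs checking is the support hypothesis \eqref{eq:assumption-for-well-def-action}. I would choose the basepoint $o = (x_d, x_d, x_d, \dots) \in \mathfrak{C} = \prod_{n=1}^\infty X$, so that $o^{i-2} = x_d^{i-2}$ and $o_{i-1} = x_d$. By Remark~\ref{rem:B_i-is-a-certain-alternating-group},
\[
\supp_{X^i}(B_i) = \{x_d^{i-2}\} \times \{x_1, \dots, x_r\} \times X,
\]
and since $r \leq d-3 < d$, the set $\{x_1,\dots,x_r\}$ is disjoint from $\{x_d\} = \{o_{i-1}\}$. Hence $\supp(B_i) \subseteq \{o^{i-2}\} \times (X_{i-1}\setminus\{o_{i-1}\}) \times X_i$, and \eqref{eq:assumption-for-well-def-action} holds.

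With all hypotheses in place, Proposition~\ref{prop:amenability-of-GS} gives amenability of both $G_{\mathcal{A}(d,r)}$ and $Q_{\mathcal{A}(d,r)}$. There is no substantive obstacle: the entire work has been done in Section~\ref{sec:embedding-groups}, and the corollary is really a bookkeeping verification that $\mathcal{A}(d,r)$ fits its framework.
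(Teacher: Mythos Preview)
Your proposal is correct and follows essentially the same route as the paper's proof: apply Proposition~\ref{prop:amenability-of-GS} after checking that $B=\Alt(rd)$ is perfect (and amenable, being finite), that $Q_{\mathcal{A}(d,r)}$ is simple by Theorem~\ref{thm:simplehead-alt}, and that the support condition~\eqref{eq:assumption-for-well-def-action} holds with $o_n = x_d$. The paper cites Lemma~\ref{lem:supports} rather than Remark~\ref{rem:B_i-is-a-certain-alternating-group} for the support computation, but the content is identical.
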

\begin{proof}
We have to show that $\mathcal{A}(d,r)$ satisfies the conditions of the $B$-telescope $\mathcal{S}$ in Proposition~\ref{prop:amenability-of-GS}.
Since $B \cong \Alt(rd)$ is perfect and $Q_{\mathcal{A}(d,r)}$ is simple by Theorem~\ref{thm:simplehead-alt}, it remains to show that $\mathcal{A}(d,r) = ((\Omega_n)_{n \in \N},  (\phi_n)_{n \geq 2})$ satisfies~\eqref{eq:assumption-for-well-def-action}, i.e.\ that
\[
\supp_{X^n}(B_n) \subseteq \{o^{n-2}\} \times (X_{n-1} \setminus \{o_{n-1}\}) \times X_n
\]
for every $n \geq 2$.
In view of Lemma~\ref{lem:supports} this becomes evident by setting $o_n = x_d$ for each $n \in \N$.
\end{proof}

The group $Q_{\mathcal{S}}$ in Proposition~\ref{prop:amenability-of-GS} yields one of the rare examples of finitely generated, infinite, simple amenable groups, whose existence was first established in~\cite{JuschenkoMonod13}. To the best of our knowledge $Q_{\mathcal{A}(d,r)}$ is the first $2$-generated example; in addition, it admits a $2$-generated residually finite amenable cover.

\begin{theorem}\label{thm:2-generated-amenable}
There exists an infinite, $2$-generated, residually finite, amenable group $G$ that admits an infinite, simple quotient $Q$.
\end{theorem}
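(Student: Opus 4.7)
The plan is to simply assemble the theorem from results already established for the running example $\mathcal{A}(d,r)$. I would fix any pair of integers $d \geq 5$ and $2 \leq r \leq d-3$ (for instance, $d=5, r=2$), consider the flexible $\Alt(rd)$-telescope $\mathcal{A}(d,r)$ constructed in \S\ref{ex:Alt}, and set $G \defeq G_{\mathcal{A}(d,r)}$ and $Q \defeq Q_{\mathcal{A}(d,r)}$. The claim is that this choice satisfies every requirement of the theorem.

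First I would verify that $G$ is $2$-generated and residually finite. The $2$-generation is exactly the content of Theorem~\ref{thm:frame-minimal-Alt}, while residual finiteness is immediate from the inclusion $G \subseteq \prod_{\ell=1}^{\infty} \Alt(d^\ell)$ into a product of finite groups (equivalently, it is part of the conclusion of Theorem~\ref{thm:completion}). Infiniteness is obvious since $G$ contains $\bigoplus_{\ell=1}^\infty \Alt(d^\ell)$ by Corollary~\ref{cor:first-levels}.

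Next I would invoke Corollary~\ref{cor:2-generated-amenable} to conclude that both $G$ and $Q$ are amenable. Finally, $Q = G/\bigoplus_{\ell=1}^\infty \Alt(d^\ell)$ is by construction a quotient of $G$, and by Theorem~\ref{thm:simplehead-alt} it is an infinite simple group. This proves the theorem.

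There is no real obstacle at this stage: all the substantive work has already been done in the previous sections. The only mild subtlety worth noting is that one must appeal to the \emph{amenability} of $G_{\mathcal{A}(d,r)}$ established in Corollary~\ref{cor:2-generated-amenable} (which itself rests on the Juschenko--Nekrashevych--de la Salle criterion, Theorem~\ref{thm:amenability-criterion}), rather than trying to derive it directly, since the underlying analysis of germs on the Cantor set is what makes the whole construction work.
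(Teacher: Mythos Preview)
Your proposal is correct and follows essentially the same route as the paper: take $G = G_{\mathcal{A}(d,r)}$ and $Q = Q_{\mathcal{A}(d,r)}$, cite Theorem~\ref{thm:frame-minimal-Alt} for $2$-generation, Corollary~\ref{cor:2-generated-amenable} for amenability, and Theorem~\ref{thm:simplehead-alt} for simplicity of the head. You even spell out residual finiteness and infiniteness a bit more explicitly than the paper does, but there is no substantive difference.
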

\begin{proof}
Consider the $\Alt(rd)$-telescope $\mathcal{A}(d,r)$.
From Theorem~\ref{thm:frame-minimal-Alt} we know that $G_{\mathcal{A}(d,r)}$ is $2$-generated.
In particular the group $Q_{\mathcal{A}(d,r)}$, being a quotient of $G_{\mathcal{A}(d,r)}$ by Corollary~\ref{cor:first-levels}, is also $2$-generated.
Moreover $Q_{\mathcal{A}(d,r)}$ is simple by Theorem~\ref{thm:simplehead-alt}.
Now it remains to apply Corollary~\ref{cor:2-generated-amenable}, which tells us that the groups $G_{\mathcal{A}(d,r)}$ and $Q_{\mathcal{A}(d,r)}$ are amenable.
\end{proof}

Going back to our $B$-telescope $\mathcal{H}$, we can now prove the following.

\begin{theorem}\label{thm:embeddings-into-simple-groups}
Let $G$ be a finitely generated, residually finite (amenable) group.
There is a finitely generated, residually finite (amenable) group $H$ and an infinite, simple, (amenable) $\LEF$ group $Q$ such that
\begin{enumerate}
\item there is an embedding $\iota \colon G \rightarrow H$,
\item there is a projection $\pi \colon H \rightarrow Q$,
\item the composition $\pi \circ \iota$ is injective.
\end{enumerate}
\end{theorem}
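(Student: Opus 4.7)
My plan is to reduce to the case where $G$ is perfect by embedding the given finitely generated, residually finite (amenable) group into such a perfect group preserving all relevant properties, and then apply the construction of Subsection~\ref{subsec:embedding-perfect-groups}. With $B = B_0\times G$, Proposition~\ref{prop:H-is-flexible-B-system} gives that $\mathcal{H} = ((\Omega_n)_{n\in\N},(\phi_n)_{n\geq 2})$ is a flexible $B$-telescope. I take $H = G_\mathcal{H}$ and $Q = Q_\mathcal{H}$, with $\pi\colon H\to Q$ the canonical projection.

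For the embedding $\iota\colon G\to H$, I use the restriction of $\tau_1 = \tilde{\cdot}^{[1]}\colon B\to P_\mathcal{H}$ to the subgroup $\{1_{B_0}\}\times G \subseteq B$. By the discussion preceding Proposition~\ref{prop:H-is-flexible-B-system}, the image of this restriction coincides with the image of $G$ under the embedding of $\SAut(\T)$ into $P_\mathcal{H}$ composed with $\tilde{\cdot}^\alpha_{[1]}\colon G\to \SAut(\T)$. Since $\bigcap_i N_i = 1$, the map $\tilde{\cdot}^\alpha_{[1]}$ is injective on $G$, so $\iota$ is injective. Moreover, every nontrivial element of $\tilde{G}^\alpha_{[1]}$ acts nontrivially on arbitrarily deep levels of $\T$, so its image in $P_\mathcal{H}$ has infinite support and therefore cannot lie in $\bigoplus_n\Omega_n$. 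Consequently $\iota(G)\cap \ker\pi = 1$, which gives the injectivity of $\pi\circ\iota$.

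It remains to verify the stated properties of $H$ and $Q$. Finite generation of $H$ follows from Theorem~\ref{thm:completion}, since $B_0\times G$ and $\Omega_1 = \Alt(X^1)$ are finitely generated. Residual finiteness of $H$ is immediate from $H\subseteq P_\mathcal{H}$. In the amenable case $B = B_0\times G$ is amenable, so Proposition~\ref{prop:amenability-of-GS} yields the amenability of $H$ and $Q$ once simplicity of $Q$ is established. The group $Q$ is infinite because it contains the injective image of $G$, and $G$ is nontrivial (the trivial case of the theorem being vacuous). Finally, $Q$ is $\LEF$: the projections $H \hookrightarrow P_\mathcal{H} \twoheadrightarrow \Omega_n$ descend, for any free ultrafilter $\mathcal{U}$ on $\N$, to an injection $Q \hookrightarrow \prod_\mathcal{U}\Omega_n$, as in the paragraph following the second proof of Theorem~\ref{thm:simplehead-alt}.

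The principal obstacle is the simplicity of $Q_\mathcal{H}$: since $B$ is typically infinite here, Lemma~\ref{lem:simplicity-criterion} does not apply directly. I plan instead to adapt the first proof of Theorem~\ref{thm:simplehead-alt} --- establish a weak normal form for elements of $G_\mathcal{H}$ paralleling Lemma~\ref{lem:weak-normalform}, use it to show that every element outside $\bigoplus_n\Omega_n$ moves a consistent point on some level, and then combine the simplicity of each $\Alt(X^n)$ with Lemma~\ref{lem:normal-closure-of-Delta} and Corollary~\ref{cor:group-associated-to-shifted-B-telescope} to conclude that any normal subgroup of $G_\mathcal{H}$ strictly containing $\bigoplus_n\Omega_n$ equals $G_\mathcal{H}$. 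A secondary obstacle is the reduction to perfect groups, which requires constructing a f.g.\ residually finite (amenable) perfect overgroup for every f.g.\ residually finite (amenable) group $G$; this can be done with a small additional argument preserving the class under consideration.
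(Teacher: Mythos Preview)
Your proposal is correct and follows essentially the same route as the paper: reduce to the perfect case, apply the flexible $B$-telescope $\mathcal{H}$ of Subsection~\ref{subsec:embedding-perfect-groups} with $B=B_0\times G$, take $H=G_{\mathcal{H}}$ and $Q=Q_{\mathcal{H}}$, and verify the required properties.

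Two points are worth flagging. First, the reduction to perfect groups is not quite ``a small additional argument'': the paper invokes a separate theorem of the same authors (\cite[Theorem 1.1]{KionkeSchesler22}) to embed any finitely generated, residually finite (amenable) group into a finitely generated, residually finite, perfect (amenable) group. You correctly identify that this step is needed, but you should not expect to improvise it on the spot.

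Second, for the simplicity of $Q_{\mathcal{H}}$ the paper does \emph{not} develop an analogue of Lemma~\ref{lem:weak-normalform}; it instead uses the normal form of Lemma~\ref{lem:normalform-short-proof} (a product $\varepsilon c_1^{\delta_1}\cdots c_m^{\delta_m}\eta$ of conjugates of elements of $C^{[n+1]}$ with the $\delta_i$ sending $o^{n}$ to distinct points) and then runs the consistent-point argument from the first proof of Theorem~\ref{thm:simplehead-alt}. Your plan to imitate Lemma~\ref{lem:weak-normalform} directly would also work, but is less natural here because the alphabets $X_n=G/N_n$ vary with $n$, so the ``enlarged'' alternating groups $\Alt(X^m\times\{\text{support}\}\times X_{m+2})$ become awkward to bookkeep; Lemma~\ref{lem:normalform-short-proof} sidesteps this by keeping everything expressed through conjugates of $C^{[n+1]}$. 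Either way, the consistent-point step also gives you that every element outside $\bigoplus_n\Omega_n$ has $\pi_j$ nontrivial for all large $j$, which is exactly what your ultraproduct argument for LEF needs.
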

\begin{proof}
In~\cite[Theorem 1.1]{KionkeSchesler22} the authors show that every finitely generated, residually finite, (amenable) group embeds in a finitely generated, residually finite, perfect (amenable) group.
We can therefore assume that $G$ is perfect.
In this case the group $B = B_0 \times G$ from our flexible $B$-telescope $\mathcal{H} = ((\Omega_{\ell})_{\ell \in \N},(\phi_{\ell})_{\ell \geq 2})$ is perfect since $B_0 \cong \Alt(6)$.
Moreover $B$ is amenable if and only if $G$ is amenable.
As noted in Subsection~\ref{subsec:embedding-perfect-groups}, the intersection $\tilde{G}^\alpha_{[1]} \cap \bigoplus \limits_{n \in \N} \Omega_n$ is trivial, from which it follows that $G \cong \tilde{G}^\alpha_{[1]}$ is contained in $Q_{\mathcal{H}}$.
To verify the conditions of the $B$-telescope $\mathcal{S}$ in Proposition~\ref{prop:amenability-of-GS} it remains to show that $Q_{\mathcal{H}}$ is simple and that $\mathcal{H}$ satisfies~\eqref{eq:assumption-for-well-def-action}.
The latter is a direct consequence of the definition of the maps $\phi_n$.
The proof of simplicity of $Q_{\mathcal{H}}$ follows the same lines as the first proof of Theorem~\ref{thm:simplehead-alt} provided in Section~\ref{sec:simplicity-alt}.
The only difference is that one has to replace the normal form in Lemma~\ref{lem:weak-normalform} with the normal form in Lemma~\ref{lem:normalform-short-proof}.
Finally, since $Q_{\mathcal{H}} = G_{\mathcal{H}} / \bigoplus \limits_{n \in \N} \Omega_n$ with finite groups $\Omega_n$, it follows that $Q_{\mathcal{H}}$ satisfies the $\LEF$ property.
\end{proof}

\section{An amenable-($\tau$) Grothendieck pair}\label{sec:grothendieck-pair}


In this section we revisit the classical question of what can be said about a finitely generated, residually finite group $G$ if one knows its profinite completion $\widehat{G}$.
Let us investigate this question for two fairly oppositional properties in group theory: amenability and property $(\tau)$.
In order to underline how opposite these concepts are,
we define them in terms of boundaries of subsets in their Cayley graphs.
To this end, it will be useful to write $\partial_{\Gamma(G,S)}(F) \defeq (F \cdot S \cup F \cdot S^{-1}) \setminus F$, where $S,F$ are two subsets of a group $G$.
The following characterization only applies to finitely generated, residually finite groups.
Many equivalent definitions of amenability and property $(\tau)$, which also apply in a more general setting, can be found in~\cite{CeccheriniCoornaert10}, respectively in~\cite{LubotzkyZuk05}.

\begin{definition}\label{def:amenable-and-tau}
Let $G$ be a finitely generated, residually finite group and let $S$ be a finite generating set of $G$.
Then $G$
\begin{enumerate}
\item is \emph{amenable} if for every $\varepsilon > 0$ there is a non-empty finite subset $F \subseteq G$ such that
\[
\abs{\partial_{\Gamma(G/N,S)}(F)} \leq \varepsilon \cdot \abs{F}
\]
for every finite index normal subgroup $N$ in $G$.
\item has \emph{property $(\tau)$} if there is some $\varepsilon > 0$ such that
\[
\abs{\partial_{\Gamma(G/N,S)}(F)} \geq \varepsilon \cdot \abs{F}
\]
for every finite index normal subgroup $N$ in $G$ and every non-empty subset $F \subseteq G/N$ of size $\abs{F} \leq \frac{\abs{G/N}}{2}$.
\end{enumerate}
\end{definition}


Recall that a pair $(G,H)$ of finitely generated, residually finite groups is called a \emph{Grothendieck pair} if $G$ is a proper subgroup of $H$ and the inclusion $\iota \colon G \rightarrow H$ induces an isomorphism $\widehat{\iota} \colon \widehat{G} \rightarrow \widehat{H}$.
By combining strong results of Ershov, Jaikin-Zapirain, and Kassabov~\cite{ErshovJaikinKassabov17} on groups satisfying property $(\tau)$ with the properties of our groups $G_{\mathcal{S}}$ that we have seen so far, we can now easily deduce the following.

\begin{theorem}\label{thm:amenable-vs-tau}
There exists a Grothendieck pair $(G,H)$ consisting of an amenable group $G$ and a group $H$ with property $(\tau)$.
\end{theorem}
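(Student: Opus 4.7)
The plan is to take $G = G_{\mathcal{A}(d,r)}$ (for some fixed $d \geq 5$, $2 \leq r \leq d-3$) as the amenable side of the pair, and to enlarge it inside its profinite completion $P = \prod_{\ell=1}^\infty \Alt(d^\ell)$ by adjoining the image of a group with property $(\tau)$. Combining Theorem~\ref{thm:2-generated-amenable}, Theorem~\ref{thm:frame-minimal-Alt} and Corollary~\ref{cor:2-generated-amenable}, the group $G$ is finitely generated, residually finite and amenable, and sits as a frame in $P$, so the inclusion $G \hookrightarrow P$ induces an isomorphism $\widehat{G} \cong P$.

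The main external input I would use is the work of Ershov, Jaikin-Zapirain and Kassabov~\cite{ErshovJaikinKassabov17}: their construction of property-$(\tau)$ (indeed property (T)) groups that project densely onto large products of non-abelian finite simple groups can be arranged to produce a finitely generated subgroup $H_0 \leq P$ that itself satisfies property $(\tau)$ and is a frame in $P$. Arranging that $H_0$ is a frame (i.e.\ that its image contains $\bigoplus_\ell \Alt(d^\ell)$) while simultaneously witnessing uniform expansion with respect to a fixed generating set is the hard part of the argument, and is exactly what the work cited above supplies.

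Once $H_0$ is in hand, I would set $H = \langle G, H_0 \rangle \leq P$. By the gluing lemma of Kassabov and Nikolov (Lemma~\ref{lem:gluing}) applied with $A_\ell = B_\ell = C_\ell = \Alt(d^\ell)$, the group $H$ is again a frame in $P$, so the inclusion $G \hookrightarrow H$ induces an isomorphism $\widehat{G} \xrightarrow{\sim} \widehat{H}$, both being canonically identified with $P$. The inclusion must be strict: if $G = H$, then $H_0$ would be a subgroup of the amenable group $G$, hence amenable, but an infinite residually finite amenable group cannot have property $(\tau)$, and $H_0$ is infinite since it is dense in the infinite product $P$.

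Finally, I would transfer property $(\tau)$ from $H_0$ to $H$. The inclusions $H_0 \hookrightarrow H \hookrightarrow P$ induce isomorphisms on profinite completions, so for every finite-index normal subgroup $N \trianglelefteq H$ the induced map $H_0/(H_0 \cap N) \to H/N$ is an isomorphism. Choosing a finite generating set $S$ of $H$ containing a generating set $S_0$ of $H_0$ that witnesses property $(\tau)$, the Cayley graph of $H/N$ with respect to $S$ contains the Cayley graph of $H_0/(H_0\cap N)$ with respect to $S_0$ as a spanning subgraph; since adding edges can only enlarge the boundary of any subset and hence increase the Cheeger constant, the uniform lower bound on Cheeger constants for $H_0$ transfers to $H$. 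This gives property $(\tau)$ for $H$ and completes the construction of the Grothendieck pair $(G,H)$.
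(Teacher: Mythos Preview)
Your proposal is correct and follows essentially the same approach as the paper: both take $G = G_{\mathcal{A}(d,r)}$, adjoin a property-$(\tau)$ frame from \cite{ErshovJaikinKassabov17}, apply the Kassabov--Nikolov gluing lemma, and transfer $(\tau)$ to the larger group via the shared finite quotients. The only cosmetic differences are that you argue properness of the inclusion explicitly (the paper leaves this implicit) and phrase the $(\tau)$ transfer in terms of Cheeger constants rather than the representation-theoretic characterization used in the paper.
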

\begin{proof}
From Theorem~\ref{thm:frame-minimal-Alt} and Proposition~\ref{prop:amenability-of-GS} we know that $G \defeq G_{\mathcal{A}(d,r)}$ is a $2$-generated amenable frame for $P_{\mathcal{S}} = \prod \limits_{\ell \in \N} \Omega_{\ell} \cong \prod \limits \Alt(d^{\ell})$.
According to~\cite[p.3]{KassabovNikolov06} there is also a frame $K$ for $P_{\mathcal{S}}$ that has property $(\tau)$.
A detailed proof of this statement is given in~\cite[Section 9.6]{ErshovJaikinKassabov17}, where one has to replace $\N_{\geq 5}$ by $\Set{d^k}{k \in \N}$ in the construction.
See also~\cite{Kassabov08} for a slightly different proof.
By Lemma~\ref{lem:gluing} the group $H$ that is generated by the two subgroups $G,K \leq P$ is a frame for $P$ as well, which implies that the inclusion $\iota \colon G \rightarrow H$ induces an isomorphism $\widehat{\iota} \colon \widehat{G} \rightarrow \widehat{H}$.
It remains to observe that $H$ has property $(\tau)$.
To this end, we use the characterization of property $(\tau)$ in terms of representations with finite image.
More precisely, $H$ has property $(\tau)$ if the trivial representation is isolated in the space of all representations that factor through finite quotients, see~\cite[Definition 1.11]{LubotzkyZuk05}.
Using this, it directly follows that $H$ inherits property $(\tau)$ from $K$, as it has the same representations that factor through finite quotients and adding generators to a group with $(\tau)$ can only further isolate the trivial representation.
\end{proof}

\begin{remark}\label{rem:embeddings-of-countable-res-fin}
The proof of Theorem~\ref{thm:amenable-vs-tau} can be used to show that every countable residually finite group embeds into a finitely generated group with property $(\tau)$.
To see this, one only has to replace the group $G = G_{\mathcal{A}_{d,r}}$ in that proof with the group $G_{\mathcal{H}}$ from Section~\ref{sec:embedding-groups} and use the result of Wilson~\cite[Theorem A]{Wilson80} that every countable residually finite group embeds into a finitely generated residually finite group.
Thus, apart from the obvious ones, there are no obstructions for groups to embed into finitely generated groups with property $(\tau)$.
\end{remark}

A problem that is related to Theorem~\ref{thm:amenable-vs-tau} goes back to Lubotzky and Weiss~\cite[Conjecture 5.4]{LubotzkyWeiss92} who conjectured that every compact group $C$ that contains finitely generated dense subgroups $G,H$, where $G$ is amenable and $H$ has property $(\T)$, is finite.
This was disproven by Ershov and Jaikin-Zapirain~\cite[Section 6]{ErshovJaikin10} for the group $C = \prod \limits_{n=2}^{\infty} \SL_{3n}(\F_p)$, where $p > 2$ is a prime.
More precisely they showed that a certain dense subgroup of $C$, which was previously shown by Kassabov~\cite[Theorem 10]{Kassabov07a} to have property $(\tau)$, in fact has property $(\T)$.
On the other hand, it was proven by Lubotzky and Weiss that $C$ has a dense amenable subgroup.
Regarding all of this, the following question suggests itself.

\begin{question}\label{quest:grothendieck-T-amenable}
Does there exist a Grothendieck pair $(G,H)$, where $G$ is amenable and $H$ has property $(\T)$?
\end{question}

Regarding the proof of Theorem~\ref{thm:amenable-vs-tau}, Question~\ref{quest:grothendieck-T-amenable} could be answered affirmatively if

1) there is a frame $K$ for $\prod \limits_{\ell=1}^{\infty} \Alt(d^{\ell})$ that has property $(\T)$, and

2) property $(\T)$ is a so-called weak-up profinite property, which means that for every Grothendieck pair $(K,H)$, where $K$ has property $(\T)$, it follows that $H$ has property $(\T)$.

\smallskip

\noindent As far as we know, none of these steps can be verified so far.
However, concerning the first step there are products of alternating groups known that contain dense subgroups with property $(\T)$.
See~\cite{CapraceKassabov22} for recent examples due to Caprace and Kassabov.
Regarding the second step, Cotton-Barratt~\cite[Theorem 1.3]{Cotton-Barratt13} has proven that a positive solution would imply the existence of hyperbolic groups that are not residually finite.
Without the restriction to Grothendieck pairs it was shown by Aka~\cite[Theorem 1]{Aka12} that property $(\T)$ is not profinite.

\section{Questions}\label{sec:questions}

We expect that there exist plenty of examples of telescopes.
In addition to the examples given in this article, we are also aware of some telescopes of symplectic groups. We think it would be fruitful to have more telescopes available and pose the following

\begin{problem}
Construct telescopes of groups of Lie type that give rise to $2$-generated frames.
\end{problem}

To the best of the authors knowledge Theorem~\ref{thm:2-generated-amenable} and Theorem~\ref{thm:embeddings-into-simple-groups} provide the first examples of residually finite, amenable groups that admit infinite, simple quotients.
However the following question of Thom~\cite{ThomMO10} suggests that such groups might be abundant.

\begin{question}[Thom]\label{quest:thom}
Is every group the quotient of a residually finite group by a normal amenable subgroup?
\end{question}

Indeed, if Question~\ref{quest:thom} has a positive answer, then every finitely generated, simple, amenable group admits a residually finite cover, which then can be chosen to be finitely generated.

\smallskip

\noindent Guided by some of the breakthroughs that were achieved in the field of branch groups and their similarity of our $B$-telescopes $\mathcal{A}(r,d)$, we are tempted to formulate the following

\begin{question}
Is there a telescope $\mathcal{S}$ such that $G_{\mathcal{S}}$ or $Q_{\mathcal{S}}$ are finitely generated, infinite, and have
\begin{enumerate}
\item subexponential growth,
\item non-uniform exponential growth,
\item finite L-presentations, see~\cite{Bartholdi2003} for a definition of this concept.
\end{enumerate}
\end{question}

Judging from the theory of branch groups, the following problem might be easier.

\begin{problem}
Construct a telescope $\mathcal{S}$ whose associated group $G_{\mathcal{S}}$ is a finitely generated, infinite torsion group.
\end{problem}

According to a remarkable result of Brieussel~\cite[Theorem 3.3]{Brieussel14}, some of the classical examples of branch groups admit a F{\o}lner sequence that can be described in a surprisingly nice way.
We wonder if there is also a similarly nice sequence for $G_{\mathcal{A}(d,r)}$ or $Q_{\mathcal{A}(d,r)}$.

\begin{problem}
Find an easy description of a F{\o}lner sequence for $G_{\mathcal{A}(d,r)}$, and hence for the simple group $Q_{\mathcal{A}(d,r)}$.
\end{problem}

Regarding the lack of examples of simple amenable group, and in fact non-elementary amenable groups that are not defined via their actions on Cantor sets, it is intriguing to pose the following

\begin{problem}
Find new classes of telescopes whose associated groups are amenable but not elementary amenable.
E.g.\ the group $G_{\mathcal{SL}_d(\F_q)}$ can be easily seen to be not elementary amenable.
Is it amenable?
\end{problem}

%

\bibliographystyle{amsplain}
\bibliography{literatur}

\end{document}